\newcommand{\wiggleQ}{Q \hspace{-7pt} Q}
\definecolor{darkred}{rgb}{1,0,0}        
\definecolor{lightred}{rgb}{1,0.4,0}     
\definecolor{darkblue}{cmyk}{1,0.4,0,0.4}  
\definecolor{lightblue}{cmyk}{1,0.4,0,0}  
\definecolor{darkgreen}{cmyk}{1,0.5,1,0}  
\definecolor{lightgreen}{cmyk}{1,0,1,0}  
\definecolor{lightgray}{rgb}{0.5,0.5,0.5}  
\newcommand{\red}[1]{{\color{red} #1\color{black}}}
\newcommand{\lightblue}[1]{\color{lightblue}{#1}\color{black}}
\newcommand{\lightgray}[1]{\color{lightgray}{#1}\color{black}}
\newcommand{\cS}[1]{{\noindent\textsf{\color{purple}$\blacksquare$~#1~$\blacksquare$}}}
\newcommand{\hide}[1]{}
\newtheorem{theorem}{Theorem}[section]
\newtheorem{proposition}[theorem]{Proposition}
\newtheorem{conjecture}[theorem]{Conjecture}
\newtheorem{corollary}[theorem]{Corollary}
\newtheorem{lemma}[theorem]{Lemma}
\theoremstyle{definition}
\newtheorem{remark}[theorem]{Remark}
\newtheorem{example}[theorem]{Example}
\newtheorem{definition}[theorem]{Definition}
\newtheorem{problem}[theorem]{Problem}
\newtheorem{observation}[theorem]{Observation}
\newtheorem{algorithm}[theorem]{Algorithm}
\numberwithin{equation}{section}
\def\ZZ{{\mathbb Z}}
\def\RR{{\mathbb R}}
\def\QQ{{\mathbb Q}}
\def\CC{{\mathbb C}}
\def\O{{\mathcal{O}}}
\def\dd{{\mathbf{d}}}
\def\wind{{\operatorname{wind}}}
\def\GL{{\operatorname{GL}}}
\def\In{{\operatorname{In}}}
\def\Out{{\operatorname{Out}}}
\newcommand\LOQ{\operatorname{LOQ}}
\newcommand\Pf{\operatorname{Pf}}
\newcommand\sgn{\operatorname{sgn}}
\newcommand\GQ{K_Q}
\title{Cyclically ordered quivers}
\begin{document}

\author{Sergey Fomin}
\address{\hspace{-.3in} Department of Mathematics, University of Michigan,
Ann Arbor, MI 48109, USA}
\email{fomin@umich.edu}
\author{Scott Neville}

\email{nevilles@umich.edu}

\date{August 13, 2025. Updated May 15, 2026.}

\thanks{Partially supported by NSF grants DMS-2054231, DMS-2348501 (S.~F.) and DMS-1840234 (S.~N.),
Rackham Predoctoral Fellowship (S.~N.), BSF grant 2022157 (S.~F.), and a Simons Fellowship (S.~F.).
}

\subjclass{
Primary
13F60, 
Secondary
05E99, 
15B36. 
}

\keywords{Quiver, proper mutation, cyclic ordering, integral congruence of matrices, Alexander polynomial, 
unipotent companion, signed braid group.}

\begin{abstract}
A cyclically ordered quiver is a quiver endowed with an additional structure of a cyclic ordering of its vertices. 
This structure, which naturally arises in many important applications, 
gives rise to new powerful mutation invariants. 
\end{abstract}

\ \vspace{-10pt}

\maketitle


\thispagestyle{empty}

\begin{center}
{
\setlength{\unitlength}{.7pt}
\newcommand{\radius}{1.5cm} 
\newcommand{\vertsize}{1.5pt}
\begin{tikzpicture}
\filldraw[black] (0,0)++(180:\radius) circle (2pt) node[left] {} coordinate (a);
\filldraw[black] (0,0)++(120:\radius) circle (2pt) node[above left] {} coordinate (b);
\filldraw[black] (0,0)++(60:\radius) circle (2pt) node[above right] {} coordinate (c);
\filldraw[black] (0,0)++(0:\radius) circle (2pt) node[right] {} coordinate (d);
\filldraw[black] (0,0)++(-60:\radius) circle (2pt) node[below right] {} coordinate (e);
\filldraw[black] (0,0)++(-120:\radius) circle (2pt) node[below left] {} coordinate (f);
\draw[black, dashed, decoration={markings, mark=at position 0.1 with {\arrow{<}}}, postaction={decorate}, opacity=0.5] (0,0) circle (\radius);
\draw[black, -{stealth}, thick, shorten >=4pt, shorten <= 4pt] (a) -- (b);
\draw[black, -{stealth}, thick, shorten >=4pt, shorten <= 4pt] (b) -- (c);
\draw[black, -{stealth}, thick, shorten >=4pt, shorten <= 4pt] (c) -- (d);
\draw[black, -{stealth}, thick, shorten >=4pt, shorten <= 4pt] (d) -- (e);
\draw[black, -{stealth}, thick, shorten >=4pt, shorten <= 4pt] (e) -- (f);
\draw[black, -{stealth}, thick, shorten >=4pt, shorten <= 4pt] (f) -- (a);

\draw[black, -{stealth}, thick, shorten >=4pt, shorten <= 4pt] (c) -- (a);
\draw[black, -{stealth}, thick, shorten >=4pt, shorten <= 4pt] (e) -- (a);
\draw[black, -{stealth}, thick, shorten >=4pt, shorten <= 4pt] (b) -- (d);
\draw[black, -{stealth}, thick, shorten >=4pt, shorten <= 4pt] (f) -- (d);

\end{tikzpicture}
}
\end{center}

\setcounter{tocdepth}{1}
\tableofcontents
\vspace{-25pt}

\pagebreak[3]

Quiver mutations form the combinatorial backbone of the theory of \emph{cluster algebras}~\cite{fwz1-3,ca1}. 
Despite the elementary nature of quiver mutation, many basic questions concerning this notion remain open.
There is no known algorithm for detecting mutation equivalence of quivers. 
The dearth of known invariants of quiver mutation 
makes it difficult to determine, or even guess, whether two particular cluster structures,  
perhaps arising in different mathematical contexts, have the same mutation~type.
Identification of such coincidences is often indicative of a deeper connection, and is 
one of the key benefits that the theory of cluster algebras~supplies. 

In this paper, we develop an approach to quiver combinatorics 
that provides new powerful tools for detecting mutation inequivalence of quivers. 
This approach is~based on   
endowing a quiver~$Q$ with additional combinatorial structure:
a \emph{cyclic ordering}~$\sigma$ of its vertices, 
yielding a \emph{cyclically ordered quiver (COQ)} $(Q,\sigma)$. 

In the framework of cyclically ordered quivers, 
the usual notion of quiver mutation at a vertex~$v$ gets upgraded 
to the operation of \emph{proper mutation} of COQs 
that changes both the quiver and the cyclic ordering. 
This operation is only defined when a COQ $(Q,\sigma)$ satisfies a simple local condition: 
informally speaking, every~oriented path 
passing through~$v$ must make a right turn at~$v$.
(We then say that vertex~$v$ is \emph{proper}.) 


Tearing up a cyclic ordering~$\sigma$ into a linear ordering of the vertices of a quiver~$Q$ allows us to 
associate to a COQ $(Q, \sigma)$ a unipotent upper-triangular matrix~$U$,  
the \emph{unipotent companion} of~${(Q,\sigma)}$. 
We  show that the \emph{integral congruence class} 
${\{G U G^T \mid G\in\GL(n,\ZZ)\}}$
of this matrix (here $G^T$ denotes the transpose of~$G$) does not depend on the choice of a tearing point,
nor does it change under \emph{wiggles}, the local transformations of a cyclic ordering that
exchange consecutive vertices that are not adjacent in the quiver. 
Crucially, the integral congruence class of a unipotent companion is invariant under proper mutations.

The integral congruence class of~$U$ 
gives rise to an arguably
more useful invariant of a COQ $(Q, \sigma)$: 
the $\GL(n,\ZZ)$ conjugacy class of the \emph{cosquare} $U^{-T} U$. 
This conjugacy class, in turn, determines the \emph{Alexander lattices}
(which capture a lot of number-theoretic information about the conjugacy class) 
and the \emph{Alexander polynomial} $\Delta_Q(t)$, 
the monic characteristic polynomial of the cosquare. 
(For a quiver associated with a planar divide~\cite{FPST}, 
this polynomial agrees with the Alexander polynomial of the corresponding link.) 
All of the above are invariant under proper mutations. 

The coefficient of $t$  in the Alexander polynomial $\Delta_Q(t)$ 
gives rise to the \emph{Markov invariant}, \linebreak[3]
which generalizes a mutation invariant of 3-vertex quivers introduced in~\cite{BBH}. 
Other known mutation invariants of quivers \cite{casals-binary, fwz1-3}  
can also be derived from the Alexander polynomial. 

\enlargethispage{5pt}

Figure~\ref{fig:hierarchy} provides an overview of the invariants of proper mutations discussed
in this paper, the functional dependencies between them, and their connections to known  
invariants of quiver mutations. 

\pagebreak[3]

\begin{figure}[ht]
\begin{center}
\vspace{10pt}
\begin{tikzcd}[arrows={-stealth}, sep=1.5em]
\hline
\\[-15pt]
\text{\textbf{invariants of COQs}} & \text{\textbf{invariants of quivers}} \\[-15pt]
\hline
\\[-15pt]
\begin{array}{c}
\text{proper mutation equivalence class} \\
\text{of a cyclically ordered quiver~$(Q,\sigma)$}  
\end{array} \arrow[d]   \\[-5pt]
\text{signed braid group orbit of $U$} \arrow[r]\arrow[d] & \text{gcd's of multiplicities \cite{Seven3x3}}
\\[-5pt]
\text{integral congruence class of $U$}
\arrow[d] \arrow[r] & \text{Seven-\"Unal invariants \cite{Seven-congruence}}
\\[-5pt]
\text{$\operatorname{GL}_n(\ZZ)$ conjugacy class  of $U^{-T}U$}\arrow[d] \\[-5pt]
\text{Alexander lattices $\dd_k(Q,t)$}
\arrow[d]\arrow[r] & 
\operatorname{rank}(B)
\\[-5pt]
\text{Alexander polynomial} \arrow[d] \arrow[r] \arrow[rd, start anchor=-20, end anchor=west] & 
\det(B) 
\\[-5pt]
\text{Markov invariant} & \text{Casals' invariant \cite{casals-binary}}
\\[-10pt]
\hline
\end{tikzcd}
\end{center}
\vspace{3pt}
\caption{Hierarchy of equivalence classes and invariants. 
Here ${U}$ (resp.,~$B$) is the unipotent companion (resp., the exchange matrix) of~$Q$. 
Arrows indicate functional dependencies; 
thus, the Alexander polynomial  determines the Markov invariant,~etc. 
}
\label{fig:hierarchy}
\end{figure}

A cyclically ordered quiver is \emph{proper} if all its vertices are proper.
Proper COQs are especially nice because any single mutation leaves all our invariants intact.

Examples of quivers that possess a proper cyclic ordering 
%
include all quivers on $\le3$~vertices, all quivers of finite type, and all acyclic quivers. 
Moreover, every quiver $Q$ belonging to one of these three classes allows a 
\emph{totally proper} cyclic ordering~$\sigma$:
any sequence of (proper) mutations transforms $(Q,\sigma)$ into a proper~COQ. 
(Total properness of all acyclic quivers has been recently established by the second author~\cite{SN-acyclic-TP}.)

\pagebreak[3]

A totally proper cyclic ordering of a given quiver, if it is known to exist, is necessarily unique (up to wiggles) 
and can be efficiently computed, see Algorithm~\ref{alg:sigma-Q}. 

We show that the class of totally proper COQs is rather broad 
and includes many examples beyond the ones mentioned above. 
Within this class, 
each of the aforementioned invariants of proper mutations
becomes a fully-fledged mutation invariant. 

In a forthcoming work, we will show that quiver mutations corresponding to square moves 
in reduced plabic graphs are proper, for a suitable cyclic order; 
this statement extends to many non-reduced plabic graphs. 
More generally, many important classes of quivers arising in applications of Lie-theoretic nature
appear to always come equipped with a naturally defined proper cyclic ordering.
Each time, proper mutations seem to be sufficient to produce a generating set 
of the corresponding cluster algebra, so the latter can in fact be defined
within the framework of proper mutations. 

To be sure, there are many quivers for which no proper cyclic ordering exists, 
and consequently the tools developed in this paper do not apply. 
These quivers however do not seem to arise ``in nature,'' i.e., in important applications of cluster theory. 


To summarize, the machinery of proper mutations of cyclically ordered quivers \linebreak[3]
constitutes,
in our opinion, a useful upgrade of the traditional combinatorics of quiver mutations.  
It in particular provides new powerful mutation invariants that will hopefully  
prove effective in future applications of this theory. 

\newpage

\subsection*{Structure of the paper}
Section~\ref{sec:defs} reviews basic background on quivers and their mutations. 
Cyclically ordered quivers (COQs) and wiggles in them are introduced in Section~\ref{sec:cyclic orderings}.  
We define the \emph{winding numbers}, a family of wiggle invariants associated to cycles in a~COQ, 
characterize wiggle equivalence in terms of these numbers (Theorem~\ref{th:wiggle/winding}), 
and sketch an algorithm for constructing a cyclic ordering with given winding numbers (Theorem~\ref{thm:construct an ordering}). 
The proofs of both theorems are given in Section~\ref{sec:proof-wiggle/winding}. 
In Section~\ref{sec:unipotent companions}, we define a unipotent companion of a linearly ordered quiver 
and show that its integral congruence class is invariant under wiggles and cyclic reorderings.

The notion of a proper vertex in a COQ is introduced in Section~\ref{sec:proper verts}.
In Section~\ref{sec:proper mutations}, we define proper mutations,  
verify that they are well defined modulo wiggles,~and 
present examples of proper mutation equivalence classes of quivers of finite mutation type.
In Section~\ref{sec:invariants-of-proper-mutations}, we prove that proper mutations preserve 
the integral congruence class of a unipotent companion 
(Theorem~\ref{thm:I-N-action}). 
Section~\ref{sec:alexander-polynomials} is devoted to Alexander lattices and Alexander poly\-nomials. 
These notions are illustrated with examples of quivers on $\le 4$ vertices (Section~\ref{sec:alex-examples})
and quivers whose underlying graph is a tree (Section~\ref{sec:tree-quivers}). 

Section~\ref{sec:proper COQs} focuses on proper COQs, i.e., the ones in which every vertex is~proper. 
In Theorem~\ref{th:small winding proper}, we characterize this property in terms of subquivers supported on chordless cycles. 
One important class of proper COQs are the quivers associated with planar \emph{divides}.
Alexander polynomials of those quivers coincide with Alexander polynomials of the corresponding divide links. 

In Section~\ref{sec:opposite}, we explain that the theory of proper mutations of COQs can be developed 
modulo identification of every quiver with its \emph{opposite}.
We conjecture that for $3$-vertex quivers, mutation equivalence up to taking opposites is equivalent to 
integral conjugacy of cosquares (Conjecture~\ref{conj:3 vert mu equiv}). 

\emph{Vortices} are complete 4-vertex quivers that contain an oriented 3-cycle but not an oriented 4-cycle. 
A proper COQ must be vortex-free (Corollary~\ref{cor:vortex improper}). 
We show that for complete COQs, properness propagates under mutations as long as vortices do not emerge
(Proposition~\ref{pr:propagation-under-VF}).

In~Sections~\ref{sec:totally proper}--\ref{sec:totally proper examples}, 
we study totally proper COQs. 
We prove that a totally proper cyclic ordering is unique up to wiggles 
(Theorem~\ref{th:totally-proper-unique}),
describe algorithms for constructing this ordering (assuming it exists), 
exhibit several families of totally proper quivers,
and discuss the problem of testing mutation equivalence of such quivers. 
%
In Section~\ref{sec:admissible cartan}, we show that totally proper cyclic orderings give rise to 
admissible \emph{quasi-Cartan companions}~\cite{SevenAMatrices}, 
and discuss quivers associated with \emph{triangulated surfaces}. 

In Sections~\ref{sec:bondal}--\ref{sec:bondal-to-proper-mut}, we make a connection with 
the well-studied action of the (signed) braid group on unipotent upper-triangular integer matrices 
(equivalently, on linearly ordered quivers) \cite{Bondal, BP, CV, Dub, Rudakov}. 
We show that proper mutations and wiggles can be interpreted as instances of this action 
(Theorem~\ref{th:braid-vs-COQ-mut}) and describe each orbit as the set of quivers
related to each other by proper mutations, wiggles, and vertex reversals. 


\subsection*{Acknowledgments}

We thank Roger Casals, Michael Shapiro, and Hugh Thomas for stimulating discussions, Danielle Ensign for software assistance/consulting,
and Jineon Baek for sharing a construction that inspired Definitions~\ref{def:algorithm for wiggle equiv} and~\ref{def:collisions}. 
\hbox{Eugenii} Shustin pointed us towards Example~\ref{eg:divides-collide}.
We used \textsc{Magma}, \textsf{Maple}, and \texttt{Sage} in our computations. 


\newpage

\section{Preliminaries on quivers and their mutations}
\label{sec:defs}

This section contains basic definitions pertaining to quivers and their mutations. 
The only (potentially) non-standard notions are those of a complete (resp., abundant) quiver, 
see Definition~\ref{def:complete}, and of the unoriented simple graph of a quiver, see Definition~\ref{def:Graph-Q}.

\begin{definition} 
A \emph{quiver} is a directed graph without loops or oriented $2$-cycles. Directed edges in a quiver are called \emph{arrows}. Multiple arrows are allowed. 
We indicate multiplicities by labeling the arrows. See Figure~\ref{fig:quiver-example}. 
\end{definition}

\begin{remark} 
By default, all quivers considered in this paper have \underline{no frozen vertices}. 
\end{remark}

\begin{remark} 
\label{rem:labeled-quivers}
Unless specified otherwise, we always work with \emph{labeled} quivers. 
In particular, we distinguish between isomorphic quivers on the same set of vertices. 
\end{remark}
 
\begin{definition} 
\label{def:complete}
A quiver $Q$ is \emph{complete} (resp., \emph{abundant}) if each pair of vertices in $Q$ is connected by at least one arrow (resp., at least two arrows), in one of the two directions. 
\end{definition}

\begin{definition}
\label{def:acyclic}
Let $Q$ be a quiver on $n$ vertices. We say that $Q$ is \emph{acyclic} if $Q$ contains no oriented cycles. 
\end{definition}

\begin{figure}[ht]
\begin{tikzpicture}[scale=0.8]
\filldraw[black] (1,1) circle (2pt);
\filldraw[black] (1,2) circle (2pt);

\filldraw[black] (2,1) circle (2pt);
\filldraw[black] (2,2) circle (2pt);
\draw[black, thick, -{stealth}, shorten >=3pt, shorten <= 3pt ] (1,1) -- (1,2) node [midway, left] {$\scriptstyle 2$};
\draw[black, thick, -{stealth}, shorten >=3pt, shorten <= 3pt ] (1,1) -- (2,1);
\draw[black, thick, -{stealth}, shorten >=3pt, shorten <= 3pt ] (1,1) -- (2,2);
\draw[black, thick, -{stealth}, shorten >=3pt, shorten <= 3pt ] (1,2) -- (2,1);
\draw[black, thick, -{stealth}, shorten >=3pt, shorten <= 3pt ] (1,2) -- (2,2);
\draw[black, thick, -{stealth}, shorten >=3pt, shorten <= 3pt ] (2,2) -- (2,1) node [midway, right] {$\scriptstyle 3$};
\end{tikzpicture}
\caption{An acyclic and complete (but not abundant) quiver on 4 vertices.}
\label{fig:quiver-example}
\end{figure}

\vspace{-0.3cm}

\begin{definition}
\label{def:tree quiv}
A quiver is called a \emph{tree quiver} if its underlying undirected graph is a tree. 
A quiver is \emph{connected} if its underlying undirected graph is connected. 
\end{definition}

\begin{figure}[ht]
\begin{tikzpicture}[scale=0.7]
\filldraw[black] (1,1) circle (2pt);
\filldraw[black] (1,2) circle (2pt);
\filldraw[black] (1,3) circle (2pt);
\filldraw[black] (2,2) circle (2pt);
\filldraw[black] (2,3) circle (2pt);
\filldraw[black] (3,3) circle (2pt);
\filldraw[black] (0,2) circle (2pt);
\draw[black, thick, -{stealth}, shorten >=3pt, shorten <= 3pt ] (1,1) -- (1,2);
\draw[black, thick, -{stealth}, shorten >=3pt, shorten <= 3pt ] (1,2) -- (1,3);
\draw[black, thick, -{stealth}, shorten >=3pt, shorten <= 3pt ] (1,2) -- (0,2);
\draw[black, thick, -{stealth}, shorten >=3pt, shorten <= 3pt ] (2,2) -- (1,2);
\draw[black, thick, -{stealth}, shorten >=3pt, shorten <= 3pt ] (3,3) -- (2,3);
\draw[black, thick, -{stealth}, shorten >=3pt, shorten <= 3pt ] (2,3) -- (1,3);
\end{tikzpicture}
\hspace{2cm}
\begin{tikzpicture}[scale=0.7]
\filldraw[black] (0,2) circle (2pt);
\filldraw[black] (1,1) circle (2pt);
\filldraw[black] (1,2) circle (2pt);
\filldraw[black] (1,3) circle (2pt);

\filldraw[black] (2,1) circle (2pt);
\filldraw[black] (2,2) circle (2pt);
\filldraw[black] (2,3) circle (2pt);
\filldraw[black] (3,3) circle (2pt);
\draw[black, thick, -{stealth}, shorten >=3pt, shorten <= 3pt ] (1,1) -- (1,2);
\draw[black, thick, -{stealth}, shorten >=3pt, shorten <= 3pt ] (1,2) -- (1,3);
\draw[black, thick, -{stealth}, shorten >=3pt, shorten <= 3pt ] (1,2) -- (0,2);
\draw[black, thick, -{stealth}, shorten >=3pt, shorten <= 3pt ] (1,3) -- (0,2);

\draw[black, thick, -{stealth}, shorten >=3pt, shorten <= 3pt ] (2,1) -- (2,2);
\draw[black, thick, -{stealth}, shorten >=3pt, shorten <= 3pt ] (3,3) -- (2,3);
\draw[black, thick, -{stealth}, shorten >=3pt, shorten <= 3pt ] (2,3) -- (2,2);
\draw[black, thick, -{stealth}, shorten >=3pt, shorten <= 3pt ] (3,3) -- (2,1);
\end{tikzpicture}

\caption{Left: a tree quiver.~Right: a disconnected quiver.~Neither quiver is complete.
}
\end{figure}
\vspace{-0.3cm}

\begin{definition}
\label{def:Graph-Q}
For a quiver $Q$, we denote by $\GQ$ the underlying unoriented \emph{simple} graph of~$Q$ 
(ignoring multiplicities). 
To illustrate, if $Q$ is the quiver shown in Figure~\ref{fig:quiver-example}, 
then $\GQ$ is the complete graph~$K_4$.

In this paper, every cycle in the graph $\GQ$ will come equipped with a direction of traversal 
(one of two possible choices). 
In other words, cyclic shifts do not change the cycle, but the reversal of direction does. 

Any such cycle defines a $1$-dimensional oriented submanifold of the simplicial complex $\GQ$, 
thus an element of the first homology group $H_1(\GQ,\ZZ)$. 
\end{definition}

\begin{definition} 
To \emph{mutate} a quiver $Q$ at a vertex $j$, perform the following steps:
\begin{enumerate}
\item for each path $i \rightarrow j \rightarrow k$ in $Q$, add a new arrow $i \rightarrow k$ (thus, if we have $a$ arrows from $i$ to $j$ and $b$ arrows from $j$ to $k$, we should add $ab$ new arrows from $i$ to $k$); 
\item reverse all arrows incident to $j$; 
\item repeatedly remove oriented $2$-cycles until there are none left.
\end{enumerate}
The transformed (mutated) quiver is denoted by $\mu_j(Q)$. Mutation is an involution: $\mu_j(\mu_j(Q))=Q$. 
\end{definition}

\begin{definition} 
Two quivers are called \emph{mutation-equivalent} if they can be related to each other by a sequence of mutations. 
The \emph{mutation equivalence class} (or just \emph{mutation class}) of $Q$ is denoted by $[Q]$.
\end{definition}

\pagebreak[3]


Instead of dealing with quivers and their mutations, one can utilize the language 
of skew-symmetric matrices: 

\begin{definition}
For a given $n$-vertex quiver $Q$, the \emph{exchange matrix} 
$B=B_Q = (b_{ij})$ associated to $Q$ is an $n \times n$ skew-symmetric matrix 
defined by 
\begin{equation*}
b_{ij} = \begin{cases}
x & \text{if $Q$ contains $x\ge0$ arrows $i \rightarrow j$;} \\
-x & \text{if $Q$ contains $x\ge0$ arrows $i \leftarrow j$.}
\end{cases}
\end{equation*}
\end{definition}

\begin{example}
\label{eg:3 vert quiver and B mat}
Let $Q$ be a quiver on an ordered 3-vertex set $\{a<b<c\}$,
with $x\ge0$ arrows $a\rightarrow b$ and $y\ge0$ arrows $b\rightarrow c$.
(We can always relabel the vertices so that~the arrows point in the directions specified above.) 
The exchange matrix $B_Q$ has the~form
\begin{equation}
\label{eq:B(x,y,z)}
B_Q=\begin{bmatrix} 
0 & x & z \\
-x & 0 & y \\
-z & -y & 0
\end{bmatrix}\!. 
\end{equation}
The cases $z \ge 0$ and $z \le 0$ are shown in Figure~\ref{fig:Q(x,y,z)}.
\end{example}

\begin{figure}[ht]

\begin{tikzpicture}
\filldraw[black] (0,1) circle (2pt) node[above left=-1pt] {$a$} coordinate (a);
\filldraw[black] (1,1) circle (2pt) node[above right=-1pt] {$b$} coordinate (b);
\filldraw[black] (0,0) circle (2pt) node[below left=-1pt] {$c$} coordinate (c);
\draw[black, -{stealth}, shorten >=3pt, shorten <= 3pt] (a) -- (b) node [midway, above] {$x$};
\draw[black, -{stealth}, shorten >=3pt, shorten <= 3pt] (b) -- (c) node [midway, below right=-1pt] {$y$};
\draw[black, -{stealth}, shorten >=3pt, shorten <= 3pt] (a) -- (c) node [midway, left] {$z$};
\end{tikzpicture}
\hspace{3cm}
\begin{tikzpicture}
\filldraw[black] (0,1) circle (2pt) node[above left=-1pt] {$a$} coordinate (a);
\filldraw[black] (1,1) circle (2pt) node[above right=-1pt] {$b$} coordinate (b);
\filldraw[black] (0,0) circle (2pt) node[below left=-1pt] {$c$} coordinate (c);
\draw[black, -{stealth}, shorten >=3pt, shorten <= 3pt] (a) -- (b) node [midway, above] {$x$};
\draw[black, -{stealth}, shorten >=3pt, shorten <= 3pt] (b) -- (c) node [midway, below right=-1pt] {$y$};
\draw[black, -{stealth}, shorten >=3pt, shorten <= 3pt] (c) -- (a) node [midway, left] {$-z$};
\end{tikzpicture}
\caption{$3$-vertex quivers with the exchange matrix \eqref{eq:B(x,y,z)}. Left: an acyclic quiver ($x,y,z\ge0$). Right: a quiver with cyclically oriented arrows ($x,y\ge 0$, $z\le 0$). }
\label{fig:Q(x,y,z)}
\end{figure}

\hide{ 

\begin{figure}[ht]

\begin{tikzpicture}
\filldraw[black] (0,1) circle (2pt) node[above left=-1pt] {$a$} coordinate (a);
\filldraw[black] (1,1) circle (2pt) node[above right=-1pt] {$b$} coordinate (b);
\filldraw[black] (0,0) circle (2pt) node[below left=-1pt] {$c$} coordinate (c);
\draw[black, -{stealth}, shorten >=3pt, shorten <= 3pt] (a) -- (b) node [midway, above] {$x$};
\draw[black, -{stealth}, shorten >=3pt, shorten <= 3pt] (b) -- (c) node [midway, below right=-1pt] {$y$};
\draw[black, -{stealth}, shorten >=3pt, shorten <= 3pt] (a) -- (c) node [midway, left] {$z$};
\end{tikzpicture}
\hide{
 \begin{tikzcd}[arrows={-stealth}, sep=3em]
  a  \arrow[r,  "x"]   \arrow[d, "z", swap] 
  & b  \arrow[dl, "y"]
  \\
 c  
   & 
\end{tikzcd}
}
\caption{An acyclic $3$-vertex quiver.}
\label{fig:3-vertex-acyclic}
\end{figure}

\begin{figure}[ht]

\begin{tikzpicture}
\filldraw[black] (0,1) circle (2pt) node[above left=-1pt] {$a$} coordinate (a);
\filldraw[black] (1,1) circle (2pt) node[above right=-1pt] {$b$} coordinate (b);
\filldraw[black] (0,0) circle (2pt) node[below left=-1pt] {$c$} coordinate (c);
\draw[black, -{stealth}, shorten >=3pt, shorten <= 3pt] (a) -- (b) node [midway, above] {$x$};
\draw[black, -{stealth}, shorten >=3pt, shorten <= 3pt] (b) -- (c) node [midway, below right=-1pt] {$y$};
\draw[black, -{stealth}, shorten >=3pt, shorten <= 3pt] (c) -- (a) node [midway, left] {$-z$};
\end{tikzpicture}
\hide{
 \begin{tikzcd}[arrows={-stealth}, sep=3em]
  a  \arrow[r,  "x"]   
  & b  \arrow[dl, "y"]
  \\
 c  \arrow[u, "-z"] 
   & 
\end{tikzcd}
}
\caption{A $3$-vertex quiver with cyclically oriented arrows.}
\label{fig:Q(x,y,z)}
\end{figure}

}

\begin{definition}
For $x\in\RR$, define the \emph{positive part} (resp., \emph{negative part}) of~$x$ by 
\begin{align*}
[x]_+ &= \max(x,0), \\
[x]_- &= \max(-x,0). 
\end{align*}
We note that both $[x]_+$ and $[x]_-$ are nonnegative.  
\end{definition}

\begin{definition}
\label{def:matrix-mut}
For a quiver $Q$, and a vertex $j$, the \emph{matrix mutation}~$\mu_j$ transforms $B_Q$ into the skew-symmetric matrix $B_{\mu_j(Q)}=(b_{ij}')=\mu_j(B)$ defined by
\begin{equation}
b_{ik}'=\begin{cases}
-b_{ik} & \text{if $i=j$ or $j=k$}; \\
b_{ik}+\frac12 (b_{ij}|b_{jk}|+|b_{ij}|b_{jk}) & \text{otherwise}
\end{cases}
\end{equation}
(see \cite[(4.3)]{ca1}). 
Alternatively, one may set 
\begin{equation}
\label{eq: def mutation signs}
b_{ik}'=\begin{cases}
-b_{ik} & \text{if $i=j$ or $j=k$}; \\
b_{ik}+ [b_{ij}]_+[b_{jk}]_+ - [b_{ij}]_-[b_{jk}]_- & \text{otherwise}
\end{cases}
\end{equation}
(see \cite[(2.2)]{ca4}). 
\end{definition}

\newpage 

\section{Cyclic orderings and wiggles}
\label{sec:cyclic orderings}

Informally speaking, a cyclic ordering of a quiver is a choice of a ``clockwise'' cyclic ordering of
its vertices, i.e., a way of placing the vertices around a circle, 
viewed modulo rotations. 
We next give a formal definition. 

\begin{definition} 
\label{def:cyclic ordering}
Let $Q$ be a quiver on an $n$-element vertex set~$V$. 
Two linear orderings $\tau=(v_1<\dots<v_n)$ and $\tau'=(v_{c(1)}<\dots<v_{c(n)})$ of~$V$ 
are \emph{cyclically equivalent} if the map $i\mapsto c(i)$ is a cyclic rearrangement, 
i.e., is given by $c(i)=(i+a)\bmod n$ for some~$a$. 
A~\emph{cyclic ordering}~$\sigma$ is an equivalence class of cyclically equivalent linear orderings. 
There are $n$ such linear orderings for a given~$\sigma$; 
we call them \emph{compatible} with~$\sigma$. 
For a linear ordering $\tau=(v_1<\cdots<v_n)$, we denote by 
$\sigma=(v_1,\dots, v_n)$ the corresponding cyclic ordering~$\sigma$. 

A~quiver on an $n$-element vertex set $V$ has $(n-1)!$ cyclic orderings. 
See Figure~\ref{fig:4-cycle orderings}. 
\end{definition}

{ 
\newcommand\cyclicLabels[5]{%
	\filldraw[black] (#4,#5)++(135:0.8cm) circle (2pt) node[above left=-1pt] {$a$} coordinate (a);
	\filldraw[black] (#4,#5)++(45:0.8cm) circle (2pt) node[above right=-1pt] {$#1$} coordinate (#1);
	\filldraw[black] (#4,#5)++(-45:0.8cm) circle (2pt) node[below right=-1pt] {$#2$} coordinate (#2);
	\filldraw[black] (#4,#5)++(-135:0.8cm) circle (2pt) node[below left=-1pt] {$#3$} coordinate (#3);
	\draw[black, dashed, decoration={markings, mark=at position 0 with {\arrow{<}}}, postaction={decorate}] (#4,#5) circle (0.8cm);
	\draw[black, -{stealth}, shorten >=3pt, shorten <= 3pt] (a) -- (b);
	\draw[black, -{stealth}, shorten >=3pt, shorten <= 3pt] (b) -- (c);
	\draw[black, -{stealth}, shorten >=3pt, shorten <= 3pt] (c) -- (d);
	\draw[black, -{stealth}, shorten >=3pt, shorten <= 3pt] (d) -- (a);
	\draw (#4,#5-1) node[below] {$(a, #1, #2, #3)$};
}

\begin{figure}[ht]
\begin{tikzpicture}
\foreach \i\j\k\xp\yp in {b/c/d/0/0, b/d/c/2.4/0, d/b/c/4.8/0, c/d/b/7.2/0, c/b/d/9.6/0, d/c/b/12/0} 
{
	\cyclicLabels{\i}{\j}{\k}{\xp}{\yp}
} 
\end{tikzpicture}
\caption{The six COQs whose underlying quiver is the 4-cycle $Q=(a \rightarrow b \rightarrow c \rightarrow d \rightarrow a)$.
The bottom row shows the corresponding cyclic orders.}
\vspace{-7pt}
\label{fig:4-cycle orderings}
\end{figure}

\begin{definition} 
A \emph{cyclically ordered quiver (COQ)} $(Q,\sigma)$
is a quiver $Q$ together with a cyclic ordering $\sigma$ of its vertices. 
When $\sigma$ is clear from the context, 
we will sometimes drop~$\sigma$ from the notation and just use $Q$ to denote a COQ.
\end{definition}

\begin{definition} 
A \emph{wiggle} is a transformation of a COQ that leaves the underlying quiver $Q$ intact while transforming the cyclic ordering via a transposition $(ij)$ that interchanges a pair of consecutive vertices $i$ and~$j$
that are not adjacent in the quiver. Note that this notion is insensitive to the orientations of the arrows. 
See Figure~\ref{fig:4-cycle-wiggles}. 
\end{definition}

\vspace{-2pt}

\begin{figure}[ht]
\newcommand{\h}[0]{1.5cm}

\begin{tikzpicture}
\cyclicLabels{b}{d}{c}{0}{0}
\end{tikzpicture}
\raisebox{\h}{$\stackrel{(bd)}{\longleftrightarrow}$}
\begin{tikzpicture}
\cyclicLabels{d}{b}{c}{3.5}{0}
\end{tikzpicture}
\raisebox{\h}{$\stackrel{(ac)}{\longleftrightarrow}$}
\begin{tikzpicture}
\cyclicLabels{c}{d}{b}{7}{0}
\end{tikzpicture}
\raisebox{\h}{$\stackrel{(bd)}{\longleftrightarrow}$}
\begin{tikzpicture}
\cyclicLabels{c}{b}{d}{10.5}{0}
\end{tikzpicture}
\caption{Four COQs related by a sequence of wiggles. 
Also, the first COQ is related to the last one by the wiggle~$(ac)$.}
\label{fig:4-cycle-wiggles}
\end{figure}
} 

\vspace{-7pt}

\begin{remark} 
If each pair of vertices of a quiver $Q$ that are consecutive in a cyclic ordering~$\sigma$ 
are connected by an arrow (going in either direction), then the COQ $(Q,\sigma)$ allows no wiggles. 
(Cf.\ the first and the last quivers in Figure~\ref{fig:4-cycle orderings}.)
In particular, no wiggles are possible if $Q$ is a complete quiver. 
\end{remark}

\begin{definition} 
\label{def:wiggle equiv}
Two cyclic orderings of a quiver are \emph{wiggle equivalent} if they can be obtained from each other by a sequence of wiggles. 
We will usually denote a wiggle equivalence class of a COQ~$Q$ (i.e., the set of all COQs wiggle equivalent to~$Q$) 
by~$\wiggleQ$.
To illustrate, Figure~\ref{fig:4-cycle-wiggles} shows a wiggle equivalence class. 
\end{definition}


In what follows, we will often consider COQs up to wiggle equivalence. 


\begin{proposition}
\label{prop:tree wiggles}
All cyclic orderings of a tree quiver are pairwise wiggle equivalent. 
\end{proposition}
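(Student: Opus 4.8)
The plan is to induct on the number $n$ of vertices of the tree quiver $T$. The cases $n\le 2$ are trivial, since then $(n-1)!=1$ and there is only one cyclic ordering. For the inductive step (with $n\ge 3$), I would fix a leaf $\ell$ of $T$ — which exists because every finite tree has a vertex of degree~$1$ — and let $p$ denote its unique neighbor. The point of choosing a leaf is that $\ell$ is non-adjacent to all of the other $n-2$ vertices, so wiggles involving $\ell$ are almost always available. Deleting $\ell$ yields a tree $T-\ell$ on $n-1$ vertices (removing a leaf preserves connectedness and acyclicity), to which the inductive hypothesis applies: all cyclic orderings of $T-\ell$ are pairwise wiggle equivalent.

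The engine of the proof is the following mobility claim: starting from any cyclic ordering of $T$, repeated wiggles that swap $\ell$ with a cyclically adjacent vertex allow us to move $\ell$ into any of the $n-1$ gaps between consecutive non-leaf vertices, without disturbing the relative cyclic order of those $n-1$ vertices. Indeed, such a swap is legal exactly when the vertex that $\ell$ hops over is different from $p$. Viewing the $n-1$ gaps as the vertices of an auxiliary cycle (two gaps being joined when a single non-leaf vertex separates them), the only forbidden hop is the one across $p$; this deletes exactly one edge of the cycle, leaving a connected path, so every gap is reachable. In particular $\ell$ can always be parked immediately after $p$, and it can always be moved out from between any two prescribed non-leaf vertices (at most one of which is $p$, so $\ell$ may hop over the other).

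With this claim in hand, I would argue as follows. Given two cyclic orderings $\sigma,\sigma'$ of $T$, first use the mobility claim to make $\ell$ sit immediately after $p$ in each; this reduces the problem to the case where $\sigma$ and $\sigma'$ differ only in the relative cyclic order of the $n-1$ non-leaf vertices. Projecting away $\ell$ gives cyclic orderings $\bar\sigma,\bar\sigma'$ of $T-\ell$, which by induction are connected by a sequence of wiggles. I then lift this sequence to $T$ one step at a time: to realize a wiggle of $T-\ell$ swapping consecutive non-adjacent vertices $u,v$, I first use mobility wiggles to move $\ell$ out from between $u$ and $v$, then swap $u$ and $v$ in $T$ (legal since $u,v$ are non-adjacent in $T$). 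The auxiliary moves of $\ell$ never change the relative order of the non-leaf vertices, so the projection to $T-\ell$ evolves exactly as the chosen sequence; after the last step the projection equals $\bar\sigma'$, and a final mobility move returns $\ell$ to just after $p$, matching $\sigma'$.

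The main obstacle to watch is the interference of $\ell$ (and of its neighbor $p$) with the wiggles being lifted: a wiggle of $T-\ell$ may be blocked because $\ell$ lies between $u$ and $v$, and the natural remedy — hop $\ell$ over one of them — is itself illegal when that vertex is $p$. This is precisely where it matters that $p$ is a single vertex, so at most one of $u,v$ can equal $p$; this is also what makes the gap-mobility claim work. Once this local bookkeeping is handled, the induction closes and all cyclic orderings of $T$ are wiggle equivalent.
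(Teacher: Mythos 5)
Your proof is correct and takes essentially the same approach as the paper, which proves the proposition by induction on the number of vertices, deducing the claim for a tree $T$ from the claim for $T$ with a single leaf removed (the paper explicitly leaves the details to the reader). Your mobility claim for the leaf $\ell$ and the lifting of wiggles from $T-\ell$ to $T$ supply exactly the omitted details.
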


\begin{proof}
Induction on the number of vertices. 
The claim for a tree~$T$ can be deduced from a similar claim for $T$ with a single leaf removed. 
Details are left to the reader. 
\end{proof}


\begin{example}
\label{eg:6-orderings-D4}
The $6$ cyclic orderings of the $4$-cycle quiver $a \rightarrow b \rightarrow c \rightarrow d \rightarrow a$ 
(see Figure~\ref{fig:4-cycle orderings}) fall into $3$ wiggle equivalence classes: 
\begin{itemize}
\item a class consisting of a single cyclic ordering $(a, b, c, d)$; 
\item a class consisting of a single cyclic ordering $(a, d, c, b)$;
\item a class consisting of the remaining four cyclic orderings of $\{a,b,c,d\}$, see Figure~\ref{fig:4-cycle-wiggles}.
\end{itemize}
\end{example}

Our next goal is to describe a solution to the following problems: 
\begin{itemize}[leftmargin=.2in]
\item 
determine whether 
two cyclic orderings of the same labeled quiver (cf.\ Remark~\ref{rem:labeled-quivers}) 
yield wiggle equivalent COQs; 
\item
if two COQs are wiggle equivalent,
construct a sequence of wiggles relating them to each other.
\end{itemize}
%
%

\begin{definition}
\label{def:dist and winding number}
Let $V$ be a finite set.
Let $\sigma=(v_1,\dots,v_n)$ be a (``clockwise'') cyclic ordering of~$V$, cf.\ Definition~\ref{def:cyclic ordering}.
(Thus $V\!=\!\{v_1,\dots,v_n\}$.)  
Let~$a,b\in V$; say, $a\!=\!v_i$ and $b\!=\!v_j$. 
The (clock\-wise) \emph{distance} $\theta(\sigma, a, b)$ between $a$ and~$b$,  
with respect to the cyclic ordering~$\sigma$, is defined by 
\begin{equation*}
\theta(\sigma, a, b) = 
\begin{cases}
j-i & \text{if $i\le j$;} \\
n+j-i & \text{if $i>j$.}
\end{cases}
\end{equation*} 
In other words, for distinct $a$ and~$b$, the distance $\theta(\sigma, a, b)$ is equal to 
1 plus the number of elements of~$V$ that we pass while moving clockwise from~$a$ to~$b$.
Notice that this notion only depends on the cyclic ordering; no quivers are involved. 
\end{definition}

\begin{example}
For the cyclic ordering $\sigma = (a,b,c)$ on a 3-element set $V=\{a,b,c\}$, we have:
$\theta(\sigma,a,b)=\theta(\sigma,b,c)=\theta(\sigma,c,a)=1$ and
$\theta(\sigma,a,c)=\theta(\sigma,b,a)=\theta(\sigma,c,b)=2$. 
\end{example}

\begin{definition}
\label{def:winding number}
Let $(Q,\sigma)$ be a COQ. 
Let 
\begin{equation}
\label{eq:u0...un}
C=(u_0 - u_1 - \cdots - u_{k-1} - u_k=u_0)
\end{equation}
be a $k$-cycle in the undirected simple graph~$\GQ$.
We consider the cycle $C$ to be endowed with a preferred direction of traversal, 
wherein $u_{i+1}$ follows~$u_i$ for $i=0,\dots,k-1$; 
cf.~Definition~\ref{def:Graph-Q}.

For each $i\in\{0,\dots,k-1\}$, precisely one of the two cases takes place:
\begin{itemize}[leftmargin=.27in]
\item[{\rm (a)}]
the quiver~$Q$ contains at least one arrow $u_i\rightarrow u_{i+1}$; 
\item[{\rm (b)}]
the quiver~$Q$ contains at least one arrow $u_i\leftarrow u_{i+1}$.
\end{itemize}
\pagebreak[3]
Let $\ell$ be the number of locations~$i$ that fall in category~(b) above. The number 
\begin{equation}
\label{eq:wind-n-cycle}
\wind(C)=\wind(C,\sigma) = \tfrac{1}{n} \Bigl ( \sum_{0\le i\le k-1} \theta(\sigma, u_i , u_{i+1}) \Bigr ) - \ell
\end{equation}
is called the \emph{winding number} of $C$. 
\end{definition}

\pagebreak[3]

\begin{example}
In the six COQs shown in Figure~\ref{fig:4-cycle orderings}, 
the cycle $(a\rightarrow b\rightarrow c\rightarrow d\rightarrow a)$
has winding numbers $1$, $2$, $2$, $2$, $2$,~$3$, respectively. 
\end{example}

\begin{remark}
The winding number of a cycle can be informally described as follows. 
We start by placing the vertices of the given quiver~$Q$ on the circle $\RR/n\ZZ$ 
in accordance with the given cyclic ordering~$\sigma$. 
We then traverse the given cycle~$C$ (cf.~\eqref{eq:u0...un}), each time moving from 
a vertex~$u_i$ to the next vertex~$u_{i+1}$ in one of the two directions:
\begin{itemize}[leftmargin=.2in]
\item 
if we see an arrow $u_i \rightarrow u_{i+1}$ (cf.\ case~(a) above), 
we move clockwise from~$u_i$ to~$u_{i+1}$;
\item
if we see an arrow $u_i \leftarrow u_{i+1}$ (cf.\ case~(b)), 
we move counterclockwise from~$u_i$~to~$u_{i+1}$. 
\end{itemize}
The winding number $\wind(C,\sigma)$ is the signed number of clockwise revolutions around  
the circle $\RR/n\ZZ$ that will occur as we complete the traversal of the cycle~$C$ in the way described above. 
(To see that, verify that each instance of case~(b) contributes $ \frac{1}{n}\theta(\sigma, u_i , u_{i+1}) - 1=-\frac{1}{n}\theta(\sigma, u_{i+1}, u_i) $ to the sum~\eqref{eq:wind-n-cycle}.) 
\end{remark}

In particular, the winding number is always an integer.
Moreover:

\begin{observation}
\label{ob:H1-to-Z}
For any COQ $(Q,\sigma)$, the map $C\mapsto \wind(C,\sigma)$ extends to a homomorphism
$H_1(\GQ,\ZZ)\to\ZZ$ from the first homology group of the undirected graph~$\GQ$. 
\end{observation}

The following result shows that the winding numbers determine, up to wiggle equivalence,
the choice of a cyclic ordering of a given quiver~$Q$: 

\begin{theorem}
\label{th:wiggle/winding}
Let $\sigma$ and $\sigma'$ be two cyclic orderings of the vertices of a quiver~$Q$. 
The following are equivalent:
\begin{itemize}[leftmargin=.2in]
\item 
the COQs $(Q,\sigma)$ and $(Q,\sigma')$ are wiggle equivalent; 
\item
for any cycle~$C$ in~$\GQ$ (equipped with a distinguished direction of traversal),~we~have 
\begin{equation*}
\wind(C,\sigma)=\wind(C,\sigma'). 
\end{equation*}
\end{itemize}
\end{theorem}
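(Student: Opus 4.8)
The plan is to prove the two implications separately, with the forward direction being straightforward and the converse being the real content.

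\medskip

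\textbf{Wiggle equivalence implies equal winding numbers.}
First I would establish that each winding number $\wind(C,\sigma)$ is a wiggle invariant. Since wiggle equivalence is generated by single wiggles, it suffices to show that one wiggle — a transposition $(ij)$ of consecutive non-adjacent vertices — preserves $\wind(C,\sigma)$ for every undirected cycle $C$. Fix such a cycle $C=(u_0-u_1-\cdots-u_k=u_0)$ and examine how the defining quantity in \eqref{eq:wind-n-cycle} changes. The quantity $\ell$ (the number of backward arrows along $C$) is insensitive to the cyclic ordering, so only the sum $\sum_i \theta(\sigma,u_i,u_{i+1})$ can change. A wiggle swapping consecutive positions occupied by $i$ and $j$ alters $\theta(\sigma,a,b)$ only for pairs $(a,b)$ where exactly one of $a,b$ lies in $\{i,j\}$, and then by exactly $\pm 1$. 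The crucial point is that $i$ and $j$ are \emph{not adjacent} in $Q$, hence no edge of $C$ joins $i$ to $j$, so along the cycle every consecutive pair $(u_m,u_{m+1})$ contains at most one of $i,j$. I would then argue that the $+1$ and $-1$ contributions cancel in pairs: each of $i$ and $j$ appears in $C$ some number of times, and the net change in the $\theta$-sum contributed by the two edges incident to any single occurrence of $i$ (or $j$) is zero, because moving a vertex one step clockwise past a neighbor increases the incoming distance by $1$ and decreases the outgoing distance by $1$. Summing over all occurrences gives total change $0$, so $\wind$ is preserved.

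\medskip

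\textbf{Equal winding numbers imply wiggle equivalence.}
This is the harder direction, and the natural strategy is to exhibit a \emph{canonical form} for each wiggle equivalence class and show that the collection of winding numbers determines it. I would proceed by induction on $n$, the number of vertices, reducing to a single connected quiver (handling disconnected quivers by treating components and the ``gaps'' between them, where wiggles move whole blocks freely). For a connected quiver, the idea is that the winding numbers record, for each edge, the clockwise distance between its endpoints relative to the global cyclic position, and a spanning set of cycles pins down all these relative distances up to the freedom of permuting non-adjacent vertices past each other — which is exactly what a wiggle does. Concretely, I expect to fix a spanning tree $T$ of $Q$; by Proposition~\ref{prop:tree wiggles} all cyclic orderings of $T$ alone are wiggle equivalent, so the tree edges impose no constraint, while each non-tree edge $e$ closes a unique fundamental cycle $C_e$, and the data $\{\wind(C_e,\sigma)\}$ should determine the cyclic ordering up to wiggles.

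\medskip

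\textbf{The main obstacle.}
The delicate part of the converse is turning ``equal winding numbers on all fundamental cycles'' into an \emph{explicit} sequence of wiggles, rather than just an abstract equality, since the theorem (together with the surrounding discussion of an algorithm) demands a constructive characterization. I anticipate the key technical step is a normalization lemma: given the target winding data, one can always perform wiggles to move a chosen vertex into a prescribed position relative to its neighbors without disturbing the already-fixed winding numbers, analogous to the leaf-moving argument sketched in the proof of Proposition~\ref{prop:tree wiggles}. The obstruction to naively sliding a vertex is precisely an adjacency (an arrow blocks the wiggle); so the heart of the argument is showing that whenever two cyclic orderings share all winding numbers, any blocking adjacency can be circumvented by first relocating the conflicting neighbor — and that this relocation can itself be carried out by wiggles without changing the winding numbers. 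Making this simultaneous-untangling step terminate, and proving it never reintroduces a previously resolved conflict, is where I expect the real work to lie; this is likely why the authors defer the full proof to a dedicated later section.
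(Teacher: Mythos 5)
Your first implication is essentially the argument the paper leaves to the reader (Lemma~\ref{lem:winding number invariant}): the cancellation of the $\pm1$ changes in $\theta$ at each occurrence of the two swapped vertices, using non-adjacency to rule out an edge of $C$ joining them, is correct. The converse, however, is where the entire content of the theorem lies, and your proposal does not prove it: you name a strategy (induction plus a ``normalization lemma'' sliding one vertex at a time into position) and then explicitly concede that its key steps --- that a blocking adjacency can always be circumvented, that the untangling terminates, and that it never re-creates a previously resolved conflict --- are left open. That concession is a genuine gap, not a deferral of routine detail. There is also a flaw in the reduction you do sketch: the assertion that ``the tree edges impose no constraint'' because all cyclic orderings of a tree quiver are wiggle equivalent (Proposition~\ref{prop:tree wiggles}) does not apply here, since wiggles in $(Q,\sigma)$ must avoid \emph{all} adjacencies of $Q$, not just those of the spanning tree; once the non-tree edges are present you cannot freely rearrange vertices along tree paths, so the fundamental-cycle data does not ``pin down'' the ordering in the way you claim.

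The paper's proof (Section~\ref{sec:proof-wiggle/winding}) dissolves precisely the difficulty you ran into, by replacing the discrete vertex-by-vertex normalization with a continuous one-parameter family. Fixing a spanning tree $T$ and a root, each vertex $v$ is assigned a real position $\O(t,v)$ interpolating linearly between positions computed from $\sigma$ and from $\sigma'$ via signed distance sums along tree paths, and $\mathcal{R}(t,v)=\O(t,v)\bmod n$ is a point moving at constant speed on the circle $\RR/n\ZZ$. The hypothesis of equal winding numbers is used exactly once, to show that the congruence $\O(t,v)-\O(t,u)\equiv(1-t)\,\theta(\sigma,u,v)+t\,\theta(\sigma',u,v)\bmod n$ holds for \emph{every} arrow $u\rightarrow v$ of $Q$, not only tree edges (Lemma~\ref{lem:w works for all edges}); since each $\theta$ lies strictly between $0$ and $n$, this forces any set of vertices colliding at some time to be pairwise non-adjacent in $Q$ (Lemma~\ref{lem:colliding set disjoint}). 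Each collision therefore reverses a contiguous block of pairwise non-adjacent vertices, which is a product of wiggles, and because the motion is linear in $t$ there are only finitely many collision times; concatenating the corresponding wiggle sequences carries $\sigma$ to $\sigma'$. In short, your plan correctly identifies the obstacle but leaves it unresolved, whereas the paper's interpolation makes the required wiggles appear automatically, in a canonical order, with termination built in.
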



The proof of Theorem~\ref{th:wiggle/winding} is given in Section~\ref{sec:proof-wiggle/winding}. 

\begin{remark}
\label{rem:homotopy gen invariant}
Observation~\ref{ob:H1-to-Z} implies that 
in Theorem~\ref{th:wiggle/winding}, 
it suffices to check~the equality $\wind(C,\sigma)=\wind(C,\sigma')$ 
on any set of cycles~$C$ that generate the group $H_1(\GQ,\ZZ)$.
To rephrase, the collection of winding numbers of such cycles 
uniquely determines the wiggle equivalence class of a cyclic ordering of~$Q$. 
\end{remark}

The problem of constructing a cyclic ordering of a given quiver that has prescribed winding numbers
for a given basis of cycles has an efficient algorithmic solution that utilizes linear programming: 

\newcommand\LPT{P}

\begin{theorem}
\label{thm:construct an ordering}
Let $Q$ be an $n$-vertex quiver.
Let $C_1, \ldots, C_m$ be cycles in~$\GQ$, each equipped with a direction of traversal.
Assume that these cycles span~$H_1(\GQ)$. 
Then for any integers $w_1, \ldots, w_m$, 
there is an explicitly constructed polyhedral set $\LPT$ 
defined by $O(n^2+m)$ linear inequalities in a real vector space of dimension~$O(n^2)$, such that
\begin{itemize}[leftmargin=.2in]
\item 
if the set $\LPT$ is nonempty, then any point in $\LPT$ directly yields 
a cyclic ordering $\sigma$ of~$Q$ with the winding numbers $\wind(C_i, \sigma) = w_i$ for all~$i$; 
\item
if the set $\LPT$ is empty, then no such cyclic ordering exists.
\end{itemize}
\end{theorem}

The proof of Theorem~\ref{thm:construct an ordering} will also appear in Section~\ref{sec:proof-wiggle/winding}.

\newpage

\section{From winding numbers to cyclic orderings
}
\label{sec:proof-wiggle/winding}

In this section we discuss several relationships between winding numbers and wiggle equivalence classes of COQs, including proofs of Theorems~\ref{th:wiggle/winding} and~\ref{thm:construct an ordering}.
We begin with a detailed treatment of the case when $\GQ$ is an $n$-cycle.

\begin{proposition}
\label{prop:n-cycle with no wiggles}
Let $Q$ be a quiver such that the undirected simple graph~$\GQ$ is a chordless $n$-cycle 
\begin{equation}
\label{eq:n-cycle-C}
C = (v_0 - v_1 - \cdots - v_{n-1} - v_n=v_0 )\,.  
\end{equation}
(There are no arrows in~$Q$ between non-consecutive vertices in~$C$.)  
We fix one of the two directions of traversal of the cycle, namely the direction in which $v_{i+1}$ follows~$v_i$ for $i=0,\dots,n-1$. 
Let $r = \# \{ i | v_i \rightarrow v_{i+1}\}$ (resp., $\ell = \# \{ i | v_i \leftarrow v_{i+1}\}$) 
be the number of locations~$i$ for which the orientation of the arrows connecting $v_i$ with~$v_{i+1}$ 
agrees (resp., disagrees) with the chosen direction, cf.\ Definition~\ref{def:winding number}. 
(Thus $r + \ell = n$.)
Then, for any cyclic ordering~$\sigma$ on~$Q$, we have 
\begin{equation}
\label{eq:bounds-on-wind}
1-\ell\le \wind(Q,\sigma)\le r-1 = n-\ell -1.   
\end{equation}
Moreover, every winding number between $1-\ell$ and $r-1$ is achieved for some~$\sigma$. 

Furthermore, for a cyclic ordering~$\sigma$ on~$Q$, the following are equivalent:
\begin{itemize}[leftmargin=.25in]
\item[{\rm (a)}]
the COQ $(Q,\sigma)$ allows no wiggles; 
\item[{\rm (b)}]
the winding number $\wind(Q,\sigma)$ is equal to $1-\ell$ or $r-1$ (cf.~\eqref{eq:bounds-on-wind}); 
\item[{\rm (c)}]
either $\sigma=(v_n, v_{n-1}, \dots, v_1)$ or $\sigma=(v_1, v_2, \dots, v_n)$. 
\end{itemize}
\end{proposition}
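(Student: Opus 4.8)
The plan is to reduce everything to the single cycle $C=Q$, since the underlying graph is a chordless $n$-cycle and this is the only undirected cycle to analyze. Writing $W=\sum_{i=0}^{n-1}\theta(\sigma,v_i,v_{i+1})$, the definition of the winding number gives $\wind(Q,\sigma)=\tfrac1n W-\ell$. Two elementary facts drive the argument. First, since $v_i\neq v_{i+1}$ are distinct (hence occupy distinct positions in~$\sigma$), each summand satisfies $1\le\theta(\sigma,v_i,v_{i+1})\le n-1$. Second, reading $\theta(\sigma,v_i,v_{i+1})$ as the clockwise position-difference modulo~$n$, the sum $W$ telescopes to $0$ modulo~$n$, so $W$ is a multiple of~$n$ (this is also why $\wind$ is an integer, as already noted in the text), and thus $\tfrac1n W\in\{1,\dots,n-1\}$.

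The bounds \eqref{eq:bounds-on-wind} then follow immediately: summing $n$ terms each lying in $[1,n-1]$ yields $n\le W\le n(n-1)$, i.e.\ $1\le\tfrac1n W\le n-1$, and subtracting $\ell$ (using $r=n-\ell$) gives $1-\ell\le\wind(Q,\sigma)\le r-1$. For achievability I would exhibit, for each $k\in\{1,\dots,n-1\}$, an explicit cyclic ordering realizing $\tfrac1n W=k$, namely the block reversal
\[
\sigma_k=(v_0,\,v_k,\,v_{k-1},\,\dots,\,v_1,\,v_{k+1},\,v_{k+2},\,\dots,\,v_{n-1}),
\]
obtained from the forward ordering by reversing the segment $v_1,\dots,v_k$. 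Computing the position of each $v_j$ in $\sigma_k$ shows that $\theta(\sigma_k,v_0,v_1)=\theta(\sigma_k,v_k,v_{k+1})=k$, that $\theta(\sigma_k,v_i,v_{i+1})=n-1$ for $1\le i\le k-1$, and that all remaining consecutive distances equal~$1$; these sum to $W=kn$ exactly. As $k$ ranges over $1,\dots,n-1$, the value $\wind(Q,\sigma_k)=k-\ell$ ranges over every integer from $1-\ell$ to $r-1$. I expect the bookkeeping verifying $W=kn$ (and the boundary cases $k=1$ and $k=n-1$, which recover the forward and reverse cycle orders) to be the one genuinely fiddly step.

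For the equivalence of (a), (b), (c), I would prove (c)$\Rightarrow$(a), (a)$\Rightarrow$(c), and (b)$\Leftrightarrow$(c). Because $Q$ is a chordless $n$-cycle, the only $Q$-adjacent pairs are the cycle edges $\{v_i,v_{i+1}\}$, so $(Q,\sigma)$ admits no wiggle if and only if every $\sigma$-consecutive pair is a cycle edge. If that holds, reading $\sigma$ as a cyclic sequence produces a closed walk through all $n$ vertices using only cycle edges, i.e.\ a Hamiltonian cycle of the graph~$C_n$; since every vertex of $C_n$ has degree~$2$, such a Hamiltonian cycle must use all $n$ edges and hence coincides with $C_n$ traversed in one of its two directions, which are exactly the orderings in~(c). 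This establishes (a)$\Leftrightarrow$(c).

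Finally, the forward ordering $(v_1,\dots,v_n)$ has all $\theta=1$ (so $\wind=1-\ell$) and its reverse has all $\theta=n-1$ (so $\wind=r-1$), giving (c)$\Rightarrow$(b). Conversely, attaining the extremal value $\tfrac1n W=1$ (resp.\ $n-1$) forces equality in the term-by-term bound, so every summand equals~$1$ (resp.\ every summand equals $n-1$); either condition means each $v_{i+1}$ is the immediate clockwise (resp.\ counterclockwise) neighbor of $v_i$, pinning $\sigma$ to one of the two cycle orders and yielding (b)$\Rightarrow$(c). The main conceptual point is the Hamiltonicity observation in (a)$\Rightarrow$(c); every other implication is a direct consequence of the sharp bounds $1\le\theta\le n-1$.
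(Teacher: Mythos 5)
Your proof is correct, and for most of the proposition it follows the same path as the paper: the inequality \eqref{eq:bounds-on-wind} comes from the term-by-term estimate $1\le\theta(\sigma,v_i,v_{i+1})\le n-1$, achievability uses exactly the same block-reversal orderings $(v_k,v_{k-1},\dots,v_1,v_{k+1},\dots,v_n)$ (the paper leaves the computation $\wind(Q,\sigma)=k-\ell$ as ``straightforward,'' which you carry out explicitly and correctly), and your Hamiltonicity argument for (a)$\Leftrightarrow$(c) is the paper's observation that each vertex has two neighbors in $Q$ and two neighbors in the cyclic ordering. The genuine divergence is in how the extremal winding numbers pin down the ordering. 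The paper proves (b)$\Rightarrow$(a) by invoking Theorem~\ref{th:wiggle/winding}: since $\wind(Q,\sigma)=1-\ell=\wind(Q,\sigma_\rightarrow)$ for $\sigma_\rightarrow=(v_1,\dots,v_n)$, the two orderings must be wiggle equivalent, and as $\sigma_\rightarrow$ admits no wiggles this forces $\sigma=\sigma_\rightarrow$. You instead prove (b)$\Rightarrow$(c) by the equality case of your bounds: writing $W=\sum_i\theta(\sigma,v_i,v_{i+1})$, the value $W=n$ (resp.\ $W=n(n-1)$) forces every summand to equal $1$ (resp.\ $n-1$), which identifies $\sigma$ as the forward (resp.\ reverse) cycle ordering. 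Your route is more elementary and self-contained: it makes the proposition independent of Theorem~\ref{th:wiggle/winding}, whose (nontrivial) proof only appears in Section~\ref{sec:proof-wiggle/winding}, whereas the paper's route is shorter once that theorem is granted and illustrates how the winding-number invariant is meant to be deployed. Both arguments are complete; yours is arguably the cleaner proof of this particular statement.
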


\begin{proof}
The contribution of each  arrow $v_i \rightarrow v_{i+1}$ (resp., $v_i \leftarrow v_{i+1}$) 
to $\wind(Q,\sigma)$ lies in the interval $[1,n-1]$ (resp., $[1-n,-1]$). 
It follows that 
\begin{equation*}
\wind(Q,\sigma)\cdot n \in [r+\ell(1-n), r(n-1)-\ell]=[n(1-\ell),n(r-1)], 
\end{equation*}
implying~\eqref{eq:bounds-on-wind}. 

To show that every winding number between $1-\ell$ and $r-1$ is achieved, 
take $k$ between $1$ and $n-1$ and consider the cyclic ordering 
\begin{equation*}
\sigma=(v_k, v_{k-1}, \dots, v_1, v_{k+1}, \ldots, v_n). 
\end{equation*}
Straightforward calculations verify that $\wind(Q,\sigma)=k-\ell$. 

We next show that (a)$\Rightarrow$(c)$\Rightarrow$(b). 
If $(Q,\sigma)$ has no wiggles, then each pair of vertices $\{v_i, v_{i+1}\}$ must be adjacent in the cyclic ordering.
As each vertex has two neighbors in $Q$ and two neighbors in the cyclic ordering, 
the cyclic ordering must either be $\sigma_\rightarrow=(v_1, v_2, \ldots, v_n)$ 
or $\sigma_\leftarrow=(v_n, v_{n-1}, \dots, v_1)$.
A quick computation verifies that the corresponding winding numbers are $1-\ell$ and $r-1$, respectively.

It remains to prove that (b)$\Rightarrow$(a). 
Suppose that $\wind(Q,\sigma)=1-\ell$. 
By~\eqref{eq:bounds-on-wind}, this is the smallest possible value of $\wind(Q,\sigma)$. 
It follows that each arrow $v_i \rightarrow v_{i+1}$ (resp., $v_i \leftarrow v_{i+1}$) contributes its minimal possible value to the summation~\eqref{eq:wind-n-cycle}. 
Thus we must have the cyclic ordering $(v_n, v_1, v_2 \ldots
v_{n-1}) = \sigma_\rightarrow$, as claimed.
%
The case $\wind(Q,\sigma)=r-1$ is treated in the same way. 
\hide{Each edge $v_i \rightarrow v_{i+1}$ contributes between $\frac 1 n$ and $\frac{n-1}{n}$ to the winding number sum, while each edge $v_i \leftarrow v_{i+1}$ contributes between $-\frac{n-1}{n}$ and$ -\frac 1 n$.
Thus the winding number is bounded above by $\frac{n-1}{n} r - \frac l n = r-1,$ and the minimum winding number is similarly bounded below by $1-l$.
So we must achieve exactly these values for $w(\sigma, v_i, v_{i+1})$}
\end{proof}

\pagebreak[3]

\centerline{\sc Proof of Theorem~\ref{thm:construct an ordering}}

\medskip


We begin by defining the polyhedral set $\LPT$ appearing in Theorem~\ref{thm:construct an ordering}.

\begin{definition}
\label{def:polyhedra for winding}
Let $E$ denote the set of edges of the simple graph~$\GQ$. 
Let $V$ be a real vector space of dimension $|E|$ with coordinates~$\theta_{uv}$,
two for each edge $u-\!\!-v$ in~$\GQ$, satisfying $\theta_{vu} = n-\theta_{uv}$.
The set $P\subset V$ consists of all points $\theta\in V$ whose coordinates $\theta_{uv}$ satisfy the following constraints: 
\begin{itemize}[leftmargin=.2in]
\item 
for each edge $u-\!\!-v$ in~$\GQ$, we have 
\begin{equation}
\label{eq:lp thetas}
1 \leq \theta_{uv} \leq n-1;
\end{equation}
\item
for each cycle $C_i = (u_0 - u_1 - \cdots - u_k = u_0)$, we have 
\begin{equation}
\label{eq:lp winding}
\sum_{0\le j\le k-1} \theta_{u_j u_{j+1}} = n( \ell_i + w_i),
\end{equation}
where $\ell_i=\#\{j\mid u_j \leftarrow u_{j+1}\}$, cf.\ Definition~\ref{def:winding number}. 
\end{itemize}
\end{definition}


\begin{lemma}
\label{lem:lp feasible}
Suppose $\sigma$ is a cyclic ordering of $Q$ such that $\wind(C_i, \sigma) = w_i$ for all~$i$.
Then the point $\theta\in V$ defined by $\theta_{uv} = \theta(\sigma, u, v)$ lies in $\LPT$.
\end{lemma}

\begin{proof}
The inequalities \eqref{eq:lp thetas} follow from Definition~\ref{def:dist and winding number}.
The equalities \eqref{eq:lp winding} follow from the assumptions and Definition~\ref{def:winding number}.
\end{proof}

Lemma~\ref{lem:lp feasible} implies that if $\LPT=\varnothing$, then there are no cyclic orderings with the desired winding numbers.

We next describe how a point $\theta' = \{ \theta'_{uv}\}\in\LPT$ yields a cyclic ordering with the desired winding numbers.

\begin{definition}
Fix a spanning tree $T$ of $\GQ$ and a root vertex~$v_\circ$. 
For a vertex~$v$, we denote by
\begin{equation*}
T(v) = (v_\circ, v_1, \dots, v_k=v)
\end{equation*}
the unique (undirected) path in $T$ that connects the root $v_\circ$ to~$v$. 
We then define
\begin{equation*}
\mathcal{R}(v) = \sum_{(v_i, v_{i+1}) \in T(v)}  \theta'_{v_i v_{i+1}}\bmod n \in \RR/n \ZZ. 
\end{equation*}
The circle $\RR/n \ZZ$ has a natural cyclic ordering coming from the linear ordering on~$[0, n)$.
Restricting to the values $\mathcal{R}(v)$, we obtain a cyclic ordering $\sigma_{\theta'}$ on the vertices of~$Q$ 
(after breaking ties if necessary, by perturbing the $\mathcal{R}(v)$ slightly).
\end{definition}

Equation~\eqref{eq:lp winding}, together with the fact that the cycles~$C_i$ span $H_1(\GQ)$, 
implies the following statement. 

\begin{lemma}
\label{lem:respect all distances}
Let $u,v$ be two vertices in $Q$. 
Then 
$$\mathcal{R}(v) - \mathcal{R}(u) \equiv \theta'_{uv} \bmod n \in \mathbb R / n \ZZ$$
 for all adjacent vertices $u,v$ in $Q$.
\end{lemma}

To complete the proof of Theorem~\ref{thm:construct an ordering},  
fix a cycle $C_i = (u_0 - u_1 - \cdots - u_k = u_0)$. 
The cyclic ordering~$\sigma_{\theta'}$ and the cyclic ordering induced by $\mathcal{R}(v)$ induce homotopic maps from $C_i$ to $S^1$.
Thus they have the same winding numbers.
By Lemma~\ref{lem:respect all distances}, the winding number induced by $\mathcal{R}(v)$
is given by 
\begin{equation*}
\wind(C_i) = \tfrac{1}{n} \Big ( \sum_{(u_j, u_{j+1}) \in C_i} \theta'_{u_j  u_{j+1}} \Big ) - \ell_i = w_i,
\end{equation*}
cf.\ Definition~\ref{def:polyhedra for winding} and specifically equation~\eqref{eq:lp winding}. 
\qed

\begin{example}
\label{eg:grid2x6}
Let $Q$ be the $2 \times 6$ grid quiver $Q$ shown in Figure~\ref{fig:grid2x6}.
Consider the following cycles $C_1,\dots,C_5$ and associated winding numbers $w_1,\dots,w_5$: 
\begin{equation*}
\begin{array}{ll} 
C_1 = ( a - b -  h - g-a) \qquad & w_1=1 \\[2pt]
C_2 = (b - c - i - h-b) & w_2=-1 \\[2pt]
C_3 = (c - d - j - i - c) & w_3= 1 \\[2pt]
C_4 = (d - e - k - j - d) & w_4=-1 \\[2pt]
C_5 = (f - e - k - l - f) & w_5=-3. 
\end{array}
\end{equation*}
The cycles $C_i$ span $H_1(K_Q)$. 
The set $P$ is defined by the inequalities 
and equations 
\begin{equation*}
\begin{array}{lll}
& \theta_{ab} + \theta_{bh} + \theta_{hg} + \theta_{ga} = 12(0+1) \\[2pt]
& \theta_{bc} + \theta_{ci} + \theta_{ih} + \theta_{hb} = 12(4-1) &  \theta_{bh} +\theta_{hb} = 12 \\[2pt]
1 \leq \theta_{u,v} \leq n-1\qquad
& \theta_{cd} + \theta_{dj} + \theta_{ji} + \theta_{ic} = 12(0+1) \quad\qquad & \theta_{ci} +\theta_{ic} = 12  \\[2pt]
& \theta_{de} + \theta_{ek} + \theta_{kj} + \theta_{jd} = 12(4-1) & \theta_{dj} +\theta_{jd} = 12\\[2pt]
& \theta_{fe} + \theta_{ek} + \theta_{kl} + \theta_{lf} = 12(4-3)
\end{array}
\end{equation*}
(cf.\ Definition~\ref{def:polyhedra for winding}). 
Starting with the solution 
\begin{equation*}
\begin{array}{c}
\theta_{bh} = \theta_{hg} = \theta_{ga}= \theta_{dj}= \theta_{ji}= \theta_{ic}= \theta_{fe} = \theta_{kl} = \theta_{lf} =1, \\[4pt]
\theta_{ih} = 3, \quad \theta_{kj} = 5, \quad \theta_{ab} = \theta_{cd} = \theta_{ek} = 9, \quad \theta_{bc} = \theta_{de} = 11,
\end{array}
\end{equation*}
we construct a cyclic ordering $\sigma$ with the desired winding numbers. 
Let the spanning tree $T$ be $K_Q$ with the path $(g - h - i - j  - k - l)$ removed, with root vertex~$c$.
Then 
\begin{equation*}
\begin{array}{ll}
\mathcal{R}(a) \equiv -\theta_{bc} - \theta_{ab} \equiv  4, &\mathcal{R}(g) \equiv 3, \\[2pt]
\mathcal{R}(b) \equiv -\theta_{bc} \equiv 1, &\mathcal{R}(h) \equiv 2, \\[2pt]
\mathcal{R}(c) \equiv 0, &\mathcal{R}(i) \equiv 11, \\[2pt]
\mathcal{R}(d) \equiv \theta_{cd}\equiv 9, &\mathcal{R}(j) \equiv 10,\\[2pt]
\mathcal{R}(e)\equiv \theta_{cd} + \theta_{de} \equiv 8, &\mathcal{R}(k) \equiv 5,\\[2pt]
\mathcal{R}(f) \equiv \theta_{cd} + \theta_{de} - \theta_{fe} \equiv 7, \qquad &\mathcal{R}(l) \equiv 6.
\end{array}
\end{equation*}
Ordering the vertices according to the values $\mathcal{R}(v)$, we obtain the cyclic ordering
$$\sigma = (c, b, h, g, a, k, l, f, e, d, j, i).$$
\end{example}

\enlargethispage{5pt}

\begin{figure}[h]
\vspace{-5pt}
\begin{equation*}
\begin{tikzcd}[arrows={-stealth}, sep=1.6em]
  a  \arrow[r] & b \arrow[d] & c \arrow[r] \arrow[l] & d \arrow[d] & e \arrow[r] \arrow[l] & f \arrow[d]   \\[-4pt]
  g  \arrow[u] & h \arrow[r] \arrow[l] & i \arrow[u] & j \arrow[r] \arrow[l] & k \arrow[u] & l \arrow[l]  
\end{tikzcd}
\end{equation*}
\vspace{-8pt}
\caption{A $2 \times 6$ grid quiver.} 
\vspace{-8pt}
\label{fig:grid2x6}
\end{figure}

\pagebreak[3]

\centerline{\sc Proof of Theorem~\ref{th:wiggle/winding}}

\medskip

One direction of Theorem~\ref{th:wiggle/winding} is rather straightforward (so we omit its proof):   

\begin{lemma}
\label{lem:winding number invariant}
The winding number of any cycle in a COQ is invariant under wiggles.

More precisely, let $Q$ be a quiver and let $C$ be a cycle in the undirected graph~$K_Q$.
We fix a direction of traversal of~$C$, as in Definition~\ref{def:winding number}. 
If two cyclic orderings $\sigma$ and $\sigma'$ are related by a wiggle (or more generally, are wiggle equivalent), 
then the correspoding winding numbers coincide: $\wind(C,\sigma)=\wind(C,\sigma')$. 
\end{lemma}

It remains to show that if two COQs $(Q, \sigma)$ and $(Q, \sigma')$ with the same underlying quiver~$Q$ 
have the same winding numbers, then they are wiggle equivalent. 

Without loss of generality, we assume that (the underlying unoriented graph~of) the quiver~$Q$ is connected. 


We fix a spanning tree $T$ of the underlying undirected graph of~$Q$.
We also fix a root vertex~$v_\circ$. 
For any vertex~$v$, we denote by
\begin{equation*}
T(v) = (v_\circ, v_1, \dots, v_k=v)
\end{equation*}
the unique (undirected) path in $T$ that connects the root $v_\circ$ to~$v$. 

\begin{definition}
\label{def:algorithm for wiggle equiv}
Let $u$ and $v$ be two adjacent vertices in the tree~$T$.
We assume that $Q$ contains an edge $u \rightarrow v$. 
For any value of the \emph{time parameter} $t\in [0,1]$, we set
\begin{equation*}
\theta(t, u,v) = (1-t) \theta(\sigma,u,v) + t \theta(\sigma',u,v).
\end{equation*}
Thus, $\theta(t, u,v)$ linearly interpolates between $\theta(\sigma, u,v)$ and $\theta(\sigma', u,v)$.  
 

We then define, for any vertex $v$ and time $t\in [0,1]$, 
\begin{align*}
\O(t,v) &= \sum_{(v_i, v_{i+1}) \in T(v)} \theta(t,v_i, v_{i+1}) \in \RR. 
\end{align*}
Alternatively, the real numbers $\O(t,v)$ can be defined as follows. 
We define the numbers $\O(0,v)$ by the initial condition $\O(0,v_\circ)=0$
together with the recurrence
\begin{equation*}
\O(0,v)-\O(0,u) = \theta(\sigma,u,v), 
\end{equation*}
for every arrow $u\rightarrow v$ as above (i.e., $u$ and $v$ are adjacent in the tree~$T$).
We define the numbers $\O(1,v)$ in the same way using the cyclic ordering~$\sigma'$. 
Finally, we interpolate linearly for $0<t<1$:
\begin{equation*}
\O(t,v)=(1-t) \O(0,v)+t \O(1,v). 
\end{equation*}

We also set
\begin{align*}
\mathcal{R}(t,v) &= \O(t,v) \bmod n \in \RR / n \ZZ. 
\end{align*}
\end{definition}

\begin{example}
\label{eg:tree wiggle equiv-1}
Let $Q$ be the 3-vertex quiver $(a\rightarrow b \rightarrow c)$ of type~$A_3$.
Consider two wiggle equivalent cyclic orderings 
$\sigma = (a,b,c)$ and $\sigma' = (a,c,b)$. 
The underlying undirected graph of $Q$ is a tree. 
Selecting the root $v_\circ=a$ gives 
\begin{align*}
&\O(t,a) = 0, \quad \O(t,b)= 1+t, \quad \O(t,c)= 2+2t, \\
&\mathcal{R}(t,a) = 0, \quad \mathcal{R}(t,b)= 1+t, \quad \mathcal{R}(t,c)= \begin{cases}
2+2t & \text{if $t<\frac12$;} \\ -1+2t & \text{if $t\ge\frac12$.}
\end{cases}
\end{align*}
\end{example}

\begin{example}
\label{eg:4cycle wiggle equiv-1}
Consider the 4-cycle quiver 
\begin{equation*}
\begin{tikzcd}[arrows={-stealth}, sep=2em]
  a  \arrow[r] & b \arrow[d]  \\
 d\arrow[u]  & c   \arrow[l] 
\end{tikzcd}
\end{equation*}
of type~$D_4$, 
with two cyclic orderings $\sigma = (a,b,d,c)$ and $\sigma' = (a,c,d,b)$. 
Remove the arrow $c\rightarrow d$ from $Q$ to get the tree~$T$. 
Select the root $v_\circ=a$. 
Then 
\begin{equation*}
\begin{array}{llll}
\O(t,a) = 0, \quad & \O(t,b) = 1+2t, \quad & \O(t,c) = 3+2t, \quad & \O(t,d) = -2; \\[5pt]
\mathcal{R}(t,a) = 0, \quad & \mathcal{R}(t,b)=1+2t, \quad & \mathcal{R}(t,c) =\begin{cases}
3+2t & \text{if $t<\frac12$;} \\ -1+2t & \text{if $t\ge\frac12$,}
\end{cases}
\quad &\mathcal{R}(t,d)=2. 
\end{array} 
\end{equation*}
\end{example}

\begin{definition}
\label{def: Z/nZ circle ordering}
The circle $\RR/n\ZZ$ is naturally endowed with the cyclic ordering associated to the linear order $([0,n), <)$. 
Restricting this cyclic ordering to the locations $\mathcal{R}(t,v)\in\RR / n \ZZ$, we obtain, for a generic time parameter
$t\in [0,1]$, a well-defined cyclic ordering $\sigma_t$ on the set of vertices of~$Q$. 
This cyclic ordering ``interpolates'' between the cyclic orderings $\sigma$ (at $t=0$) and $\sigma'$ (at $t=1$). 
\end{definition}

We next focus on the instances of ``collisions'' where the cyclic orderings $\sigma_t$ are ill-defined. 

\begin{definition}
\label{def:collisions}
For $t \in (0,1)$ and $x \in \RR/n \ZZ$, we say that $(t,x)$ is a \emph{collision point} 
if there exist distinct vertices $u \neq v$ such that $x=\mathcal{R}(t,u) = \mathcal{R}(t,v)$. 
It is easy to see that the number of collision points is finite. 

For a collision point $(t,x)$, we refer to the set 
\begin{equation*}
\mathcal{C}(t,x) = \{v | \mathcal{R}(t,v) = x\}
\end{equation*}
as the \emph{set of colliding vertices} (at $(t,x)$). 
The vertices in $\mathcal{C}(t,x)$ are permuted at time~$t$ according to some permutation~$w(t,x)$.
More precisely, $w(t,x)$ is the permutation of the vertices of~$Q$
that intertwines the orderings of $\mathcal{C}(t,x)$ 
induced by $\sigma_{t-\varepsilon}$ and $\sigma_{t+\varepsilon}$, 
respectively, keeping the remaining vertices fixed. 
\end{definition}

\begin{lemma}
\label{lem:colliding set contiguous}
Each set of colliding vertices $\mathcal{C}(t, x)$ is a contiguous interval 
in the cyclic ordering $\sigma_{t-\varepsilon}$
(resp., $\sigma_{t+\varepsilon}$), for $\varepsilon > 0$ sufficiently small.
The permutation $w(t,x)$ reverses the order of the elements of $\mathcal{C}(t,x)$,
keeping the remaining vertices fixed. 
\end{lemma}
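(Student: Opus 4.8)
The plan is to exploit the fact that, by construction, each $\O(t,v)$ is an \emph{affine} function of the time parameter, so on the circle $\RR/n\ZZ$ the point $\mathcal{R}(t,v)$ travels at a constant ``angular speed''
\[
s_v = \O(1,v)-\O(0,v)\in\ZZ .
\]
First I would record two facts already available: by Definition~\ref{def: Z/nZ circle ordering} we have $\sigma_0=\sigma$ and $\sigma_1=\sigma'$, so in particular the points $\mathcal{R}(0,v)$ are pairwise distinct and ordered according to~$\sigma$ (and likewise at $t=1$); and, as noted in Definition~\ref{def:collisions}, there are only finitely many collision points. Fixing a collision point $(t_0,x)$ (this is the pair called $(t,x)$ in the statement), finiteness lets me choose $\varepsilon>0$ so small that the open intervals $(t_0-\varepsilon,t_0)$ and $(t_0,t_0+\varepsilon)$ contain no collision times whatsoever; on each of these the cyclic ordering $\sigma_t$ is constant, which is exactly what makes $\sigma_{t_0-\varepsilon}$, $\sigma_{t_0+\varepsilon}$, and the permutation $w(t_0,x)$ well defined.

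For the contiguity assertion I would argue by continuity. Since $\mathcal{R}(t_0,w)\neq x$ for every vertex $w\notin\mathcal{C}(t_0,x)$, shrinking $\varepsilon$ keeps all such points $\mathcal{R}(t_0\pm\varepsilon,w)$ outside a fixed short arc $I$ centered at~$x$, while all points $\mathcal{R}(t_0\pm\varepsilon,v)$ with $v\in\mathcal{C}(t_0,x)$ lie inside~$I$. Hence in both $\sigma_{t_0-\varepsilon}$ and $\sigma_{t_0+\varepsilon}$ the colliding vertices occupy a single contiguous arc, which is the first claim.

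The heart of the argument is the order reversal, and its crux is that the colliding vertices have pairwise \emph{distinct} speeds. Indeed, if $u\neq v$ both lay in $\mathcal{C}(t_0,x)$ with $s_u=s_v$, then $\O(t,u)-\O(t,v)$ would be independent of~$t$, so $\mathcal{R}(t,u)-\mathcal{R}(t,v)$ would be a constant element of $\RR/n\ZZ$; evaluating at $t=0$ this constant is the difference of the two distinct positions of $u$ and $v$ in~$\sigma$, hence nonzero, contradicting $\mathcal{R}(t_0,u)=\mathcal{R}(t_0,v)=x$. With distinct speeds established, I would pass to the local coordinate on $I$ (signed arc-length from~$x$): for $|t-t_0|$ small enough there is no wraparound, and the local coordinate of $v\in\mathcal{C}(t_0,x)$ is exactly $(t-t_0)\,s_v$. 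Thus for $t<t_0$ the colliding vertices are linearly ordered by decreasing $s_v$, and for $t>t_0$ by increasing $s_v$; these orders are exact reverses of one another, and since the block is contiguous, this says precisely that $w(t_0,x)$ reverses $\mathcal{C}(t_0,x)$ while fixing all remaining vertices.

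I expect the only delicate point to be the passage to the local linear model: one must verify that for sufficiently small $\varepsilon$ the arc swept out by each colliding vertex stays within~$I$ and does not wrap around, so that the cyclic order restricted to $I$ genuinely coincides with the linear order of the local coordinates $(t-t_0)\,s_v$. Everything else is routine bookkeeping built on the affineness of $\O(t,\cdot)$, and the distinct-speeds observation is the single idea driving the reversal.
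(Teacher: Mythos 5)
Your proof is correct and follows essentially the same route as the paper, whose entire argument is the observation that each location $\mathcal{R}(t,v)$ moves at constant speed (the contiguity claim being declared clear). Your write-up simply fills in the details the paper leaves implicit—most usefully the verification that colliding vertices have pairwise distinct speeds (via distinctness of positions at $t=0$), which is exactly what makes the crossings transverse and the order reversal genuine.
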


\begin{proof}
The first statement is clear. 
To prove the second, recall that 
the ``location'' $\mathcal{R}(t,v)$ of each vertex~$v\in\mathcal{C}(t,x)$ is moving at constant speed. 
\end{proof}

\begin{example}
\label{eg:tree wiggle equiv-2}
In Example~\ref{eg:tree wiggle equiv-1}, the only collision point is $(\frac{1}{2}, 0)$. 
Its set of colliding vertices is $\{a,c\}$. 
The two cyclic orderings are related by the wiggle $(a c)=w(\frac12,0)$.
\end{example}

\begin{example}
\label{eg:4cycle wiggle equiv-2}
In Example~\ref{eg:4cycle wiggle equiv-1}, the collision points are $(\frac{1}{2},0)$ and $(\frac{1}{2},2)$. 
The sets of colliding vertices are $\mathcal{C}(\frac{1}{2}, 0) = \{a,c\}$ and $\mathcal{C}(\frac{1}{2},2)=\{b,d\}$.
The two orders are related by the composition of two commuting wiggles $w(\frac{1}{2}, 0) = (a c)$ 
and $w(\frac{1}{2}, 2) = (b d)$. 
\end{example}

\pagebreak[3]

\begin{lemma}
\label{lem:w works for all edges}
If two COQs $(Q, \sigma)$ and $(Q, \sigma')$ have the same winding numbers,
then for every arrow $u \rightarrow v$ in the quiver~$Q$ 
and every $t\in [0,1]$, we have 
\begin{equation*}
\O(t,v)-\O(t,u)
\equiv (1-t) \theta(\sigma,u,v) + t \theta(\sigma',u,v) \bmod n.
\end{equation*}
\end{lemma}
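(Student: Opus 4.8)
The plan is to reduce the statement to a computation with the fundamental cycles of the spanning tree~$T$. First observe that the asserted congruence is in fact an exact equality whenever the arrow $u\rightarrow v$ happens to be an edge of~$T$: by the recurrence defining $\O(0,\cdot)$ we have $\O(0,v)-\O(0,u)=\theta(\sigma,u,v)$, likewise $\O(1,v)-\O(1,u)=\theta(\sigma',u,v)$, and linear interpolation in~$t$ yields $\O(t,v)-\O(t,u)=(1-t)\theta(\sigma,u,v)+t\,\theta(\sigma',u,v)$ on the nose. So all the content of the lemma lies in the non-tree arrows, and those I would handle by bringing in winding numbers.

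For a non-tree arrow $u\rightarrow v$, I would introduce the fundamental cycle~$C$ obtained by appending to this arrow the unique undirected path in~$T$ running from $v$ back to~$u$, traversed starting along $u\rightarrow v$. The key point is that $\O(0,\cdot)$ records an accumulated clockwise displacement from the root, so summing the defining recurrence along the tree part of~$C$ telescopes to $\O(0,u)-\O(0,v)$. Converting each per-edge increment into a genuine $\theta(\sigma,\cdot,\cdot)$ term via the identity $\theta(\sigma,a,b)+\theta(\sigma,b,a)=n$ (to absorb the sign $\sgn(b_{\cdot\cdot})$ whenever an arrow runs against the traversal), and then comparing against definition~\eqref{eq:wind-n-cycle} of $\wind(C,\sigma)$, I expect to obtain the clean identity
\begin{equation*}
\O(0,v)-\O(0,u)=\theta(\sigma,u,v)-n\,\wind(C,\sigma),
\end{equation*}
together with its $t=1$ analogue $\O(1,v)-\O(1,u)=\theta(\sigma',u,v)-n\,\wind(C,\sigma')$.

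With these two identities in hand, the conclusion follows by interpolating linearly in~$t$:
\begin{equation*}
\O(t,v)-\O(t,u)=(1-t)\,\theta(\sigma,u,v)+t\,\theta(\sigma',u,v)-n\big((1-t)\wind(C,\sigma)+t\,\wind(C,\sigma')\big).
\end{equation*}
Since $C$ is an undirected cycle in~$Q$ and the two COQs share all winding numbers by hypothesis, we have $\wind(C,\sigma)=\wind(C,\sigma')$, so the trailing term is an integer multiple of~$n$ for every~$t$ and the desired congruence drops out. The one place demanding care---and the main obstacle---is the orientation bookkeeping in the telescoping step: matching the signs $\sgn(b_{v_i v_{i+1}})$ in the definition of~$\O$ against the count~$\ell$ of backward-traversed arrows in the winding-number formula. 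Conceptually this is merely the observation that $\O$ measures clockwise displacement while $\wind(C)$ measures the net number of revolutions around the circle, but the signs must be tracked precisely, repeatedly invoking $\theta(\sigma,a,b)+\theta(\sigma,b,a)=n$.
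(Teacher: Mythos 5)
Your proposal is correct and takes essentially the same route as the paper's own proof: exact equality on tree edges from the defining recurrence, then for a non-tree arrow a telescoping along the fundamental cycle of~$T$, invoking the hypothesis $\wind(C,\sigma)=\wind(C,\sigma')$ to kill the multiple of~$n$. Your key identity $\O(0,v)-\O(0,u)=\theta(\sigma,u,v)-n\,\wind(C,\sigma)$ does check out (the $n\ell$ contributions from backward-traversed arrows cancel against the $-\ell$ in the winding-number formula), and the paper's argument is just this same telescoping carried out directly for general~$t$.
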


\begin{proof}
If $u$ and $v$ are adjacent in $T$, then the claim follows from Definition~\ref{def:collisions}:
\begin{align*}
\O(t,v)-\O(t,u) &= (1-t) \O(0,v)+t\O(1,v) - (1-t)\O(0,u) - t\O(1,u) \\
&= (1-t) \theta(\sigma,u,v) + t \theta(\sigma',u,v). 
\end{align*}
Now suppose that $u$ and $v$ are not adjacent in~$T$. 
Then adding the edge $u-v$ to~$T$ produces exactly one cycle,
say with the edges $v = u_1 - u_2 - \cdots - u_k=u - v$. 
Let $m\in\ZZ$ be the winding number of this cycle with respect to the cyclic orderings
$\sigma$ and~$\sigma'$. (We know that the two winding numbers agree.) 
We then have:
\begin{align*}
\O(t,v)-\O(t,u) 
&= -\sum_{1\le i\le k-1} (\O(t,u_{i+1})\!-\! \O(t,u_{i})) \\
&= -\sum_{1\le i\le k-1} \sgn(b_{u_i, u_{i+1}}) ((1-t)  \theta(\sigma , u_i, u_{i+1}) + t \theta(\sigma',u_i,u_{i+1})) \\
&= - ((1-t)(mn-\theta(\sigma,u,v)) + t (mn - \theta(\sigma',u,v))) \\
&\equiv (1-t) \theta(\sigma,u,v) + t \theta(\sigma',u,v) \bmod n. \qedhere
\end{align*}
\end{proof}

\begin{example}
\label{eg:4cycle wiggle equiv-3}
Continuing with Examples~\ref{eg:4cycle wiggle equiv-1} and~\ref{eg:4cycle wiggle equiv-2}, we get
\begin{align*}
\O(t,d) - \O(t,c) &= -2 - 3 - 2t \\
&= -5-2t \\
&\equiv (1-t) \theta(\sigma,c,d)+ t \theta(\sigma',c,d) \bmod 4,
\end{align*}
consistent with Lemma~\ref{lem:w works for all edges}.
\end{example}

\begin{lemma}
\label{lem:colliding set disjoint}
If $(Q, \sigma)$ and $(Q, \sigma')$ have the same winding numbers, 
then each set of colliding vertices $\mathcal{C}(t,x)$ consists of vertices that are pairwise non-adjacent in~$Q$.
\end{lemma}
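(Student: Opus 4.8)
The plan is to argue by contradiction, using the congruence supplied by Lemma~\ref{lem:w works for all edges} together with the elementary observation that the distances $\theta$ between distinct vertices never vanish modulo~$n$. Suppose, toward a contradiction, that for some collision point $(t,x)$ with $t\in(0,1)$ the set $\mathcal{C}(t,x)$ contains two distinct vertices $u,v$ that are adjacent in~$Q$. Since they are adjacent, $Q$ contains at least one arrow between them; without loss of generality assume it is oriented $u\rightarrow v$ (otherwise interchange the roles of $u$ and $v$ and run the identical argument with the arrow $v\rightarrow u$).

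First I would translate the collision hypothesis into a statement about the functions~$\O$. Because $u,v\in\mathcal{C}(t,x)$, we have $\mathcal{R}(t,u)=\mathcal{R}(t,v)=x$, which by the definition of $\mathcal{R}$ means exactly
\begin{equation*}
\O(t,v)-\O(t,u)\equiv 0 \bmod n.
\end{equation*}
Next I would invoke Lemma~\ref{lem:w works for all edges}, which applies precisely because $(Q,\sigma)$ and $(Q,\sigma')$ are assumed to have the same winding numbers. For the arrow $u\rightarrow v$ and the given~$t$, it yields
\begin{equation*}
\O(t,v)-\O(t,u)\equiv (1-t)\,\theta(\sigma,u,v)+t\,\theta(\sigma',u,v)\bmod n.
\end{equation*}
Combining the two displays gives $(1-t)\,\theta(\sigma,u,v)+t\,\theta(\sigma',u,v)\equiv 0\bmod n$.

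The key quantitative step is the range of~$\theta$. Directly from Definition~\ref{def:dist and winding number}, for distinct vertices the distances $\theta(\sigma,u,v)$ and $\theta(\sigma',u,v)$ are integers lying in $\{1,\dots,n-1\}$. Hence their convex combination $(1-t)\,\theta(\sigma,u,v)+t\,\theta(\sigma',u,v)$ lies in the closed interval $[1,n-1]$ for every $t\in[0,1]$, and in particular is a real number strictly between~$0$ and~$n$. Such a number cannot be an integer multiple of~$n$, so it is not congruent to~$0$ modulo~$n$. This contradicts the congruence just obtained, and therefore $u$ and $v$ cannot be adjacent in~$Q$; since $u,v\in\mathcal{C}(t,x)$ were arbitrary distinct vertices, the colliding set is pairwise non-adjacent.

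I do not anticipate a serious obstacle here: the argument is short once Lemma~\ref{lem:w works for all edges} is in hand, and the only points requiring care are the reduction to a single arrow direction (handled by the symmetric roles of $u$ and~$v$) and the confirmation that the convex combination stays in $[1,n-1]$ rather than merely $[0,n]$. The latter interval estimate is the genuine crux, as it is exactly what forbids the two locations from meeting on the circle $\RR/n\ZZ$.
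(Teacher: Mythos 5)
Your proposal is correct and matches the paper's own proof essentially step for step: both reduce the collision to the congruence $(1-t)\,\theta(\sigma,u,v)+t\,\theta(\sigma',u,v)\equiv 0 \bmod n$ via Lemma~\ref{lem:w works for all edges} and then rule it out because the convex combination of the two distances lies strictly between $0$ and~$n$. Your handling of the interval ($\theta\in\{1,\dots,n-1\}$, hence the combination lies in $[1,n-1]$) is in fact slightly more careful than the paper's, which states the open interval $(1,n-1)$.
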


\begin{proof}
Let $u,v\in\mathcal{C}(t,x)$. 
Then $\mathcal{R}(t,v) = \mathcal{R}(t,u)$ by Definition~\ref{def:collisions}. 
Suppose that $u \rightarrow v$ is an arrow in~$Q$. 
By Lemma~\ref{lem:w works for all edges}, we have
\begin{equation}
0 = \mathcal{R}(t,v) - \mathcal{R}(t,u) \equiv (1-t) \theta(\sigma,u,v) + t \theta(\sigma',u,v) \bmod n .
\label{eq:R-R}
\end{equation} 
On the other hand, both $\theta(\sigma,u,v)$ and $\theta(\sigma',u,v)$ lie in the interval $(1,n-1)$.  
Therefore the same is true for $(1-t) \theta(\sigma,u,v) + t \theta(\sigma',u,v)$, in contradiction with~\eqref{eq:R-R}. 
\end{proof}

We are now ready to complete the proof of Theorem~\ref{th:wiggle/winding}.

\begin{proof}[Proof of Theorem~\ref{th:wiggle/winding}]
As $t$ changes from $t=0$ to $t=1$, the 
cyclic ordering $\sigma_t$ is transformed from~$\sigma$ to~$\sigma'$ 
via a sequence of vertex permutations $w(t,x)$ corresponding to the various collision points~$(t,x)$.  
(We apply these permutations in the order~dictated by~$t$, breaking ties arbitrarily.) 
Lemma~\ref{lem:colliding set disjoint} ensures that
each permutation $w(t,x)$ permutes pairwise non-adjacent vertices---so 
this permutation can be implemented
as a sequence of wiggles. We conclude that $(Q,\sigma)$ and $(Q,\sigma')$ are wiggle equivalent. 
\end{proof}

\newpage


\section{Unipotent companions and their cosquares}
\label{sec:unipotent companions}
We will need to recall some basic linear algebra. 
For a matrix $M$, we will denote by $M^T$ the transpose of $M$. 
We denote by $I$ the $n \times n$ identity matrix. 

\begin{definition} 
Two $n\times n$ integer matrices $L$ and $M$ are called \emph{congruent} 
(over~$\ZZ$) 
if there exists a matrix $G\in \operatorname{GL}_n(\ZZ)$ 
(i.e., an integer matrix of determinant~$\pm1$) 
such that $M=GLG^T$. 
The congruence relation is symmetric. 
The integral \emph{congruence class} of an $n\times n$ integer matrix $M$ 
consists of all matrices congruent to $M$ over $\ZZ$. 
\end{definition}

The following definition is fundamental for all subsequent developments. 

\begin{definition} 
\label{def:unipotent companions}
Let $Q$ be a quiver on a linearly ordered vertex set $\{1<\cdots<n\}$. 
Let $B=B_Q=(b_{ij})$ be the corresponding exchange matrix. 
The \emph{unipotent companion} of~$Q$ (or of~$B$) is 
the unique unipotent upper-triangular matrix $U=U_Q$ satisfying 
\begin{equation}
\label{eq:-B=U-UT}
-B = U-U^T.
\end{equation}
In other words, $U$ is obtained by taking the strictly upper-triangular part of $B$, changing its sign, and placing 1's on the diagonal:
\begin{equation*}
U=\begin{bmatrix}
1 & -b_{12} & -b_{13} &\cdots  & -b_{1n} \\
0 & 1 & -b_{23} & \cdots  & -b_{2n} \\
0 & 0 & 1 & \cdots & -b_{3n}\\
\vdots & \vdots & \vdots & \ddots & \vdots  \\
0 & \cdots & 0 & \cdots & 1
\end{bmatrix}.
\end{equation*}

\end{definition}

We note that the unipotent companion depends on the choice of a linear ordering of the vertices of a quiver. 

\begin{remark}
\label{rem:U-vs-A}
The notion of a unipotent companion is reminiscent of (but distinct from) the notion of a 
\emph{quasi-Cartan companion} introduced by M.~Barot, C.~Geiss, and A.~Zelevinsky~\cite{BGZ}. 
Given a linear ordering of the vertices of a quiver~$Q$,
the corresponding quasi-Cartan companion 
is the symmetric matrix $A=A_Q$ defined by
\begin{equation}
\label{eq:A(Q)-via-U}
A=U+U^T. 
\end{equation}
Both the exchange matrix $B$ and the quasi-Cartan matrix~$A$ are determined by the unipotent companion~$U$,
cf.\ \eqref{eq:-B=U-UT} and~\eqref{eq:A(Q)-via-U}. 
See also Remark~\ref{rem:congruence-AB}. 
\end{remark}

\begin{proposition}
\label{prop:UQ cyclic shift congruent} 
The integral congruence class of a unipotent companion 
is invariant under cyclic rearrangements of the vertices of a quiver. 
\end{proposition}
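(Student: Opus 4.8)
The plan is to reduce to a single cyclic shift and then exhibit an explicit congruence matrix. Since every cyclic rearrangement of the vertices is a composition of one-step shifts and integral congruence is transitive, it suffices to treat the shift that moves the first vertex to the last position, passing from the linear order $(1,2,\dots,n)$ to $(2,3,\dots,n,1)$. Writing the unipotent companion in block form as $U=\begin{pmatrix}1 & r\\ 0 & U_0\end{pmatrix}$, where $r=(-b_{12},\dots,-b_{1n})$ records the arrows at vertex~$1$ and $U_0$ is the unipotent companion of the induced subquiver on $\{2,\dots,n\}$, I would first record that the shifted companion is $U'=\begin{pmatrix}U_0 & -r^T\\ 0 & 1\end{pmatrix}$: the only change beyond relabeling is that the arrows incident to the moved vertex reverse their position relative to the rest of the order, which flips the sign of the corresponding entries.

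First I would conjugate $U$ by the permutation matrix $P$ of the cyclic shift $i\mapsto i+1 \pmod n$. Because $P\in\GL_n(\ZZ)$, the matrix $M=PUP^T$ is integrally congruent to $U$, and a direct block computation gives $M=\begin{pmatrix}U_0 & 0\\ r & 1\end{pmatrix}$. The key observation driving the whole argument is that $M$ and $U'$ have the same skew-symmetric part: indeed $\tfrac12(M-M^T)=\tfrac12 P(U-U^T)P^T=-\tfrac12 PBP^T=-\tfrac12 B'$, which equals the skew-symmetric part of the unipotent companion $U'$ of $B'=PBP^T$. Consequently $S:=M-U'$ is symmetric, and one computes $S=\begin{pmatrix}0 & r^T\\ r & 0\end{pmatrix}$.

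It remains to kill this symmetric correction by a unipotent congruence. I would take the shear $G=\begin{pmatrix}I_{n-1} & -r^T\\ 0 & 1\end{pmatrix}\in\GL_n(\ZZ)$ and verify by block multiplication that $GMG^T=U'$; the cross terms produce $\pm r^Tr$ contributions in the top-left block that cancel, while the bottom-left block $r$ is cancelled exactly against $-r$. Composing the two steps yields $U'=(GP)\,U\,(GP)^T$ with $GP\in\GL_n(\ZZ)$, so $U$ and $U'$ are integrally congruent, which completes the single shift and hence the proposition.

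The main obstacle is locating the correct $G$: the naive attempt to realize the shift by the permutation $P$ alone (optionally negating the moved coordinate) fails as soon as the quiver has a path through vertex~$1$, since the induced transitive arrows spoil the triangularity and the off-diagonal entries do not land where $U'$ demands. The resolution, and the crux of the proof, is the observation that the permutation $P$ already matches the skew-symmetric part, so that only a symmetric discrepancy remains; a symmetric matrix of the special form $\begin{pmatrix}0 & r^T\\ r & 0\end{pmatrix}$ is then removable by a single integral shear. I expect the verification $GMG^T=U'$ to be the one genuine computation, but it is short and self-contained.
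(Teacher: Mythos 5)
Your proof is correct and takes essentially the same route as the paper: both reduce to a single cyclic shift and exhibit an explicit integral congruence, namely the cyclic permutation matrix composed with a unipotent shear whose off-diagonal entries are the arrow multiplicities at the moved vertex. In fact your composite matrix $GP$ coincides with the paper's congruence matrix $c(I+B_1^T)$ (the paper shears in the original coordinates and then permutes, you permute first and shear in the shifted coordinates), so the difference is purely presentational: your $2\times 2$ block multiplication replaces the paper's identities $B_1 = UB_1$ and $B_1^T = B_1^T U(I+B_1)$.
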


\begin{proof}
Let $Q$ be an $n$-vertex quiver on the vertex set $\{1<\cdots<n\}$.
Let $c\in\GL_n$ be the permutation matrix 
\begin{equation}
\label{eq:cyclic-shift}
c=
\begin{bmatrix}
0 & 1 & 0 & \cdots & 0 \\
0 & 0 & 1 & \cdots & 0 \\
\vdots & \vdots & \vdots & \ddots & \vdots  \\
0 & 0 & 0 & \cdots & 1 \\
1 & 0 & 0 & \cdots & 0
\end{bmatrix}
\end{equation}
associated with the $n$-cycle (a Coxeter element)
\begin{equation}
\label{eq:long-cycle}
1 \rightarrow 2 \rightarrow 3 \rightarrow \cdots \rightarrow n \rightarrow 1
\end{equation}
in the symmetric group~$\mathcal{S}_n$.
Let $cQ$ be the quiver with the exchange matrix~$c B_Q c^T$,
or equivalently the quiver obtained by changing the vertex ordering in~$Q$ according~to~$c$.
Let $B_1$ denote the $n\times n$ matrix whose top row is the same as in $B$ and whose other entries are equal to~$0$: 
\begin{equation*}
B_1 =
\begin{bmatrix}
0 & b_{12} & \cdots & b_{1n} \\
0 & 0 & \cdots & 0 \\
\vdots &  \vdots & \ddots & \vdots  \\
0 & 0 & \cdots & 0 
\end{bmatrix}
.
\end{equation*}
Set 
$G=c (I + B_1^T)$. 
We will prove the proposition by showing that $U_{cQ} = G U_Q G^T$. 

We begin by expressing the matrix~$U_{cQ}$ in terms of the original unipotent companion $U=U_Q$, 
the permutation matrix~$c$, and the matrix~$B_1$: 
\begin{align*}
U_{cQ}&=
\begin{bmatrix}
1 & \!-b_{23}\! &  \cdots & \!-b_{2n}\! & \!-b_{21} \\
0 & 1 &  \cdots & \!-b_{3n}\! & \!-b_{31}\\
\vdots &  \vdots & \ddots & \vdots & \vdots  \\
0 & 0 &  \cdots & 1 & \!-b_{n1} \\
0 & 0 &  \cdots & 0 & 1 
\end{bmatrix}
\\
&=
c
\begin{bmatrix}
1 & 0 &  \cdots & 0 & 0 \\
-b_{21} & 1 &  \cdots & \!-b_{2,n-1}\! & \!-b_{2n}\\
\vdots &  \vdots & \ddots & \vdots & \vdots  \\
-b_{n-1,1} & 0 &  \cdots & 1 & \!-b_{n-1,n}\! \\
-b_{n,1} & 0 &  \cdots & 0 & 1 
\end{bmatrix}
c^{-1} \\
&=c(U+B_1+B_1^T) c^T.
\end{align*}
Since the matrix $I-U$ is strictly upper-triangular, we have $(I-U)B_1=0$, so that 
\begin{equation*}
B_1 = UB_1. 
\end{equation*}
Since the top row of $I-U-B_1$ consists entirely of zeroes, we have
$B_1^T (I-U-B_1)=0$, or equivalently
\begin{equation*}
B_1^T = B_1^T U + B_1^T B_1= B_1^T U + B_1^T UB_1. 
\end{equation*}
It follows that 
\begin{align*}
U+B_1+B_1^T 
&= U + UB_1 + B_1^T U + B_1^T U B_1 \\
&= (I+B_1^T)U(I+B_1). 
\end{align*}
We conclude that 
\begin{equation*}
U_{cQ} = c(U+B_1+B_1^T) c^T = c (I+B_1^T)U(I+B_1) c^T =G U G^T. 
\qedhere
\end{equation*}
\end{proof}

Proposition~\ref{prop:UQ cyclic shift congruent} shows that 
the unipotent companion of a cyclically ordered quiver (COQ) is well defined up to integral congruence.
Thus, we can associate to any COQ the integral congruence class of a unipotent companion. 

\begin{proposition} 
\label{prop:UQ wiggle congruent}
The integral congruence class of a unipotent companion is invariant under wiggles.  
 \end{proposition}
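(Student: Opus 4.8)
The plan is to reduce the wiggle invariance of the integral congruence class to a single transposition and then verify it by an explicit block computation. A wiggle is a transposition $(ij)$ of two consecutive vertices $i$ and $j$ that are \emph{not adjacent} in $Q$, so the only thing that changes between $U_Q$ and $U_{(ij)Q}$ is that rows/columns $i$ and $j$ get swapped, while the absence of an arrow between $i$ and $j$ means $b_{ij}=0$. First I would set up notation: assume without loss of generality (using Proposition~\ref{prop:UQ cyclic shift congruent}, which lets me cyclically reorder freely) that the two vertices being wiggled sit in consecutive positions $k$ and $k+1$ of the linear order, so that the wiggle is the adjacent transposition swapping positions $k$ and $k+1$.

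The key observation is that, because the wiggled vertices are non-adjacent in $Q$, the corresponding off-diagonal entry of $U_Q$ is already $1$ at position $(k,k+1)$ (since $-b_{k,k+1}=0$). Let $P=P_{(k,k+1)}$ be the transposition permutation matrix. Then $P U_Q P^T$ is upper-triangular \emph{except} that the single entry that was in position $(k,k+1)$ is now moved to position $(k+1,k)$, i.e.\ $PU_QP^T$ differs from the genuine upper-triangular companion $U_{(ij)Q}$ only in the $2\times 2$ block at rows/columns $\{k,k+1\}$. The plan is to exhibit an explicit elementary matrix $G\in\GL_n(\ZZ)$ that corrects this discrepancy: conjugating $P U_Q P^T$ by a unipotent $G$ supported on the $\{k,k+1\}$ block should move the stray entry back above the diagonal. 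Concretely, I would take $G$ of the form $G = E\,P$ where $E$ is the identity modified by a single $\pm1$ in the $(k,k+1)$ block, and then check that $G U_Q G^T = U_{(ij)Q}$.

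I would verify the identity $G U_Q G^T = U_{(ij)Q}$ by the block computation, reducing to the $2\times 2$ case because $G$ acts as the identity outside positions $k,k+1$. In the $2\times 2$ block the companion matrix looks like $\begin{bmatrix}1 & 0\\ 0 & 1\end{bmatrix}$ (the off-diagonal entry being $-b_{k,k+1}=0$), and one checks that swapping the two basis vectors and applying the compensating unipotent returns the upper-triangular normal form, with all interaction terms against the other rows/columns combining correctly. This is exactly analogous to the computation in the proof of Proposition~\ref{prop:UQ cyclic shift congruent}, where the permutation $c$ was conjugated by $I+B_1^T$ to restore upper-triangularity; here the non-adjacency $b_{k,k+1}=0$ plays the role that made that earlier argument clean.

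The main obstacle I anticipate is bookkeeping the cross terms: when $G$ mixes rows $k$ and $k+1$, the conjugation $GU_QG^T$ affects all entries in those two rows and columns (the interactions with every other vertex $m\neq k,k+1$), and I must confirm that these are precisely the entries that the transposition $(ij)$ is supposed to produce in $U_{(ij)Q}$, namely $-b_{\sigma(p),\sigma(q)}$ with $\sigma=(ij)$. The crucial simplification making this work is that $b_{k,k+1}=0$, so the correcting elementary matrix introduces no spurious off-diagonal contribution in the $\{k,k+1\}$ block and the remaining cross-term cancellations are forced. Once the $2\times 2$ block and the cross terms are checked, the conclusion $U_{(ij)Q}=GU_QG^T$ with $G\in\GL_n(\ZZ)$ gives integral congruence, and since every wiggle equivalence is a composition of such transpositions, the integral congruence class is a wiggle invariant.
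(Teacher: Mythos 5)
You have the right mechanism---realize the wiggle as conjugation by the transposition permutation matrix $P=P_{(k,k+1)}$ and exploit non-adjacency---and this is exactly the paper's route. But your execution contains a self-contradiction that, taken literally, derails the argument. You first claim that the $(k,k+1)$ entry of $U_Q$ ``is already $1$ \dots (since $-b_{k,k+1}=0$)''; in fact that entry equals $-b_{k,k+1}=0$, as you yourself state two paragraphs later. Because it is $0$, the conjugate $P U_Q P^T$ is \emph{already} upper-triangular and unipotent: the only entry that conjugation by $P$ could push below the diagonal is the one at position $(k,k+1)$, and it vanishes. Moreover, rows and columns $k$ and $k+1$ of $P U_Q P^T$ now carry precisely the entries indexed by the swapped vertices, so $P U_Q P^T$ \emph{is} the unipotent companion $U_{(ij)Q}$ of the wiggled COQ. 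The proof therefore ends with $G=P\in\GL_n(\ZZ)$: one line, no correction needed. This is precisely the computation in the paper.

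The ``correcting'' matrix $G=EP$ you propose, with $E$ the identity modified by a single entry $e=\pm1$ in the $(k,k+1)$ slot, is not merely unnecessary---it would produce the wrong answer. Writing $U'=PU_QP^T$ (which has zeros at positions $(k,k+1)$ and $(k+1,k)$), one computes $(EU'E^T)_{kk} = u'_{kk} + e\,u'_{k,k+1} + e\,u'_{k+1,k} + e^2\,u'_{k+1,k+1} = 1+0+0+1 = 2$, so $EU'E^T$ is not unipotent and cannot equal $U_{(ij)Q}$. If you carried out the ``check'' you promise, you would find that the only admissible choice is $E=I$. So the flaw is conceptual rather than fatal: the obstacle you anticipate (a stray entry below the diagonal that must be moved back) does not exist, and your argument becomes correct---and identical to the paper's---once the correction step is deleted rather than executed.
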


\begin{proof}
Let $(Q,\sigma)$ be a COQ on the vertex set $\{v_1,\dots,v_n\}$,
with the cyclic ordering $\sigma=(v_1,\dots,v_n)$. 
Assume that the transposition $s_1=(v_1 v_2)$ is a wiggle in $(Q,\sigma)$, 
i.e., the vertices $v_1$~and~$v_2$ are not adjacent in~$Q$.
We identify $s_1$ with the corresponding $n\times n$ permutation matrix. 

Let $\sigma'=(v_2,v_1,v_3,\dots,v_n)$ be the ordering obtained by swapping the vertices $v_1$ and~$v_2$.
Let $U'$ denote the corresponding unipotent companion matrix. 
This matrix can be related to the original unipotent companion~$U$ as follows: 
\begin{align*}
s_1 U s_1 &=
\begin{bmatrix}
0 & 1 & 0 &\cdots  & 0 \\
1 & 0 & 0 & \cdots  & 0 \\
0 & 0 & 1 & \cdots & 0\\
\vdots & \vdots & \vdots & \ddots & \vdots  \\
0 & \cdots & 0 & \cdots & 1
\end{bmatrix}
\begin{bmatrix}
1 & 0 & -b_{13} &\cdots  & -b_{1n} \\
0 & 1 & -b_{23} & \cdots  & -b_{2n} \\
0 & 0 & 1 & \cdots & -b_{3n}\\
\vdots & \vdots & \vdots & \ddots & \vdots  \\
0 & \cdots & 0 & \cdots & 1
\end{bmatrix}
\begin{bmatrix}
0 & 1 & 0 &\cdots  & 0 \\
1 & 0 & 0 & \cdots  & 0 \\
0 & 0 & 1 & \cdots & 0\\
\vdots & \vdots & \vdots & \ddots & \vdots  \\
0 & \cdots & 0 & \cdots & 1
\end{bmatrix} 
\\
&= 
\begin{bmatrix}
1 & 0 & -b_{23} &\cdots  & -b_{2n} \\
0 & 1 & -b_{13} & \cdots  & -b_{1n} \\
0 & 0 & 1 & \cdots & -b_{3n}\\
\vdots & \vdots & \vdots & \ddots & \vdots  \\
0 & \cdots & 0 & \cdots & 1
\end{bmatrix}
\\
&=U'. 
\end{align*}
We conclude that the matrices $U$ and $U'=s_1Us_1=s_1Us_1^T$ are congruent over $\ZZ$.
\end{proof}

Proposition~\ref{prop:UQ wiggle congruent} shows that the integral congruence class of a unipotent companion is uniquely determined by the wiggle equivalence class of a COQ.

\begin{remark}
\label{rem:congruence-AB}
Various authors (see, e.g., \cite{BGZ,casals-binary, fwz1-3,Seven-congruence}) considered
using the congruence class of either the exchange matrix~$B_Q$ or 
a particular quasi-Cartan companion $A_Q$ to study the properties of a quiver~$Q$. 
Unfortunately, these congruence classes appear to retain a lot less useful information about~$Q$
compared to the congruence class of the unipotent companion~$U_Q$.  
\end{remark}

We next recall some well-known results relating congruence classes of square matrices to similarity/conjugacy classes. 
We will use the notation $M^{-T}\!=\!(M^T)^{-1}\!=\!(M^{-1})^T$. 
For invertible matrices $A$ and~$B$,
we have 
$(AB)^{-T} \!=\! ((AB)^T)^{-1} \!=\! (B^T A^T)^{-1} \!=\!A^{-T} B^{-T}$. 

\begin{definition}
\label{def:cosquare}
The \emph{cosquare} of an invertible matrix $M$ is the matrix $M^{-T}M$. 
\end{definition}

\begin{lemma} 
\label{lem:C-sim=Ucong}
If two  matrices in $\operatorname{GL}_n(\ZZ)$ are congruent over $\ZZ$, 
then their respective cosquares are similar over~$\ZZ$ (i.e., conjugate in $\operatorname{GL}_n(\ZZ)$). 
 \end{lemma}

\begin{proof}
Let $L$ and $M=GLG^T$ be two congruent matrices. 
Then 
\[
 M^{-T} M =(GLG^T)^{-T}GLG^T=G^{-T}L^{-T}G^{-1}GLG^T=G^{-T}L^{-T}LG^T. \qedhere
 \]
\end{proof}

\begin{remark}
\label{rem:horn-sergeichuk}
As shown by R.~Horn and V.~Sergeichuk \cite[Lemma 2.1]{horn-sergeichuk}, 
the converse to Lemma~\ref{lem:C-sim=Ucong} holds over~$\CC$.
However, it fails over the integers (and over the reals). 
For example, take two symmetric matrices $A,B\in\GL(n,\ZZ)$ (or ${A, B\in\GL(n,\RR)}$).  
The~cosquares of $A$ and $B$ are equal, as they are both equal to the identity matrix. 
Yet it can happen that $A$ and $B$ are not congruent over~$\RR$, let alone over~$\ZZ$. 
Indeed, Sylvester's Law of Inertia asserts that  
two real symmetric matrices are congruent over $\RR$ 
if and only if they have the same number of positive, negative, and zero eigenvalues.
\end{remark}

\pagebreak[3]

\begin{problem}
It is conceivable (although unlikely) that the integer 
version of the Horn-Sergeichuk theorem referenced in Remark~\ref{rem:horn-sergeichuk} 
might hold for upper-triangular unipotent matrices. 
In other words, if $U$ and $U'$ are two upper-triangular unipotent integer matrices whose cosquares
are conjugate in $\GL(n,\ZZ)$, does it follow that $U$ and~$U'$ are congruent over~$\ZZ$?
(If not, provide a counterexample.) 
\end{problem}

\begin{corollary}
\label{cor:similarity-U}
Let $Q$ be a quiver on a linearly ordered set of vertices ${\{v_1\!<\cdots<v_n\}}$. 
The $\GL(n,\ZZ)$ conjugacy class of the cosquare of the unipotent companion~$U_Q$ 
is uniquely determined by the cyclic ordering $\sigma=(v_1,\dots,v_n)$, and indeed by 
the wiggle equivalence class of the COQ~$(Q,\sigma)$. 
\end{corollary}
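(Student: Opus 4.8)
The plan is to derive Corollary~\ref{cor:similarity-U} directly by combining the three results already established in this section, treating it as a bookkeeping consequence rather than a fresh argument. The chain of reasoning runs as follows. By Proposition~\ref{prop:UQ cyclic shift congruent}, replacing the linear ordering $(v_1,\dots,v_n)$ by any cyclic rearrangement of it leaves the integral congruence class of $U_Q$ unchanged; by Proposition~\ref{prop:UQ wiggle congruent}, performing a wiggle likewise leaves this congruence class unchanged. Hence the integral congruence class of $U_Q$ depends only on the wiggle equivalence class of the COQ $(Q,\sigma)$, and not on the choice of a linear tearing of the cyclic order.

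First I would make explicit that the cosquare $U_Q^{-T}U_Q$ lies in $\operatorname{GL}_n(\ZZ)$: indeed $U_Q$ is unipotent upper-triangular with integer entries, so $\det U_Q = 1$ and $U_Q^{-1}$ is again an integer matrix, whence $U_Q \in \operatorname{GL}_n(\ZZ)$ and its cosquare is well defined over $\ZZ$. Next I would invoke Lemma~\ref{lem:C-sim=Ucong}: since congruent matrices in $\operatorname{GL}_n(\ZZ)$ have cosquares that are conjugate in $\operatorname{GL}_n(\ZZ)$, the $\operatorname{GL}_n(\ZZ)$ conjugacy class of $U_Q^{-T}U_Q$ is determined by the integral congruence class of $U_Q$. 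Composing this with the two invariance propositions yields the claim: the conjugacy class of the cosquare is constant on each wiggle equivalence class of $(Q,\sigma)$, and in particular is unchanged under passing to a different linear ordering representing the same cyclic order.

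Concretely, the argument is a two-line implication. Any two linear orderings representing wiggle-equivalent COQs are connected by a finite sequence of cyclic shifts (the element $c$ of \eqref{eq:cyclic-shift} and its powers) and wiggles; along each such elementary step the congruence class of $U_Q$ is preserved by Proposition~\ref{prop:UQ cyclic shift congruent} or Proposition~\ref{prop:UQ wiggle congruent}, so it is preserved overall. Applying Lemma~\ref{lem:C-sim=Ucong} at the end transfers this invariance from the congruence class of $U_Q$ to the conjugacy class of its cosquare. I would state this and close.

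I do not expect any genuine obstacle here, since all the substantive work has been done in the preceding propositions and lemma; the only point requiring care is the logical order, namely that one should pass to cosquares only \emph{after} establishing congruence invariance, because Lemma~\ref{lem:C-sim=Ucong} is a one-way implication (congruence forces conjugacy of cosquares, but not conversely, as Remark~\ref{rem:horn-sergeichuk} emphasizes). One should therefore resist the temptation to phrase the corollary as an equivalence; it asserts only that the cosquare conjugacy class is a wiggle invariant, a weaker and correct statement.
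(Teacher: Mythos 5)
Your proposal is correct and follows exactly the paper's own argument: the paper proves this corollary by citing Lemma~\ref{lem:C-sim=Ucong} together with Propositions~\ref{prop:UQ cyclic shift congruent} and~\ref{prop:UQ wiggle congruent}, which is precisely your chain of reasoning. Your added remarks (that $\det U_Q=1$ puts the cosquare in $\operatorname{GL}_n(\ZZ)$, and that the lemma is a one-way implication) are accurate clarifications of the same proof, not a different route.
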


\begin{proof}
This follows from Lemma~\ref{lem:C-sim=Ucong}, 
together with Propositions~\ref{prop:UQ cyclic shift congruent} 
and~\ref{prop:UQ wiggle congruent}. 
\end{proof}

\begin{remark}
The construction of the cosquare of a unipotent upper-triangular matrix has appeared, 
under the name of a \emph{Coxeter matrix} or \emph{monodromy matrix}, 
in a number of contexts ranging from algebraic geometry to singularity theory and mathematical physics. 
See Section~\ref{sec:bondal} and references therein. 
\end{remark}

\begin{example}
\label{eg:U-n=2}
Let $Q$ be a 2-vertex quiver with 
\begin{align*}
B=B_Q&=\begin{bmatrix} 
0 & x  \\
-x & 0 
\end{bmatrix}. 
\end{align*}
The unipotent companion is 
\begin{align*}
U=U_Q&=\begin{bmatrix} 
1 &  \! \!-x  \\
0 &  \! \!1 
\end{bmatrix}, 
\end{align*}
and its cosquare is 
\begin{align*}
U^{-T} U
&=
\begin{bmatrix} 
1 &  0  \\
x &  1 
\end{bmatrix}
\begin{bmatrix} 
1 &  -x  \\
0 &  1
\end{bmatrix}
=
\begin{bmatrix} 
1 &  -x  \\
x & 1-x^2 
\end{bmatrix}
\!\!.
\end{align*}
\end{example}

\begin{example}
\label{eg:U-n=3}
Let $Q$ be a quiver on three linearly ordered vertices, with 
\begin{align*}
B=B_Q&=\begin{bmatrix} 
0 & x & z \\
-x & 0 & y \\
-z & -y & 0
\end{bmatrix},
\end{align*}
cf.\ Example~\ref{eg:3 vert quiver and B mat}. 
The unipotent companion $U=U_Q$ is  the matrix
\begin{align*}
U&=\begin{bmatrix} 
1 &  \! \!-x \! \! & -z \\
0 &  \! \!1 \! \! & -y \\
0 &  \! \!0 \! \! & 1
\end{bmatrix}. 
\end{align*}
The cosquare of $U$ is then computed as follows: 
\begin{align*}
U^{-T} U
&=
\begin{bmatrix} 
1 &  0 & 0 \\
x &  1 & 0 \\
z+xy & y & 1
\end{bmatrix}
\begin{bmatrix} 
1 &  \! \!-x \! \! & -z \\
0 &  \! \!1 \! \! & -y \\
0 &  \! \!0 \! \! & 1
\end{bmatrix}
=
\begin{bmatrix} 
1 &  -x & -z \\
x & 1-x^2 & -y-xz \\
z+xy & y- xz-x^2y   & 1 - y^2 - z^2 -xyz
\end{bmatrix}
\!\!.
\end{align*}
\end{example}

\begin{example}
\label{ex:C(Q)-4vert}
Let $Q$ be a quiver on four linearly ordered vertices $a<b<c<d$, with %
\begin{equation*}
B_Q=\begin{bmatrix} 
0 & x & z & w \\
-x & 0 & y & v \\
-z & -y & 0 & u \\
-w & -v & -u & 0
\end{bmatrix} .
\end{equation*}
The case $x,y,z,u,v,w\ge0$ (an acyclic 4-vertex quiver) is illustrated in 
Figure~\ref{fig:Q(a,b,c,d,e,f)}. 
We emphasize that we do not require these inequalities to hold: 
the computations provided below apply for general 4-vertex quivers. 

The unipotent companion of $Q$ is 
\begin{equation*}
U\!=\!U_Q\!=\begin{bmatrix} 
1 & -x & -z & -w \\
0 & 1 & -y & -v\\
0 & 0 & 1 & -u \\
0 & 0 & 0 & 1
\end{bmatrix} . 
\end{equation*}
The cosquare of $U$ is then computed as follows: 
\begin{align*}
U^{-T}&=
\begin{bmatrix} 
1 & 0 & 0 & 0 \\
x & 1 & 0 & 0 \\
xy+z & y& 1 & 0 \\
xyu+xv+uz+w & yu+v & u & 1
\end{bmatrix}, 
\\[5pt]
U^{-T} U 
&=
\begin{bmatrix} 
1 	& 		-x	&		-z		&-w  \\[3pt]
x 	& 		1-x^2	&		-y-xz		&-v-xw\\[3pt]	
z+xy 	& 		y-xz-x^2y &		1-y^2-z^2-xyz	&-u-yv-zw-xyw\\[3pt]	
c_{41} 	& c_{42} & c_{43}	& c_{44} 
\end{bmatrix}\!,
\end{align*}
where
\begin{align*}
c_{41}&=xyu + xv + uz - w, \\ 
c_{42}&=v-xw+yu-xuz-x^2v-x^2yu, \\
c_{43}&=u-yv-zw-xzv-uz^2-uy^2-xyuz, \\
c_{44}&=1-w^2-v^2-u^2-uzw-yuv-xvw-xyuw. 
\end{align*}
\end{example}

\begin{figure}[ht]
$Q =$ \begin{tikzcd}[arrows={-stealth}, sep=4em]
  a  \arrow[r,  "x"] \arrow[d, "w", -stealth]  \arrow[dr, "z", near end, swap, outer sep=-1, -stealth]
  & b  \arrow[d, swap, "y"] \arrow[dl, "v", swap, near start, outer sep=-1]
  \\[-5pt]
   d  
   & c   \arrow[l, "u", -stealth] 
\end{tikzcd}

\caption{A $4$-vertex acyclic quiver. }
\label{fig:Q(a,b,c,d,e,f)}
\end{figure}

\begin{remark}
\label{rem:GLZ-conjugacy}
There are known algorithms for testing whether two given matrices in $\GL_n(\ZZ)$
are conjugate to each other (over~$\ZZ$). 
We will discuss this topic, and provide references, in Section~\ref{sec:invariants-of-proper-mutations},
see Remark~\ref{rem:algorithms-conjugacy}. 
\end{remark}

\newpage 

\section{Proper vertices in cyclically ordered quivers}
\label{sec:proper verts}

\begin{definition}
Let $(Q,\sigma)$ be a cyclically ordered quiver. 
We say that an oriented two-arrow path $i \rightarrow j \rightarrow k$ in $Q$ makes a \emph{right turn} at~$j$ 
if the cyclic ordering~$\sigma$ can be represented as 
\begin{equation*}
\sigma=(\dots,i,\dots,j,\dots,k,\dots). 
\end{equation*}
Otherwise, we say that the path $i \rightarrow j \rightarrow k$ makes a left turn at~$j$. 
\end{definition}

\begin{definition}
\label{def:proper}
A vertex $j$ in a COQ $Q$ is \emph{proper} (alternatively, $Q$ is proper at~$j$) 
if the following ``no-left-turn rule'' is satisfied: every oriented path $\cdots \rightarrow j \rightarrow \cdots$ makes a right turn at $j$. See Figure~\ref{fig:proper illustration}. 
\end{definition}

\begin{figure}[ht]
\begin{tikzpicture}
	\filldraw[black] (0,0)++(15:2cm) circle (2pt) node[above right] {$i$} coordinate (i);
	\filldraw[black] (0,0)++(-45:2cm) circle (2pt) node[below right] {$j$} coordinate (j);
	\filldraw[black] (0,0)++(-135:2cm) circle (2pt) node[below left] {$k$} coordinate (k);
	\filldraw[black] (0,0)++(165:2cm) circle (2pt) node[above left] {$\ell$} coordinate (l);
	\filldraw[black] (0,0)++(65:2cm) circle (2pt) node[above right] {$h$} coordinate (h);
	\filldraw[black] (0,0)++(115:2cm) circle (2pt) node[above left] {$g$} coordinate (g);
	\draw[black, dashed, decoration={markings, mark=at position 0 with {\arrow{<}}}, postaction={decorate}] (0,0) circle (2cm);
	\draw[black, -{stealth}, shorten >=3pt, shorten <= 3pt] (i) -- (j);
	\draw[black, -{stealth}, shorten >=3pt, shorten <= 3pt] (j) -- (k);
	\draw[black, -{stealth}, shorten >=3pt, shorten <= 3pt] (l) -- (k);
	\draw[black, -{stealth}, shorten >=3pt, shorten <= 3pt] (i) -- (l);
	\draw[black, -{stealth}, shorten >=3pt, shorten <= 3pt] (g) -- (l);
	\draw[black, -{stealth}, shorten >=3pt, shorten <= 3pt] (g) -- (j);
	\draw[black, -{stealth}, shorten >=3pt, shorten <= 3pt] (k) -- (h);
	\draw[black, -{stealth}, shorten >=3pt, shorten <= 3pt] (h) -- (i);
\end{tikzpicture}
\vspace{5pt}
\caption{In this COQ, the vertices $g,h,i,$ and $j$ are proper but $k$ and~$\ell$ are not. 
}
\label{fig:proper illustration}
\end{figure}


\begin{remark}
\label{rem:wiggles dont respect}
Properness of an individual vertex in a COQ is \underbar{not} preserved under wiggles:  
 a wiggle may transform a proper vertex into a non-proper one. 
 To see it, suppose that two vertices $i$ and $k$ 
 are adjacent in the cyclic ordering but not connected by an arrow~in~$Q$. 
 If an oriented path $i \rightarrow j \rightarrow k$ makes a right turn at $j$, then the same path would make a left turn at $j$ after the wiggle $(i k)$ has been performed. 
\end{remark}

\begin{definition}
\label{def:wiggle-class-proper-vertex}
We say that a vertex $j$ is \emph{proper} in a wiggle equivalence class~$\wiggleQ$
if $j$ is a proper vertex in some COQ $Q\in\wiggleQ$.
\end{definition}

\begin{example}
\label{eg:tree-proper}
Let $Q$ be a tree quiver. 
By Proposition~\ref{prop:tree wiggles}, all cyclic orderings of~$Q$ are wiggle equivalent.
It follows that every vertex of~$Q$ is proper in the corresponding wiggle equivalence class~$\wiggleQ$. 
\end{example}

Another series of examples involves the quivers that appeared in 
Proposition~\ref{prop:n-cycle with no wiggles}: 

\begin{proposition}
\label{prop:n-cycle proper vertices}
Let $Q$ be a COQ of the kind described in Proposition~\ref{prop:n-cycle with no wiggles}, 
with notation $n, r, \ell, v_i$ as specified there. 
Recall that $1-\ell\le \wind(Q,\sigma)\le r-1$, see~\eqref{eq:bounds-on-wind}. 
If $\wind(Q) < r-1$ (resp., $\wind(Q) > 1-\ell$), then every vertex~$v_i$ with $v_{i-1} \leftarrow v_i$ (resp. $v_{i-1} \rightarrow v_i$) is proper in $\wiggleQ$.

Further, if $\wind(Q) = r-1$ (resp., $\wind(Q) = 1-\ell$), then every vertex~$v_i$ with $v_{i-1} \leftarrow v_i \leftarrow v_{i+1}$ (resp. $v_{i-1} \rightarrow v_i \rightarrow v_{i+1}$) is not proper in $\wiggleQ$.
\end{proposition}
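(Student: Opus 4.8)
The plan is to reduce the statement to a single ``transit'' vertex and then to exhibit one ordering in its wiggle class that realizes the required turn. By symmetry it suffices to treat the first assertion, $\wind(Q)<r-1$: the second follows by reversing the chosen direction of traversal of the cycle, an operation that interchanges the roles of $r$ and $\ell$, of $\rightarrow$ and $\leftarrow$, and replaces each clockwise distance $\theta(\sigma,\cdot,\cdot)$ by its complement $n-\theta(\sigma,\cdot,\cdot)$. So fix a vertex $v_i$ with $v_{i-1}\leftarrow v_i$. Its only incident arrows lie on the two cycle edges at $v_i$. If the second arrow points $v_i\rightarrow v_{i+1}$, then $v_i$ is a source, no oriented two-arrow path passes through it, and $v_i$ is proper for the trivial reason that the no-left-turn rule of Definition~\ref{def:proper} is vacuous; hence I may assume $v_{i+1}\rightarrow v_i$, so that the unique oriented path through $v_i$ is $v_{i+1}\rightarrow v_i\rightarrow v_{i-1}$. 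What must be produced is a cyclic ordering $\sigma'$, wiggle equivalent to $\sigma$, in which this path makes a right turn at~$v_i$.

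First I would record what a right turn means numerically. Writing $\theta_j=\theta(\sigma',v_j,v_{j+1})$ and using $\theta(\sigma',v_{j+1},v_j)=n-\theta_j$ for distinct vertices, the path $v_{i+1}\rightarrow v_i\rightarrow v_{i-1}$ turns right at $v_i$ exactly when the two clockwise arcs from $v_{i+1}$ to $v_i$ and from $v_i$ to $v_{i-1}$ compose without wrapping once around the circle; this is a single local inequality relating $\theta_{i-1}$, $\theta_i$, and $n$. Thus properness of $v_i$ depends only on the two cycle edges at $v_i$, and the remaining task is to realize the inequality inside the prescribed wiggle class.

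The key structural input is that, since the underlying graph of $Q$ is the single cycle~$C$ of \eqref{eq:n-cycle-C}, its first homology is generated by~$C$ alone; so by Theorem~\ref{th:wiggle/winding} (and Remark~\ref{rem:homotopy gen invariant}) the wiggle class of $\sigma$ is exactly the set of orderings with $\wind(Q,\cdot)=\wind(Q,\sigma)=:w$. I would then draw the candidate witnesses from the explicit family used to prove Proposition~\ref{prop:n-cycle with no wiggles}: the ordering $(v_k,v_{k-1},\dots,v_1,v_{k+1},\dots,v_n)$ has winding number $k-\ell$, and reversing any other contiguous arc of $C$ of the same length gives the same winding number by the rotational symmetry of the cycle. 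Choosing the reversed arc to contain (or to avoid) the two edges at $v_i$ pins the two local distances $\theta_{i-1},\theta_i$ to their extreme values, which is precisely what makes the path turn one way or the other; the hypothesis $\wind(Q)<r-1$ is what guarantees that an arc of the length needed to realize the favorable configuration is still available within the class~$w$.

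The main obstacle, and where the real bookkeeping lives, is this last matching step: one must verify that the chosen reversed-arc ordering simultaneously lies in the winding class $w$ \emph{and} makes $v_i$ proper, i.e.\ reconcile the global constraint $\sum_j\theta_j=(w+\ell)\,n$ with the local turn inequality at~$v_i$. This requires checking that reversing a contiguous arc produces a genuine cyclic ordering (all positions distinct), computing its winding number and the two distances $\theta_{i-1},\theta_i$ explicitly, and tracking carefully which direction of the inequality the hypothesis $\wind(Q)<r-1$ forces — the delicate sign that determines the correct pairing of hypothesis with conclusion. The extremal boundary deserves separate attention, since Proposition~\ref{prop:n-cycle with no wiggles} shows that at $\wind=r-1$ the class has a unique, wiggle-free representative, so the available room shrinks exactly as $w$ approaches the forbidden value. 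Once a witness is produced in the first case, the reversal-of-traversal symmetry noted at the outset yields the ``resp.'' assertion and completes the proof.
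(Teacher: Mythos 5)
Your overall strategy is the paper's own: reduce to a transit vertex, exhibit one explicit ordering in the prescribed winding class in which the relevant two-arrow path turns right, and conclude via Theorem~\ref{th:wiggle/winding} (since the cycle generates the first homology, the wiggle class is exactly the winding class). The witness family you propose---orderings obtained by reversing a contiguous arc of the cycle---is also the paper's: its proof consists precisely of exhibiting the ordering $(v_{i+1}, v_i, v_{i-1}, \dots, v_{i+1-w}, v_{i+2}, \dots, v_{i-w})$, in which $v_i$ is proper by construction, matching its winding number to $\wind(Q)$, and invoking Theorem~\ref{th:wiggle/winding}. But your write-up stops exactly where that proof does its work: you never write down the witness, never compute its winding number or the two distances at $v_i$, and explicitly defer ``which direction of the inequality the hypothesis forces.'' That deferred bookkeeping is not a routine verification appended to a complete argument; it is the entire content of the proposition, so as submitted this is a plan, not a proof.

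Moreover, the matching step, as you have set it up, would fail. You fixed the transit vertex with $v_{i+1} \rightarrow v_i \rightarrow v_{i-1}$ and claim that the hypothesis $\wind(Q) < r-1$ guarantees a suitable reversed arc. Carrying out the computation shows otherwise: a reversed descending arc of $w$ vertices beginning $v_{i+1}, v_i, \dots$ yields winding number $w - \ell$, and the right turn at $v_i$ requires $w \ge 2$, i.e., $\wind(Q) > 1-\ell$; the bound $\wind(Q) < r-1$ plays no role for this vertex type. Concretely, at $\wind(Q) = 1-\ell$ (which satisfies $\wind(Q) < r-1$ for every $n \ge 3$) the wiggle class is the single ordering $(v_1, \dots, v_n)$ by Proposition~\ref{prop:n-cycle with no wiggles}, and there the path $v_{i+1} \rightarrow v_i \rightarrow v_{i-1}$ turns left, so no choice of arc can help---compare the extreme-winding COQs with no proper vertices in Examples~\ref{eg:A3} and~\ref{eg:D4}. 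The hypothesis $\wind(Q) < r-1$ is instead what serves the opposite transit type $v_{i-1} \rightarrow v_i \rightarrow v_{i+1}$: a right turn there forces the two edges at $v_i$ to contribute at most $n-1$ jointly to the $\theta$-sum, capping the winding at $r-2$. So the ``delicate sign'' you flagged is precisely where a blind proof must commit; your tentative pairing commits it to the wrong side, and running your own plan to completion would force the swap.
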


\begin{proof}
If $v_i$ is a sink or source vertex, then $v_i$ is proper in any cyclic order.
Thus we may restrict ourselves to vertices in the middle of directed paths, 
i.e., $v_{i-1} \rightarrow v_i \rightarrow v_{i+1}$ or $v_{i-1} \leftarrow v_i \leftarrow v_{i+1}$.

Suppose that $\wind(Q) = w- \ell < r-1$ for a positive integer~$w$ (the case $\wind(Q)>1-\ell$ is similar).
Fix a vertex $v_i$ with $v_{i-1} \leftarrow v_i \leftarrow v_{i+1}$.
Then $v_i$ is proper in the COQ 
\begin{equation*}
Q' = (Q, (v_{i+1}, v_i, v_{i-1}, \ldots , v_{i+1-w}, v_{i+2}, \ldots, v_{i-w})).
\end{equation*}
Since $\wind(Q') = w-\ell = \wind(Q)$, by Theorem~\ref{th:wiggle/winding} $Q'$ and $Q$ are wiggle equivalent.
Thus $v_i$ is proper in~$\wiggleQ$.

Now suppose that $\wind(Q) = r-1$ (resp. $\wind(Q) = 1-\ell$). 
Proposition~\ref{prop:n-cycle with no wiggles} implies that $Q$ is unique in its wiggle equivalence class 
and has cyclic ordering that is opposite (resp., agrees with) the order of traversal of $C$. 
The claim follows. 
\end{proof}


We will make use of Proposition~\ref{prop:n-cycle proper vertices} in Section~\ref{sec:proper COQs},
cf.\ Proposition~\ref{prop:n-cycle proper vertices 2}. 

\medskip

The material below in this section will be used in Section~\ref{sec:proper mutations},
cf.\ Proposition~\ref{prop:proper mutation for wiggle equivalent}. 

\begin{definition}
\label{def:IOsets}
 We denote by $\In(j)=\In_Q(j)$ (resp., $\Out(j)=\Out_Q(j)$) the set of vertices~$i$ for which $Q$ contains an arrow $i \rightarrow j$ 
 (resp., an arrow $j \rightarrow i$). 
\end{definition}

\begin{remark}
\label{rem:In-Out-proper}
If $j$ is a proper vertex in a COQ $(Q,\sigma)$, then the cyclic ordering~$\sigma$ 
can be obtained from a compatible linear ordering (cf.\ Definition~\ref{def:cyclic ordering})
in which all elements of $\In(j)$ precede~$j$, 
while $j$ precedes all elements of~$\Out(j)$.
\end{remark}

\begin{lemma}
\label{lem:preserve j proper wiggles}
 Let $Q$ and $Q'$ be two wiggle-equivalent COQs on n vertices. 
 Suppose that a vertex $j$ is proper in both $Q$ and $Q'$. 
 Then there exists a sequence of wiggles $w_1,\ldots , w_k$ such that $w_k \cdots w_1(Q) = Q'$ and $j$ is proper in every intermediate COQ $w_\ell w_{\ell-1} \cdots w_1 (Q)$ (for $1 \leq \ell \leq k$).
\end{lemma}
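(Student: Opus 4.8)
The plan is to \emph{rigidify} the neighbors of $j$: pass to a larger quiver in which properness at $j$ cannot be destroyed by any wiggle, and then check that $\sigma$ and $\sigma'$ remain wiggle equivalent there. First I would record the reformulation behind Remark~\ref{rem:In-Out-proper}: the vertex $j$ is proper in a COQ exactly when, for every $i\in\In(j)$ and $k\in\Out(j)$, the clockwise cyclic order of $i,j,k$ is $(i,j,k)$. In particular, properness at $j$ depends only on the cyclic order of the vertices in $\{j\}\cup\In(j)\cup\Out(j)$, and is insensitive to where the remaining (``free'') vertices sit.

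Next I would introduce the quiver $Q^{+}\supseteq Q$ obtained from $Q$ by adjoining, for each pair $i\in\In(j)$, $k\in\Out(j)$ that is non-adjacent in $Q$, a single arrow $i\rightarrow k$ (these are legitimate arrows, since $\In(j)\cap\Out(j)=\emptyset$ and no $2$-cycle is created). Two observations make $Q^{+}$ the right object. First, properness at $j$ is literally the same condition in $Q$ and in $Q^{+}$, since the new arrows create no path through $j$ and change neither $\In(j)$ nor $\Out(j)$; thus $\sigma$ and $\sigma'$ are proper at $j$ in $Q^{+}$ as well. Second, every wiggle legal in $Q^{+}$ is also legal in $Q$ (as $Q^{+}$ has more arrows, non-adjacency in $Q^{+}$ implies non-adjacency in $Q$), and it automatically preserves properness at $j$. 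Indeed, in $Q^{+}$ the vertex $j$ is adjacent to all of $\In(j)\cup\Out(j)$ and every In--Out pair is adjacent, so the only swaps of two vertices of $\{j\}\cup\In(j)\cup\Out(j)$ a wiggle can perform are In--In or Out--Out swaps, which never disturb the condition ``all of $\Out(j)$ clockwise-before all of $\In(j)$''; swaps involving a free vertex do not change the cyclic order of $\{j\}\cup\In(j)\cup\Out(j)$ at all.

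It then remains to prove that $\sigma$ and $\sigma'$ are wiggle equivalent \emph{in $Q^{+}$}; granting this, any sequence of $Q^{+}$-wiggles realizing it is the desired sequence in $Q$, with $j$ proper at every intermediate stage. By Theorem~\ref{th:wiggle/winding} and Remark~\ref{rem:homotopy gen invariant} it suffices to match winding numbers on a generating set of the first homology of (the underlying graph of) $Q^{+}$. I would take generators of $H_1(Q)$ together with the triangles $T_{ik}=(i\rightarrow j\rightarrow k\leftarrow i)$ attached to the new arrows; these generate $H_1(Q^{+})$, because the fundamental cycle of each new arrow (with respect to a spanning tree of $Q$) differs from the corresponding $T_{ik}$ by a cycle lying in $Q$. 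On the $H_1(Q)$-generators the two winding numbers agree by hypothesis. On each triangle, properness forces the clockwise order $(i,j,k)$, whence $\theta(\sigma,i,j)+\theta(\sigma,j,k)+\theta(\sigma,k,i)=n$ and therefore $\wind(T_{ik},\sigma)=\tfrac1n\cdot n-1=0$; the identical computation for $\sigma'$ gives $0$. Hence all generators match and $\sigma$ and $\sigma'$ are wiggle equivalent in $Q^{+}$.

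The crux---and the only place properness is really used---is the vanishing of the triangle winding numbers, which is exactly what guarantees that enlarging $Q$ to $Q^{+}$ does not separate $\sigma$ from $\sigma'$. The hard part is precisely this step: a priori, adding arrows restricts the available wiggles, so one must rule out that $\sigma$ and $\sigma'$ become inequivalent in $Q^{+}$. The remaining points are routine: verifying that the triangles $T_{ik}$ together with $H_1(Q)$ generate $H_1(Q^{+})$ (a spanning-tree computation), and observing that the whole argument is unaffected when $Q$ is disconnected, since homology and winding numbers split over connected components.
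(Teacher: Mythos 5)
Your proof is correct, but it takes a genuinely different route from the paper's. The paper proves this lemma by going back \emph{inside} the proof of Theorem~\ref{th:wiggle/winding}: it runs the interpolation/collision construction of Definitions~\ref{def:algorithm for wiggle equiv}--\ref{def:collisions} with root $v_\circ=j$ and a spanning tree containing all edges at~$j$, and shows that no collision set $\mathcal{C}(t,x)$ can contain both some $u\in\In(j)$ and some $v\in\Out(j)$, because that would force $\theta(t,u,j)+\theta(t,j,v)=n$, while properness of $j$ at both endpoints gives $\theta(\cdot,u,j)+\theta(\cdot,j,v)<n$ for both $\sigma$ and $\sigma'$, hence for every linear interpolant. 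You instead keep Theorem~\ref{th:wiggle/winding} (plus Remark~\ref{rem:homotopy gen invariant}) as a black box: you rigidify by passing to the augmented quiver $Q^{+}$ in which every In--Out pair is adjacent, observe that wiggles in $Q^{+}$ are exactly the wiggles of $Q$ that cannot disturb any triple $(i,j,k)$, and then match winding numbers on a generating set of $H_1(Q^{+})$, the key point being that the new triangles $T_{ik}$ have winding number $0$ for both orderings precisely because $j$ is proper in both. The two arguments share the observation that only an In--Out (or $j$--neighbor) swap can destroy properness at $j$, and both use properness at \emph{both} endpoints in an essential way; what yours buys is modularity (no reliance on the internals of the interpolation proof) and a slightly stronger conclusion---the orderings reachable from $\sigma$ while keeping $j$ proper form exactly the wiggle class of $\sigma$ in $Q^{+}$---while the paper's argument produces the explicit wiggle sequence of the interpolation construction without introducing an auxiliary quiver.
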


\begin{proof}
We will use the notation and construction from Definitions~\ref{def:algorithm for wiggle equiv}~\ref{def: Z/nZ circle ordering} and~\ref{def:collisions}, assuming:
\begin{itemize}
\item the root vertex is $v_0=j$ (the vertex we wish to keep proper); 
\item and the tree $T$ contains all edges adjacent to $j$.
\end{itemize} 
We claim that the sequence of wiggles found in Theorem~\ref{th:wiggle/winding} are $w_1, \ldots, w_k$. 

Wiggles do not change the relative order of any pair of vertices except the two vertices being wiggled. 
Thus a wiggle can only create a left turn $u \rightarrow j \rightarrow v$ if the wiggle involves both $u$ and $v$.
So it suffices to check that each set of colliding vertices $C$ never includes both $u$ and $v$. 

Assume for contradiction that $u,v \in \mathcal{C}(t,x)$.
Then
\begin{equation*}
\mathcal{R}(t, v) = \theta(t, j, v) = x =  -\theta(t, u, j) = \mathcal{R}(t, u) \bmod n,
\end{equation*}
which (since $\theta(t,a,b) \in (0, n)$ for $a \rightarrow b$ in $T$) implies $$\theta(t,u,j) + \theta(t,j,v) = n.$$ 
But, as $j$ is proper in $Q$ and $Q'$, we have: 
\begin{align*}
\theta(\sigma,u,j) + \theta(\sigma,j,v) &< n;\\
\theta(\sigma',u,j) + \theta(\sigma',j,v) &< n.
\end{align*}
Thus 
\begin{align*}
n &= \theta(t,u,j) + \theta(t,j,v) \\
&= (1-t) (\theta(\sigma,u,j) + \theta(\sigma,j,v)) + t (\theta(\sigma',u,j) + \theta(\sigma',j,v)) < n,
\end{align*}
 a contradiction.
\end{proof}

\newpage

\section{Proper mutations}
\label{sec:proper mutations}

The following definition can be viewed as a ``proper upgrade'' of the notion of quiver mutation to COQs. 

\begin{definition}
\label{def:proper mutation}
 A \emph{proper mutation} in a COQ $Q$ is a mutation at a proper vertex $j$, accompanied by the following modification of the cyclic ordering. 
 Let $\In(j)=\In_Q(j)$ and $\Out(j)=\Out_Q(j)$ be the sets from Definition~\ref{def:IOsets} for the original COQ $Q$. 
 (Note that after the mutation, the roles of $\In(j)$ and $\Out(j)$ will get interchanged, since all arrows incident to $j$ are reversed by the mutation.) 
 In the new cyclic ordering, the placement of all vertices besides $j$ remains the same, whereupon $j$ moves clockwise past all the vertices in $\Out(j)$ without passing any vertices in $\In(j)$. 
 In other words, we place $j$ so that the no-left-turn rule is satisfied at~$j$ in the mutated COQ~$\mu_j(Q)$, 
 keeping $j$ a proper vertex. 
 This placement of $j$ is defined canonically up to wiggle equivalence. 
 In~what follows, when dealing with a proper mutation $\mu_j$ of a COQ~$Q$,
 we denote by $\mu_j(Q)$ the COQ described above, i.e., the quiver $\mu_j(Q)$ whose cyclic ordering is determined,
 up to wiggle equivalence, by the above rule. 
\end{definition}

\begin{remark}
\label{rem:proper mutation involution}
As mentioned above, if $j$ is a proper vertex in a COQ~$Q$, then $j$ is also proper in the mutated COQ~$\mu_j(Q)$. 
Furthermore, a proper mutation at $j$ in the COQ $\mu_j(Q)$ recovers the original COQ~$Q$,
up to wiggle equivalence.
\end{remark}

\begin{example}
\label{eg:3-vert-always-proper}
Let $Q$ be a 3-vertex quiver. 
We can always assign the labels $a, b, c$ to the vertices of~$Q$ so that 
$Q$ would contain $x\ge0$ arrows $a \rightarrow b$
 and $y\ge0$ arrows $b \rightarrow c$; cf.\ Example~\ref{eg:3 vert quiver and B mat}.  
 For the cyclic ordering $(a,b,c)$, all three vertices in $Q$ are proper. 
 A (proper) mutation at each of these vertices produces a new COQ with the reversed cyclic ordering $(a,c,b)$. 
Cf.\ Proposition~\ref{pr:3vertex proper}. 
\end{example}

Our next goal is to show that the notion of proper mutation is well defined 
at the level of wiggle equivalence classes of cyclically ordered quivers.

\begin{proposition}
\label{prop:proper mutation for wiggle equivalent}
 Let $(Q,\sigma)$ and $(Q,\sigma')$ be wiggle equivalent COQs. 
Suppose that a vertex~$j$ is proper in both $(Q,\sigma)$ and~$(Q,\sigma')$ 
(cf. Remark~\ref{rem:wiggles dont respect}).
Then the COQs $\mu_j(Q,\sigma)$ and~$\mu_j(Q,\sigma')$ are wiggle equivalent. 
\end{proposition}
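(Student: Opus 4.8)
The plan is to reduce the claim to the case of a single wiggle and then to analyze how a proper mutation at~$j$ interacts with one wiggle. First I would invoke Lemma~\ref{lem:preserve j proper wiggles}: since $(Q,\sigma)$ and $(Q,\sigma')$ are wiggle equivalent and $j$ is proper in both, there is a sequence of wiggles $\sigma=\sigma_0,\sigma_1,\dots,\sigma_k=\sigma'$ in which $j$ is proper at every intermediate step. Consecutive orderings $\sigma_i$ and $\sigma_{i+1}$ differ by a single wiggle and have $j$ proper at both ends. Because wiggle equivalence is transitive, it suffices to treat one such step; that is, I may assume $\sigma$ and $\sigma'$ differ by a single wiggle $w=(ab)$ with $j$ proper in both $\sigma$ and~$\sigma'$.

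Next I would record how $\mu_j$ acts on a cyclic ordering. By Definition~\ref{def:proper mutation} and Remark~\ref{rem:In-Out-proper}, when $j$ is proper the ordering reads clockwise as $\In(j)$, then $j$, then $\Out(j)$ (with vertices not adjacent to~$j$ interspersed), and $\mu_j$ keeps all vertices other than~$j$ in place while sliding $j$ clockwise across the entire block $\Out(j)$ into the ``far gap'' between $\Out(j)$ and $\In(j)$; its exact landing spot among the non-adjacent vertices in that gap is immaterial up to wiggles. Since the underlying quiver $\mu_j(Q)$ is the same for both $\sigma$ and $\sigma'$, I only need to exhibit wiggles in $\mu_j(Q)$ relating the two resulting orderings.

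I would then split into two cases. If $j\in\{a,b\}$, say $w=(jb)$ with $b$ not adjacent to~$j$, then deleting~$j$ from $\sigma$ and from $\sigma'=(jb)\sigma$ yields the \emph{same} ordering of the remaining vertices, so $\mu_j(\sigma)$ and $\mu_j(\sigma')$ are both obtained by reinserting $j$ into the same far gap, and are therefore wiggle equivalent. If $j\notin\{a,b\}$, the swap $(ab)$ only changes the relative order of $a$ and $b$ among the non-$j$ vertices; placing $j$ identically in the common far gap, the orderings $\mu_j(\sigma)$ and $\mu_j(\sigma')$ will differ by exactly the transposition $(ab)$, provided $(ab)$ is still a legal wiggle in $\mu_j(Q)$ and provided $j$ is not slid between $a$ and~$b$.

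The main obstacle is precisely this proviso. Mutation at~$j$ creates a new arrow between $a$ and $b$ exactly when one of them lies in $\In(j)$ and the other in $\Out(j)$, which would make $(ab)$ illegal in $\mu_j(Q)$. The key step is to rule this configuration out using that $j$ is proper in \emph{both} orderings: if $a\in\In(j)$ and $b\in\Out(j)$ were consecutive in~$\sigma$, properness forces $j$ to lie between them on one side of the circle, so $a$ and $b$ must be adjacent on the far side; swapping them then produces a left turn $a\to j\to b$, making $j$ non-proper in~$\sigma'$, a contradiction. Hence $a$ and $b$ lie on the same side of~$j$ (both in $\In(j)$, both in $\Out(j)$, or at least one non-adjacent to~$j$), no new arrow between them is created, and $j$ is never inserted between them. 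Consequently $(ab)$ remains a valid wiggle in $\mu_j(Q)$ relating $\mu_j(\sigma)$ and $\mu_j(\sigma')$. Chaining the single-wiggle steps along the sequence from Lemma~\ref{lem:preserve j proper wiggles} yields the wiggle equivalence of $\mu_j(Q,\sigma)$ and $\mu_j(Q,\sigma')$.
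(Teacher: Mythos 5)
Your proof is correct and follows essentially the same route as the paper's: reduce to a single wiggle via Lemma~\ref{lem:preserve j proper wiggles}, handle the wiggle involving~$j$ by matching placements of~$j$, and for a wiggle $(ab)$ not involving~$j$ use properness of~$j$ in both orderings to rule out an oriented path $a \rightarrow j \rightarrow b$ (or its reverse), so that $a$ and~$b$ stay non-adjacent in $\mu_j(Q)$ and the wiggle commutes with the mutation. In fact you spell out explicitly the contradiction argument that the paper compresses into the single sentence ``since $j$ remains proper after the wiggle, the vertices $u,v$ are not connected by an oriented path passing through~$j$.''
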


\begin{proof}
By Lemma~\ref{lem:preserve j proper wiggles}, we may assume that $(Q,\sigma)$ and~$(Q,\sigma')$ 
are related by a single wiggle. 
If the wiggle involves the vertex~$j$, then we may choose the placements of~$j$ 
within the cyclic orderings of mutated quivers so that $\mu_j(Q,\sigma) = \mu_j(Q,\sigma')$. 
Now suppose that $(Q,\sigma)$ and~$(Q,\sigma')$ are related by a single wiggle $(uv)$ not involving~$j$. 
Since $j$ remains proper after the wiggle, the vertices $u,v$ 
are not connected by an oriented path passing through~$j$. 
Therefore $u$ and $v$ remain non-adjacent in~$\mu_j(Q)$, and the wiggle $(u v)$ commutes with~$\mu_j$.
\end{proof}

\begin{definition}
\label{def:wiggle class mutation}
We say that $\mu_j$ is a \emph{proper mutation} for a wiggle equivalence class~$\wiggleQ$
if the vertex~$j$ is proper in~$\wiggleQ$, 
i.e., if $j$ is a proper vertex in some COQ $Q\in\wiggleQ$. \linebreak[3] 
We~then define $\mu_j(\wiggleQ)$ to be the wiggle equivalence class of~$\mu_j(Q)$.
By Proposition~\ref{prop:proper mutation for wiggle equivalent}, 
the wiggle equivalence class $\mu_j(\wiggleQ)$ is well defined, i.e., it does not depend on the choice of 
a COQ $Q\in\wiggleQ$ in which $j$ is a proper vertex.
\end{definition}

\begin{definition}
 A \emph{proper mutation equivalence class} of a COQ $Q$ consists of all COQs 
 that can be obtained from $Q$ by repeated proper mutations and wiggles.
\end{definition}

In Examples ~\ref{eg:A3}--\ref{eg:D4} below,
we begin with a quiver~$Q$ of type $A_3$, $A_4$,~or~$D_4$. 
Since $Q$ is a tree quiver, all its cyclic orderings are wiggle equivalent. 
Furthermore, each vertex is proper in the wiggle equivalence class~$\wiggleQ$ of~$Q$, cf.\ Example~\ref{eg:tree-proper}.

\begin{example}
\label{eg:A3}
Consider the COQ $Q=(a \rightarrow b \rightarrow c)$ of type~$A_3$ with the cyclic ordering $\sigma=(a,b,c)$. 
Its proper mutation class is shown in Figure~\ref{fig:mu class A3}. 
Cf.\ also Figure~\ref{fig:mu class A3 abstracted}, which shows (on the left) 
the same class with COQs identified up to relabeling.

The COQ $\mu_b(Q,\sigma)$ (see the leftmost quiver in Figure~\ref{fig:mu class A3}) 
is the oriented $3$-cycle $Q'=(c \rightarrow b \rightarrow a\rightarrow c)$ with the cyclic ordering $\sigma'=(a,c,b)$. 
Taking instead the same quiver~$Q'$ with the cyclic ordering $\sigma=(a,b,c)$, 
we get a COQ $(Q',\sigma)$ that does not lie in the proper mutation class of~$(Q,\sigma)$. 
Indeed, no vertex~is proper in $(Q',\sigma)$ and no wiggles are possible, so it is the only COQ in its proper mutation class.
\end{example}

\enlargethispage{15pt}

\begin{figure}[ht]
\vspace{3pt}

{
\newcommand{\radius}{0.4cm} 
\newcommand{\locsize}{1cm} 
\newcommand{\dbllocsize}{2.5cm} 
\newcommand{\mkwigglebox}[6]{
\draw (#2,#3) node [rectangle, fill=white, text opacity=0.0] (#1) {\vrule width \locsize height \locsize} ;
\draw (#1) node [rectangle, draw, text opacity=0.0] {\vrule width \locsize height \locsize} ;
\filldraw[black] (#1)++(135:\radius) circle (1.5pt) node[left] {$\scriptstyle #4$} coordinate (#4);
\filldraw[black] (#1)++(45:\radius) circle (1.5pt) node[right] {$\scriptstyle #5$} coordinate (#5) ;
\filldraw[black] (#1)++(-135:\radius) circle (1.5pt) node[ left] {$\scriptstyle #6$} coordinate (#6) ;
\draw[black, dashed, decoration={markings, mark=at position 0.875 with {\arrow{<}}}, opacity=0.5, postaction={decorate}] (#1) circle (\radius);
}

\newcommand{\mkwigglerect}[3]{
\draw (#2,#3) node  [rectangle, fill=white, text opacity=0.0] (#1) {\vrule width \dbllocsize height \locsize} ;
\draw (#1) node [rectangle, draw, text opacity=0.0] {\vrule width \dbllocsize height \locsize} ;
\draw (#2-0.7*\locsize,#3) node (#1r) {};
\draw (#2 + 0.7*\locsize,#3) node (#1l) {};
\filldraw[black] (#1r)++(135:\radius) circle (1.5pt) node[left] {$\scriptstyle a$} coordinate (a);
\filldraw[black] (#1r)++(45:\radius) circle (1.5pt) node[right] {$\scriptstyle b$} coordinate (b);
\filldraw[black] (#1r)++(-135:\radius) circle (1.5pt) node[left]  {$\scriptstyle c$} coordinate (c);
\draw[black, dashed, decoration={markings, mark=at position 0.875 with {\arrow{<}}}, opacity=0.5, postaction={decorate}] (#1r) circle (\radius);
}

\newcommand{\mkwiggleverts}[4]{
\filldraw[black] (#1)++(135:\radius) circle (1.5pt) node[ left] {$\scriptstyle #2$} coordinate (#2);
\filldraw[black] (#1)++(45:\radius) circle (1.5pt) node[ right] {$\scriptstyle #3$} coordinate (#3);
\filldraw[black] (#1)++(-135:\radius) circle (1.5pt) node[ left] {$\scriptstyle #4$} coordinate (#4);
\draw[black, dashed, decoration={markings, mark=at position 0.875 with {\arrow{<}}}, opacity=0.5, postaction={decorate}] (#1) circle (\radius);
}

\begin{tikzpicture}
\mkwigglebox{loc1}{-5*\locsize}{0cm}{a}{c}{b}

\draw[black, thick, -{stealth}, shorten >=3pt, shorten <= 3pt ] (a) -- (c);
\draw[black, thick, -{stealth}, shorten >=3pt, shorten <= 3pt ] (c) -- (b);
\draw[black, thick, -{stealth}, shorten >=3pt, shorten <= 3pt ] (b) -- (a);

\mkwigglerect{loc2}{-2*\locsize}{0cm} 
\draw[black, thick, -{stealth}, shorten >=3pt, shorten <= 3pt ] (a) -- (b);
\draw[black, thick, -{stealth}, shorten >=3pt, shorten <= 3pt ] (b) -- (c);
\mkwiggleverts{loc2l}{c}{b}{a}
\draw[black, thick, -{stealth}, shorten >=3pt, shorten <= 3pt ] (a) -- (b);
\draw[black, thick, -{stealth}, shorten >=3pt, shorten <= 3pt ] (b) -- (c);

\mkwigglerect{loc3}{0cm}{2.5*\locsize} 
\draw[black, thick, -{stealth}, shorten >=3pt, shorten <= 3pt ] (b) -- (a);
\draw[black, thick, -{stealth}, shorten >=3pt, shorten <= 3pt ] (b) -- (c);
\mkwiggleverts{loc3l}{c}{b}{a}
\draw[black, thick, -{stealth}, shorten >=3pt, shorten <= 3pt ] (b) -- (a);
\draw[black, thick, -{stealth}, shorten >=3pt, shorten <= 3pt ] (b) -- (c);

\mkwigglerect{loc4}{0cm}{-2.5*\locsize}
\draw[black, thick, -{stealth}, shorten >=3pt, shorten <= 3pt ] (a) -- (b);
\draw[black, thick, -{stealth}, shorten >=3pt, shorten <= 3pt ] (c) -- (b);
\mkwiggleverts{loc4l}{c}{b}{a}
\draw[black, thick, -{stealth}, shorten >=3pt, shorten <= 3pt ] (a) -- (b);
\draw[black, thick, -{stealth}, shorten >=3pt, shorten <= 3pt ] (c) -- (b);

\mkwigglerect{loc5}{2*\locsize}{0cm}
\draw[black, thick, -{stealth}, shorten >=3pt, shorten <= 3pt ] (b) -- (a);
\draw[black, thick, -{stealth}, shorten >=3pt, shorten <= 3pt ] (c) -- (b);
\mkwiggleverts{loc5l}{c}{b}{a}
\draw[black, thick, -{stealth}, shorten >=3pt, shorten <= 3pt ] (b) -- (a);
\draw[black, thick, -{stealth}, shorten >=3pt, shorten <= 3pt ] (c) -- (b);

\mkwigglebox{loc6}{5*\locsize}{0cm}{a}{b}{c}
\draw[black, thick, -{stealth}, shorten >=3pt, shorten <= 3pt ] (a) -- (b);
\draw[black, thick, -{stealth}, shorten >=3pt, shorten <= 3pt ] (b) -- (c);
\draw[black, thick, -{stealth}, shorten >=3pt, shorten <= 3pt ] (c) -- (a);

\draw[black, thick, {stealth}-{stealth}, shorten >=3pt, shorten <= 3pt ] (loc1) -- node[above]{$\mu_b$} (loc2);
\draw[black, thick, {stealth}-{stealth}, shorten >=3pt, shorten <= 3pt ] (loc2) -- node[above left]{$\mu_a$} (loc3);
\draw[black, thick, {stealth}-{stealth}, shorten >=3pt, shorten <= 3pt ] (loc2) -- node[below left]{$\mu_c$} (loc4);
\draw[black, thick, {stealth}-{stealth}, shorten >=3pt, shorten <= 3pt ] (loc3) -- node[above right]{$\mu_c$} (loc5);
\draw[black, thick, {stealth}-{stealth}, shorten >=3pt, shorten <= 3pt ] (loc3) -- node[right]{$\mu_b$} (loc4);
\draw[black, thick, {stealth}-{stealth}, shorten >=3pt, shorten <= 3pt ] (loc4) -- node[below right]{$\mu_a$} (loc5);
\draw[black, thick, {stealth}-{stealth}, shorten >=3pt, shorten <= 3pt ] (loc6) -- node[above]{$\mu_b$} (loc5);

\draw[black, thick, {stealth}-{stealth}, shorten >=3pt, shorten <= 3pt ] (loc1) -- node[above left]{$\mu_c$} (-6*\locsize, -1.5*\locsize) node[left] {$\cdots$};
\draw[black, thick, {stealth}-{stealth}, shorten >=3pt, shorten <= 3pt ] (loc1) -- node[above right]{$\mu_a$} (-6*\locsize, 1.5*\locsize) node[left] {$\cdots$};

\draw[black, thick, {stealth}-{stealth}, shorten >=3pt, shorten <= 3pt ] (loc6) -- node[above left]{$\mu_a$} (6*\locsize, 1.5*\locsize) node[right] {$\cdots$};
\draw[black, thick, {stealth}-{stealth}, shorten >=3pt, shorten <= 3pt ] (loc6) -- node[above right]{$\mu_c$} (6*\locsize, -1.5*\locsize) node[right] {$\cdots$};

\end{tikzpicture}
}
\vspace{3pt}
\caption{The proper mutation class of a COQ of type~$A_3$ discussed in Example~\ref{eg:A3}. 
The branches marked ``$\cdots$'' lead to isomorphic copies of the middle ``diamond''.}
\label{fig:mu class A3}
\end{figure}

In the rest of this section, we do not distinguish between isomorphic COQs, 
i.e., COQs that are related by an isomorphism of quivers that identifies the respective cyclic orderings.  
Put differently, we consider COQs up to relabeling of their vertices.   

\begin{figure}[ht]
\vspace{7pt}
{
\newcommand{\radius}{0.4cm} 
\newcommand{\locsize}{0.8cm} 
\newcommand{\dbllocsize}{2cm} 
\newcommand{\mkwigglebox}[6]{
\draw (#2,#3) node  [rectangle, fill=white, text opacity=0.0] (#1) {\vrule width \locsize height \locsize} ;
\draw (#1) node [rectangle, draw, text opacity=0.0] {\vrule width \locsize height \locsize} ;
\filldraw[black] (#1)++(135:\radius) circle (1.5pt) node[left] {} coordinate (#4);
\filldraw[black] (#1)++(45:\radius) circle (1.5pt) node[right] {} coordinate (#5) ;
\filldraw[black] (#1)++(-135:\radius) circle (1.5pt) node[ left] {} coordinate (#6) ;
\draw[black, dashed, decoration={markings, mark=at position 0.875 with {\arrow{<}}}, opacity=0.5, postaction={decorate}] (#1) circle (\radius);
}

\newcommand{\mkredwigglebox}[6]{
\draw (#2,#3) node  [rectangle, fill=white, text opacity=0.0] (#1) {\vrule width \locsize height \locsize} ;
\draw (#1) node [rectangle, draw, text opacity=0.0] {\vrule width \locsize height \locsize} ;
\filldraw[red] (#1)++(135:\radius) circle (1.5pt) node[left] {} coordinate (#4);
\filldraw[red] (#1)++(45:\radius) circle (1.5pt) node[right] {} coordinate (#5) ;
\filldraw[red] (#1)++(-135:\radius) circle (1.5pt) node[ left] {} coordinate (#6) ;
\draw[black, dashed, decoration={markings, mark=at position 0.875 with {\arrow{<}}}, opacity=0.5, postaction={decorate}] (#1) circle (\radius);
}

\newcommand{\mkwigglerect}[3]{
\draw (#2,#3) node  [rectangle, fill=white, text opacity=0.0] (#1) {\vrule width \dbllocsize height \locsize} ;
\draw (#1) node [rectangle, draw, text opacity=0.0] {\vrule width \dbllocsize height \locsize} ;
\draw (#2-0.7*\locsize,#3) node (#1r) {};
\draw (#2 + 0.7*\locsize,#3) node (#1l) {};
\filldraw[black] (#1r)++(135:\radius) circle (1.5pt) node[left] {} coordinate (a);
\filldraw[black] (#1r)++(45:\radius) circle (1.5pt) node[right] {} coordinate (b);
\filldraw[black] (#1r)++(-135:\radius) circle (1.5pt) node[left]  {} coordinate (c);
\draw[black, dashed, decoration={markings, mark=at position 0.875 with {\arrow{<}}}, opacity=0.5, postaction={decorate}] (#1r) circle (\radius);
}

\newcommand{\mkwiggleverts}[4]{
\filldraw[black] (#1)++(135:\radius) circle (1.5pt) node[ left] {} coordinate (#2);
\filldraw[black] (#1)++(45:\radius) circle (1.5pt) node[ right] {} coordinate (#3);
\filldraw[black] (#1)++(-135:\radius) circle (1.5pt) node[ left] {} coordinate (#4);
\draw[black, dashed, decoration={markings, mark=at position 0.875 with {\arrow{<}}}, opacity=0.5, postaction={decorate}] (#1) circle (\radius);
}

\begin{tikzpicture}
\mkwigglebox{loc1}{-5*\locsize}{0cm}{a}{c}{b}

\draw[black, thick, -{stealth}, shorten >=3pt, shorten <= 3pt ] (a) -- (c);
\draw[black, thick, -{stealth}, shorten >=3pt, shorten <= 3pt ] (c) -- (b);
\draw[black, thick, -{stealth}, shorten >=3pt, shorten <= 3pt ] (b) -- (a);

\mkwigglerect{loc2}{-2*\locsize}{0cm} 
\draw[black, thick, -{stealth}, shorten >=3pt, shorten <= 3pt ] (a) -- (b);
\draw[black, thick, -{stealth}, shorten >=3pt, shorten <= 3pt ] (b) -- (c);
\mkwiggleverts{loc2l}{c}{b}{a}
\draw[black, thick, -{stealth}, shorten >=3pt, shorten <= 3pt ] (a) -- (b);
\draw[black, thick, -{stealth}, shorten >=3pt, shorten <= 3pt ] (b) -- (c);

\mkwigglebox{loc3}{1cm}{1.5*\locsize}{a}{b}{c} 
\draw[black, thick, -{stealth}, shorten >=3pt, shorten <= 3pt ] (b) -- (a);
\draw[black, thick, -{stealth}, shorten >=3pt, shorten <= 3pt ] (b) -- (c);

\mkwigglebox{loc4}{1cm}{-1.5*\locsize}{a}{b}{c}
\draw[black, thick, -{stealth}, shorten >=3pt, shorten <= 3pt ] (a) -- (b);
\draw[black, thick, -{stealth}, shorten >=3pt, shorten <= 3pt ] (c) -- (b);

\mkredwigglebox{loc0}{6*\locsize}{0*\locsize}{a}{b}{c}
\draw[black, thick, -{stealth}, shorten >=3pt, shorten <= 3pt ] (a) -- (c);
\draw[black, thick, -{stealth}, shorten >=3pt, shorten <= 3pt ] (c) -- (b);
\draw[black, thick, -{stealth}, shorten >=3pt, shorten <= 3pt ] (b) -- (a);

\draw[black, thick, {stealth}-{stealth}, shorten >=3pt, shorten <= 3pt ] (loc1) -- node[above]{} (loc2);
\draw[black, thick, {stealth}-{stealth}, shorten >=3pt, shorten <= 3pt] (loc2) -- node[above left]{} (loc3);
\draw[black, thick, {stealth}-{stealth}, shorten >=3pt, shorten <= 3pt ] (loc2) -- node[below left]{} (loc4);
\draw[black, thick, {stealth}-{stealth}, shorten >=3pt, shorten <= 3pt ] (loc3) -- node[right]{} (loc4);

\end{tikzpicture}
}
\vspace{3pt}
\caption{Proper mutation classes of COQs of type $A_3$. 
Each box contains a wiggle equivalence class. 
Double-sided arrows represent proper mutations.
The~red vertices are not proper.}
\label{fig:mu class A3 abstracted}
\end{figure}

\vspace{-3pt}

\begin{example}
\label{eg:A4}
Consider the quiver $Q = (a \rightarrow b \rightarrow c \rightarrow d)$ of type~$A_4$, 
with the cyclic ordering $\sigma=(a,b,c,d)$.
The COQs in the proper mutation class of $(Q,\sigma)$, viewed up to relabeling  
and wiggle equivalence, are depicted in Figure~\ref{fig:mu class A4} on the left.

The quiver $\mu_b(Q)$ has just one other wiggle equivalence class, 
with a representative cyclic ordering $(a,b,c,d)$. In this COQ, the only proper vertex is the sink~$d$. 
Mutating at $d$ gives a similar COQ which again has only $d$, now the source, as a proper vertex. 
\end{example}

\begin{figure}[ht]{
\vspace{5pt}

\newcommand{\radius}{0.4cm} 
\newcommand{\locsize}{0.8cm} 
\newcommand{\mkwigglebox}[3]{
\draw (#2,#3) node [rectangle, fill=black, opacity=0] (#1) {\vrule width \locsize height \locsize} ;
\draw (#1) node [rectangle, draw, text opacity=0.0] {\vrule width \locsize height \locsize} ;
\filldraw[black] (#1)++(135:\radius) circle (1.5pt) node[left] {} coordinate (a);
\filldraw[black] (#1)++(45:\radius) circle (1.5pt) node[right] {} coordinate (b) ;
\filldraw[black] (#1)++(-45:\radius) circle (1.5pt) node[ left] {} coordinate (c) ;
\filldraw[black] (#1)++(-135:\radius) circle (1.5pt) node[ left] {} coordinate (d) ;
\draw[black, dashed, decoration={markings, mark=at position 0 with {\arrow{<}}}, opacity=0.5, postaction={decorate}] (#1) circle (\radius);
}

\newcommand{\mkredwigglebox}[3]{
\draw (#2,#3) node  [rectangle, fill=white, text opacity=0.0] (#1) {\vrule width \locsize height \locsize} ;
\draw (#1) node [rectangle, draw, text opacity=0.0] {\vrule width \locsize height \locsize} ;
\filldraw[red] (#1)++(135:\radius) circle (1.5pt) node[left] {} coordinate (a);
\filldraw[red] (#1)++(45:\radius) circle (1.5pt) node[right] {} coordinate (b) ;
\filldraw[red] (#1)++(-45:\radius) circle (1.5pt) node[ left] {} coordinate (c) ;
\filldraw[black] (#1)++(-135:\radius) circle (1.5pt) node[ left] {} coordinate (d) ;
\draw[black, dashed, decoration={markings, mark=at position 0 with {\arrow{<}}}, opacity=0.5, postaction={decorate}] (#1) circle (\radius);
}

\begin{tikzpicture}
\mkwigglebox{loc1}{-2*\locsize}{2*\locsize}

\draw[black, thick, -{stealth}, shorten >=3pt, shorten <= 3pt ] (b) -- (a);
\draw[black, thick, -{stealth}, shorten >=3pt, shorten <= 3pt ] (b) -- (c);
\draw[black, thick, -{stealth}, shorten >=3pt, shorten <= 3pt ] (d) -- (a);

\mkwigglebox{loc2}{0*\locsize}{2*\locsize}

\draw[black, thick, -{stealth}, shorten >=3pt, shorten <= 3pt ] (a) -- (b);
\draw[black, thick, -{stealth}, shorten >=3pt, shorten <= 3pt ] (c) -- (b);
\draw[black, thick, -{stealth}, shorten >=3pt, shorten <= 3pt ] (d) -- (a);

\mkwigglebox{loc3}{-2*\locsize}{0*\locsize}

\draw[black, thick, -{stealth}, shorten >=3pt, shorten <= 3pt ] (a) -- (b);
\draw[black, thick, -{stealth}, shorten >=3pt, shorten <= 3pt ] (b) -- (c);
\draw[black, thick, -{stealth}, shorten >=3pt, shorten <= 3pt ] (a) -- (d);

\mkwigglebox{loc4}{0*\locsize}{0*\locsize}

\draw[black, thick, -{stealth}, shorten >=3pt, shorten <= 3pt ] (a) -- (b);
\draw[black, thick, -{stealth}, shorten >=3pt, shorten <= 3pt ] (b) -- (c);
\draw[black, thick, -{stealth}, shorten >=3pt, shorten <= 3pt ] (d) -- (a);

\mkwigglebox{loc5}{2*\locsize}{0*\locsize}

\draw[black, thick, -{stealth}, shorten >=3pt, shorten <= 3pt ] (a) -- (b);
\draw[black, thick, -{stealth}, shorten >=3pt, shorten <= 3pt ] (b) -- (c);
\draw[black, thick, -{stealth}, shorten >=3pt, shorten <= 3pt ] (c) -- (a);
\draw[black, thick, -{stealth}, shorten >=3pt, shorten <= 3pt ] (d) -- (a);

\mkwigglebox{loc6}{0*\locsize}{-2*\locsize}

\draw[black, thick, -{stealth}, shorten >=3pt, shorten <= 3pt ] (a) -- (b);
\draw[black, thick, -{stealth}, shorten >=3pt, shorten <= 3pt ] (b) -- (c);
\draw[black, thick, -{stealth}, shorten >=3pt, shorten <= 3pt ] (b) -- (d);
\draw[black, thick, -{stealth}, shorten >=3pt, shorten <= 3pt ] (d) -- (a);

\mkredwigglebox{loc7}{6*\locsize}{2*\locsize}

\draw[black, thick, -{stealth}, shorten >=3pt, shorten <= 3pt ] (c) -- (b);
\draw[black, thick, -{stealth}, shorten >=3pt, shorten <= 3pt ] (a) -- (c);
\draw[black, thick, -{stealth}, shorten >=3pt, shorten <= 3pt ] (b) -- (a);
\draw[black, thick, -{stealth}, shorten >=3pt, shorten <= 3pt ] (d) -- (a);

\mkredwigglebox{loc8}{6*\locsize}{-0*\locsize}

\draw[black, thick, -{stealth}, shorten >=3pt, shorten <= 3pt ] (c) -- (b);
\draw[black, thick, -{stealth}, shorten >=3pt, shorten <= 3pt ] (a) -- (c);
\draw[black, thick, -{stealth}, shorten >=3pt, shorten <= 3pt ] (b) -- (a);
\draw[black, thick, -{stealth}, shorten >=3pt, shorten <= 3pt ] (a) -- (d);

\draw[black, thick, {stealth}-{stealth}, shorten >=3pt, shorten <= 3pt ] (loc1) -- node[above]{} (loc2);
\draw[black, thick, {stealth}-{stealth}, shorten >=3pt, shorten <= 3pt ] (loc1) -- node[above]{} (loc3);
\draw[black, thick, {stealth}-{stealth}, shorten >=3pt, shorten <= 3pt ] (loc2) -- node[above]{} (loc4);
\draw[black, thick, {stealth}-{stealth}, shorten >=3pt, shorten <= 3pt ] (loc2) -- node[above]{} (loc5);
\draw[black, thick, {stealth}-{stealth}, shorten >=3pt, shorten <= 3pt ] (loc3) -- node[above]{} (loc4);
\draw[black, thick, {stealth}-{stealth}, shorten >=3pt, shorten <= 3pt ] (loc3) -- node[above]{} (loc6);
\draw[black, thick, {stealth}-{stealth}, shorten >=3pt, shorten <= 3pt ] (loc4) -- node[above]{} (loc5);
\draw[black, thick, {stealth}-{stealth}, shorten >=3pt, shorten <= 3pt ] (loc4) -- node[above]{} (loc6);
\draw[black, thick, {stealth}-{stealth}, shorten >=3pt, shorten <= 3pt ] (loc5) -- node[above]{} (loc6);

\draw[black, thick, {stealth}-{stealth}, shorten >=3pt, shorten <= 3pt ] (loc7) -- node[above]{} (loc8);

\end{tikzpicture}
}
\vspace{10pt}
\caption{Proper mutation classes of COQs of type $A_4$, treated up to wiggle equivalence. 
Red vertices are not proper.}
\label{fig:mu class A4}
\end{figure}

\vspace{-10pt}

\begin{example}
\label{eg:D4}
Consider the oriented 4-cycle quiver $Q$ of type $D_4$, with arrows 
\begin{equation*}
a \rightarrow b \rightarrow c \rightarrow d\rightarrow a. 
\end{equation*}
This quiver has three wiggle equivalence classes of cyclic orderings, cf.\ Example~\ref{eg:6-orderings-D4},
with representatives 
$\sigma_1=(a, b, c, d)$, $\sigma_2=(a,b,d,c)$, $\sigma_3=(a, d, c, b)$.
(These have winding numbers 1, 2, and~3, respectively.) 
The  COQs in the proper mutation class of $(Q,\sigma_1)$, 
viewed up to relabeling and wiggle equivalence, are shown on the left of Figure~\ref{fig:mu class D4}.
Every vertex in each of these quivers is proper.

The COQ $(Q,\sigma_3)$ has no proper vertices and cannot be wiggled.
The COQ $(Q,\sigma_2)$ can be wiggled so that any given vertex is proper. 
Any single proper mutation applied to $(Q,\sigma_2)$ gives a COQ isomorphic to $(\mu_a(Q),\sigma_3)$. 
The only proper vertex in it is~$a$.

There is one additional proper mutation class of COQs of type~$D_4$ (up to wiggles and relabeling), 
represented by $(\mu_a(Q),\sigma_2)$. 
Every vertex of this COQ is not proper.
\end{example}

\begin{figure}[ht]
\vspace{5pt}
{
\newcommand{\radius}{0.4cm} 
\newcommand{\locsize}{0.8cm} 
\newcommand{\mkwigglebox}[3]{
\draw (#2,#3) node [rectangle, fill=black, opacity=0] (#1) {\vrule width \locsize height \locsize} ;
\draw (#1) node [rectangle, draw, text opacity=0.0] {\vrule width \locsize height \locsize} ;
\filldraw[black] (#1)++(135:\radius) circle (1.5pt) node[left] {} coordinate (a);
\filldraw[black] (#1)++(45:\radius) circle (1.5pt) node[right] {} coordinate (b) ;
\filldraw[black] (#1)++(-45:\radius) circle (1.5pt) node[ left] {} coordinate (c) ;
\filldraw[black] (#1)++(-135:\radius) circle (1.5pt) node[ left] {} coordinate (d) ;
\draw[black, dashed, decoration={markings, mark=at position 0 with {\arrow{<}}}, opacity=0.5, postaction={decorate}] (#1) circle (\radius);
}

\newcommand{\mkcolorwigglebox}[7]{
\draw (#2,#3) node  [rectangle, fill=white, text opacity=0.0] (#1) {\vrule width \locsize height \locsize} ;
\draw (#1) node [rectangle, draw, text opacity=0.0] {\vrule width \locsize height \locsize} ;
\filldraw[#4] (#1)++(135:\radius) circle (1.5pt) node[left] {} coordinate (a);
\filldraw[#5] (#1)++(45:\radius) circle (1.5pt) node[right] {} coordinate (b) ;
\filldraw[#6] (#1)++(-45:\radius) circle (1.5pt) node[ left] {} coordinate (c) ;
\filldraw[#7] (#1)++(-135:\radius) circle (1.5pt) node[ left] {} coordinate (d) ;
\draw[black, dashed, decoration={markings, mark=at position 0 with {\arrow{<}}}, opacity=0.5, postaction={decorate}] (#1) circle (\radius);
}

\begin{tikzpicture}
\mkwigglebox{loc1}{-3*\locsize}{3*\locsize}

\draw[black, thick, -{stealth}, shorten >=3pt, shorten <= 3pt ] (a) -- (b);
\draw[black, thick, -{stealth}, shorten >=3pt, shorten <= 3pt ] (a) -- (c);
\draw[black, thick, -{stealth}, shorten >=3pt, shorten <= 3pt ] (a) -- (d);

\mkwigglebox{loc2}{0*\locsize}{3*\locsize}

\draw[black, thick, -{stealth}, shorten >=3pt, shorten <= 3pt ] (a) -- (b);
\draw[black, thick, -{stealth}, shorten >=3pt, shorten <= 3pt ] (a) -- (c);
\draw[black, thick, -{stealth}, shorten >=3pt, shorten <= 3pt ] (d) -- (a);

\mkwigglebox{loc3}{-3*\locsize}{0*\locsize}

\draw[black, thick, -{stealth}, shorten >=3pt, shorten <= 3pt ] (b) -- (a);
\draw[black, thick, -{stealth}, shorten >=3pt, shorten <= 3pt ] (c) -- (a);
\draw[black, thick, -{stealth}, shorten >=3pt, shorten <= 3pt ] (d) -- (a);

\mkwigglebox{loc4}{0*\locsize}{0*\locsize}

\draw[black, thick, -{stealth}, shorten >=3pt, shorten <= 3pt ] (a) -- (b);
\draw[black, thick, -{stealth}, shorten >=3pt, shorten <= 3pt ] (c) -- (a);
\draw[black, thick, -{stealth}, shorten >=3pt, shorten <= 3pt ] (d) -- (a);

\mkwigglebox{loc5}{2.5*\locsize}{1.5*\locsize}

\draw[black, thick, -{stealth}, shorten >=3pt, shorten <= 3pt ] (a) -- (c);
\draw[black, thick, -{stealth}, shorten >=3pt, shorten <= 3pt ] (b) -- (c);
\draw[black, thick, -{stealth}, shorten >=3pt, shorten <= 3pt ] (d) -- (a);
\draw[black, thick, -{stealth}, shorten >=3pt, shorten <= 3pt ] (d) -- (b);
\draw[black, thick, -{stealth}, shorten >=3pt, shorten <= 3pt ] (c) -- (d);

\mkwigglebox{loc6}{4.5*\locsize}{1.5*\locsize}

\draw[black, thick, -{stealth}, shorten >=3pt, shorten <= 3pt ] (a) -- (b);
\draw[black, thick, -{stealth}, shorten >=3pt, shorten <= 3pt ] (b) -- (c);
\draw[black, thick, -{stealth}, shorten >=3pt, shorten <= 3pt ] (c) -- (d);
\draw[black, thick, -{stealth}, shorten >=3pt, shorten <= 3pt ] (d) -- (a);

\mkcolorwigglebox{loc7}{11*\locsize}{3*\locsize}{red}{red}{red}{red}

\draw[black, thick, -{stealth}, shorten >=3pt, shorten <= 3pt ] (a) -- (d);
\draw[black, thick, -{stealth}, shorten >=3pt, shorten <= 3pt ] (b) -- (d);
\draw[black, thick, -{stealth}, shorten >=3pt, shorten <= 3pt ] (c) -- (a);
\draw[black, thick, -{stealth}, shorten >=3pt, shorten <= 3pt ] (c) -- (b);
\draw[black, thick, -{stealth}, shorten >=3pt, shorten <= 3pt ] (d) -- (c);

\mkcolorwigglebox{loc8}{7.75*\locsize}{3*\locsize}{red}{black}{red}{red}

\draw[black, thick, -{stealth}, shorten >=3pt, shorten <= 3pt ] (d) -- (c);
\draw[black, thick, -{stealth}, shorten >=3pt, shorten <= 3pt ] (b) -- (c);
\draw[black, thick, -{stealth}, shorten >=3pt, shorten <= 3pt ] (a) -- (d);
\draw[black, thick, -{stealth}, shorten >=3pt, shorten <= 3pt ] (a) -- (b);
\draw[black, thick, -{stealth}, shorten >=3pt, shorten <= 3pt ] (c) -- (a);

\mkwigglebox{loc9}{7.75*\locsize}{0*\locsize}

\draw[black, thick, -{stealth}, shorten >=3pt, shorten <= 3pt ] (a) -- (b);
\draw[black, thick, -{stealth}, shorten >=3pt, shorten <= 3pt ] (b) -- (d);
\draw[black, thick, -{stealth}, shorten >=3pt, shorten <= 3pt ] (d) -- (c);
\draw[black, thick, -{stealth}, shorten >=3pt, shorten <= 3pt ] (c) -- (a);

\mkcolorwigglebox{loc10}{11*\locsize}{-0*\locsize}{red}{red}{red}{red}

\draw[black, thick, -{stealth}, shorten >=3pt, shorten <= 3pt ] (b) -- (a);
\draw[black, thick, -{stealth}, shorten >=3pt, shorten <= 3pt ] (c) -- (b);
\draw[black, thick, -{stealth}, shorten >=3pt, shorten <= 3pt ] (d) -- (c);
\draw[black, thick, -{stealth}, shorten >=3pt, shorten <= 3pt ] (a) -- (d);

\draw[black, thick, {stealth}-{stealth}, shorten >=3pt, shorten <= 3pt ] (loc1) -- node[above]{} (loc2);
\draw[black, thick, {stealth}-{stealth}, shorten >=3pt, shorten <= 3pt ] (loc1) -- node[above]{} (loc3);
\draw[black, thick, {stealth}-{stealth}, shorten >=3pt, shorten <= 3pt ] (loc2) -- node[above]{} (loc4);
\draw[black, thick, {stealth}-{stealth}, shorten >=3pt, shorten <= 3pt ] (loc2) -- node[above]{} (loc5);
\draw[black, thick, {stealth}-{stealth}, shorten >=3pt, shorten <= 3pt ] (loc3) -- node[above]{} (loc4);
\draw[black, thick, {stealth}-{stealth}, shorten >=3pt, shorten <= 3pt ] (loc4) -- node[above]{} (loc5);
\draw[black, thick, {stealth}-{stealth}, shorten >=3pt, shorten <= 3pt ] (loc5) -- node[above]{} (loc6);

\draw[black, thick, {stealth}-{stealth}, shorten >=3pt, shorten <= 3pt ] (loc8) -- node[above]{} (loc9);
\end{tikzpicture}
}
\vspace{10pt}
\caption{Proper mutation classes of COQs of type $D_4$,
considered up to wiggle equivalence. Red vertices are not proper.}
\label{fig:mu class D4}
\end{figure}

We next discuss a couple of examples of quivers of affine types $\tilde A(n_1,n_2)$, cf.~\cite{cats1}. 

\begin{example}
\label{eg: proper mu classes of A(2,1)}
Let $Q$ be a quiver of type $\tilde A(2,1)$ with arrows $a \rightarrow b \rightarrow c$ and $a \rightarrow c$. 
Up to relabeling, there are only two quivers mutation-equivalent to~$Q$.
Each has two cyclic orderings, which fall into $3$ proper mutation classes shown in Figure~\ref{fig:proper mu classes of A(2,1)}.
Fix the cyclic ordering $\sigma=(a,b,c)$.
Then every vertex in $(Q, \sigma)$ is proper. Mutating at $b$ results in the COQ $(\mu_b(Q), (a, c, b))$, where again every vertex is proper.

The other cyclic ordering of $Q$ is $(a,c,b)$. 
In this COQ, only the sink $a$ and the source $c$ are proper.

The other cyclic ordering of $\mu_b(Q)$ is $(a,b,c)$. 
No vertex in this COQ is proper.
\end{example}

\begin{figure}[ht]
\vspace{5pt}

{
\newcommand{\radius}{0.4cm} 
\newcommand{\locsize}{0.8cm} 
\newcommand{\mkwigglebox}[3]{
\draw (#2,#3) node [rectangle, fill=black, opacity=0] (#1) {\vrule width \locsize height \locsize} ;
\draw (#1) node [rectangle, draw, text opacity=0.0] {\vrule width \locsize height \locsize} ;
\filldraw[black] (#1)++(135:\radius) circle (1.5pt) node[left] {} coordinate (a);
\filldraw[black] (#1)++(45:\radius) circle (1.5pt) node[right] {} coordinate (b) ;
\filldraw[black] (#1)++(-135:\radius) circle (1.5pt) node[ left] {} coordinate (c) ;
\draw[black, dashed, decoration={markings, mark=at position 0 with {\arrow{<}}}, opacity=0.5, postaction={decorate}] (#1) circle (\radius);
}

\newcommand{\mkcolorwigglebox}[6]{
\draw (#2,#3) node  [rectangle, fill=white, text opacity=0.0] (#1) {\vrule width \locsize height \locsize} ;
\draw (#1) node [rectangle, draw, text opacity=0.0] {\vrule width \locsize height \locsize} ;
\filldraw[#4] (#1)++(135:\radius) circle (1.5pt) node[left] {} coordinate (a);
\filldraw[#5] (#1)++(45:\radius) circle (1.5pt) node[right] {} coordinate (b) ;
\filldraw[#6] (#1)++(-135:\radius) circle (1.5pt) node[ left] {} coordinate (c) ;
\draw[black, dashed, decoration={markings, mark=at position 0 with {\arrow{<}}}, opacity=0.5, postaction={decorate}] (#1) circle (\radius);
}

\begin{tikzpicture}
\mkwigglebox{loc1}{-3*\locsize}{0cm}
\draw[black, thick, -{stealth}, shorten >=3pt, shorten <= 3pt ] (a) -- (b);
\draw[black, thick, -{stealth}, shorten >=3pt, shorten <= 3pt ] (b) -- (c);
\draw[black, thick, -{stealth}, shorten >=3pt, shorten <= 3pt ] (a) -- (c);

\mkwigglebox{loc2}{0*\locsize}{0cm}
\draw[black, thick, -{stealth}, shorten >=3pt, shorten <= 3pt ] (a) -- (b);
\draw[black, thick, -{stealth}, shorten >=3pt, shorten <= 3pt ] (b) -- (c);
\draw[black, thick, double, -{stealth}, shorten >=3pt, shorten <= 3pt ] (c) -- (a);

\mkcolorwigglebox{loc3}{4*\locsize}{-0cm}{black}{black}{red}
\draw[black, thick, -{stealth}, shorten >=3pt, shorten <= 3pt ] (a) -- (c);
\draw[black, thick, -{stealth}, shorten >=3pt, shorten <= 3pt ] (c) -- (b);
\draw[black, thick, -{stealth}, shorten >=3pt, shorten <= 3pt ] (a) -- (b);

\mkcolorwigglebox{loc4}{8*\locsize}{-0cm}{red}{red}{red}
\draw[black, thick, -{stealth}, shorten >=3pt, shorten <= 3pt ] (b) -- (a);
\draw[black, thick, -{stealth}, shorten >=3pt, shorten <= 3pt ] (c) -- (b);
\draw[black, thick, double, -{stealth}, shorten >=3pt, shorten <= 3pt ] (a) -- (c);

\draw[black, thick, {stealth}-{stealth}, shorten >=3pt, shorten <= 3pt ] (loc1) -- (loc2);
\end{tikzpicture}
}
\vspace{10pt}
\caption{Proper mutation classes of COQs of type $\tilde A(2,1)$, considered up to wiggle equivalence. 
The red vertices are not proper.}
\label{fig:proper mu classes of A(2,1)}
\end{figure}

\vspace{-10pt}

\begin{example}[cf.\ Figure~\ref{fig:mu class C}]
\label{eg:acyclic 4 cycle}
Let $Q$ be a quiver of type $\tilde A(3,1)$ (cf.\ \cite[Figure~16]{cats1})
with the vertices and arrows $a \rightarrow b \rightarrow c \rightarrow d$, $a \rightarrow d$. 
This quiver has $3$ cyclic orderings up to wiggle equivalence, 
represented by $(a,d,c, b), (a,b,c,d)$  and $(a,b,d,c)$.  
In every cyclic ordering, the mutations at the sink $d$ and the source $a$ are proper, and yield relabelings of the same COQs.
In the COQ $(Q, (a,d,c,b))$, there are no other proper vertices.
Every vertex of every COQ in the proper mutation class of the COQ $(C, (a,b,c,d))$ is proper.
In the COQ $(Q, (a,b,d,c))$, every vertex is proper, but mutating at $b$ (resp.,~$c$) results in a COQ where $c$ (resp. $b$) is not proper.

The quiver $Q'=\mu_b(Q)$ has $4$ distinct wiggle equivalence classes of cyclic orderings.
Both the COQs $(Q', (a, b,c,d))$ and $(Q', (a,d,b, c))$ have no proper vertices besides the sink $d$.
The COQs $(Q',(a,c,b,d))$ and $(Q',(a, d, c, b))$ are in the proper mutation classes of  $(Q, (a,b,c,d))$ and $(Q, (a,b,d,c))$ respectively.

The quiver $Q''=\mu_a (\mu_b(Q))$ has two wiggle equivalence classes of cyclic orderings.
The COQ $(Q'', (a,b, d,c))$ is in the proper mutation class of$(Q, (a,b,c,d))$, so every vertex is proper.
By contrast, only the sink vertex $b$ is proper in $(Q'', (a,b,c,d))$.
\end{example}

\enlargethispage{10pt}

\begin{figure}[ht]
\vspace{2pt}
{
\newcommand{\radius}{0.4cm} 
\newcommand{\locsize}{0.8cm} 

\newcommand{\mkwigglebox}[7]{
\draw (#2,#3) node  [rectangle, fill=white, text opacity=0.0] (#1) {\vrule width \locsize height \locsize} ;
\draw (#1) node [rectangle, draw, text opacity=0.0] {\vrule width \locsize height \locsize} ;
\draw[black, dashed, decoration={markings, mark=at position 0 with {\arrow{<}}}, opacity=0.5, postaction={decorate}] (#1) circle (\radius);
\filldraw[#4] (#1)++(135:\radius) circle (1.5pt) node[left] {} coordinate (a);
\filldraw[#5] (#1)++(45:\radius) circle (1.5pt) node[right] {} coordinate (b) ;
\filldraw[#6] (#1)++(-45:\radius) circle (1.5pt) node[ left] {} coordinate (c) ;
\filldraw[#7] (#1)++(-135:\radius) circle (1.5pt) node[ left] {} coordinate (d) ;
}

\begin{tikzpicture}
\mkwigglebox{loc1}{1*\locsize}{-2*\locsize}{black}{black}{black}{black}
\draw[black, thick, -{stealth}, shorten >=3pt, shorten <= 3pt] (a) -- (b);
\draw[black, thick, -{stealth}, shorten >=3pt, shorten <= 3pt] (b) -- (c);
\draw[black, thick, -{stealth}, shorten >=3pt, shorten <= 3pt] (c) -- (d);
\draw[black, thick, -{stealth}, shorten >=3pt, shorten <= 3pt] (a) -- (d);

\mkwigglebox{loc2}{3*\locsize}{-3*\locsize}{black}{black}{black}{black}
\draw[black, thick, -{stealth}, shorten >=3pt, shorten <= 3pt] (a) -- (b);
\draw[black, thick, -{stealth}, shorten >=3pt, shorten <= 3pt] (b) -- (c);
\draw[black, thick, -{stealth}, shorten >=3pt, shorten <= 3pt] (c) -- (a);
\draw[black, thick, -{stealth}, shorten >=3pt, shorten <= 3pt] (d) -- (b);
\draw[black, thick, -{stealth}, shorten >=3pt, shorten <= 3pt] (d) -- (a);

\mkwigglebox{loc2o}{3*\locsize}{-1*\locsize}{black}{black}{black}{black}
\draw[black, thick, -{stealth}, shorten >=3pt, shorten <= 3pt] (a) -- (b);
\draw[black, thick, -{stealth}, shorten >=3pt, shorten <= 3pt] (b) -- (c);
\draw[black, thick, -{stealth}, shorten >=3pt, shorten <= 3pt] (c) -- (a);
\draw[black, thick, -{stealth}, shorten >=3pt, shorten <= 3pt] (b) -- (d);
\draw[black, thick, -{stealth}, shorten >=3pt, shorten <= 3pt] (a) -- (d);

\mkwigglebox{loc3}{5*\locsize}{-3*\locsize}{black}{black}{black}{black}
\draw[black, thick, double, -{stealth}, shorten >=3pt, shorten <= 3pt] (c) -- (d);
\draw[black, thick, -{stealth}, shorten >=3pt, shorten <= 3pt] (d) -- (a);
\draw[black, thick, -{stealth}, shorten >=3pt, shorten <= 3pt] (a) -- (c);
\draw[black, thick, -{stealth}, shorten >=3pt, shorten <= 3pt] (a) -- (b);

\mkwigglebox{loc3o}{5*\locsize}{-1*\locsize}{black}{black}{black}{black}
\draw[black, thick, double, -{stealth}, shorten >=3pt, shorten <= 3pt] (c) -- (d);
\draw[black, thick, -{stealth}, shorten >=3pt, shorten <= 3pt] (d) -- (a);
\draw[black, thick, -{stealth}, shorten >=3pt, shorten <= 3pt] (a) -- (c);
\draw[black, thick, -{stealth}, shorten >=3pt, shorten <= 3pt] (b) -- (a);

\mkwigglebox{loc4}{7*\locsize}{2*\locsize}{black}{black}{black}{black}
\draw[black, thick, -{stealth}, shorten >=3pt, shorten <= 3pt] (a) -- (b);
\draw[black, thick, -{stealth}, shorten >=3pt, shorten <= 3pt] (b) -- (d);
\draw[black, thick, -{stealth}, shorten >=3pt, shorten <= 3pt] (d) -- (c);
\draw[black, thick, -{stealth}, shorten >=3pt, shorten <= 3pt] (a) -- (c);

\mkwigglebox{loc5}{9*\locsize}{2*\locsize}{black}{black}{red}{black}
\draw[black, thick, -{stealth}, shorten >=3pt, shorten <= 3pt] (a) -- (b);
\draw[black, thick, -{stealth}, shorten >=3pt, shorten <= 3pt] (c) -- (b);
\draw[black, thick, -{stealth}, shorten >=3pt, shorten <= 3pt] (a) -- (c);
\draw[black, thick, -{stealth}, shorten >=3pt, shorten <= 3pt] (c) -- (d);
\draw[black, thick, -{stealth}, shorten >=3pt, shorten <= 3pt] (d) -- (a);

\mkwigglebox{loc5o}{8*\locsize}{0*\locsize}{red}{black}{black}{black}
\draw[black, thick, -{stealth}, shorten >=3pt, shorten <= 3pt] (b) -- (a);
\draw[black, thick, -{stealth}, shorten >=3pt, shorten <= 3pt] (b) -- (c);
\draw[black, thick, -{stealth}, shorten >=3pt, shorten <= 3pt] (a) -- (c);
\draw[black, thick, -{stealth}, shorten >=3pt, shorten <= 3pt] (c) -- (d);
\draw[black, thick, -{stealth}, shorten >=3pt, shorten <= 3pt] (d) -- (a);

\mkwigglebox{loc6}{-3*\locsize}{2*\locsize}{black}{red}{red}{black}
\draw[black, thick, -{stealth}, shorten >=3pt, shorten <= 3pt] (b) -- (a);
\draw[black, thick, -{stealth}, shorten >=3pt, shorten <= 3pt] (c) -- (b);
\draw[black, thick, -{stealth}, shorten >=3pt, shorten <= 3pt] (d) -- (a);
\draw[black, thick, -{stealth}, shorten >=3pt, shorten <= 3pt] (d) -- (c);

\mkwigglebox{loc7}{-1*\locsize}{2*\locsize}{red}{black}{red}{red}
\draw[black, thick, -{stealth}, shorten >=3pt, shorten <= 3pt] (a) -- (b);
\draw[black, thick, -{stealth}, shorten >=3pt, shorten <= 3pt] (c) -- (b);
\draw[black, thick, -{stealth}, shorten >=3pt, shorten <= 3pt] (d) -- (a);
\draw[black, thick, -{stealth}, shorten >=3pt, shorten <= 3pt] (c) -- (d);
\draw[black, thick, -{stealth}, shorten >=3pt, shorten <= 3pt] (a) -- (c);

\mkwigglebox{loc7o}{-1*\locsize}{0*\locsize}{red}{black}{red}{red}
\draw[black, thick, -{stealth}, shorten >=3pt, shorten <= 3pt] (b) -- (a);
\draw[black, thick, -{stealth}, shorten >=3pt, shorten <= 3pt] (b) -- (c);
\draw[black, thick, -{stealth}, shorten >=3pt, shorten <= 3pt] (d) -- (a);
\draw[black, thick, -{stealth}, shorten >=3pt, shorten <= 3pt] (c) -- (d);
\draw[black, thick, -{stealth}, shorten >=3pt, shorten <= 3pt] (a) -- (c);

\mkwigglebox{loc8}{1*\locsize}{2*\locsize}{red}{black}{red}{red}
\draw[black, thick, -{stealth}, shorten >=3pt, shorten <= 3pt] (a) -- (b);
\draw[black, thick, -{stealth}, shorten >=3pt, shorten <= 3pt] (d) -- (b);
\draw[black, thick, -{stealth}, shorten >=3pt, shorten <= 3pt] (d) -- (a);
\draw[black, thick, -{stealth}, shorten >=3pt, shorten <= 3pt] (c) -- (d);
\draw[black, thick, -{stealth}, shorten >=3pt, shorten <= 3pt] (a) -- (c);

\mkwigglebox{loc8o}{1*\locsize}{0*\locsize}{red}{black}{red}{red}
\draw[black, thick, -{stealth}, shorten >=3pt, shorten <= 3pt] (b) -- (a);
\draw[black, thick, -{stealth}, shorten >=3pt, shorten <= 3pt] (b) -- (d);
\draw[black, thick, -{stealth}, shorten >=3pt, shorten <= 3pt] (d) -- (a);
\draw[black, thick, -{stealth}, shorten >=3pt, shorten <= 3pt] (c) -- (d);
\draw[black, thick, -{stealth}, shorten >=3pt, shorten <= 3pt] (a) -- (c);

\mkwigglebox{loc9}{3*\locsize}{2*\locsize}{red}{black}{red}{red}
\draw[black, thick, double, -{stealth}, shorten >=3pt, shorten <= 3pt] (d) -- (c);
\draw[black, thick, -{stealth}, shorten >=3pt, shorten <= 3pt] (a) -- (d);
\draw[black, thick, -{stealth}, shorten >=3pt, shorten <= 3pt] (c) -- (a);
\draw[black, thick, -{stealth}, shorten >=3pt, shorten <= 3pt] (a) -- (b);

\mkwigglebox{loc9o}{5*\locsize}{2*\locsize}{red}{black}{red}{red}
\draw[black, thick, double, -{stealth}, shorten >=3pt, shorten <= 3pt] (d) -- (c);
\draw[black, thick, -{stealth}, shorten >=3pt, shorten <= 3pt] (a) -- (d);
\draw[black, thick, -{stealth}, shorten >=3pt, shorten <= 3pt] (c) -- (a);
\draw[black, thick, -{stealth}, shorten >=3pt, shorten <= 3pt] (b) -- (a);

\draw[black, thick, {stealth}-{stealth}, shorten >=3pt, shorten <= 3pt ] (loc1) -- node[above]{} (loc2);
\draw[black, thick, {stealth}-{stealth}, shorten >=3pt, shorten <= 3pt ] (loc2) -- node[above]{} (loc3);
\draw[black, thick, {stealth}-{stealth}, shorten >=3pt, shorten <= 3pt ] (loc2o) -- node[above]{} (loc2);
\draw[black, thick, {stealth}-{stealth}, shorten >=3pt, shorten <= 3pt ] (loc3o) -- node[above]{} (loc3);
\draw[black, thick, {stealth}-{stealth}, shorten >=3pt, shorten <= 3pt ] (loc1) -- node[above]{} (loc2o);
\draw[black, thick, {stealth}-{stealth}, shorten >=3pt, shorten <= 3pt ] (loc2o) -- node[above]{} (loc3o);

\draw[black, thick, {stealth}-{stealth}, shorten >=3pt, shorten <= 3pt ] (loc4) -- node[above]{} (loc5);
\draw[black, thick, {stealth}-{stealth}, shorten >=3pt, shorten <= 3pt ] (loc5o) -- node[above]{} (loc5);
\draw[black, thick, {stealth}-{stealth}, shorten >=3pt, shorten <= 3pt ] (loc5o) -- node[above]{} (loc4);

\draw[black, thick, {stealth}-{stealth}, shorten >=3pt, shorten <= 3pt ] (loc7o) -- node[above]{} (loc7);

\draw[black, thick, {stealth}-{stealth}, shorten >=3pt, shorten <= 3pt ] (loc8o) -- node[above]{} (loc8);

\draw[black, thick, {stealth}-{stealth}, shorten >=3pt, shorten <= 3pt ] (loc9o) -- node[above]{} (loc9);
\end{tikzpicture}
}
\vspace{5pt}
\caption{Proper mutation classes of COQs of type~$\tilde A(3,1)$, 
considered up to relabeling and wiggle equivalence.
The red vertecies are not proper.}
\label{fig:mu class C}
\end{figure}

\newpage
\section{Congruence classes and conjugacy classes}
\label{sec:invariants-of-proper-mutations}


\begin{theorem}
\label{thm:I-N-action}
 Proper mutations and wiggles preserve the integral congruence class of a unipotent companion of a cyclically ordered quiver.
\end{theorem}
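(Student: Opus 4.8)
The wiggle invariance is already supplied by Proposition~\ref{prop:UQ wiggle congruent}, so the whole content is to show that a single proper mutation $\mu_j$ preserves the integral congruence class of $U_Q$. The plan is to exhibit an explicit $G\in\GL_n(\ZZ)$ realizing the congruence $U_{\mu_j(Q)}=G\,U_Q\,G^T$ (up to a permutation of indices). First I would fix a convenient representative: since $j$ is proper, Remark~\ref{rem:In-Out-proper} lets me tear the cyclic ordering into a linear order in which every vertex of $\In(j)$ precedes $j$ and $j$ precedes every vertex of $\Out(j)$, and by Proposition~\ref{prop:UQ cyclic shift congruent} the choice of tearing point only moves $U_Q$ within its congruence class, so this is harmless. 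In such an order the $j$-th column of $U_Q$ (above the diagonal) is supported on $\In(j)$, with entries $-b_{ij}$, and the $j$-th row (to the right of the diagonal) is supported on $\Out(j)$, with entries $-b_{jk}$; every vertex not adjacent to $j$ contributes zeros in the $j$-row and $j$-column.

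Next I would record the effect of $\mu_j$. Writing the vertices in the block order $\In(j),\,j,\,\Out(j),\,\text{rest}$ and setting $\beta=(b_{ij})_{i\in\In(j)}$ and $\gamma=(b_{jk})_{k\in\Out(j)}$ (all entries positive), the mutation rule~\eqref{eq: def mutation signs} reverses the arrows at $j$ and adds $\beta_i\gamma_k$ to the $(i,k)$ entry for $i\in\In(j)$, $k\in\Out(j)$, while leaving every other entry of $B_Q$ unchanged; the proper-mutation rule then relocates $j$ clockwise past $\Out(j)$. Consequently the unipotent companion of $\mu_j(Q)$ in the new order is obtained from $U_Q$ by negating the $j$-column on $\In(j)$ (turning $-\beta$ into $\beta$), clearing the $j$-row on $\Out(j)$ while installing $-\gamma$ in the new below-diagonal $(\Out(j),j)$ block, and subtracting $\beta\gamma^T$ from the $\In(j)\times\Out(j)$ block.

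I would then verify that all of these effects are implemented by one congruence. The candidate is the reflection--transvection
\begin{equation*}
G=\begin{bmatrix} I & 0 & 0 & 0\\ 0 & -1 & 0 & 0\\ 0 & \gamma & I & 0\\ 0 & 0 & 0 & I\end{bmatrix},
\end{equation*}
that is, $G$ is the identity except that $G_{jj}=-1$ and $G_{kj}=b_{jk}$ for $k\in\Out(j)$; plainly $G\in\GL_n(\ZZ)$ with $\det G=-1$. A direct block multiplication (which I have checked) shows that $G\,U_Q\,G^T$ agrees with $U_Q$ except that its $j$-column on $\In(j)$ has become $\beta$, its $j$-row on $\Out(j)$ has been cleared with $-\gamma$ appearing in the $(\Out(j),j)$ block, and its $\In(j)\times\Out(j)$ block has become $-B_{\mathrm{io}}-\beta\gamma^T$ (entries involving the remaining vertices are unchanged). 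This is exactly the matrix described in the previous paragraph, i.e.\ the unipotent companion of $\mu_j(Q)$ written after $j$ has been moved past $\Out(j)$. Conjugating by the permutation matrix $P$ that relocates $j$ (an element of $\GL_n(\ZZ)$) identifies it with $U_{\mu_j(Q)}$, giving $U_{\mu_j(Q)}=(P^{-1}G)\,U_Q\,(P^{-1}G)^T$ and hence the claimed invariance.

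The main obstacle is bookkeeping rather than conceptual: one must confirm that the single matrix $G$ simultaneously accounts for the reversal of the arrows at $j$, the creation of the $\beta\gamma^T$ arrows, and the relocation of $j$, and that this persists when vertices not adjacent to $j$ are interspersed among $\In(j)$ and $\Out(j)$ in the linear order. The interspersing causes no difficulty, since $G$ acts as the identity on those coordinates and the $j$-row and $j$-column vanish there, so such vertices are carried along passively; the only genuinely substantive point is that the $\In(j)\times\Out(j)$ block acquires precisely $-\beta\gamma^T$, matching the arrows added by~\eqref{eq: def mutation signs}. Conceptually $G$ is the reflection that implements $\mu_j$ in the braid-group picture of Section~\ref{sec:bondal}, but the argument needs only the explicit formula above.
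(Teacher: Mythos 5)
Your proposal is correct and takes essentially the same route as the paper: after tearing the cyclic order so that $\In(j)$ precedes $j$ precedes $\Out(j)$ (justified by properness, with the choice of tearing point handled by Proposition~\ref{prop:UQ cyclic shift congruent}), you exhibit an explicit reflection--transvection in $\GL_n(\ZZ)$ realizing the congruence, absorbing the relocation of $j$ into a permutation---exactly the structure of the paper's proof via Lemma~\ref{lem:U mutation matrix} and the identity $\pi^T U' \pi = (J - NE_{kk})\,U\,(J - E_{kk}N^T)$. The only difference is a mirror-image choice of the congruence matrix: the paper's $G = J - NE_{kk}$ carries the in-arrow multiplicities $b_{ik}$ ($i \in \In(k)$) above the diagonal and tears the mutated order with $k$ first, while yours carries the out-arrow multiplicities $b_{jk}$ ($k \in \Out(j)$) below the diagonal and places $j$ just after $\Out(j)$; these are the two equivalent sign variants of the same argument (cf.\ Remark~\ref{rem:U mutates like B}).
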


Put slightly differently, proper mutations of wiggle equivalence classes of COQs
preserve the integral congruence class of associated unipotent companions. 

\begin{proof} 
Let $k$ be a proper vertex in a COQ $(Q,\sigma)$. 
By Remark~\ref{rem:In-Out-proper}, we can choose a linear ordering (denoted~$<$) on the vertices of~$Q$ 
that is compatible with the cyclic ordering~$\sigma$ and 
satisfies $i<k$ for $i\in\In(k)$
and  $k <j$ for $j\in\Out(k)$.
For the mutated COQ $Q'=\mu_k(Q)$, we choose a linear ordering $<'$ such that 
$k <' i$ for all vertices $i\ne k$, and otherwise $<'$ agrees with $<$. 

We will use the notation $U = U_Q = (u_{ij})$ and $U' = U_{Q'} = (u'_{ij})$. 
Here and below, the rows and columns of matrices associated with $Q$ and~$Q'$ 
are ordered using $<$ and $<'$ respectively.
Our goal is to show that $U$ and $U'$ are congruent over~$\ZZ$. 
We note that for the purposes of establishing congruence, the ordering of the rows and columns of $U$ and~$U'$
does not matter, as long as the rows and the columns are permuted in the same way. 

We denote $B=B_Q=(b_{ij})$ and let $N=U-I=(n_{ij})$, the strictly upper-triangular part of $U$ (or of $-B$). 
We also denote
\begin{equation*}
\varepsilon_i = 
\begin{cases} -1 & \text{if } i=k; \\ 
1 & \text{else.}
\end{cases}
\end{equation*}

\begin{lemma}
\label{lem:U mutation matrix}
We have 
$u'_{ij} = \varepsilon_i \varepsilon_j u_{ij} - n_{i k} u_{k j} \varepsilon_j - \varepsilon_i u_{i k} n_{j k} + n_{i k} n_{j k}$.
\end{lemma}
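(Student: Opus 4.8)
The plan is to verify the claimed identity entry by entry, exploiting the fact that the reordering from $<$ to $<'$ only disturbs the triangular structure in row~$k$ and column~$k$, together with the sign information supplied by properness. First I would record how the chosen orderings interact with properness. Since $<$ places every element of $\In(k)$ before~$k$ and every element of $\Out(k)$ after~$k$ (Remark~\ref{rem:In-Out-proper}), the sign of $b_{ik}$ is governed by the position of $i$: for $i<k$ we have $b_{ik}\ge0$, whence $[b_{ik}]_-=0$, $[b_{ik}]_+ = b_{ik} = -n_{ik}$, and $n_{ik}=u_{ik}$; for $i>k$ we have $b_{ik}\le0$, whence $[b_{ik}]_+=0$ and $n_{ik}=u_{ik}=0$. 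The analogous statements hold with $i$ replaced by~$j$, and with $b_{kj}=-b_{jk}$ expressed through $u_{kj}$ and $n_{jk}$. I would also recall the defining recipe for a unipotent companion (Definition~\ref{def:unipotent companions}): $u_{ij}=-b_{ij}$ and $u'_{ij}=-b'_{ij}$ above the respective diagonals, $1$ on the diagonal, and $0$ below; and that $<'$ agrees with $<$ except that $k$ becomes the minimum, so that $u'_{ij}$ and $u_{ij}$ share the same triangular support whenever $i,j\ne k$, while $u'_{kj}=-b'_{kj}$ for all $j\ne k$ and $u'_{ik}=0$ for all $i\ne k$.

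With these preparations, the proof splits according to whether $i$ or $j$ equals~$k$. The three cases $i=j=k$, $i=k\ne j$, and $i\ne k=j$ reduce, after substituting $\varepsilon_k=-1$, $u_{kk}=1$, $n_{kk}=0$, to short computations: here one uses only the row/column-negation part $b'_{kj}=-b_{kj}$ of the mutation rule~\eqref{eq: def mutation signs} and the identity $n_{ik}=u_{ik}$ (valid for $i\ne k$) to match both sides. The substantive case is $i,j\ne k$, where $\varepsilon_i=\varepsilon_j=1$ and the claimed formula reduces to
\begin{equation*}
u'_{ij}=u_{ij}-n_{ik}u_{kj}-u_{ik}n_{jk}+n_{ik}n_{jk}.
\end{equation*}
For $i<j$ I would insert $u_{ij}=-b_{ij}$, $u'_{ij}=-b'_{ij}$, and the mutation rule, reducing the goal to
\begin{equation*}
-n_{ik}u_{kj}-u_{ik}n_{jk}+n_{ik}n_{jk}=-[b_{ik}]_+[b_{kj}]_++[b_{ik}]_-[b_{kj}]_-,
\end{equation*}
which I would then check in the three position regimes $i<j<k$, $i<k<j$, and $k<i<j$ by feeding in the sign dictionary from the first step; the only nonzero contribution arises in the regime $i<k<j$, where both sides equal $-b_{ik}b_{kj}$. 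The diagonal case $i=j$ and the below-diagonal case $i>j$ (where $u'_{ij}=0$) are handled the same way, each collapsing to $0=0$ once the column-$k$ entries are evaluated according to the position of $i$ and $j$ relative to~$k$.

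I expect the main obstacle to be the sign bookkeeping rather than any conceptual difficulty: properness is exactly what converts the symmetric-looking products $[b_{ik}]_\pm[b_{kj}]_\pm$ of the mutation rule into the asymmetric combination of $n_{ik}u_{kj}$, $u_{ik}n_{jk}$, and $n_{ik}n_{jk}$ appearing in the lemma, and keeping straight which of $n_{ik},u_{ik}$ (and likewise for~$j$) vanishes in each regime is where care is required. Organizing the entire verification around the position of $i$ and $j$ relative to~$k$ keeps each subcase to a one-line check.
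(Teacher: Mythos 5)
Your proposal is correct and takes essentially the same route as the paper's own proof: both write down $u'_{ij}$ from the mutation rule \eqref{eq: def mutation signs} using that $k$ is minimal in $<'$, and then verify the claimed identity entry by entry, with the properness-derived sign facts ($n_{ik}=u_{ik}$ for $i\ne k$, and the vanishing patterns of $[b_{ik}]_{\pm}[b_{kj}]_{\pm}$ forced by the construction of $<$) doing all the work. The only difference is organizational---the paper groups cases by the form of $u'_{ij}$ (diagonal, row $k$, above-diagonal, below-diagonal) while you group by the positions of $i,j$ relative to $k$---which is immaterial.
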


\begin{proof}
In light of Definitions~~\ref{def:matrix-mut} and~\ref{def:unipotent companions} , we have
(recall that vertex $k$ is minimal with respect to~$<'$): 
\begin{equation}
\label{eq:muk U def}
u'_{ij} = \begin{cases}
1 & \text{if } i=j; \\
b_{kj} & \text{if } i = k \neq j; \\ 
-b_{ij} - [b_{ik}]_+[b_{kj}]_+ + [b_{ik}]_-[b_{kj}]_- & \text{if } k \ne i <' j; 
\\
0 & \text{if } i>' j.  
\end{cases}
\end{equation}

Let $u_{ij}''= \varepsilon_i \varepsilon_j u_{ij} - n_{i k} u_{k j} \varepsilon_j - \varepsilon_i u_{i k} n_{j k} + n_{i k} n_{j k}$. 
To establish the equality $u''_{ij} =  u'_{ij} $,
we check each case of equation~\eqref{eq:muk U def} separately:
\begin{itemize}[leftmargin=.2in]
\item 
If $i=j$, then  $u''_{ij} =  1 - 0 - u_{ik} n_{ik} + n_{ik}^2 = 1 = u'_{ij} $. 
\item
If $i=k \neq j$, then $ u''_{ij} = - u_{kj} - 0 + n_{jk} + 0 = b_{kj} = u'_{ij} $.
\item 
If $k \neq i <' j $, then 
$u''_{ij} = u_{ij} - n_{ik} u_{kj} - u_{ik} n_{jk} + n_{ik} n_{jk} = u_{ij} - n_{ik} u_{kj}$. 
For these vertices $i$ and~$j$, we have $[b_{ik}]_+[b_{kj}]_+= n_{ik} u_{kj}$ and $[b_{ik}]_-[b_{kj}]_- = 0$, both by construction of~$<$. 
So $u'_{ij} =-b_{ij} -n_{ik} u_{kj} +0 = u_{ij}-n_{ik}u_{kj}=u_{ij}''$.
\item
If $i\ne k$ and $j=k$, then $u_{ij}''=-u_{ik}+n_{ik}-0+0=0=u_{ij}'$. 
\\
If  $i,j \neq k$ and $j <i$, then 
$u''_{ij} = 0 - 0 - u_{ik} n_{jk} + n_{ik} n_{jk} = 0=u'_{ij}$. \qedhere
\end{itemize}
\end{proof}

\pagebreak[3]

To complete the proof of Theorem~\ref{thm:I-N-action}, we observe that
Lemma~\ref{lem:U mutation matrix} can be restated as follows: 
\begin{align*}
\pi^T U' \pi &= J U J - N E_{kk} U J - J U E_{kk} N^T + N E_{kk} N^T = (J - N E_{kk}) U (J - E_{kk} N^T)
\\[-20pt]
\end{align*}
where 
\begin{itemize}
\item 
$\pi$ is a permutation matrix such that 
$\pi^T U' \pi$ is obtained from $U'$ by reordering of its rows and columns according to the linear ordering~$<$
(as opposed to~$<'$); 
\item $J$ is the $n \times n$ diagonal matrix with diagonal entries $\varepsilon_1,\dots,\varepsilon_n$, and 
\item $E_{kk}$ is the $n \times n$ diagonal matrix whose sole nonzero entry is $1$ in row and column~$k$. 
\end{itemize}
(Here we used that $E_{kk} U E_{kk} = E_{kk}$ because $u_{kk}=1$.)
\end{proof}

\begin{remark}
\label{rem:U mutates like B}
The above proof is similar to the argument in \cite[p.~34]{fwz1-3}, which uses 
the matrix $E_j$ defined (for $\varepsilon=-1$) by setting $(E_k)_{ik} = \max(0, b_{ik})$ 
for all~$i$ and letting all other entries of $E_k$ be equal to~$0$.
This matrix is then used in \cite{fwz1-3} in the identity $(J + E_k) B_Q (J + E_k^T) = B_{\mu_k(Q)}$. 
Under our choice of linear ordering, $E_k = N E_{kk}$.
\end{remark}

\begin{remark}
Theorem~\ref{thm:I-N-action} asserts that proper mutation equivalence of COQs
implies integral congruence of their unipotent companions. 
The converse is false: 
it is easy to find pairs of COQs whose unipotent companions are integrally congruent 
while the quivers (ignoring the ordering) are not mutation equivalent. 
It is harder, but possible, to find pairs of this kind where all the vertices in both COQs are proper,
cf.\ Example~\ref{eg:proper-COQs-congr-but-mut-inequiv}. 
\end{remark}

\medskip


We are not aware of algorithms for detecting integral congruence,~i.e., deciding whether two given matrices in $\GL(n,\ZZ)$ are congruent to each other over~$\ZZ$. 
This makes it impractical to directly use Theorem~\ref{thm:I-N-action} 
to establish mutation (in)equivalence for specific pairs of quivers. 

We will instead replace integral congruence by some necessary conditions that can be readily checked.
The first and most powerful of these conditions utilizes the notion of a cosquare introduced in Definition~\ref{def:cosquare}: 

\begin{corollary}
\label{cor:proper-mutations-preserve-cosquare}
Proper mutations and wiggles preserve the $\GL(n,\ZZ)$ conjugacy class 
of the cosquare of the unipotent companion of a COQ.
\end{corollary}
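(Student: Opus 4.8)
The plan is to obtain this corollary as an immediate consequence of Theorem~\ref{thm:I-N-action} combined with Lemma~\ref{lem:C-sim=Ucong}. Theorem~\ref{thm:I-N-action} already tells us that both proper mutations and wiggles preserve the \emph{integral congruence class} of the unipotent companion~$U_Q$; that is, along any sequence of such moves the companion matrix stays within a single orbit under the action $U\mapsto GUG^T$ with $G\in\GL(n,\ZZ)$. The only remaining task is purely linear-algebraic: to pass from congruence of the $U$'s to conjugacy of their cosquares.

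First I would observe that every unipotent companion lies in $\GL(n,\ZZ)$: by Definition~\ref{def:unipotent companions} it is unipotent upper-triangular with integer entries, hence has determinant~$1$ and an integer inverse. This is precisely the hypothesis needed to invoke Lemma~\ref{lem:C-sim=Ucong}, which asserts that matrices in $\GL(n,\ZZ)$ that are congruent over~$\ZZ$ have cosquares that are conjugate in $\GL(n,\ZZ)$. Concretely, if $U'=GUG^T$, then the computation in the proof of that lemma yields $(U')^{-T}U'=G^{-T}(U^{-T}U)G^{T}$, exhibiting the required conjugacy.

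Combining the two ingredients: fix a COQ $(Q,\sigma)$ and apply any proper mutation or wiggle to obtain a COQ $(Q',\sigma')$. By Theorem~\ref{thm:I-N-action}, the companions $U_Q$ and $U_{Q'}$ are congruent over~$\ZZ$. By Lemma~\ref{lem:C-sim=Ucong}, their cosquares $U_Q^{-T}U_Q$ and $U_{Q'}^{-T}U_{Q'}$ are therefore conjugate in $\GL(n,\ZZ)$, so the $\GL(n,\ZZ)$ conjugacy class of the cosquare is unchanged by a single move. Since proper mutations and wiggles generate the proper mutation equivalence relation, the conjugacy class is invariant across the entire proper mutation equivalence class, which strengthens Corollary~\ref{cor:similarity-U} (wiggle and cyclic-reordering invariance) by adding invariance under proper mutation.

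There is essentially no obstacle here: all the genuine work has already been carried out in establishing Theorem~\ref{thm:I-N-action} (via the mutation identity $\pi^T U'\pi=(J-NE_{kk})U(J-E_{kk}N^T)$) and in Lemma~\ref{lem:C-sim=Ucong}. The only point worth stating explicitly is the membership $U_Q\in\GL(n,\ZZ)$, which licenses the use of the lemma; after that, the corollary is a one-line composition of the two results.
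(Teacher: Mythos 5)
Your proposal is correct and matches the paper's intended argument exactly: the corollary is obtained by combining Theorem~\ref{thm:I-N-action} (congruence invariance of $U_Q$ under proper mutations and wiggles) with Lemma~\ref{lem:C-sim=Ucong} (congruent matrices in $\GL(n,\ZZ)$ have $\GL(n,\ZZ)$-conjugate cosquares), precisely as the paper does for the analogous Corollary~\ref{cor:similarity-U}. Your explicit check that $U_Q\in\GL(n,\ZZ)$ and the identity $(GUG^T)^{-T}(GUG^T)=G^{-T}(U^{-T}U)G^{T}$ are exactly the ingredients the paper relies on.
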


\begin{proof}
Combine Theorem~\ref{thm:I-N-action} with Lemma~\ref{lem:C-sim=Ucong}. 
\end{proof}

\begin{remark}
\label{rem:algorithms-conjugacy}
The conjugacy problem in $\GL(n,\mathbb{Z})$ 
has an algorithmic solution whose idea goes back to F.~Grunewald~\cite{Grunewald}
(cf.~also R.~A.~Sarkisyan \cite{Sarkisjan} and F.~Grunewald--D.~Segal~\cite{Grunewald-Segal}).
It reduces the problem of deciding whether two matrices in $\GL(n,\ZZ)$ 
are conjugate to each other (over~$\ZZ$) 
to the isomorphism problem for (integral) modules over truncated polynomial rings $\mathcal{O}_K[t]/(t^\ell)$,
where $\mathcal{O}_K$ is the ring of algebraic integers in a number field~$K$. 
An~algorithm based on this approach was fully developed and implemented in \textsc{Magma}
by B.~Eick, T.~Hofmann, and E.~A.~O'Brien~\cite{EHO}. 
(For another, open source, software, see \cite[Section~9.5]{BHJ}.) 
We used the implementation of~\cite{EHO} to perform computational experiments
for various families of quivers. 
%
\end{remark}


\begin{remark}
Apparently, there is no settled ``canonical form,'' i.e., a distinguished choice of a representative, 
in a given conjugacy class in $\GL(n, \ZZ)$, see \cite[Problem~7.3]{EHO}. 
\end{remark}

\begin{remark}
\label{rem:conjugacy-over-Q-or-C}
The $\GL(n,\mathbb{Z})$ conjugacy class of an $n\times n$ matrix 
is contained in (hence determines) its $\GL(n,\mathbb{Q})$ conjugacy class,
which in turn determines the $\GL(n,\mathbb{C})$ conjugacy class. 
As we move from $\ZZ$ to $\QQ$ and then to~$\CC$, 
the conjugacy class of a given matrix
(in our applications, of the cosquare of a unipotent companion)
becomes much easier to compute---but the corresponding (proper) mutation invariants of quivers 
become substantially less powerful. 

Recall that the $\GL(n,\mathbb{Q})$ (resp., $\GL(n,\mathbb{C})$) conjugacy class
of a matrix is captured by its Frobenius normal form (resp., Jordan canonical form). 
A~$\GL(n,\mathbb{Q})$ conjugacy class is a disjoint union of $\GL(n,\mathbb{Z})$ conjugacy classes. 
This union may be infinite, in which case a lot of information is lost when passing from a $\GL(n,\mathbb{Z})$ class
to~a $\GL(n,\mathbb{Q})$ class. 
As pointed out in \cite[p.~755]{EHO}, the Jordan--Zassen\-haus theorem \cite{Zassenhaus} implies that 
this happens if and only if the matrices involved are not semisimple, 
i.e., when their minimal polynomial has repeated irreducible~factors. 
\end{remark}

\begin{remark}
\label{rem:rank(B)}
It is well known that the rank of the exchange matrix~$B_Q$ is a mutation invariant, see 
\cite[Theorem 2.8.3]{fwz1-3} or \cite[Lemma~3.2]{ca3}. 
This invariant 
can be easily recovered
from the Jordan normal form of the cosquare~$U_Q^{-T}U_Q$.
Specifically, the corank of~$B_Q$ is equal to the number of Jordan blocks of the cosquare
that correspond to the eigenvalue~$1$. 
\end{remark}

\begin{example}
\label{eg:2mm}
For a positive integer $m$, let $Q_m$ be the following quiver on a 3-vertex linearly ordered set $\{a<b<c\}$:
\begin{equation}
\label{eq:Qm-2mm}
 \begin{tikzcd}[arrows={-stealth}, sep=2em]
  a  \arrow[r,  "m"]   & b  \arrow[r, "m"] & c \arrow[ll, bend left=45, swap, "2"] 
\end{tikzcd}.
\end{equation}
The unipotent companion of $Q_m$ and its cosquare $C_m$ are given by
\begin{align*}
U_m &=U_{Q_m}= \begin{bmatrix} 1 & -m & 2 \\ 0 & 1 & -m \\ 0 &  0 & 1 \end{bmatrix}, 
\\
C_m&=U_m^{-T}U_m= \begin{bmatrix} 1 & -m & 2 \\ m & -m^2+1 & m \\ m^2-2 & -m^3+3m & m^2-3\end{bmatrix}.
\end{align*}
The cosquare $C_m$ has the same characteristic polynomial for all~$m$: 
\begin{equation*}
\det(tI-C_m) = (t - 1)(t + 1)^2. 
\end{equation*}
The Jordan normal form of $C_m$ carries a little bit more information: it is given by
\begin{equation*}
\begin{bmatrix}
1 & 0 & 0 \\
0 & -1 & 1 \\
0 & 0 & -1
\end{bmatrix}
(m\neq 2), 
\quad
\begin{bmatrix}
1 & 0 & 0 \\
0 & -1 & 0 \\
0 & 0 & -1
\end{bmatrix}
(m= 2). 
\end{equation*}
Thus, the complex conjugacy class distinguishes the Markov quiver $Q_2$ from all other~$Q_m$'s.
(Indeed, $Q_2$ is only mutation-equivalent to itself.)


The $\GL(n,\mathbb{Q})$ conjugacy classes do not provide any additional refinement: 
the Frobenius normal form of~$C_m$ (also known as the rational canonical form) is given~by
\begin{equation*}
\begin{bmatrix}
0 & 0 & 1 \\
1 & 0 & 1 \\
0 & 1 & -1
\end{bmatrix}
(m\neq 2), 
\quad
\begin{bmatrix}
0 & 1 & 0 \\
1 & 0 & 0 \\
0 & 0 & -1
\end{bmatrix}
(m= 2). 
\end{equation*}

On the other hand, the $\GL(n,\mathbb{Z})$ conjugacy classes of the matrices~$C_m$ are all distinct. 
To see this, substitute $C_m$ into the polynomial $t^2-1=(t-1)(t+1)$: 
\begin{equation*}
C_m^2-I
= \begin{bmatrix}
m^2 - 4 & -m^3 + 4m & m^2 - 4 \\
0 & 0 & 0 \\
-m^2 + 4 & m^3 - 4m & -m^2 + 4
\end{bmatrix}
=(m^2-4)
\begin{bmatrix}
1 & -m & 1 \\
0 & 0 & 0 \\
-1 & m & -1
\end{bmatrix}
.
\end{equation*}
This implies that $C_m^2-I\equiv 0 \bmod (m^2-4)$ but $C_{m'}^2-I\not\equiv 0 \bmod (m^2-4)$ for $0<m'<m$. 
Therefore $C_m$ and $C_{m'}$ are not conjugate in $\GL(n,\mathbb{Z})$. 


It follows that no two distinct quivers $Q_m$ are related by proper mutations. 
This conclusion can also be derived from Observation~\ref{obs:gcd-seven}. 

\end{example}

\begin{remark}
The modular arithmetic argument used in Example~\ref{eg:2mm}
is not guaranteed to always work to establish non-conjugacy over the integers. 
As shown~by P.~F.~Stebe~\cite{Stebe},
for $n\ge3$, there exist matrices $M,M'\in\GL(n,\ZZ)$ such that 
(a) $M$ and~$M'$ are not conjugate in $\GL(n,\ZZ)$ and 
(b) this fact cannot be detected by passing to $\bmod N$ arithmetic for some~$N$  
(or~by applying some other homomorphism from $\GL(n,\ZZ)$ to a finite group). 
\end{remark}

\hide{
\section*{Smith normal forms}

Let us recall the following standard definition. 

\begin{definition}
\label{rem:SNF}
Let $A$ be an $n\times n$ integer matrix. 
The (integral) \emph{Smith normal form (SNF)} of $A$ 
is the unique diagonal 
matrix
\begin{equation*}
S_A = 
\begin{bmatrix}
\alpha_1 & 0 & \cdots & 0 \\
0 & \alpha_2 & \cdots & 0 \\
\vdots & \vdots & \ddots & \vdots \\
0 & 0 & \cdots & \alpha_n 
\end{bmatrix}
\end{equation*}
such that 
\begin{itemize}[leftmargin=.2in]
\item 
$S_A= M A M'$ for some ${M, M' \in \GL(n, \ZZ)}$, 
\item
$\alpha_i\in\ZZ_{\ge 0}$ for all $i$, and 
\item
$\alpha_i | \alpha_{i+1}$ for all $i < n$.  
\end{itemize}
\end{definition}

The SNF is a complete double coset invariant: 
$S_A = S_{A'}$ if and only~if $A$ and $A'$ lie in the same double $\GL(n, \ZZ)$-coset, 
or equivalently, if 
\begin{equation*}
A' \in \GL(n, \ZZ) \cdot A \cdot \GL(n, \ZZ).
\end{equation*}
It follows that the SNF is invariant under integral congruences, as well as under conjugation in $\GL(n, \ZZ)$.
 
\begin{remark}
Within the class of integer skew-symmetric matrices $A$, the \emph{skew SNF} \cite[Section~IV.3]{NewmanMatrices}
is a certain (complete) congruence invariant.
Unfortunately, this invariant 
does not carry much information about the mutation class of the corresponding quiver, cf.\ \cite[Remark~2.8.5]{fwz1-3}.
The SNF of the unipotent companion (or of its cosquare) contains no information at all, since 
\begin{equation*}
S_U = S_{U^{-T} U} = I
\end{equation*}
for any unipotent upper triangular matrix~$U$. 
\end{remark}
} 

\section{Alexander lattices and Alexander polynomials  
}
\label{sec:alexander-polynomials}

By Corollary~\ref{cor:proper-mutations-preserve-cosquare},
the $\GL(n,\ZZ)$ conjugacy class of the cosquare of a unipotent companion
is invariant under proper mutations and wiggles. 
Given two such cosquares, determining whether they are conjugate to each other in $\GL(n,\ZZ)$ 
is a nontrivial (but solvable) problem, see Remark~\ref{rem:algorithms-conjugacy}. 
It turns out that in many cases, this problem can be solved using fairly elementary tools introduced below. 

\begin{definition}
\label{def:Ut}
Let $Q$ be a quiver on a linearly ordered set of $n$ vertices.
Let $U_Q$ be the corresponding unipotent companion, as in Definition~\ref{def:unipotent companions}. \linebreak[3]
The \emph{parametrized companion} of $Q$ is the $n \times n$ matrix $P_Q(t)$ 
with entries in $\ZZ[t]$, defined by 
\begin{equation*}
P_Q(t) = tU_Q - U_Q^T.
\end{equation*}

For $1\le k\le n$, we define the \emph{Alexander lattice} $\dd_k(Q,t)\subset\ZZ[t]$ as
the $\ZZ$-span of all $k \times k$ minors (i.e., determinants of $k \times k$ submatrices) of 
the parametrized companion~$P_Q(t)$.  

The \emph{Alexander polynomial}  $\Delta_Q(t)\in\ZZ[t]$ is the determinant of~$P_Q(t)$,
or equivalently the monic characteristic polynomial of the cosquare $U_Q^{-T}U_Q$:
\begin{equation}
\label{eq:delta-Q-char-poly}
\Delta_Q(t) = \det(tU_Q-U_Q^{T}) =\det(tU_Q^{T}-U_Q)=\det(tI-U_Q^{-T}U_Q).
\end{equation}

The \emph{Markov invariant} $M_Q$ 
is defined by
\begin{equation}
\label{eq:Markov-n}
M_Q=n+(\text{coefficient of $t^{n-1}$ in $\Delta_Q(t)$})=n-\operatorname{Trace}(U^{-T}U). 
\end{equation}
\end{definition}

\begin{remark}
The unipotent companion $U_Q$ and the exchange matrix $B_Q$ can both be recovered from the parametrized companion~$P_Q(t)$.  
Indeed, 
$P_Q(1) = - B_Q$ and $P_Q(0) = - U_Q^T$.
\end{remark}

\begin{remark}
\label{rem:d_n}
The $n$th Alexander lattice $\dd_n(Q,t)$ is the $\ZZ$-span of the Alexander polynomial:
\begin{equation}
\label{eq:d_n}
\dd_n(Q,t) = \Delta_Q(t)\, \ZZ . 
\end{equation}
Hence $\dd_n(P_Q(t))$ contains the same information as $\Delta_Q(t)$. 
\end{remark}

\begin{remark}
\label{rem:d_1}
The Alexander lattice $\dd_1(Q,t)$  is the $\ZZ$-span of the entries of~$P_Q(t)$. 
Denoting by $u_{ij}$ ($i<j$) the upper-triangular entries of~$U_Q$, 
we see that 
$\dd_1(Q,t)$ is the $\ZZ$-span of the polynomials $t-1$, $u_{ij}$, and $tu_{ij}$ 
(or just $t-1$ and the scalars~$u_{ij}$).
Hence $\dd_1(Q,t)$ contains the same information as the gcd of all entries of~$B_Q$. 
\end{remark}

In Sections~\ref{sec:alex-examples}--\ref{sec:tree-quivers}, we compute Alexander polynomials and Alexander lattices for 
various classes of quivers, including quivers on 2, 3, or~4 vertices as well as tree quivers. 

\hide{
\begin{proposition}
If the unipotent companions of two quivers are integrally congruent, 
then their parametrized companions are integrally congruent as well. 
\end{proposition}

\begin{proof}
Indeed, if $U' = G U G^T$, then 
\begin{equation*}
t U' - (U')^T = t G U G^T - G U^T G^T = G (t U - U^T) G^T. \qedhere
\end{equation*}
\end{proof}
} 

\begin{corollary}
\label{cor:alex-poly-inv}
The Alexander polynomial $\Delta_Q(t)$ is invariant under cyclic shifts, wiggles, and proper mutations. 
\end{corollary}

\begin{proof}
Recall from~\eqref{eq:delta-Q-char-poly} that $\Delta_Q(t)$ 
is the characteristic polynomial of the cosquare of a unipotent companion.
It follows that $\Delta_Q(t)$ is uniquely determined by the $\GL(n,\CC)$ conjugacy class of this cosquare, 
hence by its $\GL(n,\ZZ)$ conjugacy class. 
The claim follows by Corollaries \ref{cor:similarity-U} and~\ref{cor:proper-mutations-preserve-cosquare}. 
\end{proof}

Our next goal is to extend Corollary~\ref{cor:alex-poly-inv} to Alexander lattices.

\begin{proposition}
\label{prop:Ut}
Let $Q$ and $Q'$ be two quivers on linearly ordered sets of $n$ vertices. 
If the cosquares of their unipotent companions are conjugate in $\GL(n,\ZZ)$, 
then the corresponding Alexander lattices $\dd_k$ coincide: for all~$k$, we have $\dd_k(Q,t)=\dd_k(Q',t)$.
\end{proposition}

\begin{proof}
We begin by showing that the parametrized companions $P_Q(t)$ and $P_{Q'}(t)$ lie in the same double $\GL(n,\ZZ)$-coset: 
\begin{equation}
\label{eq:PQ-double-coset}
P_{Q'}(t) \in \GL(n, \ZZ) \cdot P_Q(t) \cdot \GL(n, \ZZ).
\end{equation}
Assume that $ U_{Q'} U_{Q'}^{-T}  = G U_Q U_Q^{-T} G^{-1}.$ 
Then 
\begin{align*}
P_{Q'}(t) &= t U_{Q'} - U_{Q'}^{T} \\ 
&= ( t U_{Q'} U_{Q'}^{-T} - I) U_{Q'}^T\\
&=  ( t G U_Q U_Q^{-T} G^{-1} - I) U_{Q'}^T\\
&=  ( t G U_Q - G U_Q^T) U_Q^{-T} G^{-1} U_{Q'}^T\\
&= G P_Q(t) U_Q^{-T} G^{-1} U_{Q'}^T, 
\end{align*}
proving~\eqref{eq:PQ-double-coset}. 
\hide{
Then one can verify that $P_{Q'}(t) = U_{Q'} G^{-T} U_{Q}^{-1}P_Q(t) G^T$. \cS{computation hidden here.}
\begin{align*}
P_{Q'}(t) &= t U_{Q'} - U_{Q'}^{T} \\ 
&= U_{Q'} ( tI - U_{Q'}^{-1} U_{Q'}^{T}) \\
&=  U_{Q'} ( tI - (U_{Q'} U_{Q'}^{-T} )^{T})\\
&=U_{Q'} ( tI - (G U_{Q} U_{Q}^{-T} G^{-1})^{T}) \\
&= U_{Q'} G^{-T}( tI - (U_{Q} U_{Q}^{-T})^{T}) G^T\\
&= U_{Q'} G^{-T}( tI - U_{Q}^{-1} U_{Q}^{T}) G^T \\
&= U_{Q'} G^{-T} U_{Q}^{-1}P_Q(t)G^T . \qedhere
\end{align*}
}
To complete the proof, it suffices to show that if $A$ and $A'$ are $n\times n$ matrices with entries in $\ZZ[t]$ 
that lie in the same double $\GL(n,\ZZ)$-coset, 
then the lattices spanned by their $k\times k$ minors are equal. 
To prove this claim, it  is enough to consider the cases $A'=GA$ and $A'=AG$, for $G\in\GL(n,\ZZ)$.
In each case, the Binet-Cauchy formula implies that
every $k\times k$ minor of $A'$ is an integer linear combination of $k\times k$ minors of~$A$. 
\end{proof}

\begin{corollary}
\label{cor:alex-lattice-coq}
Let $(Q,\sigma)$ be a COQ on $n$ vertices.
Let $\tau$ be a linear ordering of the vertices of~$Q$ that is compatible with~$\sigma$.
Then the associated Alexander lattices $\dd_k(Q,t)$, for $1\le k\le n$, 
do not depend on the choice of~$\tau$. 
\end{corollary}

\begin{proof}
By Corollary~\ref{cor:similarity-U}, the $\GL(n,\ZZ)$ conjugacy class 
of the cosquare~$U_Q^{-T}U_Q$ is invariant under cyclic shifts.
The claim follows by Proposition~\ref{prop:Ut}.
\end{proof}

By Corollary~\ref{cor:alex-lattice-coq}, we can 
associate well-defined Alexander lattices $\dd_k(Q,\sigma,t)$ to any COQ~$(Q,\sigma)$. 
Furthermore, these lattices are invariant under proper mutations:

\begin{corollary}
\label{cor:alex-lattices-inv}
Alexander lattices of COQs are invariant under wiggles and proper mutations. 
\end{corollary}

\begin{proof}
The argument essentially repeats the proof of Corollary~\ref{cor:alex-lattice-coq},
except that this time we combine Proposition~\ref{prop:Ut} 
with Corollary~\ref{cor:proper-mutations-preserve-cosquare}. 
\end{proof}

\begin{remark}
Already in the case of 3-vertex quivers,
there are many examples of quivers that are not mutation equivalent but 
have the same Alexander polynomial (equivalently, the same Markov invariant). 
See, for instance, Example~\ref{eg:2mm}. 

It is harder---but possible---to find examples of 3-vertex COQs whose Alexander lattices all agree 
while the corresponding cosquares are not conjugate in $\GL(n,Z)$
(and the COQs are not mutation equivalent). 
See, in particular, Example~\ref{eg:2m-delta}. 
\end{remark}

We conclude this section with a couple of observations on Alexander polynomials. 

As in the case of links/knots, 
the Alexander polynomials of quivers are \emph{palindromic}, up to a change of signs: 

\begin{proposition} 
\label{pr:markov-palyndromic}
For any COQ~$Q$, the Alexander polynomial $\Delta(t)=\Delta_Q(t)$ satisfies
\begin{equation*}
\Delta(t) = (-t)^n \Delta(t^{-1}). 
\end{equation*}
In particular, $M_Q=n+(-1)^n (\text{\rm coefficient of $t$ in $\Delta_Q(t)$})=n+(-1)^n \frac{d}{dt}\Delta_Q(0)$. 
\end{proposition}

\begin{proof}
Let $C = U^{-T} U$.
We first note that $C^{-T}$ is conjugate to~$C$:
\begin{equation*}
C^{-T}=U U^{-T} = U^T U^{-T} U U^{-T} = U^T C U^{-T}. 
\end{equation*}
It follows that $C^{-1}=(C^{-T})^T$ has the same characteristic polynomial as~$C$.  
Hence
\begin{align*}
\Delta(t) &= \det(tI-C) = \det(tI-C^{-1}) \\
&= \det(tC-I) = (-t)^n \det(t^{-1}I-C) = (-t)^n \Delta(t^{-1}). \qedhere
\end{align*}
\end{proof}

It is well known \cite[Theorem 2.8.4]{fwz1-3} that the determinant of the exchange matrix $B_Q$
is preserved by mutations. 
This mutation invariant can be recovered from the Alexander polynomial of $Q$, as follows: 

\begin{proposition} 
 $\det(B_Q)=(-1)^n\Delta_Q(1)$.  
\end{proposition}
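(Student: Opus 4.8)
The plan is to simply evaluate the Alexander polynomial at $t=1$ and compare the result with $\det(B_Q)$ using the defining relation between the unipotent companion and the exchange matrix. The two ingredients I would use are both already available: the alternative expression for the Alexander polynomial recorded in the preceding remark, namely $\Delta_Q(t)=\det(tU-U^T)$, and the defining identity $-B=U-U^T$ from~\eqref{eq:-B=U-UT}.

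First I would substitute $t=1$ into the form $\Delta_Q(t)=\det(tU-U^T)$, which gives
\[
\Delta_Q(1)=\det(U-U^T).
\]
Next I would invoke $-B=U-U^T$, so that $U-U^T=-B_Q$, and hence
\[
\Delta_Q(1)=\det(-B_Q)=(-1)^n\det(B_Q).
\]
Finally, multiplying both sides by $(-1)^n$ and using $(-1)^{2n}=1$ yields $(-1)^n\Delta_Q(1)=\det(B_Q)$, which is exactly the assertion.

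There is essentially no obstacle here: the statement is an immediate consequence of the two equivalent forms of $\Delta_Q$ and the definition of the unipotent companion. The only point requiring minor care is the bookkeeping of signs, in particular choosing the form $\det(tU-U^T)$ rather than $\det(tU^T-U)$, so that the substitution $t=1$ produces $U-U^T=-B$ directly. Had I instead used $\det(tU^T-U)$, I would obtain $\Delta_Q(1)=\det(U^T-U)=\det(B_Q)$ without the sign factor; the two evaluations are nonetheless consistent, since for odd~$n$ the skew-symmetric matrix $B_Q$ satisfies $\det(B_Q)=0$, while for even~$n$ the factor $(-1)^n$ equals~$1$.
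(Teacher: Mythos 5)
Your proof is correct and follows essentially the same route as the paper: both evaluate $\Delta_Q(t)=\det(tU-U^T)$ at $t=1$ and then use the defining identity $-B_Q=U-U^T$ to conclude $\Delta_Q(1)=\det(-B_Q)=(-1)^n\det(B_Q)$. Your closing remark about the sign bookkeeping is also sound, since $\det(B_Q)=0$ for odd $n$ by skew-symmetry.
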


\begin{proof}
 The formula $-B_Q = U_Q-U_Q^T$ implies that 
\begin{equation*}
 (-1)^n\det(B_Q) =\det(-B_Q) =\det(U_Q-U_Q^{T}) = \Delta_Q(1).\qedhere
\end{equation*}
 \end{proof}
 
\begin{remark}
\label{rem:casals}
R.~Casals~\cite{casals-binary} introduced a binary invariant of quiver mutations that 
can be derived from the specialization $\Delta_Q(-1)$ of the Alexander polynomial. 
(See also the follow-up work by A.~Seven and \.I.~\"Unal~\cite{Seven-congruence}.) 
Importantly, those invariants do not depend on the choice of cyclic ordering.  
\end{remark}


\newpage

\section{Quivers with few vertices}
\label{sec:alex-examples}

In this section, we compute Alexander polynomials and Alexander lattices of various quivers on 2, 3, or 4 vertices. 

\begin{example}[$n=2$]
Let $Q$ be a 2-vertex quiver with 
\begin{align*}
U_Q&=\begin{bmatrix} 
1 &  \! \!-x  \\
0 &  \! \!1 
\end{bmatrix}, 
\end{align*}
cf.\ Example~\ref{eg:U-n=2}.
The parametrized companion of $Q$ is given by 
\begin{equation*}
P_Q(t)=\begin{bmatrix} 
t-1 &  -tx  \\
x &  t-1 
\end{bmatrix}. 
\end{equation*}
The Alexander polynomial of $Q$ is
\begin{equation}
\label{eq:alex-n=2}
\Delta_Q(t)=\det(P_Q(t))=t^2+t(-2+x^2)+1. 
\end{equation}
The Markov invariant is given by 
\begin{equation}
\label{eq:markov-n=2}
M_Q=2+(-2+x^2)=x^2. 
\end{equation}
The Alexander lattices are given by 
\begin{align*}
\dd_1(Q,t) &= (t-1)\hspace{1pt}\ZZ \oplus x\hspace{1pt}\ZZ \quad \text{(cf.\ Remark~\ref{rem:d_1})};\\
\dd_2(Q,t) &=(t^2+t(-2+x^2)+1)\hspace{1pt}\ZZ \quad \text{(cf.\ Remark~\ref{rem:d_n}).}
\end{align*}

\end{example}

\begin{example}[$n=3$]
\label{eg:3vert-inv}
Let $Q$ be a quiver on three linearly ordered vertices, with 
\begin{align*}
U_Q=\begin{bmatrix} 
1 &  \! \!-x \! \! & -z \\
0 &  \! \!1 \! \! & -y \\
0 &  \! \!0 \! \! & 1
\end{bmatrix}, 
\end{align*}
cf.\ Example~\ref{eg:U-n=3}. 
Then 
\begin{align*}
P_Q(t)&=\begin{bmatrix} 
t-1 &  \! \!-tx \! \! & -tz \\
x &  \! \!t-1 \! \! & -ty \\
z &  \! \!y \! \! & t-1
\end{bmatrix}. 
\end{align*}
Computing the Alexander polynomial of $Q$ yields 
\begin{align}
\nonumber
\Delta_Q(t)&=\det(P_Q(t)) \\
\nonumber
&=t^3 + (-3 \!+\!x^2 \!+\! y^2 \!+\! z^2 \!+\!xyz)t^2 + (3\! -\!x^2 \!-\!y^2\! -\!z^2 \!-\!xyz)t - 1 \\
\label{eq:alex-n=3}
&=(t-1)^3 + M_Q\cdot t\,(t-1), 
\end{align}
where $M_Q$ is the Markov invariant, given by 
\begin{equation}
\label{eq:markov-n=3}
M_Q 
=x^2 + y^2 + z^2 +xyz .
\end{equation}
(Cf.\ the ``Markov constant'' of a 3-vertex quiver appearing in  \cite[Section~3]{BBH}.) 


In view of Remark~\ref{rem:d_1}, we have $\dd_1(Q,t)=(t-1)\hspace{1pt}\ZZ\oplus \gcd(x,y,z)\hspace{1pt}\ZZ$. 

The Alexander lattice $\dd_2(Q,t)$ is the $\ZZ$-span of the $2\times 2$ minors of~$P_Q(t)$: 
\begin{equation*}
\begin{matrix}
t^2+(y^2-2)t+1 & (x+yz)t-x & -zt + z+xy \\[4pt]
-xt^2+(x+yz)t  \quad & t^2 + (z^2-2)t +1 \quad & (y+xz)t-y \\[4pt]
(z+xy)t^2-zt   & -yt^2 +(y+xz)t & t^2 +(x^2-2)t +1
\end{matrix}
\end{equation*}
We can also think of $\dd_2(Q,t)$ as the column $\ZZ$-span of the matrix 
\begin{equation*}
\begin{bmatrix}
1		& -x		& z+xy	& 0		& 1		& -y		& 0		& 0		& 1 \\
y^2-2	& x+yz	& -z		& x+yz	& z^2-2	& y+xz	& -z		& y+xz	& x^2-2 \\
1		& 0		& 0		& -x		& 1		& 0 		& z+xy	& -y		& 1
\end{bmatrix} \!,
\end{equation*}
where every column $\left[ \begin{smallmatrix} p\\ q\\ r\end{smallmatrix}\right]$ 
records the coefficients $p,q,r$ of a polynomial $p+q\,t+r\,t^2$. 
Applying appropriate column transformations, we conclude that the Alexander lattice $\dd_2(Q,t)$
is the $\ZZ$-span of the columns of the matrix
\begin{equation}
\label{eq:d2-3vert}
\begin{bmatrix}
1		& 1		& 1		& 0		& 0		& 0		& 0		& 0		& 0 \\
x^2\!-\!2	& y^2\!-\!2	& z^2\!-\!2	& x\!+\!yz	& y\!+\!xz	& z\!+\!xy	& x^3\!+\!2yz	& y^3\!+\!2xz	& z^3\!+\!2xy \\
1		& 1		& 1		& -x		& -y		& -z 		& 0		& 0		& 0
\end{bmatrix} \!\!.
\end{equation}

Finally, $\dd_3(Q,t)=\Delta_Q(t)\hspace{1pt}\ZZ $, see \eqref{eq:d_n}. 
\end{example}

\begin{example}[$n=4$]
Let $Q$ be a quiver on four linearly ordered vertices, with %
\begin{equation*}
U_Q\!=\begin{bmatrix} 
1 & -x & -z & -w \\
0 & 1 & -y & -v\\
0 & 0 & 1 & -u \\
0 & 0 & 0 & 1
\end{bmatrix} ,  
\end{equation*}
cf.\ Example~\ref{ex:C(Q)-4vert}.
Computing the Alexander polynomial $\Delta_Q(t)=\det(tU_Q-U_Q^T)$ yields 
\begin{align*}
\Delta_Q(t) 
&=t^4 + (-4 + x^2+y^2+u^2+z^2+v^2+w^2+xyz+xvw+yuv+uzw +xyuw)t^3 \\
&+ (6  +x^2u^2 +z^2v^2 + y^2w^2 - 2yzvw - 2xuzv \\
& \quad  -2x^2 -2 y^2 -2 u^2 -2 z^2 -2v^2 -2w^2 -2 xyz-2xvw-2yuv-2uzw )t^2 \\
&+  (-4 + x^2+y^2+u^2+z^2+v^2+w^2+xyz+xvw+yuv+uzw +xyuw)t + 1\\
&=t^4 + (-4 +M_Q)t^3 + (6 +\det(B_Q)-2M_Q)t^2 
+  (-4 + M_Q)t + 1, 
\end{align*}
where $M_Q$ is the Markov invariant, given by  
\begin{equation}
\label{eq:markov-4vert}
M_Q=x^2+y^2+u^2+z^2+v^2+w^2+xyz+xvw+yuv+uzw +xyuw. 
\end{equation}
Thus
\begin{equation}
\label{eq:alex-poly-4vert}
\Delta_Q(t)=(t-1)^4+M_Q\cdot t\,(t-1)^2 +\det(B_Q)\cdot t^2. 
\end{equation}

We omit the tedious calculation of Alexander lattices $\dd_k(Q,t)$ of 4-vertex quivers.
Suffice to note that $\dd_2(Q,t)$ (resp.,~$\dd_3(Q,t)$) is spanned by 36 (resp.,~16) minors of the $4\times4$ matrix~$P_Q(t)$. 
\end{example}

\begin{remark}
Formulas \eqref{eq:alex-n=2}, \eqref{eq:markov-n=2}, and~\eqref{eq:alex-n=3} show that
for quivers on $n\le 3$ vertices, the Markov invariant contains the same information
as the Alexander polynomial. 
For~4-vertex quivers, the Alexander polynomial \eqref{eq:alex-poly-4vert} encodes two quantities:
the Markov invariant~$M_Q$ given by~\eqref{eq:markov-4vert} and the determinant of the exchange matrix~$B_Q$. 
\end{remark}

\pagebreak[3]

\begin{example}
\label{eg:2mm-continued}
Let us revisit Example~\ref{eg:2mm}.
The 3-vertex quiver~$Q_m$ (see~\eqref{eq:Qm-2mm}) is the quiver in Example~\ref{eg:3vert-inv} specialized at $x=y=m$ and $z=-2$. 
Substituting these values into~\eqref{eq:d2-3vert}, we see that $\dd_2(Q_m,t)$
is the $\ZZ$-span of the columns of the matrix
\begin{equation*}
\begin{bmatrix}
1		& 1		& 0		& 0		& 0		& 0	 \\
m^2\!-\!2	& 2		& -m		& m^2-2	& m^3-4m	& -8+2m^2 \\
1		& 1		& -m		& 2		& 0		& 0
\end{bmatrix} \!,
\end{equation*}
or equivalently the $\ZZ$-span of the columns of 
\begin{equation*}
\begin{bmatrix}
1		& 0		& 0		& 0		 \\
2		& 2		& m		& m^2-4 \\
1		& 2		& m		& 0		
\end{bmatrix} \!. 
\end{equation*}
It follows that the Alexander lattices $\dd_2(Q_m,t)$ are distinct for different values of~$m$,
so proper mutation classes of~$Q_m$ are distinguished from each other by these lattices. 
\end{example}

\hide{
To see this, substitute $t=-1$ into the parametrized companion: 
\begin{equation*}
P_Q(-1)
= - U_Q - U_Q^T
=\begin{bmatrix} -2 & m & -2 \\ m & -2 & m \\ -2 &  m & -2 \end{bmatrix}; 
\end{equation*}
then observe that $d_2(P_Q(-1))=(m^2-4)\ZZ$. 
\cS{todo: new}
This can be seen by either taking the SNF of $P_{C_m}(-1)$, or by computing $d_2$ of $P_{C_m}(t)$.
Specifically, if $m$ is odd (resp. even) then the SNF of $P_{C_m}(-1)$ has $\alpha_1 = 1$ (resp. $2$), $\alpha_2 = m^2-4$ (resp. $\frac{1}{2}(m^2-4)$), and $\alpha_3=0$ (regardless of $m$):
\begin{equation*}
\begin{bmatrix}
1 & 0 & 0 \\
0 & m^2-4 & 0 \\
0 & 0 & 0
\end{bmatrix}
(m \text{ odd}), 
\quad
\begin{bmatrix}
2 & 0 & 0 \\
0 & \frac{1}{2}m^2-2 & 0 \\
0 & 0 & 0
\end{bmatrix}
(m \text{ even}). 
\end{equation*}
An (admittably more tedious) computation reveals that $d_2$ is a lattice spanned by $1 - t^2, m^2-4$, and either $2(1+t)$ or $1+t$, depending on if $m$ is even or not.
} 

\begin{example}
\label{eg:2m-delta}
For $m>0$ and $\delta\ge 2m$, let $Q_{m,\delta}$ be the linearly ordered quiver with
\begin{align*}
U_{Q_{m,\delta}}=\begin{bmatrix} 
1 &  \! \!-m \! \! & m-\delta \\
0 &  \! \!1 \! \! & -2 \\
0 &  \! \!0 \! \! & 1
\end{bmatrix}. 
\end{align*}
Pictorially, $Q_{m,\delta}$ is represented by the diagram
\begin{equation}
\label{eq:Qlm}
 \begin{tikzcd}[arrows={-stealth}, sep=2em]
  a  \arrow[r,  "m"]  \arrow[rr, bend right=45, "\delta-m"]  & b  \arrow[r, "2"] & c 
\end{tikzcd}. 
\end{equation}
(We note that in the case $\delta=0$, which does not satisfy the requirement $\delta\ge 2m$,
we recover the COQ $Q_m$ from Examples~\ref{eg:2mm} and~\ref{eg:2mm-continued}.) 

In the notation of Example~\ref{eg:3vert-inv}, we have $x=m$, $y=2$, $z=\delta-m$. 
Computing the Markov invariant 
\begin{equation*}
M_{Q_{\ell,m}}=m^2+(\delta-m)^2+4+2m(\delta-m) = \delta^2+4 , 
\end{equation*}
we notice that it does not depend on~$m$.
So for fixed~$\delta$, we get the same Alexander polynomial~\eqref{eq:alex-n=3} for all values of~$m$. 

The Alexander lattice $\dd_2(Q_{m,\delta},t)$ (cf.\ \eqref{eq:d2-3vert}) 
is the column $\ZZ$-span of the matrix
\begin{equation*}
\left[
\begin{smallmatrix}
1		& 1		& 1				& 0			& 0			& 0	 	& 0 & 0 & 0\\[3pt]
m^2-2{\ }	& {\ }2{\ }		& {\ }(\delta-m)^2-2{\ }	& {\ }2\delta-m{\ }	& {\ }\delta m -m^2+2{\ }& {\ }\delta+m{\ }& {\ }m^3+4\delta-4m{\ }& {\ }8+2\delta m-2m^2{\ }& {\ }(\delta-m)^3+4m\\[3pt]
1		& 1		& 1				& -m			& -2			& m-\delta& 0 & 0 & 0
\end{smallmatrix} 
\right]\!,
\end{equation*}
which can be simplified to
\begin{equation*}
\dd_2(Q_{m,\delta},t)=
\text{column $\ZZ$-span}
\!\left(\left[
\begin{smallmatrix}
1		& 0				& 0			& 0			& 0	 	& 0 & 0 & 0\\[3pt]
2{\ }		& {\ }2\delta-m{\ }	& {\ }\delta+m{\ }	& {\ }\delta m -m^2+2{\ }& {\ }m^2-4{\ }& {\ }\delta^2{\ }& {\ }4\delta{\ }& {\ }2\delta m\\[3pt]
1		& -m				& m-\delta			& -2			& 0 & 0 & 0 & 0
\end{smallmatrix} 
\right]\right)\!. 
\end{equation*}
The quivers $Q_{m,\delta}$ are acyclic and therefore pairwise mutation inequivalent when $\delta>0$ and $m$ varies, 
cf.~\cite{CK}. Some of their mutation classes are distinguished from each other by their respective lattices
$\dd_2(Q_{m,\delta},t)$, but some are not. 
To illustrate, if $\delta=10$, then the only coincidences are
$\dd_2(Q_{0,10},t)=\dd_2(Q_{4,10},t)$ and $\dd_2(Q_{1,10},t)=\dd_2(Q_{5,10},t)$. 
These pairs of quivers are however distinguished by their multisets of vertex gcd's, 
see Observation~\ref{obs:gcd-seven}. 
\end{example}

\section{Tree quivers}
\label{sec:tree-quivers}

In this section, we compute Alexander polynomials 
of several families of \hbox{\emph{tree quivers}}
(cf.\ Definition~\ref{def:tree quiv}). 
We also discuss some of the corresponding Alexander lattices. 

Tree quivers provide a convenient data set for testing the relative power of mutation invariants. 
All orientations of the same tree are mutation equivalent to each other. 
It~is~also~well known, although nontrivial to prove, that orientations of non-isomorphic trees
are mutation-inequivalent, see \cite{CK} or \cite[Corollary~2.6.13]{fwz1-3}. 

By Proposition~\ref{prop:tree wiggles}, all cyclic orderings of a tree quiver are wiggle equivalent to each other.
Hence the Alexander polynomial does not depend 
on the choice of a cyclic ordering, cf.\ Corollary~\ref{cor:similarity-U}. 
It will also transpire that iterated mutations
of the quivers examined below are always proper (cf.\ Theorem~\ref{thm:finite type} and Remark~\ref{rem:atp}), 
so the proper mutation class coincides with the ordinary one. 

We first examine the Dynkin quivers of finite types $ADE$. 

\begin{example}[Type~$A_n$]
\label{eg:An-Alexander}
Consider the COQ 
\begin{equation*}
Q=(v_1\rightarrow v_2\rightarrow\cdots\rightarrow v_n)
\end{equation*}
of type~$A_n$, with the cyclic ordering $\sigma=(v_1,\dots,v_n)$; 
cf.\ Examples~\ref{eg:A3}--~\ref{eg:A4}.
Then
\begin{equation*}
U_Q= \begin{bmatrix} 
1 & -1 & 0 & \cdots & 0 & 0 \\ 
0 &  1 & -1 & \cdots & 0 & 0 \\ 
0 &  0 & 1  & \cdots & 0 & 0 \\
\vdots & \vdots & \vdots & \ddots & \vdots & \vdots \\
0 &  0 & 0  & \cdots & 1 & -1 \\
0 &  0 & 0  & \cdots & 0 & 1 \\
\end{bmatrix}\!, 
\quad
U_Q^{-T}U_Q= \begin{bmatrix} 
1 & -1 & 0 & \cdots & 0 & 0 \\ 
1 &  0 & -1 & \cdots & 0 & 0 \\ 
1 &  0 & 0  & \cdots & 0 & 0 \\
\vdots & \vdots & \vdots & \ddots & \vdots & \vdots \\
1 &  0 & 0  & \cdots & 0 & -1 \\
1 &  0 & 0  & \cdots & 0 & 0 \\
\end{bmatrix}\!, 
\end{equation*}
\begin{equation*}
\Delta_Q(t)=t^n-t^{n-1}+t^{n-2}+\cdots+(-1)^n = \frac{t^{n+1}+(-1)^n}{t+1}, 
\end{equation*}
\begin{equation*}
\label{eq:}
M_Q=n-1, \quad 
\det(B_Q)=1 \text{ ($n$ even),}  \quad  \det(B_Q)=0 \text{ ($n$ odd).} 
\end{equation*}
\end{example}

\hide{
\begin{example}
\label{eg:A3-continued}
Consider the COQ $Q=(a \rightarrow b \rightarrow c)$ of type~$A_3$ with the cyclic ordering $\sigma=(a,b,c)$,
cf.\ Example~\ref{eg:A3}.
In this case, we get 
the Alexander polynomial $\Delta(t) = t^3 - t^2 + t - 1$
and the Markov invariant $M_Q=2$.
\end{example}

\begin{example}
\label{eg:A4-continued}
Consider the quiver $Q = (a \rightarrow b \rightarrow c \rightarrow d)$ of type~$A_4$, 
with the cyclic ordering $\sigma=(a,b,c,d)$, cf.\ Example~\ref{eg:A4}. 
In this case, we get 
the Alexander polynomial $\Delta(t) = t^4 - t^3 + t^2 - t + 1$, 
the Markov invariant $M_Q=3$, and the determinant $\det(B_q)=1$. 
\end{example}

\begin{example}
\label{eg:D4-continued}
Consider the quiver $Q$ of type $D_4$ with arrows $a \rightarrow b \rightarrow c$ and $b \rightarrow d$  
and the cyclic ordering $\sigma=(a,b,c,d)$, cf.\ Example~\ref{eg:D4}. 
In this case, we get 
the Alexander polynomial $\Delta(t) = t^4 - t^3 - t + 1$, the Markov invariant $M_Q=3$, and the determinant $\det(B_Q)=0$. 
\end{example}
}

\begin{example}[Type~$D_n$]
\label{eg:Dn-Alexander}
For $n\ge 4$, consider the COQ 
\begin{equation*}
Q\ =\begin{tikzcd}[arrows={-stealth}, sep=1em]
  v_1  \arrow[r]   & v_3  \arrow[r] & v_4 \arrow[r] & \cdots \arrow[r] & v_n \\
 & v_2  \arrow[u] 
   & 
\end{tikzcd}
\end{equation*}
of type~$D_n$, with the cyclic ordering $\sigma=(v_1,\dots,v_n)$.
Then
\begin{equation*}
U_Q= \begin{bmatrix} 
1 &  0 & -1 & 0 & \cdots & 0 & 0 \\ 
0 &  1 & -1 & 0 & \cdots & 0 & 0 \\ 
0 &  0 & 1  & -1 & \cdots & 0 & 0 \\
0 &  0 & 0  & 1 & \cdots & 0 & 0 \\
\vdots & \vdots & \vdots & \vdots & \ddots & \vdots & \vdots \\
0 &  0 & 0  & 0 & \cdots & 1 & -1 \\
0 &  0 & 0  & 0 & \cdots & 0 & 1 \\
\end{bmatrix}\!, 
\quad
U_Q^{-T}U_Q= \begin{bmatrix} 
1 &  0 & -1 & 0 & \cdots & 0 & 0 \\ 
0 &  1 & -1 & 0 & \cdots & 0 & 0 \\ 
1 &  1 & -1  & -1 & \cdots & 0 & 0 \\
1 &  1 & -1  & 0 & \cdots & 0 & 0 \\
\vdots & \vdots & \vdots & \vdots & \ddots & \vdots & \vdots \\
1 &  1 & -1  & 0 & \cdots & 0 & -1 \\
1 &  1 & -1  & 0 & \cdots & 0 & 0 \\
\end{bmatrix}\!, 
\end{equation*}
\begin{equation*}
\Delta_Q(t)=t^n-t^{n-1}+(-1)^{n-1}t+(-1)^n , 
\end{equation*}
\begin{equation*}
M_Q=n-1, \quad
\det(B_Q)= 0. 
\end{equation*}
\end{example}

\begin{example}[Type~$E_n$]
\label{eg:En-Alexander}
For $n\ge6$, consider the COQ 
\begin{equation*}
Q\ =\begin{tikzcd}[arrows={-stealth}, sep=1em]
  v_1  \arrow[r]   & v_2  \arrow[r] & v_4 \arrow[r] & \cdots \arrow[r] & v_n \\
 & & v_3  \arrow[u] 
   & 
\end{tikzcd}
\end{equation*}
of type~$E_n$, with the cyclic ordering $\sigma=(v_1,\dots,v_n)$.
Then
\begin{align*}
U_Q^{-T}U_Q&= \begin{bmatrix} 
1 &  -1 & 0 & 0 & 0 & \cdots & 0 & 0 \\ 
1 &  0 & 0 & -1 & 0 & \cdots & 0 & 0 \\ 
0 &  0 & 1  &-1 & 0 & \cdots & 0 & 0 \\
1 &  0 & 1  &-1 & -1 & \cdots & 0 & 0 \\
1 &  0 & 1  &-1 & 0 & \cdots & 0 & 0 \\
\vdots & \vdots & \vdots & \vdots & \vdots & \ddots & \vdots & \vdots \\
1 &  0 & 1 & -1  & 0 & \cdots & 0 & -1 \\
1 &  0 & 1 & -1  & 0 & \cdots & 0 & 0 \\
\end{bmatrix}\!, 
\end{align*}
\begin{align*}
\Delta_Q(t)&=t^n-t^{n-1}
+t^{n-3}-t^{n-4}+\cdots+(-1)^{n-1} t^4+(-1)^n t^3
+(-1)^{n-1}t+(-1)^n \\
&=(t-1)(t^{n-1}+(-1)^{n-1})+t^3 \,\frac{t^{n-5}+(-1)^{n-4}}{t+1}, 
\end{align*}
\begin{equation*}
M_Q=n-1, \quad  \det(B_Q)=1 \text{ ($n$ even),}  \quad  \det(B_Q)=0 \text{ ($n$ odd)}. 
\end{equation*}
\end{example}

\begin{remark}
Neither $\det(B_Q)$ nor $\operatorname{rank}(B_Q)$ 
distinguish between the three finite types~$A_n$, $D_n$,~and~$E_n$ when $n$ is odd;
or between $A_n$ and~$E_n$ when $n$ is even.
On the other hand, the Alexander polynomials of these quivers (or of any COQs in the corresponding
proper mutation classes) are distinct from each other. 
\end{remark}

\pagebreak[3]

\begin{example}[Tree quivers on six vertices]
\label{eg:6vert-Alexander}
There are six pairwise non-isomorphic trees on 6~vertices: 
the Dynkin diagrams of types $A_6$, $D_6$, and~$E_6$, plus three more. 
The corresponding Alexander polynomials are shown below: 
\begin{equation*}
\begin{array}{ll}
\hspace{.5in} A_6 & \begin{array}{l} \Delta_Q(t)=t^6 - t^5 + t^4 - t^3 + t^2 - t + 1 \end{array}
\\[10pt]
\hspace{.5in} D_6 & \begin{array}{l} \Delta_Q(t)=t^6 - t^5  - t + 1 \\
\hspace{31pt} = (t^4 + t^3 + t^2 + t + 1)(t-1)^2 \end{array}
\\[15pt]
\hspace{.5in} E_6 & \begin{array}{l} \Delta_Q(t)=t^6 - t^5 +t^3 - t + 1 \\
\hspace{31pt} =(t^2 - t + 1) (t^4 - t^2 + 1) \end{array}
\\[5pt]
\begin{tikzcd}[arrows={-stealth}, sep=1em]
  \scriptstyle\bullet  \arrow[r]   & \scriptstyle\bullet  \arrow[r] & \scriptstyle\bullet \arrow[r] & \scriptstyle\bullet \\
 & \scriptstyle\bullet  \arrow[u] &  \scriptstyle\bullet \arrow[u]
   & 
\end{tikzcd}
\quad & \begin{array}{l} \ \\[-5pt]
\Delta_Q(t)=t^6 - t^5 - t^4 + 2t^3 - t^2 - t + 1
\\ \hspace{31pt}=(t^2 - t + 1)(t - 1)^2(t + 1)^2
\end{array}
\\[20pt]
\begin{tikzcd}[arrows={-stealth}, sep=1em]
& \scriptstyle\bullet \arrow[d] \\
  \scriptstyle\bullet  \arrow[r]   & \scriptstyle\bullet  \arrow[r] & \scriptstyle\bullet \arrow[r] & \scriptstyle\bullet \\
 & \scriptstyle\bullet  \arrow[u] &  
   & 
\end{tikzcd}
& \begin{array}{l}\ \\[-9pt]
\Delta_Q(t)=t^6 - t^5 - 2t^4 + 4t^3 - 2t^2 - t + 1\\ \hspace{31pt} 
   =(t^4 + t^3 - t^2 + t + 1)(t - 1)^2 \end{array}
\\[32pt]
\begin{tikzcd}[arrows={-stealth}, sep=1em]
\scriptstyle\bullet \arrow[dr] & \scriptstyle\bullet \arrow[d] & \scriptstyle\bullet \\
  \scriptstyle\bullet  \arrow[r]   & \scriptstyle\bullet  \arrow[r] \arrow[ru] & \scriptstyle\bullet 
\end{tikzcd}
& \begin{array}{l}\ \\[-6pt]
\Delta_Q(t)=t^6 - t^5 - 5t^4 + 10t^3 - 5t^2 - t + 1 \\ \hspace{31pt} = (t^2 + 3t + 1)(t - 1)^4\end{array}
\end{array}
\vspace{10pt}
\end{equation*}
\end{example}

\begin{example}
\label{eg:8vert-trees}
In general, the Alexander polynomial does not necessarily distinguish between 
tree quivers whose underlying trees are non-isomorphic.
The sole example with $n\le8$ involves the 8-vertex tree quivers
\begin{equation}
\label{eq:8-vertex-trees}
\begin{tikzcd}[arrows={-stealth}, sep=1em]
& \scriptstyle\bullet \arrow[d] & \scriptstyle\bullet \arrow[d]\\
  \scriptstyle\bullet  \arrow[r]   & \scriptstyle\bullet  \arrow[r] & \scriptstyle\bullet \arrow[r] & \scriptstyle\bullet \\
 & \scriptstyle\bullet  \arrow[u] &  \scriptstyle\bullet \arrow[u]
   & 
\end{tikzcd}
\qquad  \text{and}\qquad 
\begin{tikzcd}[arrows={-stealth}, sep=1em]
\scriptstyle\bullet \arrow[dr] & \scriptstyle\bullet \arrow[d] & \scriptstyle\bullet \\
  \scriptstyle\bullet  \arrow[r]   & \scriptstyle\bullet  \arrow[r] \arrow[ru] & \scriptstyle\bullet \arrow[r]   & \scriptstyle\bullet \arrow[r]   & \scriptstyle\bullet 
\end{tikzcd}
\end{equation}
which have the same Alexander polynomial
\begin{align*}
\Delta(t)&=
t^8 - t^7 - 5t^6 + 13t^5 - 16t^4 + 13t^3 - 5t^2 - t + 1 \\
&=(t^4 + 3t^3 + t^2 + 3t + 1)(t - 1)^4. 
\end{align*}
On the other hand, these COQs are distinguished from each other by their respective Alexander lattices
$\dd_7(Q,t)$ (hence by the $\GL(n,\ZZ)$ conjugacy classes
of the corresponding cosquare matrices $U_Q^{-T}U_Q$). 
This can be shown by verifying that for the left tree quiver in~\eqref{eq:8-vertex-trees}
every polynomial
\begin{equation*}
c_0+c_1 t + c_2 t^2 +\cdots+ c_7 t^7 \in \dd_7(Q,t)
\end{equation*}
satisfies the congruence
\begin{equation*}
c_3+c_4+c_5-c_6-c_7\equiv 0 \bmod 3.  
\end{equation*}
(Equivalently, substitute $t\leftarrow t+1$ into every polynomial in the spanning set of $\dd_7(Q,t)$
and verify that the coefficient of $t^3$ vanishes modulo~3.)
For the right quiver in~\eqref{eq:8-vertex-trees}, this congruence does not generally hold. 

\hide{
This can be shown by computing the lattice $d_7$.
Both lattices have rank $5$. 
For the tree on the left, when projected onto $1, t, \ldots, t^4$ the lattice $d_7$ has determinant $3$.
For the tree on the right, when projected onto the same subspace the corresponding lattice has determinant $1$ (indeed, we get all integer polynomials of degree $\leq 4$). 
} 
Alternatively, evaluate the polynomial
$t^4 + 3t^3 + t^2 + 3t + 1$ at each cosquare $U_Q^{-T}U_Q$
and verify that the first evaluation vanishes $\bmod 3$ 
(i.e., it yields a zero $8\times 8$ matrix) 
whereas the second one does~not.
\end{example}

\begin{example}
\label{eg:9-vertex-trees}
Among 47 pairwise non-isomorphic trees on 9 vertices, 
37~trees give rise to tree quivers with unique Alexander polynomials. 
The remaining 10~trees form five
``collision pairs,'' 
see Figures~\ref{fig:9-vertex-trees-2}--\ref{fig:9-vertex-trees-3}. 

For the two pairs shown in Figure~\ref{fig:9-vertex-trees-2}, 
the corresponding $\GL(n,\ZZ)$ conjugacy classes of cosquares of unipotent companions are distinct; 
moreover, the latter fact can be certified by inspecting the corresponding Alexander lattices~$\dd_8(Q,t)$, as follows.

Similarly to Example~\ref{eg:8vert-trees}, consider polynomials 
\begin{equation*}
c_0+c_1 t + c_2 t^2 +\cdots+ c_8 t^8 \in \dd_8(Q,t). 
\end{equation*}
It turns out that in each pair of 9-vertex tree quivers shown in Figure~\ref{fig:9-vertex-trees-2}, 
the polynomials in one of the two Alexander lattices~$\dd_8(Q,t)$---but not in the other---satisfy the congruences
\begin{align*}
c_0+c_3+c_6 \equiv c_1+c_4+c_7 \equiv c_2 + c_5+c_8 \bmod 2. 
\end{align*}
The claim follows. 

\pagebreak[3]


\begin{figure}[ht]
\begin{equation*}
\begin{array}{|l|l|} 
\hline
\multicolumn{2}{|c|}{}\\[-8pt]
\multicolumn{2}{|c|}{t^9 - t^8 - t^7 + 3t^6 - 4t^5 + 4t^4 - 3t^3 + t^2 + t - 1}\\
\multicolumn{2}{|c|}{= (t^6 + t^5 - t^4 - t^2 + t + 1)(t^2 - t + 1)(t - 1)}\\
\hline
\ \\[-10pt]
\begin{tikzcd}[arrows={-stealth}, sep=1em]
  \scriptstyle\bullet  \arrow[r]   & \scriptstyle\bullet  \arrow[r] & \scriptstyle\bullet \arrow[r] & \scriptstyle\bullet \arrow[r] & \scriptstyle\bullet \arrow[r] & \scriptstyle\bullet \\
 \scriptstyle\bullet  \arrow[r] & \scriptstyle\bullet  \arrow[u] &  \scriptstyle\bullet \arrow[l]
   & & &
\end{tikzcd}
\quad & 
\begin{tikzcd}[arrows={-stealth}, sep=1em]
  \scriptstyle\bullet  \arrow[r]   & \scriptstyle\bullet  \arrow[r] & \scriptstyle\bullet \arrow[r] & \scriptstyle\bullet \arrow[r] & \scriptstyle\bullet \arrow[r] & \scriptstyle\bullet \\
 & \scriptstyle\bullet  \arrow[u] &  & \scriptstyle\bullet \arrow[u] & \scriptstyle\bullet \arrow[l] & 
\end{tikzcd}
\\[15pt] 
\hline
\multicolumn{2}{c}{}\\[-10pt]
\hline
\multicolumn{2}{|c|}{}\\[-8pt]
\multicolumn{2}{|c|}{t^9 - t^8 - 3t^7 + 9t^6 - 14t^5 + 14t^4 - 9t^3 + 3t^2 + t - 1}\\
\multicolumn{2}{|c|}{= (t^2 + 3t + 1)(t^2 - t + 1)(t^2 + 1)(t - 1)^3}\\
\hline
\ \\[-10pt]
\begin{tikzcd}[arrows={-stealth}, sep=1em]
  \scriptstyle\bullet  \arrow[r]   & \scriptstyle\bullet  \arrow[r] & \scriptstyle\bullet \arrow[r] & \scriptstyle\bullet \arrow[r] & \scriptstyle\bullet \\
 \scriptstyle\bullet  \arrow[ur] & \scriptstyle\bullet  \arrow[u] &  \scriptstyle\bullet \arrow[u] &  \scriptstyle\bullet \arrow[l] & 
\end{tikzcd}
\quad & 
\begin{tikzcd}[arrows={-stealth}, sep=1em]
  \scriptstyle\bullet  \arrow[r]   & \scriptstyle\bullet  \arrow[r] & \scriptstyle\bullet \arrow[r] & \scriptstyle\bullet \arrow[r] & \scriptstyle\bullet \arrow[r] & \scriptstyle\bullet \\
 & & \scriptstyle\bullet  \arrow[ur]  & \scriptstyle\bullet \arrow[u] & \scriptstyle\bullet \arrow[u] & 
\end{tikzcd}
\\[15pt] 
\hline
\end{array}
\end{equation*}
\caption{Pairs of 9-vertex tree quivers that have the same Alexander polynomial (shown) but different Alexander lattices.
}
\vspace{-10pt}
\label{fig:9-vertex-trees-2}
\end{figure}

For each of the remaining three pairs of 9-vertex tree quivers shown in Figure~\ref{fig:9-vertex-trees-3}, 
the $\GL(n,\ZZ)$ conjugacy classes for the two quivers coincide,
so neither these conjugacy classes, nor the Alexander lattices that they determine, 
certify the mutation inequivalence of the two quivers. 

We do not know whether these quivers are distinguished from each other 
by the integral congruence classes of their unipotent companions,
which might potentially carry more information than the aforementioned conjugacy classes.
Cf.\ Remark~\ref{rem:conjugacy-vs-mut-equiv}.
\end{example}

\begin{figure}[ht]
\begin{equation*}
\begin{array}{|l|l|} 
\hline
\multicolumn{2}{|c|}{}\\[-8pt]
\multicolumn{2}{|c|}{t^9 - t^8 - 2t^7 + 4t^6 - 4t^5 + 4t^4 - 4t^3 + 2t^2 + t - 1}\\
\multicolumn{2}{|c|}{=(t^6 + 2t^5 + t^4 + 2t^3 + t^2 + 2t + 1)(t - 1)^3} \\
\hline
\ \\[-10pt]
\begin{tikzcd}[arrows={-stealth}, sep=1em]
  \scriptstyle\bullet  \arrow[dr]   & \scriptstyle\bullet  \arrow[r] & \scriptstyle\bullet \arrow[r] & \scriptstyle\bullet \arrow[r] & \scriptstyle\bullet \arrow[r] & \scriptstyle\bullet \\
 \scriptstyle\bullet  \arrow[r] & \scriptstyle\bullet  \arrow[u] &  \scriptstyle\bullet \arrow[l]
   & & &
\end{tikzcd}
& 
\begin{tikzcd}[arrows={-stealth}, sep=1em]
  \scriptstyle\bullet  \arrow[r]   & \scriptstyle\bullet  \arrow[r] & \scriptstyle\bullet \arrow[r] & \scriptstyle\bullet \arrow[r] & \scriptstyle\bullet \arrow[r] & \scriptstyle\bullet \\
 & \scriptstyle\bullet  \arrow[u] &  & \scriptstyle\bullet \arrow[u] & \scriptstyle\bullet \arrow[u] & 
\end{tikzcd}
\\[15pt]
\hline
\multicolumn{2}{c}{}\\[-10pt]
\hline
\multicolumn{2}{|c|}{}\\[-8pt]
\multicolumn{2}{|c|}{t^9 - t^8 - 2t^7 + 6t^6 - 8t^5 + 8t^4 - 6t^3 + 2t^2 + t - 1}\\
\multicolumn{2}{|c|}{=(t^8 - 2t^6 + 4t^5 - 4t^4 + 4t^3 - 2t^2 + 1)(t - 1)} \\
\hline
\ \\[-10pt]
\begin{tikzcd}[arrows={-stealth}, sep=1em]
  \scriptstyle\bullet  \arrow[r]   & \scriptstyle\bullet  \arrow[r] & \scriptstyle\bullet \arrow[r] & \scriptstyle\bullet \arrow[r] & \scriptstyle\bullet \arrow[r] & \scriptstyle\bullet \\
 \scriptstyle\bullet  \arrow[ur] & \scriptstyle\bullet  \arrow[u] &  \scriptstyle\bullet \arrow[l]
   & & &
\end{tikzcd}
\quad & 
\begin{tikzcd}[arrows={-stealth}, sep=1em]
  \scriptstyle\bullet  \arrow[r]   & \scriptstyle\bullet  \arrow[r] & \scriptstyle\bullet \arrow[r] & \scriptstyle\bullet \arrow[r] & \scriptstyle\bullet \arrow[r] & \scriptstyle\bullet \\
 & & \scriptstyle\bullet  \arrow[u]  & \scriptstyle\bullet \arrow[u] & \scriptstyle\bullet \arrow[u] & 
\end{tikzcd}
\\[15pt] 
\hline
\multicolumn{2}{c}{}\\[-10pt]
\hline
\multicolumn{2}{|c|}{}\\[-8pt]
\multicolumn{2}{|c|}{t^9 - t^8 - 2t^7 + 6t^6 - 10t^5 + 10t^4 - 6t^3 + 2t^2 + t - 1}\\
\multicolumn{2}{|c|}{=(t^4 + 2t^3 + 2t + 1)(t^2 + 1)(t - 1)^3} \\
\hline
\ \\[-10pt]
\begin{tikzcd}[arrows={-stealth}, sep=1em]
  \scriptstyle\bullet  \arrow[r]   & \scriptstyle\bullet  \arrow[r] & \scriptstyle\bullet \arrow[r] & \scriptstyle\bullet \arrow[r] & \scriptstyle\bullet \\
 \scriptstyle\bullet  \arrow[ur] & \scriptstyle\bullet  \arrow[u] &  \scriptstyle\bullet \arrow[l] & \scriptstyle\bullet \arrow[l] &
\end{tikzcd}
\quad & 
\begin{tikzcd}[arrows={-stealth}, sep=1em]
  \scriptstyle\bullet  \arrow[r]   & \scriptstyle\bullet  \arrow[r] & \scriptstyle\bullet \arrow[r] & \scriptstyle\bullet \arrow[r]  & \scriptstyle\bullet \\
 & \scriptstyle\bullet  \arrow[u] &  \scriptstyle\bullet \arrow[u] & \scriptstyle\bullet \arrow[l] & \scriptstyle\bullet \arrow[u]&
\end{tikzcd}
\\[15pt] 
\hline
\end{array}
\end{equation*}
\caption{Pairs of 9-vertex tree quivers that have the same Alexander polynomial (shown) and the same Alexander lattices.
}
\vspace{-5pt}
\label{fig:9-vertex-trees-3}
\end{figure}

\begin{remark}
A.~Schwartz~\cite{SchwartzTree} gave a combinatorial formula for the Alexander polynomial of a tree quiver (computed from an associated link).
Our constructions~agree.
\end{remark}

\newpage 

\section{Proper COQs}
\label{sec:proper COQs}

\begin{definition}
\label{def:wiggle class proper}
A COQ $(Q,\sigma)$, or its wiggle equivalence class~$\wiggleQ$, 
is called \emph{proper} if every vertex~$j$ in~$Q$ is proper in $\wiggleQ$,
cf.\ Definition~\ref{def:wiggle-class-proper-vertex}.
To rephrase, a COQ is proper if every vertex in it can be made proper by a sequence of wiggles. 

For a quiver~$Q$, a \emph{proper cyclic ordering} is a cyclic ordering~$\sigma$ such that $(Q,\sigma)$ is a proper COQ. 
\end{definition}

\begin{remark}
\label{rem:subcoq}
The property of being proper is \emph{hereditary}, i.e., it passes from a COQ~$Q$ to any \emph{subCOQ}, 
i.e., a full subquiver of~$Q$ with the induced cyclic ordering. 
\end{remark}

\begin{remark}
The notion of a proper cyclic ordering is closely related to the notion of a  \emph{locally transitive tournament} 
investigated by several authors \cite{brouwer, cameron, moon}.  
\end{remark}

\begin{example}
An oriented $4$-cycle quiver has six cyclic orderings 
that form three wiggle equivalence classes, see Example~\ref{eg:6-orderings-D4}. 
Two of the three classes are proper; they are shown in Figure~\ref{fig:proper-4-cycle orderings}.
\end{example}

{ 
\newcommand\cyclicLabels[5]{%
	\filldraw[black] (#4,#5)++(135:1cm) circle (2pt) node[above left=-1pt] {$a$} coordinate (a);
	\filldraw[black] (#4,#5)++(45:1cm) circle (2pt) node[above right=-1pt] {$#1$} coordinate (#1);
	\filldraw[black] (#4,#5)++(-45:1cm) circle (2pt) node[below right=-1pt] {$#2$} coordinate (#2);
	\filldraw[black] (#4,#5)++(-135:1cm) circle (2pt) node[below left=-1pt] {$#3$} coordinate (#3);
	\draw[black, dashed, decoration={markings, mark=at position 0 with {\arrow{<}}}, postaction={decorate}] (#4,#5) circle (1cm);
	\draw[black, -{stealth}, shorten >=3pt, shorten <= 3pt] (a) -- (b);
	\draw[black, -{stealth}, shorten >=3pt, shorten <= 3pt] (b) -- (c);
	\draw[black, -{stealth}, shorten >=3pt, shorten <= 3pt] (c) -- (d);
	\draw[black, -{stealth}, shorten >=3pt, shorten <= 3pt] (d) -- (a);
}

\begin{figure}[ht]
\begin{tikzpicture}
\foreach \i\j\k\xp\yp in {b/c/d/0/0,b/d/c/3/0,c/d/b/6/0} 
{
	\cyclicLabels{\i}{\j}{\k}{\xp}{\yp}
} 
\end{tikzpicture}
\vspace{5pt}
\caption{Proper cyclic orderings of the 4-cycle quiver $Q=(a \rightarrow b \rightarrow c \rightarrow d \rightarrow a)$.
The second and third cyclic orderings are wiggle equivalent.
In the first ordering, all vertices are proper. 
In the second ordering, $a$ and $b$ are proper but $c$ and $d$ are not.
In the third ordering, $c$ and $d$ are proper while $a$ and $b$ are not.}
\label{fig:proper-4-cycle orderings}
\end{figure}
} 

Proper COQs are of interest to us because any mutation in a proper COQ preserves the invariants discussed in
Sections~\ref{sec:invariants-of-proper-mutations}--\ref{sec:alexander-polynomials}. 
Unfortunately, properness does not propagate under mutations:

\begin{example}
\label{eg:proper-doesn't-propagate}
Figure~\ref{fig:proper after mutation} shows a proper COQ whose (proper) mutation produces a non-proper~COQ. 
\end{example}

\begin{figure}[ht]
\vspace{-10pt}
\begin{tikzpicture}
\filldraw[black] (0,0)++(135:1cm) circle (1.5pt) node[above left=-1pt] {$a$} coordinate (a);
\filldraw[black] (0,0)++(45:1cm) circle (1.5pt) node[above right=-1pt] {$b$} coordinate (b);
\filldraw[black] (0,0)++(-45:1cm) circle (1.5pt) node[below right=-1pt] {$c$} coordinate (c);
\filldraw[black] (0,0)++(-135:1cm) circle (1.5pt) node[below left=-1pt] {$d$} coordinate (d);
\draw[black, dashed, decoration={markings, mark=at position 0 with {\arrow{<}}}, postaction={decorate}] (0,0) circle (1cm);
\draw[black, -{stealth}, shorten >=3pt, shorten <= 3pt] (a) -- (b);
\draw[black, -{stealth}, shorten >=3pt, shorten <= 3pt] (b) -- (d);
\draw[black, -{stealth}, shorten >=3pt, shorten <= 3pt] (c) -- (d);
\draw[black, -{stealth}, shorten >=3pt, shorten <= 3pt] (c) -- (a);
\draw (0,0-1.3) node[below] {$(a, b, c, d)$};
\end{tikzpicture}
\hspace{.3in} 
$\mathrel{\raisebox{2cm}{$\stackrel{\textstyle\mu_a}{\rule[.5ex]{3em}{0.5pt}}$}}$
\hspace{.3in} 
\begin{tikzpicture}
\draw[black, dashed, decoration={markings, mark=at position 0.875 with {\arrow{<}}}, postaction={decorate}] (0,0) circle (1cm);
\filldraw[red] (0,0)++(90:1cm) circle (1.5pt) node[above, black] {$b$} coordinate (b);
\filldraw[black] (0,0)++(0:1cm) circle (1.5pt) node[right] {$a$} coordinate (a);
\filldraw[black] (0,0)++(-90:1cm) circle (1.5pt) node[below] {$c$} coordinate (c);
\filldraw[black] (0,0)++(180:1cm) circle (1.5pt) node[left] {$d$} coordinate (d);

\draw[black, -{stealth}, shorten >=3pt, shorten <= 3pt] (b) -- (a);
\draw[black, -{stealth}, shorten >=3pt, shorten <= 3pt] (b) -- (d);
\draw[black, -{stealth}, shorten >=3pt, shorten <= 3pt] (c) -- (d);
\draw[black, -{stealth}, shorten >=3pt, shorten <= 3pt] (a) -- (c);
\draw[black, -{stealth}, shorten >=3pt, shorten <= 3pt] (c) -- (b);
\draw (0,0-1.3) node[below] {$(b, a, c, d)$};
\end{tikzpicture}

\caption{Left: a proper COQ $(Q,\sigma)$ with $\sigma=(a,b,c,d)$. 
Right: the mutated COQ $(Q',\sigma')=\mu_a(Q,\sigma)$. 
The cyclic ordering $\sigma'=(b,a,c,d)$ is not proper, as the path $c\rightarrow b\rightarrow d$ makes a left turn.
}
\label{fig:proper after mutation}
\end{figure}

We next discuss several classes of proper COQs. 

\begin{observation}
\label{ob:acyclic-proper}
Any acyclic quiver~$Q$ has a proper cyclic ordering, obtained from any linear ordering in which $v$ precedes $u$ whenever $v \rightarrow u$. 
\end{observation}

\pagebreak[3]

\begin{proposition}
\label{pr:3vertex proper}
 Any quiver on $n\le 3$ vertices possesses a proper cyclic ordering. 
\end{proposition}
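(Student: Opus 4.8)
The plan is to dispose of the trivial ranges $n\le 2$ first and then concentrate on $n=3$, where all the content lies. For $n=1$ and $n=2$ there are no oriented paths of length two at all: a path $i\rightarrow j\rightarrow k$ requires three pairwise-distinct vertices, since otherwise we would produce a loop or an oriented $2$-cycle, both of which are forbidden in a quiver. Hence the ``no-left-turn rule'' of Definition~\ref{def:proper} holds vacuously at every vertex, and \emph{any} cyclic ordering is proper. So it remains to treat $n=3$.

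For $n=3$ I would invoke the canonical labeling recorded in Example~\ref{eg:3 vert quiver and B mat}: relabel the three vertices as $a,b,c$ so that $Q$ contains $x\ge 0$ arrows $a\rightarrow b$ and $y\ge 0$ arrows $b\rightarrow c$. I then take the cyclic ordering $\sigma=(a,b,c)$ and claim that $(Q,\sigma)$ is proper, exactly as asserted in Example~\ref{eg:3-vert-always-proper}. Since exhibiting a single cyclic ordering in which every vertex is literally proper already makes each vertex proper in the wiggle equivalence class, this suffices to establish that $\sigma$ is a proper cyclic ordering in the sense of Definition~\ref{def:wiggle class proper}.

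To verify properness of all three vertices, I would split on the orientation of the remaining edge between $a$ and $c$, i.e.\ on the sign of the entry $z$ in the notation of Example~\ref{eg:3 vert quiver and B mat}, with the convention $z=0$ meaning no edge. Only $2$-paths whose middle vertex has both an incoming and an outgoing arrow need to be checked, and multiplicities of arrows are irrelevant because properness depends solely on the cyclic positions of the three vertices. In the acyclic case $a\rightarrow c$, the vertex $a$ is a source and $c$ is a sink (hence both are automatically proper), while the unique path $a\rightarrow b\rightarrow c$ makes a right turn in $(a,b,c)$. In the oriented $3$-cycle case $c\rightarrow a$, the three paths $c\rightarrow a\rightarrow b$, $a\rightarrow b\rightarrow c$, $b\rightarrow c\rightarrow a$ are seen to make right turns by reading $\sigma=(a,b,c)$, up to rotation, as $(c,a,b)$, $(a,b,c)$, $(b,c,a)$ respectively. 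Finally, if the $a$--$c$ edge is absent, then $a$ is a source, $c$ is a sink, and $a\rightarrow b\rightarrow c$ is again a right turn. In every case all three vertices are proper.

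The only point requiring a little care is the existence of the canonical labeling of Example~\ref{eg:3 vert quiver and B mat}; once it is granted, the remainder is the short finite check above. I expect this labeling step to be the main (though still elementary) obstacle, and I would justify it by observing that on three vertices the net orientations fall into exactly three patterns: an acyclic tournament (label source, middle, sink), an oriented triangle (read off any cyclic traversal), or a configuration with a missing edge (orient the two present edges head-to-tail). In each pattern one can name the vertices $a,b,c$ so that the arrows between consecutive pairs run $a\rightarrow b$ and $b\rightarrow c$, which is precisely what the argument needs.
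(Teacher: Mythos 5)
Your proof is correct in substance and follows the same route as the paper: the paper's own proof disposes of $n\le 2$ as trivial and then simply points to Example~\ref{eg:3-vert-always-proper}, which is exactly your argument --- relabel the vertices as in Example~\ref{eg:3 vert quiver and B mat} and observe that all three vertices are proper in the cyclic ordering $(a,b,c)$. Your explicit right-turn verification in the three cases ($a\rightarrow c$, $c\rightarrow a$, no edge) is fine; the paper leaves that finite check implicit.

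The one place where your write-up slips is the justification of the canonical labeling, which you yourself single out as the main point (and which the paper asserts without proof). Your recipe for the missing-edge pattern --- ``orient the two present edges head-to-tail'' --- fails when the two present arrows share a common tail or a common head. For instance, the quiver with arrows $u \rightarrow v$ and $u \rightarrow w$ and no edge between $v$ and $w$ has no head-to-tail arrangement of its two arrows. The labeling still exists, but for a different reason: since $x\ge 0$ and $y\ge 0$ are allowed to vanish, a \emph{missing} edge may be assigned to one of the two constrained pairs $\{a,b\}$, $\{b,c\}$, while a \emph{present} edge plays the role of the unconstrained pair $\{a,c\}$ (the sign of $z$ is arbitrary). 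Concretely, for the common-source configuration set $a=u$, $b=v$, $c=w$, giving $x>0$, $y=0$, $z>0$; for the common-sink configuration $v\rightarrow u\leftarrow w$ set $a=v$, $b=w$, $c=u$, giving $x=0$, $y>0$, $z>0$; configurations with at most one arrow are trivial. With this correction your argument is complete and agrees with the paper's.
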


\begin{proof}
Any COQ on $n\le2$ vertices is proper.
The case $n=3$ has been treated in Example~\ref{eg:3-vert-always-proper}. 
\end{proof}

\begin{lemma}
\label{lem:n=3-tot-proper}
A mutation of a proper COQ on $n\le 3$ vertices yields a proper~COQ. 
\end{lemma}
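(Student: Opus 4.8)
The plan is to dispose of the small cases and then reduce the substantive case $n=3$ to a short, symmetry-assisted verification. For $n\le 2$ there is nothing to check: a COQ on at most two vertices admits no oriented two-arrow path, hence is automatically proper, and mutation preserves the number of vertices. So assume $n=3$.

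First I would put the given proper COQ into a normal form. Relabelling the vertices $a,b,c$ as in Example~\ref{eg:3-vert-always-proper}, the quiver has $x\ge 0$ arrows $a\to b$, $y\ge 0$ arrows $b\to c$, and an integer $z$ recording the arrows on the pair $\{a,c\}$ (with $a\to c$ when $z>0$ and $c\to a$ when $z<0$). I then claim the cyclic ordering is, up to wiggles, $\sigma=(a,b,c)$. Indeed, $(a,b,c)$ is always proper, while a direct inspection of the three length-two paths shows that for a \emph{complete} $3$-vertex quiver the opposite ordering $(a,c,b)$ is never proper; so the hypothesis forces $(a,b,c)$. When the quiver is not complete it is a tree, all its orderings are wiggle equivalent (Proposition~\ref{prop:tree wiggles}, Example~\ref{eg:tree-proper}), and again $\sigma=(a,b,c)$ up to wiggles. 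Since proper mutation and properness depend only on the wiggle class, it suffices to treat this representative.

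Next I would check properness of the mutated COQ for each mutation vertex, reading off the new cyclic ordering from Definition~\ref{def:proper mutation}. For $\mu_b$ the new arrows are $b\to a$, $c\to b$, and $z+xy$ on $\{a,c\}$, and the rule moves $b$ clockwise past $c$, yielding the ordering $(a,c,b)$; a check of the (at most two) length-two paths proves every vertex proper, split by the sign of $z+xy$ (which decides whether $a$ is a sink/source). For $\mu_a$ I would separate the source case $z\ge 0$, where a sink/source mutation leaves the ordering unchanged and $a$ becomes a sink, from the cyclic case $z<0$, where $a$ is the interior of $c\to a\to b$; both are settled by inspecting the length-two paths. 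The mutated vertex is proper automatically by Remark~\ref{rem:proper mutation involution}, so only the other two need checking, and the degenerate possibilities $x=0$ or $y=0$ merely turn a vertex into a source or sink, which is vacuously proper. Finally $\mu_c$ comes for free: the operation $\phi$ sending a COQ to its opposite (Definition~\ref{def:opposite-quiver}) and then relabelling $a\leftrightarrow c$ carries the normal form with parameters $(x,y,z)$ to the one with $(y,x,z)$, preserves properness, and satisfies $\mu_c\circ\phi=\phi\circ\mu_a$; hence $\mu_a$ preserving properness on all normal forms is equivalent to $\mu_c$ doing so.

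The main obstacle is purely bookkeeping rather than conceptual: correctly extracting the mutated cyclic ordering from Definition~\ref{def:proper mutation} and matching it against the \emph{unique} proper ordering of the mutated (complete) quiver. The one genuine subtlety to guard against is the orientation convention in the ordering rule, since mutation at a source/sink must leave the ordering unchanged; a naive reading that ``reverses'' it would produce the non-proper ordering $(a,c,b)$, so it is important to confirm that the rule indeed returns $(a,b,c)$ in that case.
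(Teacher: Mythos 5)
Your proof is correct and is essentially the paper's own argument: the paper simply asserts that the lemma "is verified by a straightforward case-by-case analysis," and your write-up supplies exactly that analysis, organized efficiently via the normal form of Example~\ref{eg:3-vert-always-proper} and the opposite-quiver symmetry to reduce $\mu_c$ to $\mu_a$. The checks of the individual cases (sign of $z+xy$ for $\mu_b$, sign of $z$ for $\mu_a$, degenerate sink/source cases) all go through as you describe.
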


\begin{proof}
This claim is verified by a straightforward case-by-case analysis. 
\end{proof}

\begin{example}
\label{eg:trees-are-proper}
Let $Q$ be an oriented tree quiver (either weighted or not). 
All cyclic orderings of $Q$ are wiggle equivalent to each other, see Proposition~\ref{prop:tree wiggles}. 
It is easy to show (say, by induction on the number of vertices) that at least one of these orderings is proper. 
Thus the wiggle equivalence class of~$Q$ is proper. 
\end{example}

\begin{example} 
\label{eg:3-colors}
Let $Q$ be a quiver whose vertices are colored in three colors, say ${\{-1, 0,1\}}$, so that every arrow originating at a vertex of color $-1$ (resp., $0, 1$) points towards a vertex of color $0$ (resp., $1, -1$). 
Then $Q$ has a proper cyclic ordering obtained from a linear ordering in which the vertices of color $-1$ precede the vertices of color $0$, which in turn precede the vertices of color $1$. 
(The ordering among the vertices of the same color does not matter, as all choices are wiggle equivalent.) 
\end{example}

One class of quivers to which the construction in Example~\ref{eg:3-colors} applies are 
the quivers associated to plane \emph{divides}, see~\cite{FPST}. 
Let us briefly review this construction, whose origins go back to N.~A'Campo~\cite{acampo} and S.~Guse\u\i n-Zade~\cite{gusein-zade-1}. 

\begin{definition}
\label{def:divide-quiver}
Given a planar divide~$D$ (roughly, a collection of intervals and circles immersed in an ambient disk), 
the associated quiver $Q(D)$ is constructed  as follows, cf.\ Figure~\ref{fig:E6-morsifications}. 
At each node of~$D$ (a point of self-intersection), place a vertex of $Q(D)$ and color it~$0$. 
(These vertices are shown in red in the figures.) 
Inside each bounded region of~$D$, place one vertex of $Q(D)$ and color it either $1$ or~$-1$,
making sure that adjacent regions receive different colors.
(In the figures, these are the hollow blue and solid blue vertices.) 
Draw arrows from each node (colored~0) to the vertices colored~$1$ situated in adjacent regions.
Draw arrows towards each such node from the vertices colored~$-1$ situated in adjacent regions.
For each pair of regions sharing a segment of their boundaries,
draw an arrow from the vertex labeled~$1$ towards the vertex labeled~$-1$. 
The resulting quiver $Q(D)$ will have a natural (canonical up to wiggles) 
proper cyclic ordering, as described in Example~\ref{eg:3-colors}. 
\end{definition}

\begin{figure}[ht]
\begin{center}
\setlength{\unitlength}{2pt}
\begin{picture}(50,40)(0,0)
\thicklines
\lightgray{\qbezier(0,0)(60,60)(60,20)
\qbezier(60,0)(0,60)(0,20)
\qbezier(60,20)(60,0)(30,0)
\qbezier(0,20)(0,0)(30,0)
}
\linethickness{1pt}

\put(6.2,6){\red{\circle*{2.5}}}
\put(6.2,2){\makebox(0,0){\red{0}}}
\put(53.8,6){\red{\circle*{2.5}}}
\put(53.8,2){\makebox(0,0){\red{0}}}
\put(30,26.5){\red{\circle*{2.5}}}
\put(30,31){\makebox(0,0){\red{0}}}

\put(30,6){\lightblue{\circle{2.5}}}
\put(30,2){\makebox(0,0){\lightblue{1}}}
\put(6.2,26.5){\lightblue{\circle*{2.5}}}
\put(6.2,31){\makebox(0,0){\lightblue{-1}}}
\put(53.8,26.5){\lightblue{\circle*{2.5}}}
\put(53.8,31){\makebox(0,0){\lightblue{-1}}}

\put(9,6){{\vector(1,0){18}}}
\put(51,6){{\vector(-1,0){18}}}
\put(9,26.5){{\vector(1,0){18}}}
\put(51,26.5){{\vector(-1,0){18}}}
\put(6.2,24){{\vector(0,-1){15.5}}}
\put(30,24){{\vector(0,-1){15.5}}}
\put(53.8,24){{\vector(0,-1){15.5}}}
\put(28,8){{\vector(-8,7){19.5}}}
\put(32,8){{\vector(8,7){19.5}}}
\end{picture}

\end{center}
\vspace{5pt}
\caption{A divide $D$ (drawn in gray) and the associated quiver $Q(D)$ of type~$E_6$.
Any linear ordering in which the vertices labeled~$-1$ precede the vertices labeled~$0$,
which in turn precede the vertices labeled~$1$, gives rise to a proper cyclic ordering. 
}
\label{fig:E6-morsifications}
\end{figure}

\begin{example}
\label{eg:proper from alg}
Figure~\ref{fig:Q-plabic-hex} shows a plane divide and the associated $7$-vertex quiver of type~$E_6^{(1)}$. 
The cyclic ordering $\sigma=(1, 2, 3, 4, 5, 6)$ makes it into a proper COQ.
\end{example}

\begin{figure}[ht]
\begin{center}
\vspace{-10pt}
\begin{tabular}{cc}
\includegraphics[scale=0.08, trim=3cm 4cm 4cm 4cm, clip]{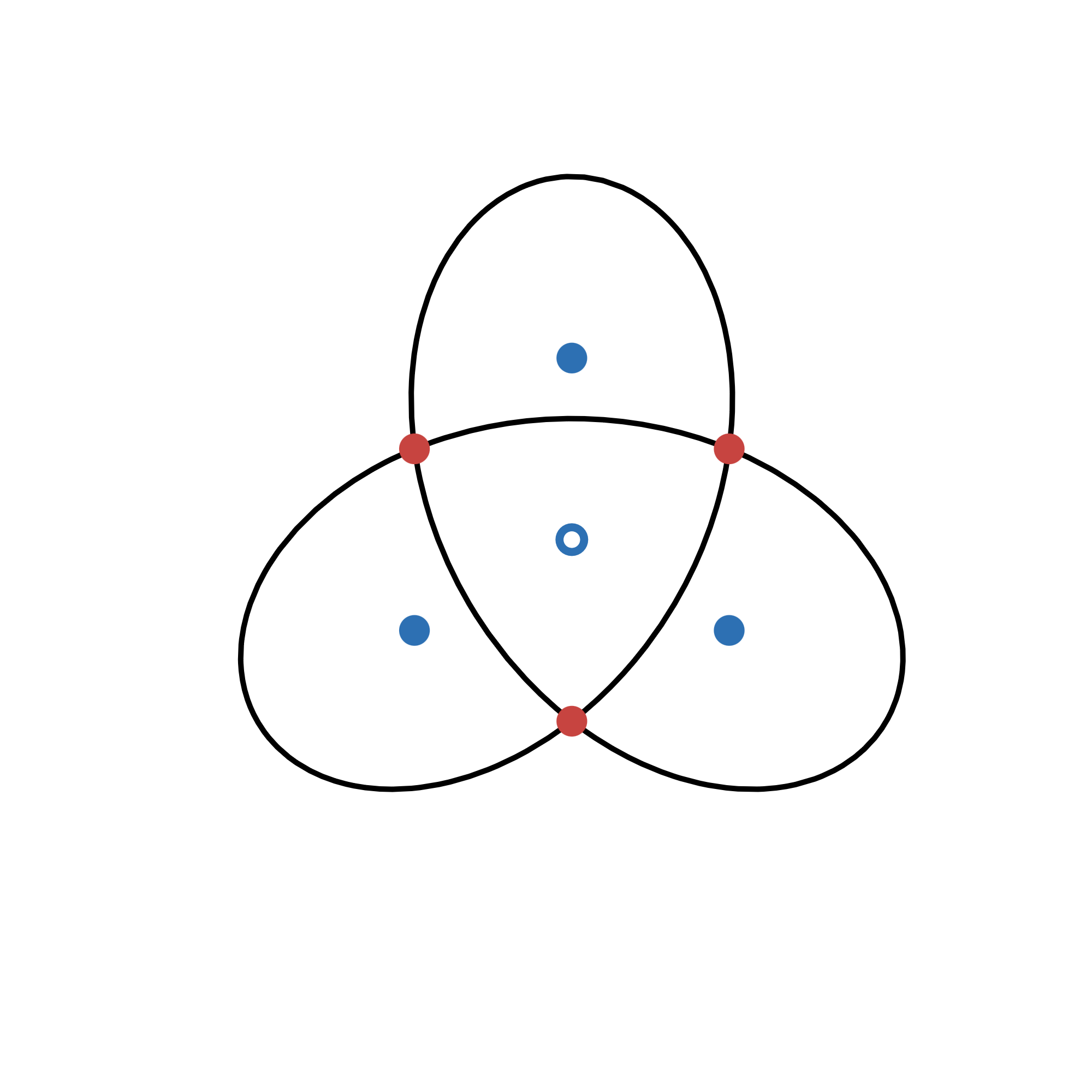}
&
\qquad $\begin{array}{c}
\ \\[-2.3in]
\begin{tikzcd}[arrows={-stealth}, sep=1.5em]
& \lightblue{\mathbf{1}} \arrow[ld] \arrow[rd] & \\[-13pt]
\red{6}  \arrow[rd]   & & \red{4} \arrow[ld]\\[-15pt]
&  \lightblue{7}  \arrow[uu] \arrow[ld] \arrow[rd] \\[-15pt]
\lightblue{\mathbf{3}} \arrow[uu] \arrow[rd] & & \lightblue{\mathbf{2}} \arrow[uu] \arrow[ld] \\[-13pt]
& \red{5} \arrow[uu]
\end{tikzcd}\end{array}$
\end{tabular}
\end{center}
\vspace{-25pt}
\caption{A plane divide and the corresponding quiver.
Solid (resp., hollow) blue vertices on the left correspond to bold (resp., regular) labels on the right. 
}
\vspace{-10pt}
\label{fig:Q-plabic-hex}
\end{figure}

\hide{
\begin{figure}[ht]
{
\newcommand{\radius}{2cm} 
\newcommand{\bigradius}{2.5cm}
\newcommand{\lilradius}{1cm}
\newcommand{\epsrad}{0.2cm}
\newcommand{\vertsize}{1.5pt}
\newcommand{\smoltxt}[1]{{\scriptstyle #1}}
\begin{tikzpicture} 
\draw[black] (0,0) circle (\bigradius + \epsrad);

\draw[black] (0,0)++(60:\bigradius) circle (\vertsize) coordinate (ao);
\filldraw[black] (0,0)++(0:\bigradius) circle (\vertsize) coordinate (bo);
\draw[black] (0,0)++(-60:\bigradius) circle (\vertsize) coordinate (co);
\filldraw[black] (0,0)++(-120:\bigradius) circle (\vertsize) coordinate (do);
\draw[black] (0,0)++(-180:\bigradius) circle (\vertsize) coordinate (eo);
\filldraw[black] (0,0)++(120:\bigradius) circle (\vertsize) coordinate (fo);

\draw[black, shorten >=\vertsize, shorten <= \vertsize] (ao) -- (bo) -- (co);
\draw[black, shorten >=\vertsize, shorten <= \vertsize] (co) -- (do) -- (eo);
\draw[black, shorten >=\vertsize, shorten <= \vertsize] (eo) -- (fo) -- (ao);

\draw[black] (0,0)++(120:\bigradius + \epsrad)  -- (fo);
\draw[black] (0,0)++(-120:\bigradius + \epsrad)  -- (do);
\draw[black] (0,0)++(0:\bigradius + \epsrad)  -- (bo);

\filldraw[black] (0,0)++(60:\lilradius) circle (\vertsize) coordinate (ai);
\draw[black] (0,0)++(0:\lilradius) circle (\vertsize) coordinate (bi);
\filldraw[black] (0,0)++(-60:\lilradius) circle (\vertsize) coordinate (ci);
\draw[black] (0,0)++(-120:\lilradius) circle (\vertsize) coordinate (di);
\filldraw[black] (0,0)++(-180:\lilradius) circle (\vertsize) coordinate (ei);
\draw[black] (0,0)++(120:\lilradius) circle (\vertsize) coordinate (fi);

\draw[black, shorten >=\vertsize, shorten <= \vertsize] (fi) -- (ai) -- (bi);
\draw[black, shorten >=\vertsize, shorten <= \vertsize] (bi) -- (ci) -- (di);
\draw[black, shorten >=\vertsize, shorten <= \vertsize] (di) -- (ei) -- (fi);

\draw[black, shorten >=\vertsize, shorten <= \vertsize] (ao) -- (ai);
\draw[black, shorten >=\vertsize, shorten <= \vertsize] (bo) -- (bi);
\draw[black, shorten >=\vertsize, shorten <= \vertsize] (co) -- (ci);
\draw[black, shorten >=\vertsize, shorten <= \vertsize] (do) -- (di);
\draw[black, shorten >=\vertsize, shorten <= \vertsize] (eo) -- (ei);
\draw[black, shorten >=\vertsize, shorten <= \vertsize] (fo) -- (fi);
\end{tikzpicture}
\quad
\begin{tikzpicture}
\filldraw[black] (0,0)++(90:\radius) circle (\vertsize) node[above] {$a$} coordinate (a);
\filldraw[black] (0,0)++(30:\radius) circle (\vertsize) node[right] {$b$} coordinate (b);
\filldraw[black] (0,0)++(-30:\radius) circle (\vertsize) node[right] {$c$} coordinate (c);
\filldraw[black] (0,0)++(-90:\radius) circle (\vertsize) node[below] {$d$} coordinate (d);
\filldraw[black] (0,0)++(-150:\radius) circle (\vertsize) node[left] {$e$} coordinate (e);
\filldraw[black] (0,0)++(150:\radius) circle (\vertsize) node[left] {$f$} coordinate (f);
\filldraw[black] (0,0) circle (\vertsize) node[right=2*\vertsize] {$g$} coordinate (g);

\draw[black, -{stealth}, shorten >=3pt, shorten <= 3pt] (g) -- (a);
\draw[black, -{stealth}, shorten >=3pt, shorten <= 3pt] (g) -- (c);
\draw[black, -{stealth}, shorten >=3pt, shorten <= 3pt] (g) -- (e);

\draw[black, -{stealth}, shorten >=3pt, shorten <= 3pt] (b) -- (g);
\draw[black, -{stealth}, shorten >=3pt, shorten <= 3pt] (d) -- (g);
\draw[black, -{stealth}, shorten >=3pt, shorten <= 3pt] (f) -- (g);

\draw[black, -{stealth}, shorten >=3pt, shorten <= 3pt] (a) -- (b);
\draw[black, -{stealth}, shorten >=3pt, shorten <= 3pt] (c) -- (b);
\draw[black, -{stealth}, shorten >=3pt, shorten <= 3pt] (c) -- (d);
\draw[black, -{stealth}, shorten >=3pt, shorten <= 3pt] (e) -- (d);
\draw[black, -{stealth}, shorten >=3pt, shorten <= 3pt] (e) -- (f);
\draw[black, -{stealth}, shorten >=3pt, shorten <= 3pt] (a) -- (f);
\end{tikzpicture}
} 
\caption{
\cS{new}
A plabic graph and the corresponding (mutable) quiver.
}
\label{fig:Q-plabic-hex}
\end{figure}
} 

There is a general construction, due to N.~A'Campo \cite{acampo-ihes, AC1}, 
that associates a link $L(D)$ to any plane divide~$D$;
see, e.g., \cite[Section~7]{FPST} and references therein.
The Alexander polynomial of a link associated to a divide~$D$ agrees with the Alexander polynomial
of the COQ associated with~$D$, as in Definition~\ref{def:divide-quiver}. 

\begin{example}
\label{eg:divides-collide}
E.~Yoshinaga and M.~Suzuki \cite{Yoshinaga} discovered non-isotopic divide links that share the same Alexander polynomial.
This observation produces interesting pairs of cyclically ordered quivers whose Alexander polynomials coincide.  
One example of such a collision is shown in Figures~\ref{fig:D12}--\ref{fig:divide-cubic+2lines}. 
In both cases, the Alexander polynomials are given by 
\begin{equation*}
\Delta_Q(t)=t^{12} - t^{11} -1 +1; 
\end{equation*}
furthermore, the corresponding cosquares are conjugate in $\GL(n,\ZZ)$. 
However, the two quivers are not mutation equivalent as the first one is of finite type but the second one is not.
\end{example}

\hide{
The corresponding exchange matrix appears below.
\begin{equation*}
B = 
\left[
\begin{array}{cccccccccccc}
0 & 0 & 0 & 1 & 1 & 1 & 0 & 0 & 0 & 0 & -1 & -1 \\
0 & 0 & 0 & 0 & 1 & 0 & 1 & 1 & 0 & 0 & -1 & -1 \\
0 & 0 & 0 & 0 & 0 & 0 & 0 & 1 & 1 & 1 & 0 & -1 \\
-1 & 0 & 0 & 0 & 0 & 0 & 0 & 0 & 0 & 0 & 1 & 0 \\
-1 & -1 & 0 & 0 & 0 & 0 & 0 & 0 & 0 & 0 & 1 & 1 \\
-1 & 0 & 0 & 0 & 0 & 0 & 0 & 0 & 0 & 0 & 0 & 1 \\
0 & -1 & 0 & 0 & 0 & 0 & 0 & 0 & 0 & 0 & 1 & 0 \\
0 & -1 & -1 & 0 & 0 & 0 & 0 & 0 & 0 & 0 & 0 & 1 \\
0 & 0 & -1 & 0 & 0 & 0 & 0 & 0 & 0 & 0 & 0 & 1 \\
0 & 0 & -1 & 0 & 0 & 0 & 0 & 0 & 0 & 0 & 0 & 0 \\
1 & 1 & 0 & -1 & -1 & 0 & -1 & 0 & 0 & 0 & 0 & 0 \\
1 & 1 & 1 & 0 & -1 & -1 & 0 & -1 & -1 & 0 & 0 & 0
\end{array}
\right]
.
\end{equation*}
} 

\begin{figure}[ht]
\begin{center}
\vspace{5pt}
\begin{tabular}{c}
\includegraphics[scale=0.18, trim=3cm 4cm 4cm 4cm, clip]{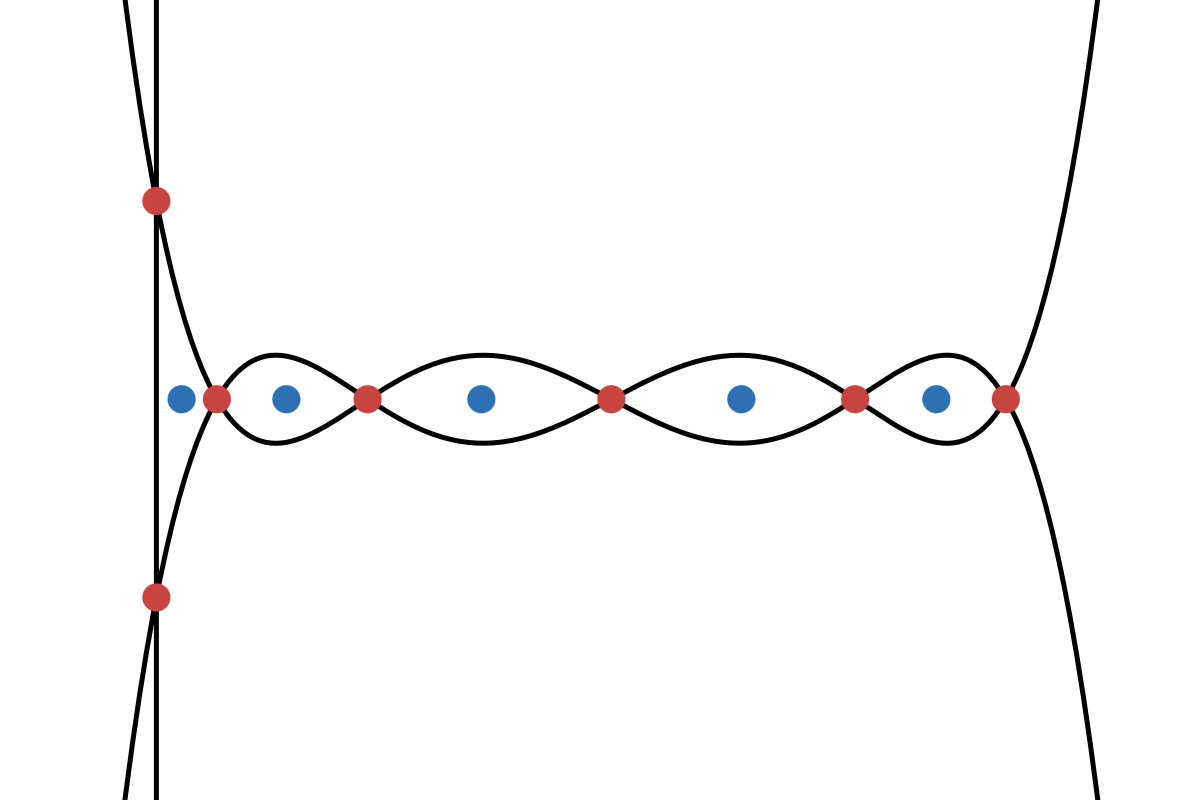}
\\
$\begin{array}{c}
\ \\[-.1in]
\begin{tikzcd}[arrows={-stealth}, sep=1em]
\red{6} \\
& \lightblue{\mathbf{1}} \arrow[lu] \arrow[ld] \arrow[r]& \red{8} & \lightblue{\mathbf{2}} \arrow[r] \arrow[l] & \red{9} & \lightblue{\mathbf{3}} \arrow[r] \arrow[l] & \red{10} & \lightblue{\mathbf{4}} \arrow[r] \arrow[l] & \red{11} & \lightblue{\mathbf{5}} \arrow[r] \arrow[l] & \red{12} 
\\
\red{7}  
\end{tikzcd}
\end{array}$
\end{tabular}
\end{center}
\caption{A plane divide and the associated COQ of type $D_{12}\,$.}
\label{fig:D12}
\end{figure}

\begin{figure}[ht]
\begin{center}
\begin{tabular}{cc}
\includegraphics[scale=0.18, trim=5cm 1cm 8cm 4cm, clip]{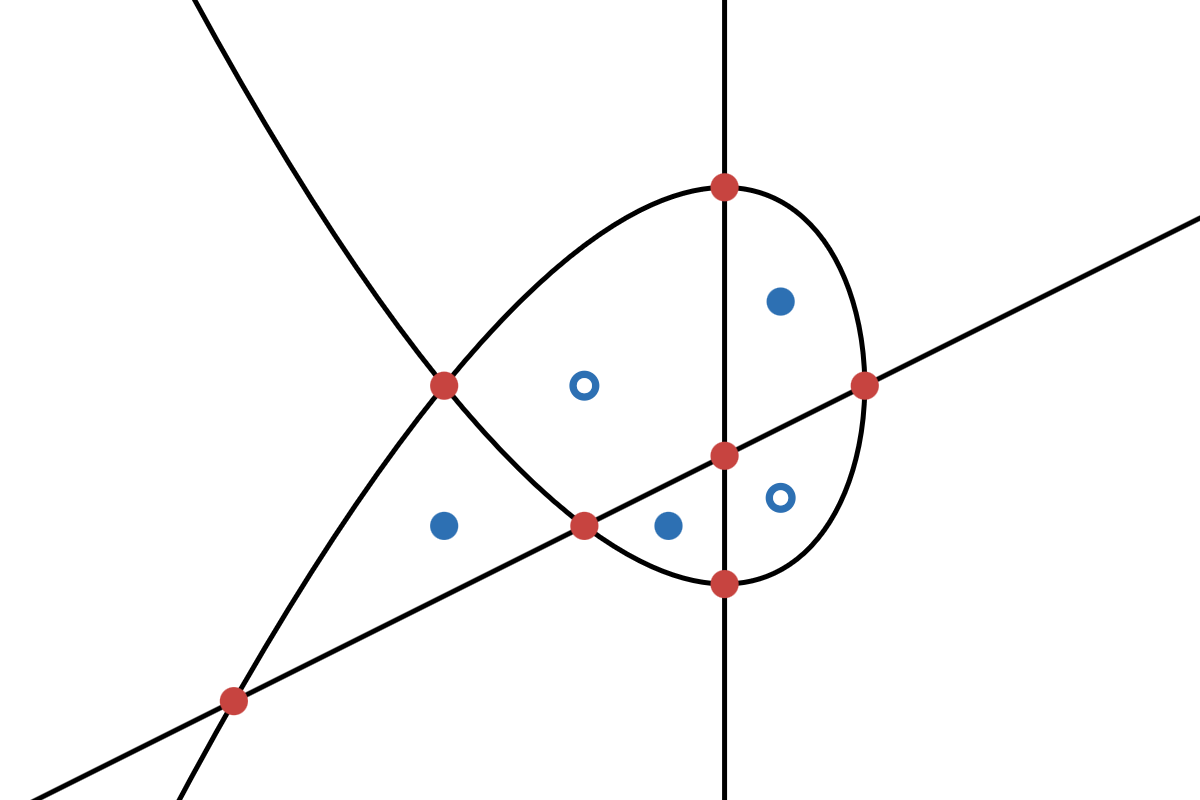}
&
\qquad $\begin{array}{c}
\ \\[-1.8in]
\begin{tikzcd}[arrows={-stealth}, sep=1.5em]
& & \red{6} \arrow[d] & \lightblue{\mathbf{1}} \arrow[l] \arrow[r] \arrow[d]& \red{4} \arrow[d] \\
& \red{9}  \arrow[r]   & \lightblue{{12}} \arrow[ru] \arrow[dl]\arrow[dr]  & \red{5} \arrow[l]\arrow[r] & \lightblue{{11}} \arrow[lu]\arrow[ld]\\
\red{10}  &  \lightblue{\mathbf{3}} \arrow[l] \arrow[r] \arrow[u] & \red{8} \arrow[u] & \lightblue{\mathbf{2}} \arrow[r] \arrow[l] \arrow[u] & \red{7}\arrow[u]  
\end{tikzcd}\end{array}$
\end{tabular}
\end{center}
\caption{Left: A plane divide involving a nodal cubic 
and two lines intersecting~it at $2$ and $3$ smooth points, respectively. 
Right: The associated cyclically ordered~quiver. 
Solid (resp., hollow) blue vertices on the left correspond to bold (resp., regular) labels on the right.
The Alexander polynomial of this divide/COQ is the same as the Alexander polynomial for the divide/COQ in Figure~\ref{fig:D12}.}
\vspace{-15pt}
\label{fig:divide-cubic+2lines}
\end{figure}

\begin{proposition}
\label{prop:n-cycle proper vertices 2}
Let $Q$ be a COQ with $n$ vertices and $n$ arrows whose underlying graph is an undirected $n$-cycle.
Assume that $r$ arrows in~$Q$ point in one direction and $\ell=n-r$ arrows point in the opposite direction.
(Thus $Q$ is of type $\tilde A(r,\ell)$, in the terminology of~\cite{cats1}.) 
Recall that $1-\ell\le \wind(C,\sigma)\le r-1$. 
If $\wind(Q) \notin \{1-\ell, r-1\}$, then $\wiggleQ$ is proper.
\end{proposition}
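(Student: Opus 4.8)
The plan is to deduce this statement almost immediately from Proposition~\ref{prop:n-cycle proper vertices}, which already records, in terms of the winding number, exactly which individual vertices of such an $n$-cycle COQ are proper in their wiggle equivalence class. First I would unpack the hypothesis: by~\eqref{eq:bounds-on-wind} we always have $1-\ell\le\wind(Q)\le r-1$, and the assumption $\wind(Q)\notin\{1-\ell,r-1\}$ therefore upgrades these to the strict inequalities $1-\ell<\wind(Q)<r-1$. Thus \emph{both} inequalities $\wind(Q)<r-1$ and $\wind(Q)>1-\ell$ hold simultaneously, which is precisely what lets me apply both halves of Proposition~\ref{prop:n-cycle proper vertices}.

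Next I would invoke Proposition~\ref{prop:n-cycle proper vertices} twice, once for each strict inequality. The inequality $\wind(Q)<r-1$ yields that every vertex $v_i$ with $v_{i-1}\leftarrow v_i$ is proper in $\wiggleQ$, and the inequality $\wind(Q)>1-\ell$ yields that every vertex $v_i$ with $v_{i-1}\rightarrow v_i$ is proper in $\wiggleQ$. The remaining point is a bookkeeping observation: these two classes of vertices exhaust all of the vertices of $Q$. Indeed, for each vertex $v_i$ the edge joining it to its predecessor $v_{i-1}$ in the chosen direction of traversal is oriented either $v_{i-1}\rightarrow v_i$ or $v_{i-1}\leftarrow v_i$ (all parallel arrows between $v_{i-1}$ and $v_i$ point the same way, since a quiver has no oriented $2$-cycles), so every vertex falls into exactly one of the two cases. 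Combining the two applications, every vertex of $Q$ is proper in $\wiggleQ$, which by Definition~\ref{def:wiggle class proper} is exactly the assertion that $\wiggleQ$ is proper.

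I do not anticipate a genuine obstacle here, since the substantive content—constructing, for a non-extremal winding number, explicit wiggle-equivalent cyclic orderings witnessing properness of each vertex—was already carried out in the proof of Proposition~\ref{prop:n-cycle proper vertices} via Theorem~\ref{th:wiggle/winding}. The only thing to be careful about is matching the orientation conventions: I must confirm that the ``resp.'' dichotomy in Proposition~\ref{prop:n-cycle proper vertices} partitions the vertices according to the orientation of their incoming edge in the same traversal direction used to define $r$, $\ell$, and $\wind(Q)$, so that the two applications together cover every vertex with no overlap gap. Once that alignment is verified, the proof is a one-line combination of the two cited inequalities.
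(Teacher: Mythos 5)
Your proposal is correct and matches the paper's proof, which simply states that the result is immediate from Proposition~\ref{prop:n-cycle proper vertices}; your write-up just makes explicit the two applications (to vertices with $v_{i-1}\leftarrow v_i$ and with $v_{i-1}\rightarrow v_i$) and the observation that these cases exhaust all vertices. The orientation bookkeeping you flag does check out, since sinks and sources are proper in any case and each remaining vertex is covered by exactly one of the two halves.
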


\begin{proof}
This is immediate from Proposition~\ref{prop:n-cycle proper vertices}.
\end{proof}

\begin{remark}
\label{rem:wiggle-inequiv-proper-orderings}
Proposition~\ref{prop:n-cycle proper vertices 2} provides many examples of quivers 
that have multiple wiggle-inequivalent proper cyclic orderings. 
\end{remark}

Recall from Definition~\ref{def:Graph-Q} that we denote by $\GQ$ the unoriented simple graph underlying a quiver~$Q$. 

\begin{lemma}
\label{lem:small winding wiggles}
For a vertex $j$ in a COQ~$Q$, the following are equivalent: 
\begin{itemize}[leftmargin=.25in]
\item[{\rm (a)}] 
$j$ is proper in the wiggle equivalence class of~$Q$ 
(cf.\ Definition~\ref{def:wiggle-class-proper-vertex});
\item[{\rm (b)}] 
for every chordless cycle~$C\ni j$ in~$\GQ$, 
vertex~$j$ is proper in the wiggle equivalence class of the subCOQ of~$Q$ supported on~$C$ (cf.\ Remark~\ref{rem:subcoq}). 
\end{itemize}
\end{lemma}

\begin{proof}
Condition (a) implies condition (b) by restriction to a subquiver. 

Assume that (b) holds. 
We will argue that if $Q$ contains left-turn two-arrow paths going through~$j$,
then $Q$ can be wiggled to decrease the number of such paths. 


Let $i \rightarrow j \rightarrow k$ be a left turn, and let $<$ be a compatible linear ordering with $i < k < j$. 
We may assume without loss of generality that every vertex $v$ with $i < v < k$ is not adjacent to $j$ in~$Q$,
i.e., there are no arrows $j\to v$ or $v\to j$. 

\noindent

\textbf{Case~1:} 
There exists a path ${(i = v_0 - \cdots - v_r= k)}$ in $K_Q$ with $v_0 < v_1 < \cdots < v_r$.
Take such a path with the smallest value of~$r$.
The cycle $C = (j - v_0 - \cdots - v_r - j)$ is chordless: 
by assumption, there are no chords incident to~$j$, 
and minimality of $r$ implies there are no chords between the~$v_i$.  
Further, $j$ is not proper in the subCOQ supported on~$C$ 
and moreover $C$ allows no wiggles. 
This contradicts~(b).

\textbf{Case~2:} 
There is no such path from $i$ to~$k$. 
Identify all vertices $k'<k$ that are connected to $i$ by such a path, 
then wiggle all of them (including~$i$) past $k$ to create a wiggle equivalent COQ with fewer left turns at $j$. 
\end{proof}

\begin{theorem}
\label{th:small winding proper}
A COQ $Q$ is proper if and only if every (full) subCOQ of $Q$ supported on a 
chordless cycle in $\GQ$ is proper. 
\end{theorem}

\begin{proof}
This follows directly from Lemma~\ref{lem:small winding wiggles}. 
%
%
\end{proof}

\begin{example}
\label{eg:E6-1-cycles}
The COQ $Q$ in Example~\ref{eg:proper from alg} has $7$ subCOQs supported on chordless cycles: 
\begin{align*}
C_1 &= (7 \rightarrow 1 \rightarrow 4 \rightarrow 7), \qquad
C_2 = (7 \rightarrow 1 \rightarrow 6 \rightarrow 7), \\
C_3 &= (7 \rightarrow 2 \rightarrow 4 \rightarrow 7), \qquad 
C_4 = (7 \rightarrow 2 \rightarrow 5 \rightarrow 7), \\
C_5 &= (7 \rightarrow 3 \rightarrow 5 \rightarrow 7), \qquad 
C_6 = (7 \rightarrow 3 \rightarrow 6 \rightarrow 7), \\
& \qquad C_7 =(1 \rightarrow 4 \leftarrow 2 \rightarrow 5 \leftarrow 3 \rightarrow 6 \leftarrow 1).
\end{align*}
By Theorem~\ref{th:small winding proper}, in order to verify that~$Q$ is proper,
it is enough to check that the subquivers $C_1,\dots,C_7$ are proper. 
Indeed,  each of the oriented cycles $C_1,\dots,C_6$ has the form $(7\rightarrow i \rightarrow j\rightarrow 7)$
for $i<j<7$, whereas $C_7$ contains no two-arrow paths. 
\end{example}

\begin{example}
\label{eg:proper from inherited}
Consider the quiver shown in Figure~\ref{fig:Q-punctured-annulus}, 
along with the cyclic ordering $\sigma=(a, b, c, d,e)$.
The chordless cycles are: 
\begin{align*}
C_1 &= (b \rightarrow c \rightarrow e \rightarrow  b), \\
C_2 &= (b \rightarrow d \rightarrow e \rightarrow b), \qquad C_4 = (a\rightarrow b \rightarrow c \leftarrow a), \\
C_3 &= (a \rightarrow b \rightarrow d \rightarrow a), \qquad\! C_5 = (a \rightarrow c \rightarrow e \leftarrow d \rightarrow a),. 
\end{align*}
One quickly checks that these subquivers are proper, so $Q$ is proper by Theorem~\ref{th:small winding proper}. 
\end{example}

\section{Opposite COQs}
\label{sec:opposite}

In this section, we discuss the variation of our construction wherein quivers are considered
\emph{up to taking opposites}, cf.\ Definition~\ref{def:opposite-quiver} below.

\begin{definition}
\label{def:opposite-quiver}
Let $Q$ be a COQ. 
The \emph{opposite} COQ, denoted $Q^{\textup{opp}}$, has all its arrows, as well as its cyclic ordering, 
reversed with respect to~$Q$.
\end{definition}

\begin{example}
The two COQs shown in Figure~\ref{fig:opposite-coq} are each other's opposites.

\end{example}

\begin{figure}[ht]
\begin{center}
\vspace{-10pt}
\begin{equation*}
\begin{tikzcd}[arrows={-stealth}, sep=2em]
  a  \arrow[r, "p"] & b \arrow[d, "q"]  \\
 d\arrow[u, "s"]  & c   \arrow[l, swap, "r"] 
\end{tikzcd}
\qquad\qquad
\begin{tikzcd}[arrows={-stealth}, sep=2em]
  a  \arrow[r, "s"] & d \arrow[d, "r"]  \\
 b\arrow[u, "p"]  & c   \arrow[l, swap, "q"] 
\end{tikzcd}
\end{equation*}
\end{center}
\caption{Two opposite cyclically ordered quivers.}
\vspace{-10pt}
\label{fig:opposite-coq}
\end{figure}

We omit the straightforward proofs of the following lemmas. 

\begin{lemma}
\label{lem:opposite-proper}
Passing to the opposite COQ is an involution that preserves properness of individual vertices. 
\end{lemma}

\begin{lemma}
Taking the opposites commutes with
\begin{itemize}[leftmargin=.2in]
\item proper mutations of COQs;
\item computing the integral congruence class of the unipotent companion $U$;
\item computing the integral conjugacy class of the cosquare of $U$.
\end{itemize}
\end{lemma}

\begin{lemma}
\label{lem:M-opp-vs-M}
Let $M$ (resp., $M^{\textup{opp}}$) denote the cosquare of a unipotent companion for a COQ~$Q$ 
(resp., its opposite~$Q^{\textup{opp}}$). 
Then $M^{\textup{opp}}$ is conjugate to $M^T$ in~$\GL(n,\ZZ)$.
\end{lemma}

\begin{remark}
\label{rem:rational transpose}
Over the rationals, every square matrix is similar to its transpose; see, e.g., \cite[Theorem~66]{Kaplansky69}. 
This is however false over the integers.
In particular, 
in the notation of Lemma~\ref{lem:M-opp-vs-M}, the matrices~$M$ and $M^{\textup{opp}}$ are not always 
conjugate to each other in $\GL(n,\ZZ)$.
For example, see the first two quivers in Figure~\ref{fig:5-quivers}. 
\end{remark}

It will be convenient to introduce the following notions.

\begin{definition}
\label{def:sim transpose}
We say that two matrices $A, B\in \GL(n,\ZZ)$ are \emph{``conjugate over $\ZZ$ up to transpose''} 
if $B$ is conjugate in $\GL(n,\ZZ)$ to either $A$ or~$A^T$.
It is easy to check that conjugacy up to transpose 
is an equivalence relation on $\GL(n,\ZZ)$.
\end{definition}

Corollary~\ref{cor:proper-mutations-preserve-cosquare} and 
Lemmas~\ref{lem:opposite-proper}--\ref{lem:M-opp-vs-M}
imply the following result. 

\begin{corollary}
\label{cor:sim trans vs opposites}
If two COQs, viewed up to opposites, are related to each other via proper mutations, 
then their respective cosquares are conjugate over $\ZZ$ up to transpose.  
\end{corollary}

The converse to Corollary~\ref{cor:sim trans vs opposites} is false in general, 
even if we require the COQs to be proper (or totally proper, see Definition~\ref{def:totally-proper});
cf., for instance, Example~\ref{eg:9-vertex-trees}.
\hide{
\begin{remark}
For COQs $Q$ and $Q'$, consider the following statements:
\begin{enumerate}
\item[$(1)$] $Q$ and $Q'$ are related to each other by wiggles and proper mutations;
\item[$(2)$] the cosquares of their unipotent companions are conjugate in $\GL(n,\ZZ)$.
\end{enumerate}
By Corollary~\ref{cor:proper-mutations-preserve-cosquare}, (1)$\Rightarrow$(2). 
The converse implication $(2)\Rightarrow(1)$ is false in general, 
even if we require $Q$ and $Q'$ to be proper (or totally proper, see Definition~\ref{def:totally-proper}) 
and even if we allow replacing $Q$ or $Q'$ by their opposite COQ. 
See, e.g., Example~\ref{eg:9-vertex-trees}.
\end{remark}
}
On the other hand, experimental evidence suggests that the converse to
Corollary~\ref{cor:sim trans vs opposites} holds for 3-vertex quivers:

\begin{conjecture}
\label{conj:3 vert mu equiv}
Let $Q$ and $Q'$ be two $3$-vertex proper COQs.
(Recall that by Proposition~\ref{pr:3vertex proper}, any $3$-vertex quiver has a proper cyclic ordering.)
If the cosquares of the unipotent companions $U_Q$ and $U_{Q'}$ are conjugate over~$\ZZ$ up to transpose,
then  $Q$ and $Q'$ are mutation equivalent, up to taking opposites. 
\end{conjecture}

We have verified Conjecture~\ref{conj:3 vert mu equiv} for all 3-vertex quivers with $|M_Q| < 5000$.
Cf.\ also Theorem~\ref{th:signed-braid-n=3}. 

\begin{example}
\label{eg:frob 3 vert}
Consider 5 cyclically oriented $3$-vertex quivers 
shown in Figure~\ref{fig:5-quivers}. 
These quivers are minimal in their respective mutation classes;
consequently these mutation classes are all distinct, cf.~\cite{BBH}. 

All these quivers have the same Markov invariant $M_Q=-50$ and consequently the same Alexander polynomial. 

The $\GL(3,\ZZ)$ conjugacy classes of unipotent companions of the first four quivers in Figure~\ref{fig:5-quivers}
are distinct. 
For the last pair of opposite quivers (with multiplicities ${4,5,7}$), these conjugacy classes coincide. 
\end{example}

\begin{figure}[ht]
\begin{center}
\vspace{-10pt}
\begin{equation*}
 \begin{tikzcd}[arrows={-stealth}, sep=2em]
  a  \arrow[r,  "9"]   
  & b  \arrow[dl, "7"]
  \\
 c  \arrow[u, "3"] 
   & 
\end{tikzcd}
\qquad 
 \begin{tikzcd}[arrows={-stealth}, sep=2em]
  a  \arrow[r,  "7"]   
  & b  \arrow[dl, "9"]
  \\
 c  \arrow[u, "3"] 
   & 
\end{tikzcd}
\qquad 
 \begin{tikzcd}[arrows={-stealth}, sep=2em]
  a  \arrow[r,  "5"]   
  & b  \arrow[dl, "5"]
  \\
 c  \arrow[u, "5"] 
   & 
\end{tikzcd}
\qquad  
 \begin{tikzcd}[arrows={-stealth}, sep=2em]
  a  \arrow[r,  "7"]   
  & b  \arrow[dl, "5"]
  \\
 c  \arrow[u, "4"] 
   & 
\end{tikzcd}
\qquad
 \begin{tikzcd}[arrows={-stealth}, sep=2em]
  a  \arrow[r,  "5"]   
  & b  \arrow[dl, "7"]
  \\
 c  \arrow[u, "4"] 
   & 
\end{tikzcd}
\end{equation*}
\end{center}
\caption{Five 3-vertex quivers with $M_Q=-50$.
The first two quivers are opposites of each other; so are the last two quivers.
The middle quiver is opposite to itself.}
\vspace{-10pt}
\label{fig:5-quivers}
\end{figure}

\begin{remark}
\label{rem:3vert mu acyclic opposites}
We do not know of any unlabeled $3$-vertex \emph{acyclic} quiver $Q$ such~that 
$U_Q^{-T} U_Q$ and its transpose (cf.\ Lemma~\ref{lem:M-opp-vs-M}) 
are conjugate in $\GL(n,\ZZ)$ 
but $Q$ is not mutation equivalent to~$M^{\textup{opp}}$. 
That is, one may be able to drop ``up to taking opposites'' and ``up to transpose'' from Conjecture~\ref{conj:3 vert mu equiv} when $Q$ and $Q'$ are mutation-acyclic. 
\end{remark}

\newpage

\section{Vortices and vortex-free quivers}

We next discuss examples of quivers that do not possess any proper cyclic orderings. 

\begin{definition}
\label{def:vortex}
A \emph{vortex} is a complete $4$-vertex quiver $Q$ such that
one of the vertices of $Q$ is a source or a sink, and 
the remaining three vertices of $Q$ support an oriented $3$-cycle. 
Equivalently, a vortex is a complete 4-vertex quiver that contains an oriented 3-cycle but not an oriented 4-cycle. 

The unique sink/source of a vortex is called its~\emph{apex}. 
See Figure~\ref{fig:vortices}. 

We say that a quiver $Q$ \emph{contains a vortex} if one of its $4$-vertex (induced) subquivers is a vortex. 
A quiver that does not contain a vortex is called \emph{vortex-free}.
This terminology goes back to D.~E.~Knuth \cite[Section~4]{Knuth}. 
\end{definition}

\begin{figure}[ht]
\begin{equation*}
\begin{tikzcd}[arrows={-stealth}, sep=15, cramped]
  & a \ar[d] \ar[ddl]  & \\ 
  & d & \\[-13pt]
  b  \ar[ur] \ar[rr]
  & & c \ar[ul] \ar[uul]
\end{tikzcd} 
\hspace{.5in} 
\begin{tikzcd}[arrows={-stealth}, sep=15, cramped]
  & a  \ar[ddl]  & \\ 
  & d \ar[u] \ar[dl] \ar[dr]& \\[-13pt]
  b   \ar[rr]
  & & c  \ar[uul]
\end{tikzcd}
\hspace{.5in} 
\begin{tikzcd}[arrows={-stealth}, sep=15, cramped]
  & a \ar[d] \ar[ddr]  & \\ 
  & d & \\[-13pt]
  b  \ar[ur] \ar[uur]
  & & c \ar[ul] \ar[ll]
\end{tikzcd} 
\hspace{.5in} 
\begin{tikzcd}[arrows={-stealth}, sep=15, cramped]
  & a  \ar[ddr]  & \\ 
  & d \ar[u] \ar[dl] \ar[dr]& \\[-13pt] 
  b   \ar[uur]
  & & c \ar[ll]
\end{tikzcd}
\end{equation*}
\vspace{2pt}
\caption{Four vortices with an apex at vertex $d$.}
\vspace{-7pt}
\label{fig:vortices}
\end{figure}

\begin{proposition}
\label{prop: vortex equiv to 4 vertex proper}
 A $4$-vertex quiver has a proper cyclic ordering if and only if it is not a vortex. 
\end{proposition}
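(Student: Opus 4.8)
The plan is to prove both implications, treating the easy direction (vortex $\Rightarrow$ no proper ordering) by a direct counting argument, and the converse by a case analysis on the oriented-cycle structure of~$Q$.

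For the ``only if'' direction, suppose $Q$ is a vortex. By passing to the opposite COQ (Definition~\ref{def:opposite-quiver}, which preserves properness of individual vertices) I may assume the apex $d$ is a source, so that the remaining vertices form a directed $3$-cycle $a \to b \to c \to a$ together with arrows $d \to a$, $d \to b$, $d \to c$. Since $Q$ is complete, no wiggles are available, so a proper cyclic ordering would have to be a single ordering in which every vertex is proper. First I would observe that properness of $a,b,c$ forces the induced cyclic order on $\{a,b,c\}$ to be $(a,b,c)$: the reverse order $(a,c,b)$ already fails the right-turn test on the triangle paths. With the triangle fixed as $(a,b,c)$, the vertex $d$ must be inserted into one of the three gaps $a\,|\,b$, $b\,|\,c$, $c\,|\,a$. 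By Remark~\ref{rem:In-Out-proper}, properness at $a$ requires its unique out-neighbor $b$ to immediately follow $a$ (all of $\Out(a)$ before all of $\In(a)$), so $d$ cannot occupy the gap $a\,|\,b$; likewise properness at $b$ (resp.\ $c$) forbids the gap $b\,|\,c$ (resp.\ $c\,|\,a$). All three gaps being excluded, $d$ has no admissible position, so no proper ordering exists.

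For the ``if'' direction, I would split a non-vortex $4$-vertex quiver into three exhaustive cases. If $Q$ is acyclic, Observation~\ref{ob:acyclic-proper} supplies a proper ordering. If $Q$ contains an oriented $4$-cycle $v_1 \to v_2 \to v_3 \to v_4 \to v_1$ (so that the only possible further arrows are the two diagonals joining $v_1,v_3$ and $v_2,v_4$), I claim the ordering $(v_1,v_2,v_3,v_4)$ is always proper: reading clockwise from any $v_i$ one meets $v_{i+1}, v_{i+2}, v_{i+3}$ (indices mod~$4$), where $v_{i+1}\in\Out(v_i)$ and $v_{i+3}\in\In(v_i)$ by the cycle, while the opposite vertex $v_{i+2}$ may be an out-neighbor, an in-neighbor, or non-adjacent. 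In every case the sequence has the form ``out-block then in-block,'' so $v_i$ is proper. The remaining case is that $Q$ has an oriented $3$-cycle but no oriented $4$-cycle; by the equivalent description of a vortex in Definition~\ref{def:vortex}, a \emph{complete} such quiver would be a vortex, so a non-vortex $Q$ in this case must be incomplete.

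In that last case I would write the $3$-cycle as $a \to b \to c \to a$ and, using the rotational symmetry of the triangle, relabel so that the fourth vertex $d$ is non-adjacent to~$c$. I would then order the triangle as $(a,b,c)$ and insert $d$ into a gap adjacent to~$c$, exploiting the wiggle freedom that the non-adjacency of $c$ and $d$ provides. A short finite check over the possible orientations of the (at most two) arrows joining $d$ to $\{a,b\}$---using the opposite-COQ symmetry to halve the work---exhibits an explicit proper ordering in each subcase. The main obstacle I anticipate is organizational rather than conceptual: making the three-way case division provably exhaustive (in particular invoking the stated equivalence of the two definitions of a vortex, and noting that for a $4$-vertex quiver ``not a vortex'' simply means $Q$ itself is not a vortex) and discharging the finite subcase verification in the incomplete $3$-cycle case without error. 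The $4$-cycle case, by contrast, collapses to the single uniform observation above.
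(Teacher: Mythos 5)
Your proof is correct and follows essentially the same route as the paper's: the same three-way split into acyclic quivers (Observation~\ref{ob:acyclic-proper}), quivers with an oriented $4$-cycle (where the cycle-induced ordering is proper regardless of the diagonals), and quivers with an oriented $3$-cycle but no oriented $4$-cycle, together with the same gap-exclusion argument showing a vortex admits no proper ordering. The only differences are organizational---you separate the two implications and push the vortex case through Remark~\ref{rem:In-Out-proper} in slightly more detail, while the paper folds both directions into one case analysis (splitting the last case by whether the fourth vertex is a sink/source)---but the underlying ideas coincide.
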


\begin{proof}
Let $Q$ be a $4$-vertex quiver. 

\noindent
\textbf{Case 1:} $Q$ is acyclic (hence not a vortex).
Then $Q$ has a proper cyclic ordering, see Observation~\ref{ob:acyclic-proper}.


\noindent
\textbf{Case 2:} $Q$ has an oriented $4$-cycle (hence $Q$ is not a vortex). 
Then the cyclic ordering induced by this cycle is proper, regardless of the orientations of the remaining~arrows.

\noindent
\textbf{Case 3:} 
$Q$ has no oriented $4$-cycle but has an oriented $3$-cycle $C = (a \rightarrow b \rightarrow c \rightarrow a)$.

\noindent
\textbf{Case 3A:} 
The remaining vertex $d$ is a source or a sink. 
If $d$ is adjacent to all three vertices $a,b,c$, then $Q$ is a vortex;
furthermore it is not proper since  any location of $d$ (with respect to the clockwise 3-cycle~$C$)
will create a left turn at some vertex in~$C$.
If $d$ is adjacent to at most two of the remaining vertices (so $Q$ is not a vortex), 
then it's easy to see that we can always complete 
the cyclic ordering $(a,b,c)$ to a proper ordering of~$Q$. 

\pagebreak[3]

\noindent
\textbf{Case 3B:} 
The vertex $d$ is neither a source nor a sink. This means that $d$ lies in the middle of some oriented 2-arrow path.
(Also, $Q$ is not a vortex, as it has no sink/source vertex.) 
Up to symmetries, there are two cases: 
(1) a 2-arrow path $a \rightarrow d \rightarrow b$ is ruled out since it would create an oriented $4$-cycle; 
(2) a 2-arrow path $a \rightarrow d \rightarrow c$ would allow a proper ordering
(either $(a,b,d,c)$ or $(a,d,b,c)$, depending on the orientation of the arrows between $b$ and~$d$, if any). 
\end{proof}

\begin{corollary}
\label{cor:vortex improper}
A quiver that contains a vortex has no proper cyclic ordering. 
\end{corollary}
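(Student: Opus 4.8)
The plan is to reduce the statement to Proposition~\ref{prop: vortex equiv to 4 vertex proper} by a restriction argument. Suppose, for contradiction, that $Q$ contains a vortex~$V$ on vertices $\{a,b,c,d\}$ (an induced subquiver, by Definition~\ref{def:vortex}) and that $\sigma$ is a proper cyclic ordering of~$Q$, so that $(Q,\sigma)$ is a proper COQ. The goal is to show that the cyclic order $\tau$ that the four vortex vertices inherit from $\sigma$ is itself a proper cyclic ordering of~$V$, which is impossible by Proposition~\ref{prop: vortex equiv to 4 vertex proper}.

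The first key step is to observe that $\tau$ is well defined independently of the wiggle representative. Since a vortex is a \emph{complete} $4$-vertex quiver, the vertices $a,b,c,d$ are pairwise connected by an arrow in~$Q$, so no wiggle of $(Q,\sigma)$ can ever transpose two of them (a wiggle only swaps consecutive vertices that are non-adjacent in~$Q$). Consequently, the relative cyclic order of $\{a,b,c,d\}$ is identical in every COQ wiggle equivalent to $(Q,\sigma)$; call this common induced cyclic order~$\tau$. Note also that, $V$ being complete, it admits no wiggles, so $(V,\tau)$ being proper is equivalent to all four of its vertices being \emph{simultaneously} proper in the single ordering~$\tau$.

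The second step translates properness from $Q$ to $V$ vertex by vertex. Fix $j\in\{a,b,c,d\}$. Because $(Q,\sigma)$ is proper, there is a wiggle-equivalent COQ $(Q,\sigma_j)$ in which $j$ is a proper vertex. Since $V$ is an induced subquiver, every oriented two-arrow path through~$j$ inside~$V$ is also such a path in~$Q$, hence makes a right turn at~$j$ with respect to~$\sigma_j$. Here I would invoke the elementary fact that restricting a cyclic order to a subset preserves the clockwise orientation of any triple: if $(i,j,k)$ appears in this clockwise cyclic order in $\sigma_j$, it does so in the induced order on $\{a,b,c,d\}$, which by the previous paragraph is exactly~$\tau$. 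Therefore every such path makes a right turn at~$j$ with respect to~$\tau$, i.e., $j$ is proper in $(V,\tau)$. Running this over all four vertices shows that $(V,\tau)$ is a proper COQ, contradicting Proposition~\ref{prop: vortex equiv to 4 vertex proper}.

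The main obstacle is conceptual rather than computational: the definition of a proper COQ (Definition~\ref{def:wiggle class proper}) only requires each vertex to be made proper by \emph{some} sequence of wiggles, possibly using a different representative for each vertex, not a single ordering making all vertices proper at once. The wiggle-invariance of~$\tau$ coming from the completeness of the vortex is precisely what glues these per-vertex statements into the single proper ordering of~$V$ needed for the contradiction; this is the crux of the argument. The supporting claim that restriction to a subset preserves right turns is routine and I would state it as a one-line observation rather than prove it in detail.
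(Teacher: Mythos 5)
Your proof is correct and takes the same route the paper (implicitly) does: the corollary is stated there without proof, as an immediate consequence of Proposition~\ref{prop: vortex equiv to 4 vertex proper} obtained by restricting a proper cyclic ordering to the induced vortex subquiver. Your observation that completeness of the vortex makes the induced cyclic order on its four vertices wiggle-invariant---so that the separate per-vertex properness witnesses glue into a single proper ordering of the vortex---is precisely the point that makes this restriction argument valid under Definition~\ref{def:wiggle class proper}, and it is the detail the paper leaves implicit.
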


\begin{remark}
\label{rem:proper-doesn't-propagate}
 As shown in Example~\ref{eg:proper-doesn't-propagate}, 
mutation of a proper COQ does not necessarily produce a proper COQ, 
even if the mutated COQ is vortex-free (and does possess a totally proper cyclic ordering). 

Furthermore, some quivers that possess a proper cyclic ordering can be mutated to a quiver that does not have this property.
An example is given in Figure~\ref{fig:vortex mutation}.
\end{remark}

\pagebreak[3]


\begin{figure}[ht]
\begin{center}
\begin{tikzcd}[arrows={-stealth}, sep=36, cramped]
 b \ar[r] \ar[rd, "\!2", near end] & c \arrow[ld, "2\!", swap, near end] \ar[d] \\[-5pt]
a \ar[u] & d \ar[l] 
\end{tikzcd} 
\hspace{.3in} 
$\mathrel{\raisebox{0cm}{$\stackrel{\textstyle\mu_a}{\rule[.5ex]{3em}{0.5pt}}$}}$
\hspace{.3in} 
\begin{tikzcd}[arrows={-stealth}, sep=36, cramped]
 a \ar[r, "2"] \ar[rd] & c \arrow[ld] \ar[d] \\[-5pt]
b \ar[u] \ar[r] & d 
\end{tikzcd} 
\end{center}
\vspace{3pt}
\caption{The quiver $Q$ on the left has proper cyclic ordering $(a,b,c,d)$.
The quiver $\mu_a(Q)$ on the right is a vortex, so by Proposition~\ref{prop: vortex equiv to 4 vertex proper}, 
it has no proper cyclic ordering.}
\vspace{-10pt}
\label{fig:vortex mutation}
\end{figure}

The following result appears, in different but equivalent form, in the work of D.~E.~Knuth \cite[Section~4]{Knuth} 
and A.~Brouwer \cite[Section~1.B]{brouwer}. 


\begin{proposition} 
\label{prop:vf complete proper }
A complete quiver has at most one proper cyclic ordering. 
Given a complete quiver~$Q$, the following are equivalent:
 \begin{itemize}[leftmargin=1cm] 
\item[{\rm (P)}] $Q$ has a proper cyclic ordering;
\item[{\rm (VF)}] $Q$ is vortex-free.
\end{itemize}
\end{proposition}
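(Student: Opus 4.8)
The plan is to prove the two assertions of Proposition~\ref{prop:vf complete proper } separately: first that a proper cyclic ordering of a complete quiver (if it exists) is unique, and second the equivalence (P)$\iff$(VF). Throughout, I exploit the fact that a complete quiver admits no wiggles (as noted in the Remark following Definition~\ref{def:wiggle equiv}), so ``proper in the wiggle class'' collapses to ``proper in the single COQ,'' and uniqueness-up-to-wiggles becomes uniqueness on the nose.

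\textbf{Uniqueness.} Given a complete quiver $Q$, I claim that a proper cyclic ordering is forced by the local data. Fix any vertex $j$. Since $Q$ is complete, every other vertex lies in either $\In(j)$ or $\Out(j)$. By Remark~\ref{rem:In-Out-proper}, if $j$ is proper, then in the cyclic ordering all of $\In(j)$ immediately precedes $j$ and all of $\Out(j)$ immediately follows it; equivalently, reading clockwise starting just after $j$, one sees $\Out(j)$ then $\In(j)$. This means that for any two vertices $u \in \Out(j)$ and $v \in \In(j)$, the clockwise arc from $j$ must reach $u$ before $v$. Applying this constraint at every vertex $j$ simultaneously pins down, for each ordered pair of vertices, which one precedes the other on every arc---and I would argue that these constraints, taken together over all $j$, determine the cyclic ordering completely (this is essentially the statement that a ``locally transitive tournament'' structure reconstructs the circular order). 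The cleanest way to package this is to show that the clockwise successor relation is forced: the vertex immediately clockwise of $j$ must be a specific vertex determined by the arrow directions, and iterating recovers the whole cycle.

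\textbf{The equivalence (P)$\iff$(VF).} The direction $\neg$(VF)$\Rightarrow\neg$(P) is already available: Corollary~\ref{cor:vortex improper} states that a quiver containing a vortex has no proper cyclic ordering, giving (P)$\Rightarrow$(VF) immediately. So the real content is (VF)$\Rightarrow$(P): a vortex-free complete quiver admits a proper cyclic ordering. Here I would proceed by induction on $n$, the number of vertices, using the propagation philosophy. The base cases $n\le 3$ are Proposition~\ref{pr:3vertex proper}, and $n=4$ is exactly Proposition~\ref{prop: vortex equiv to 4 vertex proper}. For the inductive step, I would delete a suitably chosen vertex $v$ from $Q$ to obtain a complete quiver $Q\setminus v$ on $n-1$ vertices, observe that $Q\setminus v$ is still vortex-free (any induced vortex in $Q\setminus v$ would be one in $Q$), apply the inductive hypothesis to get a proper cyclic ordering $\sigma'$ of $Q\setminus v$, and then show that $v$ can be inserted into $\sigma'$ at a unique position keeping all vertices proper. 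The insertion position is dictated by properness at $v$ itself (between $\In(v)$ and $\Out(v)$), and I must verify that this single insertion does not destroy properness at any of the old vertices---i.e., does not create a left turn $u\to w\to v$ or $v\to w\to u$ at any old vertex $w$. This is where vortex-freeness enters: a bad insertion would produce a $4$-vertex induced subquiver that is a vortex (with apex among $\{v,w\}$), contradicting (VF).

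\textbf{The main obstacle} I anticipate is the insertion step: verifying that for a vortex-free complete $Q$, there is a \emph{consistent} slot for $v$ compatible with properness at every vertex simultaneously. The constraint from properness at $v$ fixes a contiguous arc (between the block of $\In(v)$ and the block of $\Out(v)$ in $\sigma'$), but I must show this arc is \emph{nonempty}, i.e., that the clockwise order of the old vertices places all of $\Out(v)$ consecutively followed by all of $\In(v)$. Equivalently, there should be no ``interleaving'' $u_1 \in \Out(v),\ u_2\in\In(v),\ u_3\in\Out(v)$ occurring in that cyclic order among the old vertices---and each such forbidden interleaving is precisely what a vortex on $\{v,u_1,u_2,u_3\}$ would encode. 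So the heart of the argument is a careful local case-check translating ``$v$ has no valid slot'' into ``$Q$ contains a vortex through $v$,'' handled by examining the $3$-vertex induced subquivers among the old vertices together with their arrows to $v$. I would present this via the contrapositive (no valid slot $\Rightarrow$ vortex), as that keeps the casework finite and directly invokes Definition~\ref{def:vortex}.
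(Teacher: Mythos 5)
Your uniqueness argument and the implication (P)$\Rightarrow$(VF) (via Corollary~\ref{cor:vortex improper}) are sound; note that the paper itself offers no proof of this proposition, attributing the statement to Knuth~\cite{Knuth} and Brouwer~\cite{brouwer}, so everything hinges on your insertion step for (VF)$\Rightarrow$(P). That step, as you state it, is false. Your claimed obstruction---an ``interleaving'' $u_1\in\Out(v)$, $u_2\in\In(v)$, $u_3\in\Out(v)$ in cyclic order---exists whenever $|\Out(v)|\ge 2$ and $|\In(v)|\ge 1$: any such triple occurs in one of the two possible cyclic orders, and since $u_1,u_3$ play symmetric roles you can always label it so that the pattern appears. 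So if each such pattern really encoded a vortex on $\{v,u_1,u_2,u_3\}$, then every complete quiver containing such a vertex would contain a vortex, which is absurd (take a transitive tournament on four vertices). The genuine obstruction to $\Out(v)$ being contiguous is a four-vertex alternation $u_1\in\Out(v)$, $u_2\in\In(v)$, $u_3\in\Out(v)$, $u_4\in\In(v)$ in cyclic order, and the vortex it forces need not lie on $\{v,u_1,u_2,u_3\}$.

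Concretely: let the old vertices be $x,a,y,b$ with arrows $x\to a$, $a\to y$, $y\to b$, $b\to x$, $y\to x$, $b\to a$; the cyclic ordering $(x,a,y,b)$ is proper, so this quiver is vortex-free. Let $v$ satisfy $\Out(v)=\{x,y\}$ and $\In(v)=\{a,b\}$, an alternating configuration, so $v$ has no valid slot. Here $\{v,x,a,y\}$ is \emph{not} a vortex: its only oriented $3$-cycles are $v\to x\to a\to v$ and $x\to a\to y\to x$, and the respective fourth vertices $y$ and $v$ are neither sinks nor sources. The vortex certifying impossibility is $\{v,x,a,b\}$ (apex $b$, a source), which requires the fourth old vertex $b$; for other orientations of the diagonals $x\,$--$\,y$ and $a\,$--$\,b$ the vortex migrates to different $4$-subsets. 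So ``no valid slot $\Rightarrow$ vortex'' demands a case analysis of five-vertex configurations governed by the arrows among the old vertices, not the single four-vertex check you describe. Your step for preserving properness at the old vertices has the same defect: a left turn $u\to w\to v$ after insertion forces $u,w\in\In(v)$ with the $\In(v)$ block misaligned against the transitive tournament structure on $\In(v)$, and excluding this again takes a chain of vortex-freeness arguments in which the apex of the resulting vortex generally lies outside $\{v,w\}$. The induction-plus-insertion strategy (essentially Knuth's) can be carried through, but the two pivotal reductions---which constitute the entire content of the hard direction---must be reworked, since as written they do not hold.
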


\begin{remark}
By Corollary~\ref{cor:vortex improper}, (P) implies (VF) for \emph{any} quiver~$Q$. 
\end{remark}

\pagebreak[3]

\begin{remark}
For incomplete quivers, (VF) does not imply (P), see Figure~\ref{fig:VF but not proper}. 
Cf.\ also Figure~\ref{fig:Q-punctured-annulus}. 
\end{remark}

\begin{figure}[ht]
\begin{center}
\begin{tikzcd}[arrows={-stealth}, sep=small, cramped]
  a\ar[rr]&    & b\ar[dd] \\[2pt]
  & e \ar[ul] \ar[ur] \ar[dr] \ar[dl] & \\ 
  d  \ar[uu] & & c \ar[ll]
\end{tikzcd}
\end{center}
\vspace{5pt}
\caption{A vortex-free quiver with no proper cyclic ordering.}
\vspace{-5pt}
\label{fig:VF but not proper}
\end{figure}

We say that a quiver $Q$ \emph{has a vortex-free completion} if one can add arrows (but not vertices) 
to~$Q$ to get a complete vortex-free quiver. 

\begin{corollary}
\label{cor:VF-completion}
 If $Q$ has a vortex-free completion, then $Q$ has a proper cyclic ordering. 
\end{corollary}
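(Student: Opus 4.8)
The plan is to reduce the statement to Proposition~\ref{prop:vf complete proper}. Suppose $Q$ has a vortex-free completion; that is, by adding arrows to~$Q$ (while keeping the vertex set fixed) we obtain a complete vortex-free quiver~$\hat Q$. By Proposition~\ref{prop:vf complete proper}, the complete vortex-free quiver~$\hat Q$ admits a (unique) proper cyclic ordering~$\sigma$ of its vertices. Since $Q$ and~$\hat Q$ share the same vertex set~$V$, the cyclic ordering~$\sigma$ is simultaneously a cyclic ordering of~$Q$, and I would claim that $\sigma$ is proper for~$Q$ as well.

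The key observation is that the notion of a right turn depends only on the cyclic ordering and not on the underlying quiver: an oriented two-arrow path $i\rightarrow j\rightarrow k$ makes a right turn at~$j$ precisely when $\sigma$ can be written as $(\dots,i,\dots,j,\dots,k,\dots)$, a condition referring solely to the positions of $i,j,k$ in~$\sigma$. Because $\hat Q$ is obtained from~$Q$ by adding arrows, every arrow of~$Q$ is also an arrow of~$\hat Q$, and hence every oriented two-arrow path in~$Q$ is an oriented two-arrow path in~$\hat Q$. Fix a vertex~$j$ and any oriented path $i\rightarrow j\rightarrow k$ in~$Q$; viewing it inside~$\hat Q$ and using that $j$ is proper in~$(\hat Q,\sigma)$, this path makes a right turn at~$j$. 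As the right-turn condition is insensitive to which quiver we regard the path as living in, the same path makes a right turn at~$j$ in~$(Q,\sigma)$. Since $i\rightarrow j\rightarrow k$ was an arbitrary oriented path through~$j$ in~$Q$, the vertex~$j$ is proper in~$(Q,\sigma)$, and since $j$ was arbitrary, every vertex of~$Q$ is proper in~$(Q,\sigma)$.

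One point to keep straight is the interplay with wiggles in the definition of a proper COQ (Definition~\ref{def:wiggle class proper}), which only asks that each vertex be made proper in \emph{some} wiggle-equivalent COQ. Here no such flexibility is needed: the argument above produces an honest cyclic ordering~$\sigma$ in which every vertex of~$Q$ is \emph{literally} proper, which is a fortiori enough to conclude that $(Q,\sigma)$ is a proper COQ. (This is consistent with the fact that $\sigma$, being the proper ordering of the complete quiver~$\hat Q$, admits no wiggles of its own in~$\hat Q$, although $Q$ may well admit wiggles.) I therefore do not expect any genuine obstacle: the content is entirely carried by Proposition~\ref{prop:vf complete proper}, and the remaining work is the bookkeeping check that literal properness of $(\hat Q,\sigma)$ descends to $(Q,\sigma)$ upon removal of arrows. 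The only things to be careful about are to invoke the prior proposition for the completion~$\hat Q$ rather than for~$Q$ itself, and to note that passing to a subquiver can only destroy oriented paths, never create them, so the universal right-turn property is preserved.
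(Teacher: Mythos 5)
Your proof is correct and is exactly the argument the paper intends: the corollary is stated without proof as an immediate consequence of Proposition~\ref{prop:vf complete proper }, applied to the completion $\hat Q$, combined with the observation that removing arrows can only destroy oriented two-arrow paths while the right-turn condition depends only on the cyclic ordering. Your additional remark that the resulting ordering makes every vertex of $Q$ literally proper (no wiggles needed) is accurate and consistent with the paper's definitions.
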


\begin{remark}
The converse to Corollary~\ref{cor:VF-completion} is false:
a quiver that allows a proper cyclic ordering does not necessarily have a vortex-free completion, see Figure~\ref{fig:ProperButNotVFcompletable}.
\end{remark}

\enlargethispage{5pt}

\begin{figure}[ht]
\vspace{-5pt}
\begin{center}
{
\setlength{\unitlength}{.7pt}
\newcommand{\radius}{1.5cm} 
\newcommand{\vertsize}{1.5pt}
\begin{tikzpicture}
\filldraw[black] (0,0)++(180:\radius) circle (2pt) node[left] {$a$} coordinate (a);
\filldraw[black] (0,0)++(120:\radius) circle (2pt) node[above left] {$b$} coordinate (b);
\filldraw[black] (0,0)++(60:\radius) circle (2pt) node[above right] {$c$} coordinate (c);
\filldraw[black] (0,0)++(0:\radius) circle (2pt) node[right] {$d$} coordinate (d);
\filldraw[black] (0,0)++(-60:\radius) circle (2pt) node[below right] {$e$} coordinate (e);
\filldraw[black] (0,0)++(-120:\radius) circle (2pt) node[below left] {$f$} coordinate (f);
\draw[black, dashed, decoration={markings, mark=at position 0.1 with {\arrow{<}}}, postaction={decorate}, opacity=0.5] (0,0) circle (\radius);
\draw[black, -{stealth}, shorten >=3pt, shorten <= 3pt] (a) -- (b);
\draw[black, -{stealth}, shorten >=3pt, shorten <= 3pt] (b) -- (c);
\draw[black, -{stealth}, shorten >=3pt, shorten <= 3pt] (c) -- (d);
\draw[black, -{stealth}, shorten >=3pt, shorten <= 3pt] (d) -- (e);
\draw[black, -{stealth}, shorten >=3pt, shorten <= 3pt] (e) -- (f);
\draw[black, -{stealth}, shorten >=3pt, shorten <= 3pt] (f) -- (a);

\draw[black, -{stealth}, shorten >=3pt, shorten <= 3pt] (c) -- (a);
\draw[black, -{stealth}, shorten >=3pt, shorten <= 3pt] (e) -- (a);
\draw[black, -{stealth}, shorten >=3pt, shorten <= 3pt] (b) -- (d);
\draw[black, -{stealth}, shorten >=3pt, shorten <= 3pt] (f) -- (d);

\draw[black, dashed, shorten >=3pt, shorten <= 3pt] (a) -- (d);
\end{tikzpicture}
}
\end{center}
\vspace{-5pt}
\caption{A proper COQ that cannot be completed to be vortex-free. 
Any orientation of the missing edge $a - d$ would create a vortex.}
\label{fig:ProperButNotVFcompletable}
\end{figure}

\begin{remark}
As shown by D.~E.~Knuth \cite[Section~6]{Knuth}, 
it is NP-hard to determine whether a quiver has a vortex-free completion. 
\end{remark}

The next result will be our primary tool for propagating the properness property. 

\begin{proposition}
\label{pr:propagation-under-VF}
Let $Q$ be a complete proper COQ. 
If the COQ $Q'=\mu_b(Q)$ is vortex-free, then $Q'$ is proper. 
\end{proposition}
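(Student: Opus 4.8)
The plan is to reduce the claim to the complete case already settled by Proposition~\ref{prop:vf complete proper}, by exhibiting a carefully chosen vortex-free completion of $Q'$ whose unique proper ordering is exactly the one produced by the proper mutation.

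First I would fix the combinatorics of the mutation. Since $Q$ is complete and proper, Proposition~\ref{prop:vf complete proper} supplies a unique proper cyclic ordering $\sigma$, and by Remark~\ref{rem:In-Out-proper} I may represent it by a linear ordering in which all of $\In_Q(b)$ precedes $b$, which in turn precedes all of $\Out_Q(b)$. Performing the proper mutation of Definition~\ref{def:proper mutation} moves $b$ clockwise past $\Out_Q(b)$, so $Q'=\mu_b(Q)$ carries the cyclic ordering $\sigma'$ with linear representative $(\In_Q(b),\Out_Q(b),b)$; by Remark~\ref{rem:proper mutation involution} the vertex $b$ is proper in $(Q',\sigma')$. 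Using the matrix-mutation formula~\eqref{eq: def mutation signs} I would then pin down exactly which edges of $Q$ disappear: an edge $\{i,k\}$ with $i\in\In_Q(b)$ and $k\in\Out_Q(b)$ is destroyed precisely when $b_{ik}+b_{ib}b_{bk}=0$, i.e.\ when $i\to b\to k\to i$ is an oriented $3$-cycle in $Q$ with matching multiplicities. All arrows internal to $\In_Q(b)$ or to $\Out_Q(b)$ are untouched, and every arrow incident to $b$ is reversed.

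Next I would build the completion. For each destroyed pair $\{i,k\}$ as above I re-insert a single arrow $i\to k$ (the orientation compatible with $i<k$ in $\sigma'$), obtaining a complete quiver $\hat Q'$ of which $Q'$ is an arrow-deletion subquiver. The point of this construction is the following reduction: properness of a vertex is a condition on oriented two-arrow paths through it, and every such path in $Q'$ is also a path in $\hat Q'$; hence if $(\hat Q',\sigma')$ is proper, then so is $(Q',\sigma')$, and $Q'$ is proper as claimed. It therefore remains only to verify that $(\hat Q',\sigma')$ is proper, equivalently (by Proposition~\ref{prop:vf complete proper}, since $\hat Q'$ is complete) that $\hat Q'$ is vortex-free with $\sigma'$ as its proper ordering. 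Turns lying entirely inside the $\In_Q(b)$-block or the $\Out_Q(b)$-block are right turns, since those arrows and their relative cyclic order are inherited unchanged from the proper COQ $Q$; turns through $b$ are right turns because $b$ is proper in $Q'$.

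The main obstacle is the remaining case analysis: the turns that mix the two blocks and involve a modified or newly inserted inter-block edge. Here I would argue that a left turn at such a configuration, or a vortex among four vertices using an inserted arrow $i\to k$, would force either a vortex in the vortex-free quiver $Q'$ itself, or a violation of the properness (local transitivity) and vortex-freeness of $Q$---the latter holding because $Q$ is complete and proper, hence vortex-free by Corollary~\ref{cor:vortex improper}. Translating each forbidden $4$-vertex pattern in $\hat Q'$ back through the single mutation $\mu_b$ into a forbidden pattern in $Q$ or in $Q'$ is the delicate step, and is precisely where the hypothesis that $Q'$ be vortex-free is genuinely used; once it is carried out, $\hat Q'$ is a complete vortex-free quiver and the subquiver reduction above finishes the proof. (Alternatively, one may phrase the conclusion through Corollary~\ref{cor:VF-completion}: the completion $\hat Q'$ certifies that $Q'$ admits a proper cyclic ordering, namely $\sigma'$.)
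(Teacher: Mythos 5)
Your proposal has a genuine gap, and it is not merely that the ``delicate step'' is left undone: the specific completion you prescribe is incorrect, so the deferred case analysis cannot be carried out as claimed. You assert that a left turn in $(\hat Q',\sigma')$ or a vortex using an inserted arrow $i\to k$ ``would force either a vortex in the vortex-free quiver $Q'$ itself, or a violation of the properness and vortex-freeness of $Q$.'' This is false for your construction. Take $Q$ on vertices $\{a,b,c,d\}$ with cyclic ordering $(a,b,c,d)$ and arrows $a\to b$, $b\to c$, $b\to d$, $c\to d$, $d\to a$, plus \emph{two} arrows $c\to a$. One checks directly that $Q$ is complete and proper (every oriented $2$-path makes a right turn). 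Mutating at $b$, where $\In_Q(b)=\{a\}$ and $\Out_Q(b)=\{c,d\}$, the edge $\{a,d\}$ is destroyed (the single arrow $d\to a$ cancels against the one new arrow $a\to d$), the pair $\{a,c\}$ survives as a single arrow $c\to a$, and the proper mutation produces $\sigma'=(a,c,d,b)$. So $Q'$ has arrows $b\to a$, $c\to a$, $c\to b$, $c\to d$, $d\to b$; it is not complete, hence vortex-free, and indeed $(Q',\sigma')$ is proper ($a$ is a sink, $c$ is a source, and the turns at $b$ and $d$ are right turns), consistent with the proposition. But your rule inserts the arrow $a\to d$ (the orientation with $a<d$ in the representative $(a,c,d,b)$), and the resulting $\hat Q'$ is a \emph{vortex}: $c$ is a source adjacent to $a$, $b$, $d$, while $a\to d\to b\to a$ is an oriented $3$-cycle. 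Correspondingly $(\hat Q',\sigma')$ contains the left turn $c\to a\to d$. So $\hat Q'$ is neither vortex-free nor proper even though all hypotheses on $Q$ and $Q'$ hold, and your reduction collapses. (Re-inserting the opposite orientation $d\to a$ happens to work in this example, but then you need a different insertion rule, and the key verification would still remain to be done; as written, your argument proves nothing about the mixed-block turns, which is where all the content of the proposition lies.)

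For comparison, the paper's proof avoids completions entirely. Since properness of a vertex in a fixed cyclic ordering is a condition on $3$-vertex subCOQs, and since mutation at $b$ commutes with passing to full subquivers containing $b$ (with vortex-freeness and properness being hereditary), the statement reduces to the case where $Q$ is a complete proper COQ on four vertices with distinguished vertex $b$. Up to opposites there are only four such configurations, and each is checked by hand; the hypothesis that $\mu_b(Q)$ is vortex-free is used only in the last case, exactly to exclude the configuration (both $c\to a$ and $a\to d$ present) that defeats your construction above. If you want to salvage your approach, you must both choose the inserted orientations correctly and actually prove the translation of forbidden patterns back through $\mu_b$; at that point you will find yourself doing the same finite case analysis, so the completion adds a layer of machinery without buying anything.
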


\begin{proof}
The case of quivers with at most $3$ vertices follows from Lemma~\ref{lem:n=3-tot-proper}.
We henceforth assume that $Q$ has $\ge4$ vertices. 

A COQ is proper if every $3$-vertex subCOQ of it is proper. 
Any $3$-vertex subCOQ appears in a $4$-vertex subCOQ along with the vertex~$b$. 
The proof will not involve any wiggles, so we may assume, without loss of generality, 
that $Q$ is a complete $4$-vertex COQ with a distinguished vertex~$b$. 
Up to taking the opposite COQ (cf.\ Definition~\ref{def:opposite-quiver}), 
there are only four possible COQs of this kind, shown in Figure~\ref{fig:proper outgoing orientations}. 
(Since $Q$ is proper, it must be vortex-free by Proposition~\ref{prop:vf complete proper }.)

\pagebreak[3]


\vspace{5pt}

\begin{figure}[ht]
{
\newcommand{\radius}{1.2cm} 
\newcommand{\out}{-{stealth}}
\newcommand{\ine}{{stealth}-}
\newcommand\OriQuiv[9]{%
	\filldraw[black] (#1,#2)++(135:\radius) circle (2pt) node[above left=-1pt] {$a$} coordinate (a);
	\filldraw[black] (#1,#2)++(45:\radius) circle (2pt) node[above right=-1pt] {$b$} coordinate (b);
	\filldraw[black] (#1,#2)++(-45:\radius) circle (2pt) node[below right=-1pt] {$c$} coordinate (c);
	\filldraw[black] (#1,#2)++(-135:\radius) circle (2pt) node[below left=-1pt] {$d$} coordinate (d);
	\draw[black, dashed, decoration={markings, mark=at position 0 with {\arrow{<}}}, postaction={decorate}] (#1,#2) circle (\radius);
	\draw[black, #3, shorten >=3pt, shorten <= 3pt] (d) -- (a);
	\draw[black, #4, shorten >=3pt, shorten <= 3pt] (a) -- (b);
	\draw[black, #5, shorten >=3pt, shorten <= 3pt] (b) -- (c);
	\draw[black, #6, shorten >=3pt, shorten <= 3pt] (d) -- (b);
	\draw[black, #7, shorten >=3pt, shorten <= 3pt] (a) -- (c);
	\draw[black, #8, shorten >=3pt, shorten <= 3pt] (d) -- (c);
	\draw (#1,#2-\radius) node[below] {\textbf{#9}};
}
\begin{tikzpicture} 
\OriQuiv{0}{0.2}{\out}{\ine}{\out}{\ine}{\ine}{\ine}{Case 1}
\end{tikzpicture}
{\ \ }
\begin{tikzpicture} 
\OriQuiv{0}{0.2}{\ine}{\out}{\out}{\ine}{\out}{\ine}{Case 2}
\end{tikzpicture}
{\ \ }
\begin{tikzpicture} 
\OriQuiv{0}{0.2}{\out}{\out}{\out}{\ine}{\out}{\ine}{Case 3}
\end{tikzpicture}
{\ \ }
\begin{tikzpicture} 
\OriQuiv{0}{0.2}{\out}{\out}{\out}{\ine}{\ine}{\ine}{Case 4}
\end{tikzpicture}
} 
\caption{Non-vortex 4-vertex complete proper COQs with $|\Out(b)|\!\geq\!2$.
Arrow multiplicities are not shown.}
\label{fig:proper outgoing orientations}
\end{figure}

\textbf{Case 1:} vertex $b$ is a source. 
Mutation at $b$ does not change the cyclic ordering and $\mu_b(Q)$ is again proper.

\textbf{Case 2:} $\Out(b)=\{c,d\}$ and $Q$ is acyclic. 
Mutation at $b$ reverses the arrows incident to $b$ and leaves all other arrow orientations unchanged.
Thus $\mu_b(Q)$ is proper, with cyclic ordering $(b,a,c,d)$.

\textbf{Cases 3 and~4:} $\Out(b)=\{c,d\}$ 
and $Q$ has an oriented $4$-cycle.
%
In this case, we know the arrow orientations of $\mu_b(Q)$ shown in Figure~\ref{fig:mutated b cyclic}.
Regardless of the missing orientations, the vertices $b, c,$ and $d$ are proper in $\mu_b(Q)$. 
By assumption, $\mu_b(Q)$ is vortex-free. 
Hence we cannot have the two-arrow path $c \rightarrow a \rightarrow d$ in~$\mu_b(Q)$. 
Therefore $a$ is also proper.
\end{proof}


\hide{
\begin{figure}[ht]
{
\newcommand{\radius}{1.2cm} 
\newcommand{\out}{-{stealth}}
\newcommand{\ine}{{stealth}-}

\newcommand\OriQuiv[8]{%
	\filldraw[black] (#1,#2)++(135:\radius) circle (2pt) node[above left=-1pt] {$b$} coordinate (b);
	\filldraw[black] (#1,#2)++(45:\radius) circle (2pt) node[above right=-1pt] {$a$} coordinate (a);
	\filldraw[black] (#1,#2)++(-45:\radius) circle (2pt) node[below right=-1pt] {$c$} coordinate (c);
	\filldraw[black] (#1,#2)++(-135:\radius) circle (2pt) node[below left=-1pt] {$d$} coordinate (d);
	\draw[black, dashed, decoration={markings, mark=at position 0 with {\arrow{<}}}, postaction={decorate}] (#1,#2) circle (\radius);
	\draw[black, #3, shorten >=3pt, shorten <= 3pt] (d) -- (a);
	\draw[black, #4, shorten >=3pt, shorten <= 3pt] (a) -- (b);
	\draw[black, #5, shorten >=3pt, shorten <= 3pt] (b) -- (c);
	\draw[black, #6, shorten >=3pt, shorten <= 3pt] (d) -- (b);
	\draw[black, #7, shorten >=3pt, shorten <= 3pt] (a) -- (c);
	\draw[black, #8, shorten >=3pt, shorten <= 3pt] (d) -- (c);
}
\begin{tikzpicture}
\OriQuiv{0}{0}{-, dashed}{\ine}{\ine}{\out}{\out}{\ine}
\end{tikzpicture}
} 
\caption{Known arrow orientations in $\mu_b(Q)$. Arrow multiplicities are not shown.}
\label{fig:mutated b cyclic 2}
\end{figure}
} 

\begin{figure}[ht]
{
\newcommand{\radius}{1.2cm} 
\newcommand{\out}{-{stealth}}
\newcommand{\ine}{{stealth}-}
\newcommand\OriQuiv[8]{%
	\filldraw[black] (#1,#2)++(135:\radius) circle (2pt) node[above left=-1pt] {$b$} coordinate (b);
	\filldraw[black] (#1,#2)++(45:\radius) circle (2pt) node[above right=-1pt] {$a$} coordinate (a);
	\filldraw[black] (#1,#2)++(-45:\radius) circle (2pt) node[below right=-1pt] {$c$} coordinate (c);
	\filldraw[black] (#1,#2)++(-135:\radius) circle (2pt) node[below left=-1pt] {$d$} coordinate (d);
	\draw[black, dashed, decoration={markings, mark=at position 0 with {\arrow{<}}}, postaction={decorate}] (#1,#2) circle (\radius);
	\draw[black, #3, shorten >=3pt, shorten <= 3pt] (d) -- (a);
	\draw[black, #4, shorten >=3pt, shorten <= 3pt] (a) -- (b);
	\draw[black, #5, shorten >=3pt, shorten <= 3pt] (b) -- (c);
	\draw[black, #6, shorten >=3pt, shorten <= 3pt] (d) -- (b);
	\draw[black, #7, shorten >=3pt, shorten <= 3pt] (a) -- (c);
	\draw[black, #8, shorten >=3pt, shorten <= 3pt] (d) -- (c);
}
\begin{tikzpicture}
\OriQuiv{0}{0}{-, dashed}{\ine}{\ine}{\out}{-, dashed}{\ine}
\end{tikzpicture}
} 
\caption{Known arrow orientations in $\mu_b(Q)$. Arrow multiplicities are not shown.}
\label{fig:mutated b cyclic}
\end{figure}

\newpage

\section{Totally proper COQs: requirements}
\label{sec:totally proper}

\begin{definition}
\label{def:totally-proper}
 A COQ is \emph{totally proper} if all COQs in its proper mutation class are proper.
 A cyclic ordering of a COQ is totally proper if that COQ is totally proper.
\end{definition}

The following is immediate from Theorem~\ref{thm:I-N-action}
and Corollaries~\ref{cor:proper-mutations-preserve-cosquare}, \ref{cor:alex-poly-inv}, and~\ref{cor:alex-lattices-inv}.

\begin{corollary}
\label{cor:proper preserves}
Proper mutations of a totally proper COQ preserve the integral congruence class of the unipotent companion~$U$---hence 
the $\GL(n,\ZZ)$ conjugacy class of the cosquare of~$U$ and the associated Alexander polynomial and Alexander lattices.
\end{corollary}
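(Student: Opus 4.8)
The plan is to observe that Corollary~\ref{cor:proper preserves} is an essentially immediate consequence of Theorem~\ref{thm:I-N-action} (equivalently, Corollary~\ref{cor:I-N-action}) together with the \emph{definition} of total properness. The only genuine content is unwinding what ``totally proper'' guarantees: by definition, every COQ lying in the proper mutation class of a totally proper COQ~$Q$ is itself proper. Consequently, whenever we apply a mutation~$\mu_j$ to such a COQ, the vertex~$j$ is proper (indeed all vertices are), so $\mu_j$ is a genuine \emph{proper} mutation in the sense of Definition~\ref{def:proper mutation}, not merely an ordinary quiver mutation. This is the crux: in an arbitrary COQ, a mutation at a non-proper vertex need not preserve the unipotent companion's congruence class, but total properness removes exactly this obstruction by ensuring no such non-proper mutations ever arise within the class.

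Concretely, I would argue as follows. Let $Q$ be totally proper, and let $Q'$ be obtained from $Q$ by any single mutation $\mu_j$. Since $Q'$ lies in the proper mutation class of~$Q$, and $Q$ is totally proper, $Q'$ is a proper COQ; in particular $j$ is a proper vertex of~$Q$ (and of~$Q'$). Hence $\mu_j$ is a proper mutation. By Theorem~\ref{thm:I-N-action}, proper mutations (and wiggles) preserve the integral congruence class of the unipotent companion~$U$. The passage to the cosquare and then to the Alexander polynomial is then automatic: Lemma~\ref{lem:C-sim=Ucong} shows that integral congruence of~$U$ forces $\GL(n,\ZZ)$-similarity of the cosquares $U^{-T}U$ (this is the content of Corollary~\ref{cor:proper-mutations-preserve-cosquare}), and the Alexander polynomial is by definition the characteristic polynomial of the cosquare, hence determined by its conjugacy class. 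An iteration over a sequence of mutations, interleaved with wiggles, then propagates the invariance across the entire proper mutation class.

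The one point requiring care—and the closest thing to an obstacle—is the \emph{interpretation} step: a ``mutation'' in the ordinary quiver sense must be promoted to a proper mutation of the COQ, which carries a prescribed update of the cyclic ordering (Definition~\ref{def:proper mutation}). I would emphasize that total properness is precisely the hypothesis that makes every relevant mutation proper, so that the cyclic-ordering bookkeeping is always available and Theorem~\ref{thm:I-N-action} always applies. No new computation is needed; the proof is a direct citation of the definition of total properness and Theorem~\ref{thm:I-N-action}, exactly as the text's lead-in (``The following is immediate from Corollary~\ref{cor:I-N-action}'') already signals. Accordingly, I expect the write-up to be a single short paragraph invoking these two facts.
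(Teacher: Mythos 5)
Your proposal is correct and takes essentially the same route as the paper: the paper supplies no separate argument, simply declaring the corollary immediate from Corollary~\ref{cor:I-N-action} (i.e., from Theorem~\ref{thm:I-N-action} together with Lemma~\ref{lem:C-sim=Ucong} for the cosquare and the definition of the Alexander polynomial), which is precisely the unwinding you carry out. Your additional observation—that total properness ensures every mutation within the class can be promoted to a proper mutation, so the invariants become genuine mutation invariants—is exactly the point the paper makes in the remark following the corollary.
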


It turns out that if a totally proper cyclic ordering exists, then it is unique:

\begin{theorem}
\label{th:totally-proper-unique}
A quiver may possess at most one totally proper cyclic ordering (up~to wiggles). 
\end{theorem}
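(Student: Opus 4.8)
The plan is to prove that two totally proper cyclic orderings $\sigma$ and $\sigma'$ of a quiver $Q$ assign equal winding numbers to every undirected cycle; by Theorem~\ref{th:wiggle/winding} this forces $(Q,\sigma)$ and $(Q,\sigma')$ to be wiggle equivalent, which is exactly the assertion. Two standing facts will be used repeatedly. First, in a totally proper COQ every vertex is proper throughout the proper mutation class, so \emph{every} mutation is a proper mutation and the class is closed under mutations; in particular $\mu_v(Q,\sigma)$ and $\mu_v(Q,\sigma')$ are again totally proper orderings of the same quiver $\mu_v(Q)$. Second, a proper mutation $\mu_v$ relocates only the vertex $v$, leaving the cyclic positions of all other vertices unchanged (Definition~\ref{def:proper mutation}), and $\wind$ is wiggle-invariant (Lemma~\ref{lem:winding number invariant}). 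Since $\wind$ is additive on the cycle space (Remark~\ref{rem:homotopy gen invariant}) and any cycle with a chord splits across that chord into two shorter cycles, it suffices to prove $\wind(C,\sigma)=\wind(C,\sigma')$ for \emph{chordless} cycles $C$, by induction on the length $k$. For the base case $k=3$ the three vertices of $C$ are mutually adjacent, and properness pins their induced cyclic order (as in Example~\ref{eg:3-vert-always-proper}); since the winding number of a $3$-cycle depends only on this induced order and the fixed edge orientations, it agrees for $\sigma$ and $\sigma'$.

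For the inductive step, first assume $C$ contains a directed subpath $v_0\to v_1\to v_2$. Introduce the formal chord $v_0\to v_2$ and split the homology class $[C]=[T]+[C']$, where $T$ is the triangle on $\{v_0,v_1,v_2\}$ and $C'=(v_0\to v_2\to v_3\to\cdots\to v_{k-1}\to v_0)$ has length $k-1$. Properness of $v_1$ forces the triple $v_0,v_1,v_2$ into the clockwise cyclic order $(v_0,v_1,v_2)$ in both $\sigma$ and $\sigma'$, so $\wind(T,\sigma)=\wind(T,\sigma')$. For $C'$, mutate at $v_1$: in $\mu_{v_1}(Q)$ the chord $v_0\to v_2$ is an actual arrow, so $C'$ is a genuine cycle of length $k-1$ in a totally proper COQ, and the inductive hypothesis together with additivity gives $\wind(C',\mu_{v_1}\sigma)=\wind(C',\mu_{v_1}\sigma')$. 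Because $v_1\notin C'$ and $\mu_{v_1}$ moves only $v_1$, we have $\wind(C',\mu_{v_1}\sigma)=\wind(C',\sigma)$ and likewise for $\sigma'$; hence $\wind(C',\sigma)=\wind(C',\sigma')$. Adding the two contributions yields $\wind(C,\sigma)=\wind(C,\sigma')$.

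It remains to handle the \emph{alternating} case, in which every vertex of $C$ is a local source or sink (so $k$ is even and no directed subpath of length two exists). Choose a local source $v_0$ of $C$, with $C$-edges $v_0\to v_1$ and $v_0\to v_{k-1}$, and mutate at $v_0$. The mutation creates no arrow between two vertices of $C$ (as $v_0$ has no $C$-in-neighbor), so $C$ stays chordless in the totally proper COQ $\mu_{v_0}(Q)$, while a neighbor of $v_0$ now lies in the middle of a directed subpath; thus the previous paragraph applies to $C$ inside $\mu_{v_0}(Q)$ and gives $\wind(C,\mu_{v_0}\sigma)=\wind(C,\mu_{v_0}\sigma')$. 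To transfer this back, split off the corner triangle $S$ on $\{v_{k-1},v_0,v_1\}$ via the formal chord $v_1\to v_{k-1}$, so that $\wind(C,\cdot)=\wind(S,\cdot)+\wind(C'',\cdot)$ with $C''$ avoiding $v_0$. Since the proper mutation moves $v_0$ clockwise past both out-neighbors $v_1$ and $v_{k-1}$, the induced cyclic order of the triple $\{v_{k-1},v_0,v_1\}$ is unchanged, while the two reversed edges merely swap their forward and backward roles, leaving $\wind(S)$ intact; as $C''$ does not meet $v_0$, its winding is unchanged too. Hence $\wind(C,\sigma)=\wind(C,\mu_{v_0}\sigma)=\wind(C,\mu_{v_0}\sigma')=\wind(C,\sigma')$, completing the induction.

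The main obstacle is organizing the winding-number bookkeeping so that every ordering-dependent quantity is confined to a triple whose cyclic order is pinned by properness: the triangle $T$ in the generic step and the corner triangle $S$ in the alternating step. The two elementary facts that make this work are that a directed two-arrow path through a proper vertex forces its endpoints and midpoint into a fixed clockwise order, and that displacing a single vertex past \emph{both} of the other two vertices of a triple preserves their cyclic orientation. One must also check at each stage that the auxiliary quivers produced by mutation are genuinely totally proper, so that the inductive hypothesis and the propagation of properness remain available.
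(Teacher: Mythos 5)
Your proof is correct and takes essentially the same route as the paper: reduce to chordless cycles (which span the first homology), pin their winding numbers by induction on length using a proper mutation at the midpoint of a directed two-arrow path (whose properness forces $\theta(v_0,v_1)+\theta(v_1,v_2)=\theta(v_0,v_2)$, so the winding number survives the length reduction), handle alternating cycles by a source/sink mutation that preserves winding numbers, and conclude via Theorem~\ref{th:wiggle/winding}. The paper organizes the identical induction as Lemma~\ref{lem:new cycle winding number} and Proposition~\ref{prop:cycle proper}, computing the canonical values $0,\pm 1$ on the cycle subCOQs (via heredity of total properness) rather than comparing two orderings with formal chords as you do, but the underlying mechanism is the same.
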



\begin{remark}
Corollary~\ref{cor:proper preserves} and Theorem~\ref{th:totally-proper-unique} 
highlight the usefulness of the concept of a totally proper COQ:
within the class of quivers that allow a totally proper cyclic ordering, 
the partial invariants discussed in the previous sections
become true mutation invariants.
(Since a totally proper cyclic ordering is unique, there is no ambiguity involved in defining these invariants.) 
\end{remark}

\begin{remark}
If a COQ $Q$ is totally proper, then any subCOQ of~$Q$ 
is also totally proper.
Thus being totally proper is a hereditary property of COQs, cf.\ Remark~\ref{rem:subcoq}.

In practice, the contrapositive statement is more useful: 
if a COQ has a subCOQ that is not totally proper, then the whole COQ is not totally proper.
\end{remark}

\begin{remark}
As observed in Remark~\ref{rem:proper-doesn't-propagate}, 
properness of COQs does not propagate under (proper) mutations.
On the other hand, the existence of a totally proper cyclic ordering is a mutation-invariant 
and hereditary property of quivers.
\end{remark}

\begin{remark}
\label{rem:testing-totally-proper}
In general, it is hard to determine whether a given quiver has~a totally proper cyclic ordering,
or whether a given COQ is totally proper.
Corollary~\ref{cor:vortex improper} provides a~necessary condition: 
a totally proper quiver, as well as all quivers in its mutation class, must be vortex-free.
This condition is not sufficient: while the quiver
\begin{equation*}
\begin{tikzcd}[arrows={-stealth}, sep=2em]
  c  \arrow[r,"3"] \arrow[rd,swap,"3"]  & a \arrow[r, "2"] & b \arrow[ld, "2"] \\
   & d   \arrow[u, "2"] 
\end{tikzcd}
\end{equation*}
is not mutation-equivalent to a vortex, 
it has no totally proper cyclic ordering. 
\end{remark}

\hide{
By Proposition~\ref{prop:vf complete proper }, 
a complete quiver has at most one proper cyclic ordering, 
hence at most one totally proper cyclic ordering.
%
On the other hand, an incomplete quiver may conceivably have multiple wiggle inequivalent 
totally proper orderings. 
We are however unaware of any examples of this kind. 
If they exist, they would have to be of finite mutation type: 

\begin{proposition}
\label{prop:mu infinite unique totally proper}
Suppose that a quiver $Q$ is mutation infinite (i.e., the mutation class of~$Q$ contains infinitely many distinct quivers).
Then $Q$ has at most one totally proper cyclic ordering, up to wiggle equivalence.
\end{proposition}


\begin{proof}[Proof of Proposition~\ref{prop:mu infinite unique totally proper}]
By \cite[Theorem~5.1]{Warkentin}, any mutation infinite quiver $Q$ is mutation-equivalent to a fork~$F$.
By Definition~\ref{def:fork}, a fork quiver is complete, so by Proposition~\ref{prop:vf complete proper }, 
it has at most one proper cyclic ordering.
If $F$ has no proper cyclic ordering then $Q$ is not totally proper.
Otherwise, $F$ has a unique proper cyclic ordering, which implies the same for~$Q$. 
Indeed, proper mutation is invertible (Remark~\ref{rem:proper mutation involution}) 
and well defined on wiggle equivalence classes (see Proposition~\ref{prop:proper mutation for wiggle equivalent}
and Definition~\ref{def:wiggle class mutation}), 
so there is only one totally proper cyclic ordering of~$Q$ up to wiggle equivalence, 
obtained by mutating~$F$ into~$Q$.
\end{proof}

Essentially the same argument establishes the following claim. 

\begin{proposition}
\label{pr:trees too}
Any quiver that is mutation-equivalent to a tree quiver (see Definition~\ref{def:tree quiv}) has at most one totally proper cyclic ordering. 
The statement remains true if we allow multiple edges in a tree quiver.
\end{proposition}
}

The proof of Theorem~\ref{th:totally-proper-unique} will require some preparations.

\begin{lemma}
\label{lem:new cycle winding number}
Let $(Q,\sigma)$ be a COQ such that the undirected simple graph~$\GQ$ 
is a chordless $n$-cycle \eqref{eq:n-cycle-C}. 
Suppose that $Q$ contains the arrows $v_0 \rightarrow v_1 \rightarrow v_2$;
the directions of the arrows connecting $v_i$ and $v_{i+1}$ for $i\ge2$ can be arbitrary. 
Assume that the vertex $v_1$ is proper in~$Q$.
The COQ $(Q',\sigma')=\mu_{v_1}(Q,\sigma)$ contains an arrow $v_0 \rightarrow v_2$,  
so its underlying undirected graph contains the chordless $(n-1)$-cycle
\begin{equation*}
C'=(v_0 \rightarrow v_2 - \cdots -v_{n-1} - v_n = v_0). 
\end{equation*}
Then $\wind(C',\sigma') = \wind(C',\sigma) = \wind(Q,\sigma)$.
\end{lemma}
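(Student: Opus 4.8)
The plan is to verify the two claimed equalities separately, as they are of somewhat different character. The second equality, $\wind(C',\sigma)=\wind(Q,\sigma)$, is purely combinatorial and does not involve the mutation at all: it compares the winding number of the original chordless $n$-cycle $Q$ with that of the $(n-1)$-cycle $C'$ obtained by replacing the two consecutive arrows $v_0\rightarrow v_1\rightarrow v_2$ with a single arrow $v_0\rightarrow v_2$, using the \emph{same} cyclic ordering $\sigma$. I would compute directly from the defining formula~\eqref{eq:wind-n-cycle}. Both cycles share all the arrows $v_2-v_3-\cdots-v_n=v_0$, so their contributions to the sum $\sum\theta(\sigma,u_i,u_{i+1})$ and to $\ell$ agree on that common part. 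The difference is that $C'$ contributes $\theta(\sigma,v_0,v_2)$ (with $\ell$-count $0$, since $v_0\rightarrow v_2$) whereas $Q$ contributes $\theta(\sigma,v_0,v_1)+\theta(\sigma,v_1,v_2)$ (again both forward arrows, so $\ell$-count $0$). Since $v_1$ is proper in $(Q,\sigma)$, the path $v_0\rightarrow v_1\rightarrow v_2$ makes a right turn at $v_1$, which means $\sigma$ places these as $(\dots,v_0,\dots,v_1,\dots,v_2,\dots)$; by the additivity of clockwise distance along a right turn (i.e. $\theta(\sigma,v_0,v_1)+\theta(\sigma,v_1,v_2)=\theta(\sigma,v_0,v_2)+n$ when passing $v_1$ in between, versus $=\theta(\sigma,v_0,v_2)$ otherwise — the right-turn condition pins down which), the contributions differ by exactly $n$. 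After dividing by $n$ in~\eqref{eq:wind-n-cycle}, this accounts for the integer shift and gives $\wind(C',\sigma)=\wind(Q,\sigma)$.

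The first equality, $\wind(C',\sigma')=\wind(C',\sigma)$, asserts that the winding number of the cycle $C'$ is unchanged when we pass from $\sigma$ to the new ordering $\sigma'$ produced by the proper mutation $\mu_{v_1}$. The key observation is that $\sigma'$ differs from $\sigma$ only in the placement of the single vertex $v_1$ (by Definition~\ref{def:proper mutation}, the proper mutation fixes all vertices except $j=v_1$, which is moved clockwise past $\Out(v_1)$). Since $v_1$ does not appear on the cycle $C'=(v_0\rightarrow v_2-\cdots-v_n=v_0)$, none of the distances $\theta(\sigma,u_i,u_{i+1})$ entering the winding-number sum for $C'$ involve $v_1$. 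Therefore I would argue that for any pair of vertices $u,v$ both distinct from $v_1$, the relocation of $v_1$ can change $\theta(\cdot,u,v)$ by at most the question of whether $v_1$ lies on the clockwise arc from $u$ to $v$; but since $v_1$ is moved as a single unit and the total count of vertices is preserved, the net effect on the full cyclic sum around a closed cycle cancels. More precisely, I expect the cleanest route is to note that moving one vertex is a composition of adjacent transpositions, each of which is a wiggle when it swaps $v_1$ with a vertex not adjacent to it; along $C'$, the winding number is a wiggle invariant by Lemma~\ref{lem:winding number invariant}, and the transpositions of $v_1$ with its non-neighbors leave $\wind(C')$ invariant since $v_1\notin C'$.

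The main obstacle, and the step requiring the most care, is making the second-paragraph argument rigorous when the transpositions moving $v_1$ are \emph{not} all wiggles — that is, when $v_1$ must pass vertices to which it is adjacent in $Q'$. In that situation I cannot directly invoke Lemma~\ref{lem:winding number invariant}. The resolution I would pursue is to observe that the vertices $v_1$ passes are precisely those in $\Out(v_1)$ (after mutation, reversed), and the only cycle-edges of $C'$ incident to these are irrelevant because $v_1$ itself is absent from $C'$; hence the combinatorial change in each $\theta(\sigma,u_i,u_{i+1})$ for edges $u_i-u_{i+1}$ of $C'$ depends only on whether $v_1$ crosses the clockwise arc between $u_i$ and $u_{i+1}$. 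Summing the indicator of "$v_1$ crossed this arc'' over all edges of the closed cycle $C'$ yields a net count that is a multiple of $n$ (each full revolution of the arc-crossing indicator around the cycle contributes $n$), and dividing by $n$ shows the winding number changes by an integer; combined with the first equality and the fact that all three quantities are genuine integers lying in the forced range, I would conclude the shift is zero. I would double-check this cancellation on the worked instance of Example~\ref{eg:4cycle wiggle equiv-1} to confirm the bookkeeping before committing to the general argument.
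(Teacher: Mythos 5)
Your overall route is the same as the paper's---split the claim into the two equalities, prove $\wind(C',\sigma)=\wind(Q,\sigma)$ by comparing the sums in \eqref{eq:wind-n-cycle} term by term and using the right-turn condition at $v_1$, and prove $\wind(C',\sigma')=\wind(C',\sigma)$ from the fact that $\sigma$ and $\sigma'$ agree off $v_1$---but both halves contain a step that is wrong as written.

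First, your right-turn bookkeeping is backwards, and your conclusion does not follow from your own claim. A right turn at $v_1$ means $\sigma=(\dots,v_0,\dots,v_1,\dots,v_2,\dots)$, i.e.\ $v_1$ lies \emph{on} the clockwise arc from $v_0$ to $v_2$; in that case the distances add exactly, $\theta(\sigma,v_0,v_1)+\theta(\sigma,v_1,v_2)=\theta(\sigma,v_0,v_2)$, and the ``$+n$'' case is the \emph{left} turn, not the right turn. Moreover, since the backward-arrow counts $\ell$ of $Q$ and of $C'$ agree (the three arrows involved are all forward, as you correctly note), a difference of exactly $n$ between the two $\theta$-sums would force $\wind(Q,\sigma)-\wind(C',\sigma)=1$, contradicting the lemma; so ``the contributions differ by exactly $n$ \dots\ gives $\wind(C',\sigma)=\wind(Q,\sigma)$'' is a non sequitur. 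Equality holds precisely because the right turn makes the difference $0$, which is the paper's argument.

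Second, the concluding deduction in your argument for $\wind(C',\sigma')=\wind(C',\sigma)$ is empty as stated. That the change in $\sum_i\theta(\cdot,u_i,u_{i+1})$ is a multiple of $n$ is automatic---each sum equals $n$ times an integer winding number---so it yields no information, and the bounds \eqref{eq:bounds-on-wind} cannot force two integers in the allowed range to coincide. The missing step is a \emph{bound} on the change: writing it as the number of edges of $C'$ whose clockwise arc contains the new position of $v_1$ minus the number whose arc contains the old position, each count lies between $0$ and $n-1$ (there are only $n-1$ edges), so the change lies strictly between $-n$ and $n$ and, being a multiple of $n$, vanishes. (Alternatively, each count equals the number of revolutions of $C'$ measured in the $(n-1)$-element circle on the vertices other than $v_1$, so the two counts are equal outright; this is the rigorous content of the paper's one-line remark that the restrictions of $\sigma$ and $\sigma'$ to $C'$ coincide.) Your first, wiggle-based attempt indeed fails for the reason you identify: $v_1$ must pass the vertices of $\Out_Q(v_1)$, to which it is adjacent in both $Q$ and $Q'$, so Lemma~\ref{lem:winding number invariant} cannot be invoked for those transpositions.
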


\begin{proof}
The restrictions of $\sigma$ and~$\sigma'$ onto~$C'$ coincide, so $\wind(C',\sigma') = \wind(C',\sigma)$.

The summations \eqref{eq:wind-n-cycle}, for $\wind(Q)$ and $\wind(C')$ respectively, are very similar. 
Since proper mutation does not change the cyclic ordering of $\{v_0, v_2, v_3, \ldots, v_n\}$, 
the total number of revolutions remains the same:
\begin{equation*}
\tfrac{1}{n} (\theta(\sigma', v_0, v_2) + \sum_{2 \leq i \leq n-1} \theta(\sigma', v_i, v_{i+1}))
=\tfrac{1}{n} (\theta(\sigma, v_0, v_2) + \sum_{2 \leq i \leq n-1} \theta(\sigma, v_i, v_{i+1})) .
\vspace{-3pt}
\end{equation*}
Since $Q$ is chordless, $Q$ and $C'$ contain the same number of indices~$i$ 
with backward-oriented arrows $v_i \leftarrow v_{i+1}$.
So to establish $\wind(C')=\wind(Q)$, it suffices to verify~that
$\theta(\sigma, v_0, v_2)  = \theta(\sigma, v_0, v_1) + \theta(\sigma, v_1, v_2)$. 
\end{proof}

\begin{proposition}
\label{prop:cycle proper}
Let $Q$ be a COQ such that the undirected simple graph~$\GQ$ 
is a chordless $n$-cycle, cf.\ Proposition~\ref{prop:n-cycle with no wiggles}.
Suppose that $Q$ is totally proper.
Then one of the following situations must occur:
\begin{itemize}[leftmargin=.2in]
\item 
$Q$ is an oriented cycle (with multiplicities) and $\wind(Q) =\pm 1$; 
\item
$Q$ is acyclic (i.e., is not an oriented cycle with multiplicities) and $\wind(Q)=0$. 
\end{itemize}
\end{proposition}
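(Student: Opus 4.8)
The plan is to argue by induction on the number of vertices $n$, using Lemma~\ref{lem:new cycle winding number} as the engine that shortens the cycle while preserving the winding number. The base case is $n=3$. A chordless $3$-cycle is complete, so it admits no wiggles and has exactly its two cyclic orderings $\sigma_\rightarrow$ and $\sigma_\leftarrow$, realizing the two extreme values $1-\ell$ and $r-1$ of~\eqref{eq:bounds-on-wind}; for an oriented $3$-cycle ($\ell=0$) these are $\wind\in\{1,2\}$, and for an acyclic one ($r=2,\ell=1$ after choosing the traversal) they are $\wind\in\{0,1\}$. A direct inspection of the no-left-turn rule shows that exactly one of these orderings, namely $\sigma_\rightarrow$ with $\wind=1-\ell$, is proper, while $\sigma_\leftarrow$ is not. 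Since by Lemma~\ref{lem:n=3-tot-proper} every proper COQ on $\le 3$ vertices is totally proper, the totally proper $3$-cycles are precisely those with $\wind=1-\ell$, i.e.\ $\wind=1$ in the oriented case and $\wind=0$ in the acyclic case. This settles $n=3$ and also explains the sign in the statement: reversing the (arbitrary) direction of traversal negates $\wind$, turning $\wind=1$ into $\wind=-1$, whence the oriented case is recorded as $\wind=\pm1$.

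For the inductive step, suppose $Q$ is a totally proper chordless $n$-cycle with $n\ge 4$ that contains a directed two-arrow path; choosing the traversal so that this path reads $v_0\rightarrow v_1\rightarrow v_2$, the vertex $v_1$ is the middle of a directed path. As $Q$ is proper, $v_1$ is proper in $\wiggleQ$, so I would pass to a wiggle-equivalent representative making $v_1$ proper and apply Lemma~\ref{lem:new cycle winding number}: the mutation $\mu_{v_1}$ creates the arrow $v_0\rightarrow v_2$, and the full subquiver of $\mu_{v_1}(Q)$ on $V\setminus\{v_1\}$ is a chordless $(n-1)$-cycle $C'$ with $\wind(C')=\wind(Q)$. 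Because proper mutation is invertible, $\mu_{v_1}(Q)$ is again totally proper, and since being totally proper is hereditary, the subCOQ $C'$ is totally proper as well. Moreover the reduction replaces the two forward arrows $v_0\rightarrow v_1,\ v_1\rightarrow v_2$ by the single forward arrow $v_0\rightarrow v_2$, so that $r(C')=r(Q)-1$ while $\ell(C')=\ell(Q)$; hence $C'$ is an oriented cycle exactly when $Q$ is. The inductive hypothesis applied to $C'$ then gives $\wind(Q)=\wind(C')=0$ in the acyclic case and $\wind(Q)=\pm1$ in the oriented case, as desired.

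The remaining difficulty is that this reduction requires a directed two-arrow path, which fails precisely when $Q$ is \emph{alternating}: every vertex is a source or a sink, which forces $n$ to be even and puts $Q$ in the acyclic regime. To handle this I would first establish a winding-invariance lemma, proved by a direct computation entirely analogous to (the omitted proof of) Lemma~\ref{lem:winding number invariant}: if $v$ is a source or sink of a chordless-cycle COQ $(Q,\sigma)$, then the proper mutation $\mu_v$ (which merely reverses the two arrows at $v$ and slides $v$ past $\Out(v)$, leaving the underlying cycle intact) satisfies $\wind(\mu_v(Q))=\wind(Q)$. Granting this, pick any source $v$ of the alternating $Q$. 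Then $\mu_v(Q)$ is again a chordless $n$-cycle, is totally proper, has the same winding number, and is no longer alternating, since each former neighbor of $v$ now lies in the middle of a directed two-arrow path; reversing the two arrows at $v$ also leaves $r$ and $\ell$ unchanged, so $\mu_v(Q)$ is still acyclic. Applying the previous paragraph to $\mu_v(Q)$ yields $\wind(Q)=\wind(\mu_v(Q))=0$.

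I expect this winding-invariance lemma for source/sink mutations to be the main obstacle: it is the one step where the cyclic ordering genuinely changes (the vertex $v$ slides past the vertices of $\Out(v)$) at the same time as the orientation of its two arrows flips, and one must verify that these two effects cancel in the sum~\eqref{eq:wind-n-cycle}. The verification is of the same flavor as Lemma~\ref{lem:winding number invariant} and Lemma~\ref{lem:new cycle winding number}: one compares the contributions of the two edges incident to $v$ before and after the mutation, using that $v$ crosses exactly the vertices of $\Out(v)$ and that $\theta(\sigma,a,b)$ changes by $\pm1$ for the pairs separated by the moving vertex. Everything else is bookkeeping with the bounds~\eqref{eq:bounds-on-wind} and the definitions of $r$, $\ell$, and $\wind$.
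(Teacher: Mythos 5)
Your proposal is correct and takes essentially the same route as the paper: induction on $n$ with base case $n=3$, reduction via Lemma~\ref{lem:new cycle winding number} by mutating at the middle vertex of a directed two-arrow path, heredity of total properness applied to the resulting chordless $(n-1)$-cycle, and a preliminary sink/source mutation to dispose of the alternating case. The only difference is that the paper uses a \emph{sink} mutation there, for which your ``winding-invariance lemma'' is immediate---since $\Out(v)=\emptyset$, the cyclic ordering does not move at all, and the two arrow flips (one forward, one backward relative to the traversal) leave $\ell$ unchanged---so the step you flag as the main obstacle can be sidestepped entirely (though your source-mutation computation does also go through).
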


\begin{proof}
We argue by induction on $n$.
\textbf{Base:} $n=3$. 
If $Q$ is proper and acyclic $3$-vertex quiver, then $\wind(Q)=0$.
If $Q$ is a proper oriented 3-cycle with multiplicites, 
then $\wind(Q)=1$ or $\wind(Q)=-1$ depending on the direction of traversal of the cycle. 

\textbf{Induction step.}
Suppose the claim is true for cycles of length $n-1$.
We denote $Q=(v_0 - v_1 -\cdots -v_n=v_0)$. 
Performing a sink mutation if necessary, we find a vertex $v_j$ with $v_{j-1} \rightarrow v_j \rightarrow v_{j+1}$ 
(or the same with arrows reversed).
Then the undirected graph of the quiver $\mu_{v_j}(Q)$ contains the $(n-1)$-cycle $C' = (v_{j-1} - v_{j+1} - \cdots - v_{j-2} - v_{j-1})$. 
By Lemma~\ref{lem:new cycle winding number}, $\wind(C') = \wind(Q)$.
Furthermore, $C'$ is oriented if and only if $Q$ is. 
Since $C'$ has the required winding number, so does~$Q$.  
\end{proof}

Since total properness is hereditary, we obtain the following corollary.

\begin{corollary}
\label{cor:totally-proper-wind}
In a totally proper COQ, every full subquiver $C$ whose underlying simple undirected graph 
is a chordless $n$-cycle 
has winding number $\pm 1$ (if this cycle is oriented; the sign depends on the direction of traversal) or~$0$ (otherwise).
\end{corollary}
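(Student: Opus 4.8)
The plan is to deduce Corollary~\ref{cor:totally-proper-wind} directly from Proposition~\ref{prop:cycle proper}, using the fact that total properness is a hereditary property of COQs (as recorded in the remark preceding Theorem~\ref{th:totally-proper-unique}). First I would fix a totally proper COQ~$Q$ and a full subquiver~$C$ whose underlying undirected graph is a chordless $n$-cycle, possibly with multiplicities. Equipping the vertex set of~$C$ with the cyclic ordering induced from~$Q$ yields a subCOQ, which by the hereditary property is again totally proper. Thus the subCOQ supported on~$C$ is a totally proper COQ whose underlying graph is exactly a chordless $n$-cycle.

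With this reduction in hand, Proposition~\ref{prop:cycle proper} applies verbatim to the subCOQ on~$C$, producing the claimed dichotomy: the winding number equals $\pm1$ when $C$ is an oriented cycle (the sign determined by the chosen direction of traversal), and equals $0$ when $C$ is acyclic. This is precisely the assertion of the corollary.

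The one point requiring care is that the winding number in Proposition~\ref{prop:cycle proper} is computed inside the subCOQ, whose circle carries only the $n$ vertices of~$C$, whereas the corollary speaks of $C$ as a cycle inside the ambient COQ~$Q$. I would resolve this by showing the two values coincide. By the interpretation following Definition~\ref{def:winding number}, $\wind(C)$ is the signed number of clockwise revolutions traversed along~$C$, progressing clockwise across each forward arrow and counterclockwise across each backward arrow. This net count of revolutions depends only on the induced cyclic order of the vertices of~$C$ together with the arrow orientations; deleting the vertices of~$Q$ not lying on~$C$ alters neither of these data, and hence leaves the winding number unchanged. Concretely, in formula~\eqref{eq:wind-n-cycle} the passage from the ambient denominator to the denominator $n=|V(C)|$ exactly compensates for the vertices no longer counted inside each~$\theta$, so the resulting integer is the same.

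I expect the main (and only) obstacle to be this bookkeeping step reconciling the two normalizations of the winding number; everything else is an immediate citation of Proposition~\ref{prop:cycle proper} through heredity. Since the intrinsic character of the winding number is already implicit in its ``signed number of revolutions'' description, I anticipate no genuine difficulty, and the verification can be carried out in a single line.
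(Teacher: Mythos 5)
Your proposal is correct and is essentially the paper's own argument: the paper deduces the corollary from Proposition~\ref{prop:cycle proper} in a single line by invoking the heredity of total properness, exactly as you do. The bookkeeping point you verify---that the winding number of $C$ is the same whether computed in the ambient COQ or in the induced subCOQ (each deleted vertex lowers the sum of distances by the number of revolutions and the denominator by one, leaving the ratio unchanged)---is left implicit in the paper, and your justification of it is sound.
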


\begin{proof}[Proof of Theorem~\ref{th:totally-proper-unique}]
The first homology of a simple graph is spanned by chordless cycles. 
The winding numbers of these cycles are uniquely determined by Corollary~\ref{cor:totally-proper-wind}.
The~claim follows by Theorem~\ref{th:wiggle/winding}. 
\end{proof}

\begin{example}
The COQ in Example~\ref{eg:proper from inherited} is not totally proper because 
the subCOQ supported by $C_5$ does not satisfy the condition in Corollary~\ref{cor:totally-proper-wind}.
(Indeed, vertex~$c$ is not proper in~$\mu_a(Q)$.) 
Cf.\ also Proposition~\ref{pr:punctured-annulus}.
\end{example}

Corollary~\ref{cor:totally-proper-wind} suggests the following algorithm based on Theorem~\ref{thm:construct an ordering}. 


\begin{algorithm}[Attempting to construct a candidate cyclic ordering]
\label{alg:sigma-Q}
\ \\
\emph{Input:} quiver $Q$ on a vertex set $V$. \\
\emph{Output:} either empty set~$\varnothing$, or a cyclic ordering $\sigma_Q$ on~$V$.

Choose a set of chordless cycles $C_i$ that form a spanning set of~$H_1(\GQ)$.

By Theorem~\ref{thm:construct an ordering}, 
there is an efficient algorithm to construct a cyclic ordering $\sigma_Q$ 
for which the cycles $C_i$ have the winding numbers prescribed by Corollary~\ref{cor:totally-proper-wind}, 
or else determine that no such cyclic ordering exists.

In the former case, output~$\sigma_Q$. 
%
In the latter case, output~$\varnothing$. 
\end{algorithm}

The above discussion implies the following characterization of Algorithm~\ref{alg:sigma-Q}. 

\begin{proposition}
\label{pr:sigma-Q}
If a quiver $Q$ has a totally proper cyclic ordering~$\sigma$, 
then running  Algorithm~\ref{alg:sigma-Q} with input~$Q$
will produce a cyclic ordering~$\sigma_Q$ wiggle equivalent to~$\sigma$. 
\end{proposition}

\begin{remark}
Algorithm~\ref{alg:sigma-Q} does not necessarily determine whether
$Q$ has a totally proper cyclic ordering or not.
If the algorithm outputs~$\varnothing$, then $Q$ definitely has no totally proper cyclic ordering.
But if the algorithm outputs an actual cyclic ordering~$\sigma_Q$, 
then all we can say is that $\sigma_Q$ is the sole possible candidate for being totally proper (up to wiggles). 
\end{remark}

\begin{example}
Applying Algorithm~\ref{alg:sigma-Q} to the quiver shown in Figure~\ref{fig:Q-plabic-hex}
will yield a cyclic ordering~$\sigma_Q$ wiggle equivalent to $\sigma=(1, 2, 3, 4, 5, 6)$, cf.\ Example~\ref{eg:proper from alg}. 
(All chordless cycles of $Q$ listed in Example~\ref{eg:E6-1-cycles} have correct winding numbers.)
In~this particular case, the COQ $(Q,\sigma_Q)$ turns out to be totally proper,
because it is mutation-acyclic, cf.\ Remark~\ref{rem:atp}. 
In fact, $Q$ is mutation equivalent to a tree, namely an orientation of the affine Dynkin diagram of type~$E_6^{(1)}$. 
\end{example}

\begin{remark}
\label{rem:ambiguous-sigma-Q}
For a general quiver~$Q$, the output of Algorithm~\ref{alg:sigma-Q} may turn out to depend---even working 
modulo wiggles---on the choice of a collection of spanning cycles.
That is, different choices may produce different output cyclic orderings;
also, some choices may lead to the empty output while others would not.
None of this ambiguity can occur if $Q$ possesses a totally proper cyclic ordering,
in light of Proposition~\ref{pr:sigma-Q}. 
\end{remark}

\hide{
In general there may be exponentially many chordless cycles. 
When there are not so many, they may be enumerated fairly quickly. 
}

\begin{remark}
\label{rem:construct proper}
One way to remove the ambiguity of Algorithm~\ref{alg:sigma-Q} discused in Remark~\ref{rem:ambiguous-sigma-Q} is to 
take the spanning set consisting of \emph{all} chordless cycles in (the underlying undirected graph of)
a given quiver~$Q$. 
This would output a \emph{canonical} cyclic ordering~$\sigma_Q$ that is independent of any choices, 
or else conclude that $Q$ has no totally proper ordering. 
Furthermore, increasing the size of the spanning set might help to arrive at the latter conclusion,  
provided the restrictions coming from some collection of cycles turn out to be incompatible. 

There will however be a price to pay: 
in general, there may be exponentially many chordless cycles (as a function of the number of vertices). 
Thus this version of Algorithm~\ref{alg:sigma-Q}, for all of its advantages, 
is not always computationally feasible.
\end{remark}

\begin{definition}
\label{def:TP-Q}
We say that a quiver $Q$ is \emph{totally proper} 
if there exists a cyclic ordering~$\sigma$ such that the COQ $(Q,\sigma)$ is totally proper. 
\end{definition}

In light of Proposition~\ref{pr:sigma-Q}, the existence quantifier in Definition~\ref{def:TP-Q} can be eliminated: 

\begin{corollary}
A quiver $Q$ is totally proper if and only if Algorithm~\ref{alg:sigma-Q} outputs a cyclic ordering~$\sigma_Q$
such that the COQ $(Q,\sigma)$ is totally proper. 
\end{corollary}

The following recipe proves useful in many applications.

\begin{definition}
\label{def:sigma ori}
Let $Q$ be a quiver such that $H_1(\GQ)$ is spanned by oriented chord\-less cycles.   
(A cycle in~$\GQ$ is \emph{oriented} if it lifts to an oriented cycle in~$Q$.) 
A~cyclic ordering~$\sigma_\circ$ of~$Q$ is called \emph{synchronous} if 
the winding number of \underbar{any} oriented chordless cycle~$C$ in~$\GQ$
is equal to~1: $\wind(C, \sigma_\circ)=1$. 
(Here we choose the traversal order of~$C$ that is consistent with the orientations of the corresponding arrows in~$Q$.) 
\end{definition}

\begin{corollary}
\label{cor:}
If $H_1(\GQ)$ is spanned by oriented chordless cycles, then: 
\begin{itemize}[leftmargin=.2in]
\item 
all synchronous orderings (if they exist) are wiggle equivalent to each other; 
\item
we can construct a synchronous ordering (or determine that none exists) using 
the corresponding version of Algorithm~\ref{alg:sigma-Q}; 
\item
if $(Q,\sigma)$ is totally proper for some cyclic ordering~$\sigma$, then $\sigma$~must be synchronous. 
\end{itemize}
\end{corollary}

\begin{proof}
These statements follow from Theorem~\ref{th:wiggle/winding} and Proposition~\ref{pr:sigma-Q}. 
\end{proof}

\begin{example}
\label{eg:type D totally proper}
Consider the chordless oriented cycle quiver $Q$ of type $D_n$, with vertices and edges 
$v_1 \rightarrow v_2 \rightarrow \cdots \rightarrow v_n \rightarrow v_1$.
There is only one chordless cycle in~$\GQ$, and this cycle is oriented. 
We thus obtain the synchronous cyclic ordering $\sigma_{\circ}= (v_1, \ldots, v_n)$. 
(This cyclic ordering is in fact totally proper, see Theorem~\ref{thm:finite type}.)
%
\end{example}

\begin{example}
Consider the quiver $Q$ of type $A_6$ shown below: 
\begin{equation*}
\begin{tikzcd}[arrows={-stealth}, sep=1.5em]
  v_6  \arrow[rd]   &&& v_1 \arrow[d] \\
v_5 \arrow[u] & v_4 \arrow[l] & v_3 \arrow[ru] \arrow[l] & v_2 \arrow[l] 
\end{tikzcd}
\end{equation*}
Here  $H_1(\GQ)$ is generated by oriented chordless cycles 
$C_1 = (v_1\rightarrow v_2 \rightarrow v_3 \rightarrow v_1)$ and 
$C_2 = (v_4 \rightarrow v_5 \rightarrow v_6 \rightarrow v_4)$.
Hence $\sigma_{\circ}= (v_1, \ldots, v_6)$ is the unique, up to wiggle equivalence, synchronous cyclic ordering of~$Q$.
(It is totally proper by Theorem~\ref{thm:finite type}.)
\end{example}



\hide{
\begin{figure}[ht]
\begin{center}
\vspace{-0.4cm} 
{
\newcommand{\radius}{3cm} 
\begin{tikzpicture}[scale=0.7]
\foreach \anglex in {0,...,8} {
	\filldraw[black] (0,0)++(-10-40*\anglex:\radius) node {} coordinate (b\anglex);
}	
\draw[opacity=0.0] (b0) -- (b7) node {$v_1$} coordinate[pos=0.5] (a);
\draw[opacity=0.0] (b0) -- (b2) node {$v_1$} coordinate[pos=0.5] (b);
\draw[opacity=0.0] (b7) -- (b2) node {$v_1$} coordinate[pos=0.5] (c);
\draw[opacity=0.0] (b6) -- (b2) node {$v_1$} coordinate[pos=0.5] (d);
\draw[opacity=0.0] (b6) -- (b4) node {$v_1$} coordinate[pos=0.5] (e);
\draw[opacity=0.0] (b2) -- (b4) node {$v_1$} coordinate[pos=0.5] (f);
\draw[opacity=1.0] (a) node {$v_1$};
\draw[opacity=1.0] (b) node {$v_2$};
\draw[opacity=1.0] (c) node {$v_3$};
\draw[opacity=1.0] (d) node {$v_4$};
\draw[opacity=1.0] (f) node {$v_5$};
\draw[opacity=1.0] (e) node {$v_6$};
\draw[black, -{stealth}, shorten >=5pt, shorten <= 5pt, opacity=1.0] (a) -- (b);
\draw[black, -{stealth}, shorten >=5pt, shorten <= 5pt, opacity=1.0] (b) -- (c);
\draw[black, -{stealth}, shorten >=5pt, shorten <= 5pt, opacity=1.0] (c) -- (a);

\draw[black, -{stealth}, shorten >=5pt, shorten <= 5pt, opacity=1.0] (c) -- (d);

\draw[black, -{stealth}, shorten >=5pt, shorten <= 5pt, opacity=1.0] (d) -- (f);
\draw[black, -{stealth}, shorten >=5pt, shorten <= 5pt, opacity=1.0] (f) -- (e);
\draw[black, -{stealth}, shorten >=6pt, shorten <= 6pt, opacity=1.0] (e) -- (d);
\end{tikzpicture}
}
\end{center}
\vspace{-0.4cm}
\caption{A quiver of type $A_6$.}
\label{fig:simple ori eg}
\end{figure}
} 

\begin{example}
Both quivers shown in Figure~\ref{fig:square and triangular grid quivs} 
satisfy the assumption in Definition~\ref{def:sigma ori}:
in each case, $H_1(\GQ)$ is spanned by the boundaries of the bounded faces,
and each of those boundary cycles is oriented. 
Both quivers have synchronous cyclic orderings~$\sigma_{\circ}$,
which can be obtained from coloring the vertices in 4 and 3~colors, respectively, cf.\ Example~\ref{eg:3-colors}. 

The $4\times 4$ grid quiver in Figure~\ref{fig:square and triangular grid quivs} is not totally proper, 
as the acyclic chordless 12-cycle traversing its perimeter has nonzero winding number for~$\sigma_{\circ}$. 
The 10-vertex triangular grid quiver is totally proper, which can be shown by exhaustive search.
(This quiver has a finite mutation class.) 
\end{example}

\begin{figure}[ht]
\begin{center}
\vspace{5pt}
{
\begin{tikzpicture}[scale=0.8]
\foreach \x in {0,2} {
	\foreach \y in {0,2} {
		\filldraw (\x, \y) circle (2pt); 
		\def\test{3}
		\ifx\y\test
		\else\filldraw[black,-{stealth}, shorten >=5pt, shorten <= 5pt] (\x, \y+1) -- (\x, \y)
		\fi;
		\ifx\x\test
		\else\filldraw[black,-{stealth}, shorten >=5pt, shorten <= 5pt] (\x, \y) -- (\x+1, \y)
		\fi;
	}
}
\foreach \x in {1,3} {
	\foreach \y in {0,2} {
		\filldraw (\x, \y) circle (2pt); 
		\def\test{3}
		\ifx\y\test
		\else\filldraw[black,-{stealth}, shorten >=5pt, shorten <= 5pt] (\x, \y) -- (\x, \y+1)
		\fi;
		\ifx\x\test
		\else\filldraw[black,-{stealth}, shorten >=5pt, shorten <= 5pt] (\x+1, \y) -- (\x, \y)
		\fi;
	}
}
\foreach \x in {0,2} {
	\foreach \y in {1,3} {
		\filldraw (\x, \y) circle (2pt); 
		\def\test{3}
		\ifx\y\test
		\else\filldraw[black,-{stealth}, shorten >=5pt, shorten <= 5pt] (\x, \y) -- (\x, \y+1)
		\fi;
		\ifx\x\test
		\else\filldraw[black,-{stealth}, shorten >=5pt, shorten <= 5pt] (\x+1, \y) -- (\x, \y)
		\fi;
	}
}
\foreach \x in {1,3} {
	\foreach \y in {1,3} {
		\filldraw (\x, \y) circle (2pt); 
		\def\test{3}
		\ifx\y\test
		\else\filldraw[black,-{stealth}, shorten >=5pt, shorten <= 5pt] (\x, \y+1) -- (\x, \y)
		\fi;
		\ifx\x\test
		\else\filldraw[black,-{stealth}, shorten >=5pt, shorten <= 5pt] (\x, \y) -- (\x+1, \y)
		\fi;
	}
}


\end{tikzpicture}
}
\qquad
\qquad
\quad
{
\newcommand\height{0.36cm}
\begin{tikzpicture}
\def\x{0}
\foreach \y in {0,...,3} {
	\filldraw ({(2*\x+\y)*\height}, 2*\y*\height) circle (1.6pt); 
	\def\test{3}
	\ifx\y\test
	\else\filldraw[black,-{stealth}, shorten >=5pt, shorten <= 5pt] ({(2*\x+\y)*\height}, 2*\y*\height) -- ({(2*\x+\y+1)*\height}, {(2*\y+2)*\height})
	\fi;
	\ifx\y\test
	\else\filldraw[black,-{stealth}, shorten >=5pt, shorten <= 5pt]  ({(2*\x+\y+2)*\height}, {(2*\y)*\height}) -- ({(2*\x+\y)*\height}, 2*\y*\height)
	\fi;
	\ifx\y\test
	\else\filldraw[black,-{stealth}, shorten >=5pt, shorten <= 5pt]  ({(2*\x+\y+1)*\height}, {(2*\y+2)*\height}) -- ({(2*\x+\y+2)*\height}, {(2*\y)*\height})
	\fi;
}
\def\x{1}
\foreach \y in {0,...,2} {
	\filldraw ({(2*\x+\y)*\height}, 2*\y*\height) circle (1.6pt); 
	\def\test{2}
	\ifx\y\test
	\else\filldraw[black,-{stealth}, shorten >=5pt, shorten <= 5pt] ({(2*\x+\y)*\height}, 2*\y*\height) -- ({(2*\x+\y+1)*\height}, {(2*\y+2)*\height})
	\fi;
	\ifx\y\test
	\else\filldraw[black,-{stealth}, shorten >=5pt, shorten <= 5pt]  ({(2*\x+\y+2)*\height}, {(2*\y)*\height}) -- ({(2*\x+\y)*\height}, 2*\y*\height)
	\fi;
	\ifx\y\test
	\else\filldraw[black,-{stealth}, shorten >=5pt, shorten <= 5pt]  ({(2*\x+\y+1)*\height}, {(2*\y+2)*\height}) -- ({(2*\x+\y+2)*\height}, {(2*\y)*\height})
	\fi;
}
\def\x{2}
\foreach \y in {0,1} {
	\filldraw ({(2*\x+\y)*\height}, 2*\y*\height) circle (1.6pt); 
	\def\test{1}
	\ifx\y\test
	\else\filldraw[black,-{stealth}, shorten >=5pt, shorten <= 5pt] ({(2*\x+\y)*\height}, 2*\y*\height) -- ({(2*\x+\y+1)*\height}, {(2*\y+2)*\height})
	\fi;
	\ifx\y\test
	\else\filldraw[black,-{stealth}, shorten >=5pt, shorten <= 5pt]  ({(2*\x+\y+2)*\height}, {(2*\y)*\height}) -- ({(2*\x+\y)*\height}, 2*\y*\height)
	\fi;
	\ifx\y\test
	\else\filldraw[black,-{stealth}, shorten >=5pt, shorten <= 5pt]  ({(2*\x+\y+1)*\height}, {(2*\y+2)*\height}) -- ({(2*\x+\y+2)*\height}, {(2*\y)*\height})
	\fi;
}

\filldraw ({(2*3)*\height}, 0) circle (1.6pt);


\end{tikzpicture}
}
\vspace{2pt}
\end{center}
\caption{A square grid quiver and a triangular grid quiver.}
\vspace{-10pt}
\label{fig:square and triangular grid quivs}
\end{figure}

\hide{
\begin{remark}
It is possible that $\sigma_{\circ}$ exists, yet $Q$ is not proper.
\end{remark}

\begin{example}
\label{eg:affine A}
Fix positive integers $r, \ell$ such that $r + \ell = n$.
Consider the quiver $C$ whose undirected simple graph $\GQ$ is a chordless cycle $(u_0 - u_1 - \cdots - u_n = u_0)$, with $u_0 \rightarrow u_1 \rightarrow \cdots \rightarrow u_r$ and $u_0 \rightarrow u_{n-1} \rightarrow \cdots \rightarrow u_{n-\ell}$ (note that $n-\ell = r$).
Then $C$ is of type $\tilde A(r, \ell)$ (cf.\ Examples~\ref{eg: proper mu classes of A(2,1)}~\ref{eg:acyclic 4 cycle}).
Clearly $C$ does not have any oriented cycles, yet is not a tree.
So we cannot apply the construction in Definition~\ref{def:sigma ori}. 
However, $C$ is mutation equivalent to the quiver shown in Figure~\ref{fig:affine A blowup}.
This quiver does satisfy Definition~\ref{def:sigma ori}, and $\sigma_{\circ} = \sigma_Q$.
The second author shows that these cyclic orderings are totally proper in \cite{SN-acyclic-TP}. 
\end{example}

\begin{figure}[ht]
\begin{center}
{
\setlength{\unitlength}{.8pt}
\newcommand{\radius}{2.2cm} 
\begin{tikzpicture}

\filldraw (3,0) circle (2pt) node[above] {$v_{n-2}$} coordinate (a);
\filldraw (2,0) node {$\cdots$} coordinate (b);
\filldraw (1,0) circle (2pt) node[above right=-2pt] {$v_{r+1}$} coordinate (f);

\filldraw (-3,0) circle (2pt) node[above] {$v_{1}$} coordinate (c);
\filldraw (-2,0) node {$\cdots$} coordinate (d);
\filldraw (-1,0) circle (2pt) node [above left=-2pt] {$v_{r-1}$} coordinate (e);

\filldraw (0,1) circle (2pt) node[above] {$v_0$} coordinate (up);
\filldraw (0,-1) circle (2pt) node[below] {$v_r$} coordinate (down);

\filldraw[black,-{stealth}, shorten >=10pt, shorten <= 5pt] (a) -- (b);
\filldraw[black,-{stealth}, shorten >=5pt, shorten <= 10pt] (b) -- (f);
\filldraw[black,-{stealth}, shorten >=5pt, shorten <= 5pt] (f) -- (up);
\filldraw[black,-{stealth}, shorten >=5pt, shorten <= 5pt] (down) -- (f);

\filldraw[black,-{stealth}, shorten >=10pt, shorten <= 5pt] (c) -- (d);
\filldraw[black,-{stealth}, shorten >=5pt, shorten <= 10pt] (d) -- (e);
\filldraw[black,-{stealth}, shorten >=5pt, shorten <= 5pt] (e) -- (up);
\filldraw[black,-{stealth}, shorten >=5pt, shorten <= 5pt] (up) -- (down) node[pos=0.5, left] {$2$};
\filldraw[black,-{stealth}, shorten >=5pt, shorten <= 5pt] (down) -- (e);

\end{tikzpicture}
}
\end{center}
\caption{A quiver of type $\tilde A(r, \ell)$.}
\label{fig:affine A blowup}
\end{figure}
} 


\begin{remark}
\label{rem:plabics}
Many---though not all---quivers associated with plabic graphs satisfy the assumptions of Definition~\ref{def:sigma ori}.
\end{remark}

\enlargethispage{5pt}

\begin{remark}
The reader may wonder whether it is possible to introduce powerful mutation invariants defined for \emph{all} quivers,
rather than just the totally proper ones. 
As~shown by G.~Chelnokov~\cite{ChelnokovNoInvs}, 
any mutation-invariant
piecewise-polynomial function in the entries of a $4\times 4$ exchange matrix~$B$ 
must be a function of~$\det(B)$. 
Thus any new piecewise-polynomial mutation invariants, such as the coefficients of the Alexander polynomial, 
must involve some additional input beyond~$B$. 
\end{remark}

\newpage

\section{Totally proper COQs: examples}
\label{sec:totally proper examples}

The following result is immediate from Lemma~\ref{lem:n=3-tot-proper}.

\begin{proposition}
\label{prop:3-vert proper}
Any proper $3$-vertex COQ is totally proper.
\end{proposition}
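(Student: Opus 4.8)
The plan is to induct on the length of the sequence of proper mutations and wiggles used to produce a given member of the proper mutation class of a proper $3$-vertex COQ $(Q,\sigma)$. Recall that the proper mutation class consists of all COQs obtainable from $(Q,\sigma)$ by repeated proper mutations and wiggles, and that a COQ is \emph{proper} exactly when each of its vertices is proper in its wiggle equivalence class (Definition~\ref{def:wiggle class proper}). The base case is simply the hypothesis that $(Q,\sigma)$ is proper.

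For the inductive step I would handle the two kinds of moves separately. A wiggle does not leave the wiggle equivalence class, and properness of a COQ is by definition a property of that class; hence a wiggle applied to a proper COQ yields a proper COQ. For a proper mutation, the key observation is that in a proper COQ \emph{every} vertex is proper, so every mutation is automatically a proper mutation. Consequently any COQ reached by one proper mutation from a proper $3$-vertex COQ $Q'$ is just $\mu_j(Q')$ for some vertex $j$, and Lemma~\ref{lem:n=3-tot-proper} guarantees that $\mu_j(Q')$ is again proper. Since each individual move therefore sends a proper COQ to a proper COQ, the induction closes and every COQ in the proper mutation class of $(Q,\sigma)$ is proper, i.e.\ $(Q,\sigma)$ is totally proper.

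There is essentially no obstacle beyond this bookkeeping: the whole content is carried by Lemma~\ref{lem:n=3-tot-proper}, which supplies the propagation of properness under a single mutation on $\le 3$ vertices. The only point worth an explicit remark is the preservation of properness under wiggles, but this is immediate, since a wiggle stays within one wiggle equivalence class and ``proper'' was defined at the level of such classes. Thus the statement really is immediate from the Lemma, and the argument amounts to packaging that Lemma together with the definition of the proper mutation class into the induction above.
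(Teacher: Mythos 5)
Your proof is correct and is essentially the paper's own argument: the paper derives this proposition directly from Lemma~\ref{lem:n=3-tot-proper}, and your induction merely spells out the routine bookkeeping (wiggles preserve properness by definition; each mutation of a proper $3$-vertex COQ is proper and, by the Lemma, yields a proper COQ) that the paper leaves implicit in the word ``immediate.''
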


\begin{example}
Continuing with Example~\ref{eg:D4}, 
let $Q$ be an oriented $4$-cycle quiver 
\begin{equation*}
\begin{tikzcd}[arrows={-stealth}, sep=2em]
  a  \arrow[r] & b \arrow[d]  \\
 d\arrow[u]  & c   \arrow[l] 
\end{tikzcd}
\end{equation*}
of type~$D_4$. 
Up to wiggle equivalence, $Q$ possesses three cyclic orderings 
$\sigma_1=(a, b, c, d)$, $\sigma_2=(a,b,d,c)$, and $\sigma_3=(a, d, c, b)$,
with respective winding numbers $1$, $2$, and~$3$. 
As explained in Example~\ref{eg:D4}, 
the COQ $(Q,\sigma_1)$ is totally~proper. 
The COQs $(Q,\sigma_2)$ and $(Q,\sigma_3)$ are not.
Furthermore, some quivers mutation-equivalent to~$Q$ do not appear in the corresponding proper mutation classes,
see Figure~\ref{fig:mu class D4}. 
\end{example}

\begin{theorem}
\label{thm:finite type}
Any quiver of finite type is totally proper, cf.\ Definition~\ref{def:TP-Q}. 
\end{theorem}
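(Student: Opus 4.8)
```latex
The plan is to prove the existence and uniqueness of a totally proper cyclic ordering for each quiver of finite type by combining the structural classification of finite-type mutation classes with the propagation machinery already developed for complete vortex-free COQs. Uniqueness is not an issue: it is an immediate consequence of Theorem~\ref{th:totally-proper-unique}, which guarantees that a totally proper cyclic ordering, if it exists, is unique up to wiggles. So the entire content of the theorem lies in \emph{existence}.

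The first step is to reduce to a convenient representative of the mutation class. Recall that every quiver of finite type is mutation-equivalent to an orientation of a simply-laced Dynkin diagram of type $A_n$, $D_n$, or $E_n$, and that such orientations are tree quivers. By Example~\ref{eg:tree-proper} (via Proposition~\ref{prop:tree wiggles}), every vertex of a tree quiver is proper in its unique wiggle equivalence class, so a Dynkin representative~$Q_0$ is itself a proper COQ; indeed, being acyclic, it admits a proper cyclic ordering by Observation~\ref{ob:acyclic-proper}, and this ordering is unique up to wiggles. Thus the candidate totally proper ordering~$\sigma$ is forced, and what remains is to show that the COQ $(Q_0,\sigma)$ is \emph{totally} proper, i.e., that properness is preserved along every sequence of proper mutations.

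The second and central step is to propagate properness through the finite-type mutation class. The key tool is Proposition~\ref{pr:propagation-under-VF}: if a complete proper COQ is mutated to a vortex-free COQ, the result is again proper. The obstacle is that finite-type quivers need not be complete, so Proposition~\ref{pr:propagation-under-VF} does not apply directly. The natural route is to exploit the hereditary nature of total properness: a COQ is proper if and only if all of its $3$-vertex sub-COQs are proper, and total properness is inherited by full subquivers. One would argue that any non-proper left turn created by a mutation~$\mu_b$ must already be visible in some $4$-vertex sub-COQ containing~$b$, and that finite-type quivers (together with all quivers in their mutation class) are vortex-free---this vortex-freeness being the crucial input, since by Corollary~\ref{cor:vortex improper} a vortex obstructs properness. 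The finite-type hypothesis ensures that every $4$-vertex subquiver encountered is itself of finite type, hence vortex-free, allowing a local application of the case analysis underlying Proposition~\ref{pr:propagation-under-VF} (Cases~1--4 of Figure~\ref{fig:proper outgoing orientations}) at the level of each relevant $4$-vertex sub-COQ.

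The hard part, and the main obstacle, will be controlling the cyclic ordering of the mutated quiver consistently across all sub-COQs simultaneously---the local propagation arguments preserve properness of each $4$-vertex window, but one must check that these locally-determined placements of the mutated vertex are globally compatible up to wiggles, so that they assemble into a single well-defined proper cyclic ordering of the full mutated quiver. Here the winding-number characterization of Corollary~\ref{cor:totally-proper-wind} should be the decisive instrument: the totally proper ordering is pinned down by requiring every chordless cycle to have winding number $0$ (acyclic) or $\pm1$ (oriented), and Lemma~\ref{lem:new cycle winding number} shows precisely that proper mutation preserves these winding numbers on the relevant cycles. I would therefore verify that the forced ordering~$\sigma$ satisfies the winding-number constraints of Corollary~\ref{cor:totally-proper-wind} on the Dynkin representative, and that these constraints are stable under proper mutation, invoking Theorem~\ref{th:wiggle/winding} to conclude that the mutated COQ remains wiggle equivalent to a proper one at each step. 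Since the finite-type mutation class is finite, a straightforward induction on the mutation distance from~$Q_0$ then closes the argument.
```
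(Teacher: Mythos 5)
Your uniqueness argument is exactly the paper's (Theorem~\ref{th:totally-proper-unique}), and your identification of the forced candidate ordering is fine. The gap is in the propagation step, which is the entire content of the theorem, and it occurs in both of the mechanisms you propose. First, Proposition~\ref{pr:propagation-under-VF} cannot be applied ``locally'' to $4$-vertex sub-COQs of a finite-type quiver: its hypothesis and its proof (the case analysis of Figure~\ref{fig:proper outgoing orientations}) require the $4$-vertex COQ to be \emph{complete}, and finite-type quivers and their $4$-vertex subquivers are generally incomplete. The implication you actually need---``proper COQ, mutated quiver vortex-free $\Rightarrow$ mutated COQ proper''---is false for incomplete quivers: the paper's own Remark~\ref{rem:proper-doesn't-propagate} and Figure~\ref{fig:proper after mutation} exhibit a vortex-free $4$-vertex COQ with a proper cyclic ordering whose proper mutation is \emph{not} proper (the mutated quiver is vortex-free and even admits a totally proper ordering, but that ordering is not wiggle equivalent to the one produced by the mutation). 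So vortex-freeness of the finite-type mutation class, while true, is not the input that makes the theorem work; the finite-type hypothesis must enter in a structurally different way.

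Second, the winding-number fallback is circular as stated. Corollary~\ref{cor:totally-proper-wind} is only a \emph{necessary} condition for total properness, so checking that the winding constraints are ``stable under proper mutation'' does not show that the mutated COQ is proper. Theorem~\ref{th:wiggle/winding} lets you conclude that the mutated ordering is wiggle equivalent to a proper one only if you have \emph{independently constructed} a proper ordering of the mutated quiver and verified that it has the same winding numbers; producing that ordering is precisely the missing existence statement. This is exactly what the paper supplies in type~$A_n$: every type-$A_n$ quiver carries a $3$-coloring (coming from the triangulated-polygon model of~\cite{ca2}) which yields an explicit proper ordering as in Example~\ref{eg:3-colors}, and the paper then compares winding numbers of the two orderings of $\mu_v(Q)$ on a homology basis of oriented triangles. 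No such uniform construction exists for all vortex-free quivers, which is why the paper handles type~$D_n$ by a separate (omitted, ``more complicated'') argument and types $E_6,E_7,E_8$ by computer check. Note also that Lemma~\ref{lem:new cycle winding number} applies only when the underlying graph of the whole quiver is a single chordless cycle; it says nothing about chordless cycles sitting inside a larger quiver, where mutation can create and destroy chords.
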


\begin{proof} 

For quivers of type~$E_6, E_7, E_8$, the claim can be verified by a computer check. 

Let $Q$ be a quiver of type $A_n$. 
Any such quiver can be colored in $3$ colors, say $1,2,3$, so that every arrow is oriented 
in one of the three ways: $1 \rightarrow 2$, $2 \rightarrow 3$, or $3 \rightarrow 1$.
Fix a particular such $3$-coloring. 
Choose a linear ordering of the vertices so that for any vertices $a,b,c$ of colors $1,2,3$ respectively, we have $a < b < c$.
All such choices of linear ordering are wiggle equivalent, as vertices of the same color are pairwise non-adjacent.
Further, any such choice of coloring is entirely determined by the color of one vertex, 
so all these choices give linear orders that are cyclic shifts of each other, modulo wiggles.
Let $\sigma$ be a cyclic ordering compatible with these linear orderings;
it is defined uniquely up to wiggles. 
The resulting COQ $(Q, \sigma)$ is proper, cf.\ Example~\ref{eg:3-colors}.

Pick a vertex $v$ to mutate~at. The above construction can be applied to the quiver $Q'=\mu_v(Q)$,
resulting in a proper COQ $(Q',\sigma')$. 
Alternatively, we can mutate the COQ $(Q, \sigma)$ at~$v$, yielding a COQ $\mu_v(Q,\sigma)=(Q',\sigma'')$. 
It remains to show that in fact, the cyclic orderings $\sigma'$ and $\sigma''$ are wiggle equivalent to each other. 
In light of Theorem~\ref{th:wiggle/winding} and Remark~\ref{rem:homotopy gen invariant}, 
this can be established by proving that $\sigma'$ and $\sigma''$ have the same winding numbers
for some basis of the first homology of the underlying undirected graph~$\GQ'$. 
The standard description of quivers of type~$A_n$ (see~\cite{ca2})
provides a construction of such basis consisting of (oriented) triangles inscribed in the triangles
of the underlying triangulation of an $(n+3)$-gon. 
Since the COQ $(Q',\sigma')$ is proper, all these triangles have winding numbers equal to~1. 
To complete the proof, we need to show that the same is true for $(Q',\sigma'')$. 

Every triangle in $(Q',\sigma'')$ that is entirely disjoint from $v$ has the same induced cyclic ordering in both $Q$ and~$Q'$.
Further, $v$ is proper inside any oriented triangle of $Q'$ involving~$v$ (viewed as a 3-vertex subCOQ). 
But if an oriented $3$-cycle quiver has one proper vertex, then all three of its vertices are proper.
Hence every oriented triangle of $Q'$ has winding number~$1$.

In type~$D_n$, the argument is similar but more complicated. We omit~it. 
\end{proof}

\begin{remark}
All chordless cycles in a quiver of finite type are oriented,
see \cite[Proposition~9.7]{ca2} or \cite[Theorem~1.2]{BGZ}. 
Moreover, they span (hence contain a basis for) their respective first homology groups.
Thus for these quivers, $\sigma_Q = \sigma_{\circ}$.
\end{remark}

\begin{definition}[{\cite[Definition~2.1]{Warkentin}, cf.\ \cite[Section~6]{LMC}}]
\label{def:fork}
A quiver $Q$ is called a~\emph{fork}~if 
\begin{itemize}[leftmargin=.2in]
\item 
$Q$ is abundant, i.e., $|b_{ij}| \geq 2$ for all $i\neq j$; 
\item
$Q$ has a distinguished vertex $r$ (the \emph{point of return}) such that whenever $b_{ir}, b_{rj} > 0$, we have 
$b_{ji} > \max(b_{ir}, b_{rj})$; 
\item
the full subquiver of $Q$ obtained by removing vertex $r$ is acyclic.
\end{itemize}
\end{definition}

\begin{proposition}
\label{prop: fork vf}
Any fork quiver has a unique proper cyclic ordering. 
\end{proposition}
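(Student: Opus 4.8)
The plan is to prove Proposition~\ref{prop: fork vf} by first establishing that a fork is a complete quiver, which immediately gives uniqueness of a proper cyclic ordering via Proposition~\ref{prop:vf complete proper }, and then establishing existence by verifying that a fork is vortex-free (again invoking Proposition~\ref{prop:vf complete proper }, this time the implication (VF)$\Rightarrow$(P)). Since a fork is abundant by Definition~\ref{def:fork} (every $|b_{ij}|\ge 2$), it is in particular complete, so the entire task reduces to showing that \emph{a fork contains no vortex}.

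First I would recall the structure of a vortex from Definition~\ref{def:vortex}: it is a complete $4$-vertex subquiver with an apex vertex that is a source or sink, while the other three vertices carry an oriented $3$-cycle. Let $r$ denote the point of return. The key observation is that removing $r$ leaves an acyclic quiver, so any oriented cycle in~$Q$ must pass through~$r$. Consequently, if some $4$-vertex induced subquiver~$W$ is a vortex, its oriented $3$-cycle must involve~$r$; in particular $r$ is one of the three cycle vertices and is therefore neither the source nor the sink of~$W$. So within~$W$, the apex is one of the three non-return vertices, and $r$ sits in the middle of the oriented $3$-cycle, meaning $r$ has both an incoming and an outgoing arrow among the cycle vertices.

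The heart of the argument is to derive a contradiction from the defining inequality of a fork. Write the oriented $3$-cycle as $i\rightarrow r\rightarrow j\rightarrow i$, so that $b_{ir}>0$ and $b_{rj}>0$. The fork condition then forces $b_{ji}>\max(b_{ir},b_{rj})\ge 2$, i.e.\ there is a strong back-arrow $i\leftarrow j$. Now I would examine the apex vertex~$a$ (the fourth vertex, a source or sink adjacent to all of $i,r,j$ since $W$ is complete) together with the acyclicity of $Q\setminus\{r\}$. The subquiver on $\{i,j,a\}$ lies in $Q\setminus\{r\}$ and is therefore acyclic; combined with the orientation $i\leftarrow j$ and the fact that $a$ is a source or sink, one checks that the arrows among $i,j,a$ are forced into a transitive (acyclic) pattern. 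I would then show that this pattern, together with the arrows incident to~$r$ dictated by the $3$-cycle, is incompatible with $W$ being a vortex in the precise sense of Definition~\ref{def:vortex} (a vortex has \emph{no} oriented $4$-cycle, yet the forced orientations produce one, or else they contradict $a$ being the unique sink/source). The main obstacle I anticipate is this final bookkeeping step: carefully enumerating the possible orientations of the three arrows joining the apex~$a$ to $\{i,r,j\}$ under the source/sink constraint and the acyclicity of $Q\setminus\{r\}$, and checking in each case that no valid vortex configuration survives. This is a finite case analysis rather than a conceptual difficulty, so I would organize it compactly, reducing to the observation that the strong arrow $j\rightarrow i$ forced by the point-of-return inequality cannot coexist with the oriented $3$-cycle $i\rightarrow r\rightarrow j\rightarrow i$ inside an acyclic $Q\setminus\{r\}$ without creating a second oriented cycle avoiding~$r$, contradicting acyclicity. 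Having ruled out every vortex, the quiver is vortex-free and complete, so Proposition~\ref{prop:vf complete proper } delivers both existence and uniqueness of the proper cyclic ordering.
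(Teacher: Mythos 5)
Your overall strategy is identical to the paper's: a fork is abundant, hence complete, so Proposition~\ref{prop:vf complete proper } gives uniqueness, and the whole problem reduces to showing that a fork is vortex-free (the paper disposes of exactly this step by citing \cite[Section~6]{LMC} or ``a straightforward case analysis,'' then invokes the same proposition). Your setup for the vortex-freeness argument is also correct: since $Q\setminus\{r\}$ is acyclic, the oriented $3$-cycle of any would-be vortex must pass through the point of return~$r$, so $r$ is a cycle vertex (not the apex), and the apex~$a$ is adjacent to~$r$ by completeness of the vortex.

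However, the contradiction you sketch does not materialize. Write the $3$-cycle as $i\rightarrow r\rightarrow j\rightarrow i$ and suppose $a$ is a sink, so $i\rightarrow a$, $j\rightarrow a$, $r\rightarrow a$ (a sink apex leaves nothing to enumerate). This configuration has no oriented $4$-cycle, the subquiver on $\{i,j,a\}$ is transitive, and its only oriented cycle passes through~$r$; it is therefore fully consistent with the acyclicity of $Q\setminus\{r\}$ and is a bona fide vortex in the sense of Definition~\ref{def:vortex}. In particular, your closing claim that the arrow $j\rightarrow i$ would create ``a second oriented cycle avoiding~$r$'' is false: that arrow is one of the three arrows of the cycle through~$r$, and a single arrow in $Q\setminus\{r\}$ is not a cycle. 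Applying the fork inequality only to the path $i\rightarrow r\rightarrow j$, as you do, merely reproduces the cycle arrow $j\rightarrow i$ and yields no contradiction. What actually kills the configuration is the point-of-return inequality applied to a $2$-path through~$r$ \emph{involving the apex}: with $a$ a sink we have $b_{ir}>0$ and $b_{ra}>0$, so the fork condition forces $b_{ai}>\max(b_{ir},b_{ra})>0$, i.e.\ an arrow $a\rightarrow i$, contradicting $i\rightarrow a$; symmetrically, if $a$ is a source, the path $a\rightarrow r\rightarrow j$ forces $b_{ja}>0$, contradicting $a\rightarrow j$. With that substitution your argument closes; without it, the case analysis you describe cannot terminate, because the two apex configurations survive every constraint you actually impose.
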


\begin{proof}
It follows from \cite[Section~6]{LMC}, or by a straighforward case analysis, 
that any fork quiver is vortex-free (and complete).
The claim follows by Proposition~\ref{prop:vf complete proper }. 
\end{proof}

The following key result is immediate from Proposition~\ref{pr:propagation-under-VF}. 

\begin{theorem}
\label{th:completeVF}
Suppose that every quiver mutation-equivalent to a quiver $Q$ is complete and vortex-free. 
If $Q$ has a proper cyclic orientation, then it is totally~proper.
\end{theorem}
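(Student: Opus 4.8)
The plan is to obtain Theorem~\ref{th:completeVF} as a straightforward induction whose single engine is Proposition~\ref{pr:propagation-under-VF}. Let $\sigma$ be the proper cyclic ordering of~$Q$ guaranteed by the hypothesis, so that $(Q,\sigma)$ is a proper COQ. The goal is to show that \emph{every} COQ in the proper mutation class of $(Q,\sigma)$ is proper, which is exactly the definition of $(Q,\sigma)$ being totally proper.

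First I would record the simplifying consequences of the completeness assumption. Since every quiver mutation-equivalent to~$Q$ is complete, none of these quivers admits a wiggle (a complete quiver has each pair of cyclically consecutive vertices joined by an arrow, so no transposition of non-adjacent consecutive vertices exists). Hence each wiggle equivalence class occurring in the proper mutation class of $(Q,\sigma)$ is a single COQ, and the proper mutation class is generated by proper mutations alone. In particular, the cyclic ordering produced by each successive proper mutation is uniquely determined (there is no ambiguity up to wiggles), consistent with Proposition~\ref{prop:vf complete proper }, which asserts that a complete quiver has at most one proper cyclic ordering.

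For the induction itself, the base case is the hypothesis that $(Q,\sigma)$ is proper. For the inductive step, suppose $(Q_k,\sigma_k)$ is a proper COQ reached from $(Q,\sigma)$ by $k$ proper mutations. By hypothesis $Q_k$ is complete, so $(Q_k,\sigma_k)$ is a complete proper COQ. Applying any proper mutation $\mu_b$ produces $Q_{k+1}=\mu_b(Q_k)$, which is again mutation-equivalent to~$Q$ and is therefore vortex-free by hypothesis. Proposition~\ref{pr:propagation-under-VF} then yields that $(Q_{k+1},\sigma_{k+1})$ is proper, completing the step. Since every COQ in the proper mutation class of $(Q,\sigma)$ is reached by some finite sequence of proper mutations, every such COQ is proper, i.e.\ $(Q,\sigma)$ is totally proper.

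I do not expect a genuine obstacle: the theorem is essentially a repackaging of Proposition~\ref{pr:propagation-under-VF} into an inductive statement, and all the real work (the case analysis of complete four-vertex COQs and the reduction of properness to $3$-vertex subCOQs) has already been carried out in that proposition. The only point needing a little care is the use of completeness to rule out wiggles, so that the proper mutation class is traversed by proper mutations in lockstep with ordinary mutations of the underlying quiver, and the hypothesis really does guarantee that each intermediate quiver remains complete and vortex-free, allowing the induction to continue.
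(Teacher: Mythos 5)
Your proof is correct and takes essentially the same route as the paper: the paper declares Theorem~\ref{th:completeVF} to be ``immediate from Proposition~\ref{pr:propagation-under-VF},'' and your induction (base case the hypothesized proper COQ, inductive step supplied by that proposition together with the fact that every quiver in the mutation class stays complete and vortex-free) is exactly the spelled-out version of that deduction. Your observation that completeness forbids wiggles, so each wiggle class is a single COQ and the proper mutation class is traversed by proper mutations alone, is a careful explicit touch consistent with the paper's earlier remarks rather than a departure from its argument.
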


There are many examples of quivers to which Theorem~\ref{th:completeVF} applies.

\begin{corollary}
\label{cor:abundant-acyclic-proper}
Let $Q$ be an abundant acyclic quiver, with a cyclic ordering~$\sigma$ 
constructed as described in Observation~\ref{ob:acyclic-proper}.
Then  $(Q,\sigma)$ is a totally proper COQ. 
\end{corollary}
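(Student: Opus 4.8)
The plan is to invoke Theorem~\ref{th:completeVF}, so the entire task reduces to verifying its two hypotheses for an abundant acyclic quiver~$Q$ equipped with the cyclic ordering~$\sigma$ of Observation~\ref{ob:acyclic-proper}. First I would check that $(Q,\sigma)$ has a proper cyclic ordering at all: this is exactly the content of Observation~\ref{ob:acyclic-proper}, which furnishes a proper ordering for any acyclic quiver by taking a linear ordering in which $v$ precedes~$u$ whenever $v\rightarrow u$. So the properness hypothesis of Theorem~\ref{th:completeVF} is immediate.

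The real work is to show that \emph{every} quiver mutation-equivalent to~$Q$ is complete and vortex-free. The key observation is that abundance is a mutation-invariant property for acyclic quivers—more precisely, I would argue that every quiver in the mutation class of an abundant acyclic quiver is itself abundant. The plan is to track, under a single mutation $\mu_k$, how the entry $b_{ij}$ evolves: by the mutation rule~\eqref{eq: def mutation signs}, we have $b'_{ij}=b_{ij}+[b_{ik}]_+[b_{kj}]_+-[b_{ik}]_-[b_{kj}]_-$. When $Q$ is acyclic and abundant, an orientation (linear order) argument shows that the added term $[b_{ik}]_+[b_{kj}]_+-[b_{ik}]_-[b_{kj}]_-$ never decreases $|b_{ij}|$ below~$2$, because the correction term, when nonzero, has the same sign as~$b_{ij}$ (this is precisely the acyclicity-plus-ordering feature exploited in the proof of Theorem~\ref{thm:I-N-action}, where the cross-terms $[b_{ik}]_-[b_{kj}]_-$ vanish by the choice of linear ordering). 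One must be slightly careful, since after one mutation the quiver need not be acyclic; the cleanest route is to show abundance is preserved along the whole mutation class at once, using that the magnitude $|b_{ij}|\geq 2$ for all pairs is an invariant of the mutation class (this is the force of abundance as opposed to mere completeness).

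Once abundance of the entire mutation class is established, completeness follows trivially (abundant implies complete), and vortex-freeness follows too: a vortex is by Definition~\ref{def:vortex} a complete $4$-vertex quiver with a sink/source apex and an oriented $3$-cycle on the other three vertices, but an \emph{abundant} vortex is impossible because the apex-to-cycle arrows together with the cycle arrows would have to satisfy $|b_{ij}|\geq 2$ everywhere while forming the forbidden sink/source-plus-triangle pattern—a short case analysis (or appeal to the fork structure of Definition~\ref{def:fork} and Proposition~\ref{prop: fork vf}) rules this out. With both hypotheses of Theorem~\ref{th:completeVF} verified, that theorem yields that $(Q,\sigma)$ is totally proper.

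The main obstacle I anticipate is the abundance-propagation step: showing that $|b_{ij}|\geq 2$ is genuinely preserved under arbitrary iterated mutations, not just under a single mutation from an acyclic representative. The subtlety is that after mutating, acyclicity may be lost, so I cannot simply re-run the one-step sign argument; I expect the argument has to be packaged as a statement about the whole mutation class (e.g., via the sink/source-avoidance inherent in abundance, or by reducing to the fork machinery of~\cite{Warkentin,LMC} cited in Definition~\ref{def:fork} and Proposition~\ref{prop: fork vf}). If a clean self-contained argument is unavailable, the safe fallback is to cite the known result that abundant quivers form mutation classes consisting entirely of abundant quivers, which is precisely what makes Theorem~\ref{th:completeVF} applicable here.
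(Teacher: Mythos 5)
Your reduction to Theorem~\ref{th:completeVF} is the same as the paper's, but both of the verifications you supply for its hypotheses have problems, and one of them is fatal. The fatal one is your claim that an abundant vortex is impossible. This is false: take any vortex and double every arrow. Concretely, let $a\rightarrow b\rightarrow c\rightarrow a$ be an oriented $3$-cycle with every multiplicity equal to~$2$, and let $d$ receive two arrows from each of $a,b,c$. This quiver is complete, abundant, has the sink~$d$ as apex, and contains an oriented $3$-cycle but no oriented $4$-cycle (every $4$-cycle passes through the sink~$d$), so it is a vortex in the sense of Definition~\ref{def:vortex}. Hence abundance of the mutation class cannot, by itself, yield vortex-freeness, and no case analysis will make it do so. (Your alternative suggestion, appealing to Proposition~\ref{prop: fork vf}, does not help here either: the abundant vortex above is simply not a fork, so the fact that forks are vortex-free says nothing about it.) The paper obtains vortex-freeness from an entirely different source: A.~Seven's argument \cite[p.~473]{SevenAMatrices}, which shows that a quiver containing a vortex is never mutation-equivalent to an acyclic quiver; applied to~$Q$, this gives that every quiver in the mutation class of~$Q$ is vortex-free.

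The abundance step also needs repair. Your fallback claim, that ``abundant quivers form mutation classes consisting entirely of abundant quivers,'' is false as stated: the oriented $3$-cycle with multiplicities $(2,2,3)$ is abundant, yet mutating at the vertex joining the two multiplicity-$2$ arrows produces the oriented $3$-cycle with multiplicities $(1,2,2)$, which is not abundant. What is true---and what the paper cites from \cite[Section~6]{LMC}---is that the mutation class of an abundant \emph{acyclic} quiver consists of abundant quivers; acyclicity of the starting quiver is essential, and your one-step sign computation (which, as you correctly note, breaks down once acyclicity is lost after the first mutation) does not substitute for it. So the correct skeleton is: properness of $(Q,\sigma)$ from Observation~\ref{ob:acyclic-proper}; abundance (hence completeness) of the whole mutation class from \cite[Section~6]{LMC}; vortex-freeness of the whole mutation class from Seven's argument; then Theorem~\ref{th:completeVF}.
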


\begin{proof}
A.~Seven \cite[p.~473]{SevenAMatrices} shows that a certain vortex quiver is not mutation-equi\-valent to an acyclic quiver. 
The same argument establishes that any quiver mutation-equivalent to an acyclic quiver 
(such as~$Q$) is vortex-free.
By \cite[Section~6]{LMC}, any quiver in the mutation class of~$Q$ is abundant.
The claim then follows by Theorem~\ref{th:completeVF}. 
\end{proof}

\begin{conjecture}
Any acyclic quiver, endowed with the standard cyclic ordering (cf.\ Observation~\ref{ob:acyclic-proper}), 
is totally proper. 
\end{conjecture}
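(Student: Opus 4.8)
The plan is to prove the conjecture by induction along proper mutations, starting from the standard cyclic ordering~$\sigma$ of the acyclic quiver~$Q$, which is proper by Observation~\ref{ob:acyclic-proper}. Two facts are available at the outset. First, the argument of A.~Seven invoked in the proof of Corollary~\ref{cor:abundant-acyclic-proper} (see~\cite{SevenAMatrices}) shows that \emph{every} quiver mutation-equivalent to an acyclic quiver is vortex-free; hence every underlying quiver occurring in the proper mutation class of $(Q,\sigma)$ is vortex-free, as is each of its full subquivers. Second, by Theorem~\ref{th:totally-proper-unique} there is at most one candidate totally proper ordering, and the standard one is the natural guess. The difficulty is that one cannot propagate properness blindly: Figure~\ref{fig:proper after mutation} exhibits a proper COQ, with a proper mutation vertex and a vortex-free target, whose proper mutation is \emph{not} proper. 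So the induction must carry along extra structure that distinguishes the proper-mutation orbit of the \emph{standard} ordering.

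The structure I would carry along is the winding-number normal form supplied by Corollary~\ref{cor:totally-proper-wind}: say that a COQ is \emph{correctly wound} if every full subquiver whose underlying graph is a chordless cycle (possibly with multiplicities) has winding number $0$ when the cycle is undirected and $\pm1$ when it is an oriented cycle. The standard ordering of an acyclic quiver has only undirected chordless cycles, and a direct check shows each winds to~$0$, so $(Q,\sigma)$ is both proper and correctly wound. I would then prove the two pillars of the induction: (A) a proper mutation of a correctly wound COQ is again correctly wound; and (B) for a vortex-free COQ, being correctly wound implies being proper. Granting (A) and (B), the orbit of $(Q,\sigma)$ consists of vortex-free, correctly wound, hence proper COQs, and the conjecture follows (using that wiggles preserve properness of a class, Definition~\ref{def:wiggle class proper}, and that proper mutation is an involution well defined on wiggle classes, Remark~\ref{rem:proper mutation involution}).

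For pillar (A), the mechanism is Lemma~\ref{lem:new cycle winding number}, which already shows that shortcutting a chordless cycle through a proper mutation vertex preserves its winding number. The work is bookkeeping: under $\mu_b$ the family of chordless cycles changes (cycles through $b$ get shortcut, new cycles are created from the added arrows $\In(b)\to\Out(b)$, and some chords appear or disappear), and one must verify that the correctly wound condition on the \emph{old} chordless cycles, together with vortex-freeness, forces it on the \emph{new} ones. I expect this to be a finite, if tedious, local computation analogous to the case analysis of Figure~\ref{fig:proper outgoing orientations}, carried out now at the level of winding numbers rather than orientations.

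The main obstacle is pillar~(B): the local-to-global implication ``vortex-free $+$ correctly wound $\Rightarrow$ proper'' for possibly \emph{incomplete} COQs. When $Q$ is complete, this is essentially free, because Proposition~\ref{prop:vf complete proper } produces a \emph{unique} proper ordering, so there is no wiggle freedom to coordinate and properness is a rigid finite check; this is exactly what powers the short proof of Proposition~\ref{pr:propagation-under-VF}. Once edges are missing, wiggles reappear, and properness of each vertex becomes a condition on the global cyclic arrangement of its neighbors that different vertices must be able to realize by different wiggle sequences. The delicate point is to show that the winding-number constraints on all chordless cycles are \emph{strong enough} to rule out the cyclic incompatibility that occurs in genuinely non-proper vortex-free quivers such as the one in Figure~\ref{fig:VF but not proper}, where a source attached to an oriented $4$-cycle cannot be placed without creating a left turn. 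Proving such a clean local-to-global principle---or isolating the correct additional hypothesis that the acyclic mutation class supplies---is the crux, and is what currently keeps the statement conjectural. I would also note that the seductive shortcut of completing~$Q$ to a complete acyclic quiver and applying Theorem~\ref{th:completeVF} fails, since completeness is destroyed by the arrow cancellations of a generic mutation and the completion need not be abundant.
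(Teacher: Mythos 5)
Your proposal is a strategy outline, not a proof, and you concede as much yourself: both pillars on which your induction rests are left unproven. Pillar (A) is not supplied by Lemma~\ref{lem:new cycle winding number}, which applies only when the \emph{entire} underlying graph of the COQ is a chordless cycle; to run your induction you need a statement about chordless cycles sitting as full subquivers inside a larger quiver, and, worse, about chordless cycles of $\mu_b(Q)$ that do not exist in $Q$ at all (the arrows added from $\In(b)$ to $\Out(b)$ both create new chordless cycles and put chords into formerly chordless ones). Controlling the winding numbers of these new cycles is precisely the kind of propagation that the paper shows can fail when attempted naively for properness itself (Figure~\ref{fig:proper after mutation}), so ``a finite, if tedious, local computation'' cannot simply be deferred. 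Pillar (B), the local-to-global implication ``vortex-free $+$ correctly wound $\Rightarrow$ proper,'' you explicitly identify as the crux and do not prove; nothing in the paper supplies it for incomplete quivers, since Proposition~\ref{prop:vf complete proper } and Proposition~\ref{pr:propagation-under-VF} are genuinely about complete quivers, where the absence of wiggle freedom makes properness rigid.

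For calibration: the paper does not prove this statement either---it is stated as a conjecture, and the remark immediately following it attributes the proof to a separate work in preparation by the second author~\cite{SN-acyclic-TP}. So there is no in-paper argument to compare yours against. Judged on its own terms, your proposal chooses a sensible invariant to carry along (your ``correctly wound'' condition is exactly the winding-number normal form of Corollary~\ref{cor:totally-proper-wind}), verifies correctly that the standard ordering of an acyclic quiver satisfies it, and diagnoses accurately where the difficulty lies; but it closes neither gap, and therefore does not establish the conjecture.
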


\begin{remark}
\label{rem:atp}
This conjecture has been recently proved by the second author~\cite{SN-acyclic-TP}. 
\end{remark}

\begin{remark}
G.~Muller's \emph{local acyclicity} property~\cite{Muller-loc-acyclic}
does not guarantee the existence of a totally proper cyclic ordering. 
A~quiver~$Q$ of the kind described in \cite[Figure~1]{LMC} is a vortex, 
so it has no totally proper cyclic ordering. 
On the other hand, one can use the Banff algorithm \cite[Theorem~5.5]{Muller-loc-acyclic} 
to show that $Q$ is locally acyclic.  
\end{remark}

\begin{corollary}
\label{cor:minimal cyclic totally proper eg}
Let $Q$ be a complete quiver such that for every vertex~$k$, the quiver $\mu_k(Q)$ is a fork with point of return~$k$,
cf.\ Definition~\ref{def:fork}. 
(In the language of \cite[Section~6]{LMC}, any mutation of $Q$ is an \emph{exit}.)
Then $Q$ is totally proper.
\end{corollary}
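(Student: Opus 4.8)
The plan is to reduce the statement to Theorem~\ref{th:completeVF}, whose hypotheses are that $Q$ admits a proper cyclic ordering and that every quiver mutation-equivalent to~$Q$ is complete and vortex-free. Everything thus comes down to understanding the mutation class of~$Q$.

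First I would record that $Q$ is mutation-equivalent to a fork, namely $\mu_k(Q)$, so we are squarely inside the fork theory of~\cite{Warkentin} and \cite[Section~6]{LMC}. The structural heart of the argument is the claim that every quiver in the mutation class of~$Q$ is either $Q$ itself or a fork. I would prove this by showing that the set $S$ consisting of $Q$ together with all forks in the class is closed under mutation. Mutating a fork at any vertex other than its point of return again yields a fork by Warkentin's propagation theorem, so the only delicate case is mutation of a fork $F$ (with point of return~$r$) at~$r$ itself. Here I would argue by induction on mutation distance from~$Q$, using a complexity statistic (such as the total number of arrows) that strictly increases under every off-return fork mutation; this forces the unique distance-decreasing neighbor of each fork to be obtained by mutating at its point of return, and identifies $Q$ as the unique minimal, hence unique non-fork, quiver in the class. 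I expect this to be the main obstacle, as it relies on the fine combinatorics of the fork region; the relevant monotonicity and propagation statements should be extracted from \cite{Warkentin} and \cite[Section~6]{LMC}, the latter furnishing exactly the ``exit'' language appearing in the hypothesis.

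Granting the structural claim, completeness of every quiver in the class is immediate, since forks are complete and $Q$ is complete by assumption. For vortex-freeness, forks are vortex-free by (the proof of) Proposition~\ref{prop: fork vf}, so only $Q$ needs to be checked. Suppose $Q$ contained a vortex on four vertices with apex~$d$; then $d$ is a sink or source of that $4$-vertex subquiver and the remaining three vertices~$a,b,c$ carry an oriented $3$-cycle. Because all arrows joining $d$ to $\{a,b,c\}$ point the same way (all out of~$d$ or all into~$d$), there is no oriented $2$-path through~$d$ whose endpoints both lie in $\{a,b,c\}$; hence mutation at~$d$ creates no arrows among $a,b,c$ and cancels none of the cycle arrows, so the oriented $3$-cycle survives in $\mu_d(Q)$. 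But $\mu_d(Q)$ is a fork with point of return~$d$, so deleting~$d$ must leave an acyclic quiver (Definition~\ref{def:fork}), contradicting the presence of the $3$-cycle on $\{a,b,c\}$. Therefore $Q$ is vortex-free.

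Finally, since $Q$ is complete and vortex-free, Proposition~\ref{prop:vf complete proper } supplies a proper cyclic ordering~$\sigma$ of~$Q$. As every quiver mutation-equivalent to~$Q$ is now known to be complete and vortex-free, Theorem~\ref{th:completeVF} applies and shows that $(Q,\sigma)$ is totally proper, which is the desired conclusion.
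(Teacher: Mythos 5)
Your proposal is correct and follows the same route as the paper: show that every quiver mutation-equivalent to $Q$ is complete and vortex-free, obtain a proper cyclic ordering of $Q$ from Proposition~\ref{prop:vf complete proper }, and conclude via Theorem~\ref{th:completeVF}. Your vortex-freeness argument for $Q$ itself (the oriented $3$-cycle of a putative vortex survives mutation at its apex $d$, contradicting the requirement that deleting the point of return $d$ from the fork $\mu_d(Q)$ leaves an acyclic quiver, cf.\ Definition~\ref{def:fork}) is precisely the paper's parenthetical remark, written out in detail. The one genuine divergence is the structural claim that every quiver in the mutation class is either $Q$ or a fork, which you call the main obstacle and attack with an arrow-count monotonicity argument. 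The paper needs none of this: any quiver in the class is the endpoint of a \emph{reduced} mutation sequence (no two consecutive mutations at the same vertex), and one inducts along it. The first step yields a fork with point of return $k_1$ by hypothesis; at each later step, the current quiver is (inductively) a fork whose point of return is the vertex just mutated, so the next mutation, being at a \emph{different} vertex, is an off-return mutation and produces a fork whose point of return is that new vertex, by \cite[Lemma~2.5]{Warkentin}. Thus the ``delicate case'' of mutating a fork at its point of return never produces anything new---it only undoes the previous step, landing back in the part of the class already certified. Your monotonicity mechanism, by contrast, is not only unnecessary but, as sketched, does not actually close that case: knowing that off-return mutations increase the number of arrows does not by itself show that the distance-decreasing neighbor of a fork is its return mutation; to see that, you need essentially the induction just described, after which the statistic plays no role. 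So your plan does complete, but the step you flag as hardest is immediate from the lemma you already cite.
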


\begin{proof}
By \cite[Lemma~2.5]{Warkentin}, a mutation $\mu_j$ of a fork yields another fork, provided that $j$ is not the point of return.
It follows that every quiver in the mutation class of $Q$ is complete and vortex-free. 
($Q$~itself cannot contain a vortex, since mutating at its apex would yield a quiver with a vortex, hence not a fork.)
By Proposition~\ref{prop:vf complete proper }, $Q$ has a proper cyclic ordering. 
By Theorem~\ref{th:completeVF} it is totally proper.
\end{proof}

\begin{example}
\label{eg:minimal cyclic}
Consider the family of $5$-vertex COQs shown in Figure~\ref{fig:minimal cyclic}.
We will use Corollary~\ref{cor:minimal cyclic totally proper eg} to show that 
every COQ $Q$ in this family is totally proper. 
It is clear that $Q$ is complete and proper, hence vortex-free.
By \cite[Proposition~6.13]{LMC}, it suffices to check, for each vertex $v_i$:
\begin{itemize}
\item if $j \rightarrow v_i \rightarrow k$ for some pair of vertices $j,k$, then $|b_{jk}| < |b'_{jk}|$,
where we use the notation $B_Q = (b_{ij})$ and $B_{\mu_{v_i}(Q)} = (b'_{ij})$. 
Equivalently, $v_i$ is an ``ascent'' \cite[Definition~3.3]{LMC} in every oriented $3$-cycle that contains~$v_i\,$; 
\item $v_i$ is not a sink/source in $Q$;
\item $v_i$ is not the apex of a vortex in $Q$.
\end{itemize}
The second and third conditions are trivial, as $Q$ has no sinks, sources, or vortices.
We check the first condition for $v_2$; the arguments for all other vertices are similar.
All paths through $v_2$ contain $v_1 \rightarrow v_2$ and one of $v_2 \rightarrow v_3$, $v_2 \rightarrow v_4$, or $v_2 \rightarrow v_5$.
Mutation at $v_2$ increases the number of arrows $v_1\rightarrow v_3$ and $v_1\rightarrow v_4$.
Finally, the number of arrows between $v_1$ and $v_5$ in $\mu_{v_2}(Q)$ is equal to 
\begin{align*}
|2j - (a+f) (aj+f)| &= |2j - (a^2 j + af + ajf + f^2)| = (a^2 - 2) j + af(j+1) + f^2,
\end{align*}
which is larger than $2j$, the number of arrows $v_1\rightarrow v_5$ in~$Q$.
\end{example}

\begin{figure}[ht]
\vspace{3pt}
{
\newcommand{\radius}{2.8cm} 
\newcommand{\smoltxt}[1]{{\scriptstyle #1}}
\begin{tikzpicture}
\filldraw[black] (0,0)++(135:\radius) circle (2pt) node[above left=-1pt] {$v_1$} coordinate (v1);
\filldraw[black] (0,0)++(135-72:\radius) circle (2pt) node[above right=-1pt] {$v_2$} coordinate (v2);
\filldraw[black] (0,0)++(135-144:\radius) circle (2pt) node[below right=-1pt] {$v_3$} coordinate (v3);
\filldraw[black] (0,0)++(135-216:\radius) circle (2pt) node[below left=-1pt] {$v_4$} coordinate (v4);
\filldraw[black] (0,0)++(135-288:\radius) circle (2pt) node[below left=-1pt] {$v_5$} coordinate (v5);
\draw[black, dashed, decoration={markings, mark=at position .1 with {\arrow{<}}}, postaction={decorate}] (0,0) circle (\radius);
\draw[black, -{stealth}, shorten >=3pt, shorten <= 3pt] (v1) -- node[above, pos=0.5] {$\smoltxt{a+f}$} (v2); 
\draw[black, -{stealth}, shorten >=3pt, shorten <= 3pt] (v1) -- node[above right, pos=0.2] {$\smoltxt{b+h}$} (v3); 
\draw[black, -{stealth}, shorten >=3pt, shorten <= 3pt] (v1) -- node[right, pos=0.5] {$\smoltxt{c+i}$}(v4); 

\draw[black, -{stealth}, shorten >=3pt, shorten <= 3pt] (v2) -- node[right, pos=0.5] {$\smoltxt{d}$} (v3); 
\draw[black, -{stealth}, shorten >=3pt, shorten <= 3pt] (v2) -- node[right, pos=0.5] {$\smoltxt{e}$} (v4); 
\draw[black, -{stealth}, shorten >=3pt, shorten <= 3pt] (v2) -- node[right, pos=0.55] {$\smoltxt{aj+f}$} (v5); 

\draw[black, -{stealth}, shorten >=3pt, shorten <= 3pt] (v3) -- node[right, pos=0.5] {$\smoltxt{g}$} (v4); 
\draw[black, -{stealth}, shorten >=3pt, shorten <= 3pt] (v3) -- node[below, pos=0.25] {$\smoltxt{bj+h}$} (v5); %

\draw[black, -{stealth}, shorten >=3pt, shorten <= 3pt] (v4) -- node[right, pos=0.6] {$\smoltxt{cj+i}$} (v5); %

\draw[black, -{stealth}, shorten >=3pt, shorten <= 3pt] (v5) -- node[right, pos=0.5] {$\smoltxt{2j}$} (v1); 
\end{tikzpicture}
} 
\vspace{-5pt}
\caption{For any values of $a,b,c,d,e,f,g,h,i,j \geq 4$, this COQ is totally proper.}
\label{fig:minimal cyclic}
\end{figure}

\vspace{-10pt}

\begin{remark} 
\label{rem:forkless-part eg}
The \emph{forkless part} of a mutation class of quivers, 
introduced by M.~Warkentin~\cite{Warkentin},  is the set of quivers in the mutation class which are not forks. 
Each of the quivers discussed in Example~\ref{eg:minimal cyclic} is the unique quiver in the forkless part 
of its mutation class. 
In order to show that a given COQ is totally proper, 
it is sufficient to verify that the forkless part of its mutation class is complete and vortex-free, 
cf.\ Proposition~\ref{prop: fork vf} and Theorem~\ref{th:completeVF}.
Such a verification is particularly straightforward if the forkless part is finite.
For example, the fully generic mutation cycles constructed in \cite[Examples 10.1--10.2]{LMC} 
have finite forkless parts all of whose quivers are complete and vortex-free.
Thus all these quivers allow (unique) totally proper cyclic orderings.
Also, many of the mutation classes considered in \cite{MR2805200}
have finite forkless parts consisting of complete vortex-free quivers;
whenever this happens, total properness follows. 
Similar arguments can be applied to quivers with a finite ``pre-forkless part'' studied by T.~Ervin~\cite{Ervin}. 
\end{remark}

There are many more families of quivers with finite forkless part to which Theorem~\ref{th:completeVF} applies. 
The examples discussed above in Corollaries~\ref{cor:abundant-acyclic-proper}--\ref{cor:minimal cyclic totally proper eg}
and in Remark~\ref{rem:forkless-part eg} are just a small selection. 


\begin{remark}
\label{rem:random-walk-in-exchange-graph}
Recall that a quiver has \emph{finite} (resp., \emph{infinite}) \emph{mutation type}
if its mutation equivalence class is finite (resp., infinite). 
%
%
%
Let $Q$ be a connected quiver of infinite mutation type. 
As shown by M.~Warkentin~\cite[Proposition~5.2]{Warkentin}, 
a simple random walk in the exchange graph of~$Q$ 
will almost surely leave the forkless part~and never come back.
Therefore a random sequence of mutations starting at~$Q$
will almost surely reach a quiver that can be upgraded to a proper COQ in such a way that 
all subsequent mutations will become proper in the resulting (proper) COQs. 
\hide{
Let 
\begin{equation*}
Q=Q_0 \stackrel{\mu_{v_1}}{\longrightarrow} Q_1 \stackrel{\mu_{v_2}}{\longrightarrow} Q_2
\stackrel{\mu_{v_3}}{\longrightarrow}  \cdots 
\end{equation*}
be a simple random walk in the exchange graph of~$Q$.
As shown by M.~Warkentin~\cite[Proposition~5.2]{Warkentin}, 
with probability~1 there exists $N>0$ such that every quiver $Q_n$ with $n>N$ is a fork. 
In particular, every $Q_n$ with $n>N$ 
has a unique proper cyclic ordering---so all mutations $\mu_{v_n}$ for $n>N$ are proper. 
}
\end{remark}


\subsection*{Testing mutation equivalence of totally proper quivers}

\begin{remark}
\label{rem:conjugacy-vs-mut-equiv}
As discussed in Remark~\ref{rem:algorithms-conjugacy}, the conjugacy problem in $\GL(n,\mathbb{Z})$~has an algorithmic solution with a workable implementation. 
This gives a testable necessary condition for mutation equivalence 
of two totally proper COQs $Q_1$ and~$Q_2$:  
if the cosquares of their respective unipotent companions $U_1$ and~$U_2$ 
are not conjugate in $\GL(n,\mathbb{Z})$, 
then the quivers $Q_1$ and $Q_2$ are not mutation-equivalent. 

In our experience, this test rarely produces ``false positives:'' if $Q_1$ and~$Q_2$ 
are totally proper COQs such that the cosquares of their unipotent companions $U_1$ and~$U_2$
are conjugate  in $\GL(n,\mathbb{Z})$,
then $Q_1$ and~$Q_2$ are likely mutation equivalent. 
Counter\-examples to this phenomenon are not common, but they do exist. 
As mentioned in Example~\ref{eg:9-vertex-trees}, 
there are three pairs of nonisomorphic (hence mutation-inequivalent)
9-vertex trees which give rise to cosquare matrices that are conjugate in $\GL(n,\mathbb{Z})$. 

For 3-vertex quivers, we expect no ``collisions'' of this kind, cf.\ Conjecture~\ref{cor:sim trans vs opposites}. 

For 4-vertex quivers, such collisions exist, but seem to be rare.  
The simplest one that we found involves two quivers $Q_1$ and $Q_2$ with the exchange matrices
\begin{equation*}
B_{Q_1} = 
\begin{bmatrix}
0  & -2 & -18&   21 \\
2  &  0 & -13 &  -9 \\
18  & 13 &   0 &  -6 \\
-21 &   9  &  6  &  0
\end{bmatrix}
, \quad
B_{Q_2}=
\begin{bmatrix}
  0  & -2  & -9  & 23 \\
 2& 0 & -15 & -10 \\
  9  & 15 &   0 &  -6 \\
-23 &  10 &   6  &  0
\end{bmatrix}
\end{equation*}
We first verify that $Q_1$ and $Q_2$ are mutation-inequivalent. 
Both mutation classes have finite forkless part (cf.\ Remark~\ref{rem:forkless-part eg})
where every quiver is abundant \hbox{and vortex-free.} 
Hence both $Q_1$ and $Q_2$ have totally proper cyclic orderings. 
Neither is a fork, so~it \linebreak[3] 
is enough to check that $Q_1$ is not in the forkless part of $Q_2$. 
The only fork-avoiding mutation sequence for $Q_2$ is $v_2, v_4, v_1$ (applied left-to-right);  
it does not produce~$Q_1$.

At the same time, the cosquares of $Q_1$ and $Q_2$, given by 
\begin{equation*}
U_{Q_1}^{-T}U_{Q_1}=
\begin{bmatrix}
1  &  -2  &  -18  &   21 \\
2   &  -3  &  -49  &   33 \\
44 &   -75 &  -960 &   801 \\
261 &  -435 & -5823 &  4663
\end{bmatrix}
\!\!, \quad
U_{Q_2}^{-T}U_{Q_2}=
\begin{bmatrix}
 1  &   -2  &   -9  &   23 \\
  2  &   -3 &   -33  &   36 \\
 39 &   -63 &  -575  &  741 \\
 231 &  -362 & -3573  & 4278
\end{bmatrix}
\!\!, 
\end{equation*}
turn out to be conjugate in $\GL(4,\ZZ)$. 
The \textsc{Magma} algorithm~\cite{EHO} certifies this fact
by delivering a conjugating matrix whose entries have thousands of decimal digits~(!).
\end{remark}

\begin{problem}
\label{rem:conj-class-for-tp-determines-mut-class}
As explained in Remark~\ref{rem:conjugacy-vs-mut-equiv},
$\GL(n,\mathbb{Z})$ conjugacy of cosquares does not guarantee mutation equivalence,
even for totally proper COQs. 
Does the potentially stronger assumption of 
integral congruence of unipotent companions
imply mutation equivalence?
Put differently, is the integral congruence class of a unipotent companion 
a complete mutation invariant (say, in the case of totally proper COQs)?
\end{problem}


\newpage

\section{Admissible quasi-Cartan companions}
\label{sec:admissible cartan}


\begin{definition}[M.~Barot--C.~Geiss--A.~Zelevinsky \cite{BGZ}]
Let $Q$ be an $n$-vertex quiver.
A \emph{quasi-Cartan companion} for~$Q$ (or for the corresponding exchange matrix~$B=B_Q$)
is an $n \times n$ symmetric matrix $A = (a_{ij})$ 
with $|a_{ij}| = |b_{ij}|$ for $i \neq j$ and $a_{ii} = 2$ for all~$i$.
We note that a typical quiver has many distinct  quasi-Cartan companions. 
\end{definition}

\begin{remark}
Given an $n\times n$ integer matrix $U=(u_{ij})$ such that $U - U^T = -B$ and $u_{ii}=1$ for all~$i$, 
we can construct a quasi-Cartan companion~$A$ of~$B=B_Q$ by setting $A=A_U = U+U^T$.
In particular, for any linear ordering of the vertices~of~$Q$, the corresponding unipotent companion~$U$ 
(cf.\ Definition~\ref{def:unipotent companions}) gives rise to the quasi-Cartan companion~$A_U= U+U^T$. 
\end{remark}


\begin{definition}[A.~Seven \cite{SevenAMatrices}]
\label{def:admissible quasiCartan}
A quasi-Cartan companion $A=(a_{ij})$ for a quiver~$Q$ is \emph{ad\-missible} if 
every full subquiver whose underlying undirected simple graph (cf.\ Definition~\ref{def:Graph-Q}) 
is a chordless~cycle 
\begin{equation}
\label{eq:cordles--cycle-k}
C = (v_0 - v_1 - \cdots - v_k=v_0)
\end{equation}
satisfies the following conditions: 
\begin{itemize}
\item if $C$ is oriented (that is, either $v_i \rightarrow v_{i+1}$ for all $i<k$ or $v_i \leftarrow v_{i+1}$ for all $i<k$), 
then the count $\#\{i\mid 0 \leq i < k \text{ and } a_{v_i v_{i+1}}>0\}$ is odd; 
\item if $C$ is not oriented, then this count is even. 
\end{itemize}
\end{definition}



\begin{proposition}[\cite{SevenAMatrices}, Lemma 3.3]
\label{prop:Seven acyclic catchy}
Let $Q$ be an acyclic quiver.
We can choose an admissible quasi-Cartan companion for every quiver in the mutation class of~$Q$, 
so that all these quasi-Cartan companions are pairwise congruent over~$\ZZ$.
\end{proposition}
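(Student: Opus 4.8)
The plan is to put the standard cyclic ordering on $Q$, read off the quasi-Cartan companion from the unipotent companion, and then let the machinery of proper mutations do all the work: integral congruence is preserved by Theorem~\ref{thm:I-N-action}, while admissibility will follow from the winding-number description of totally proper COQs.

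First I would fix the standard cyclic ordering $\sigma$ on the acyclic quiver $Q$, induced by a linear ordering in which $v$ precedes $u$ whenever $v\rightarrow u$ (Observation~\ref{ob:acyclic-proper}). By the acyclic case of the Conjecture above, proved by the second author in \cite{SN-acyclic-TP}, the COQ $(Q,\sigma)$ is totally proper. (For abundant acyclic $Q$ this is already Corollary~\ref{cor:abundant-acyclic-proper}, so the external input is only needed in the non-abundant case.) Consequently every quiver $Q'$ in the mutation class of $Q$ arises as the underlying quiver of a totally proper COQ $(Q',\sigma')$ in the proper mutation class of $(Q,\sigma)$, and each such COQ is itself totally proper. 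For each of them I would take as quasi-Cartan companion the symmetric matrix $A_{U'}=U'+(U')^T$, where $U'=U_{Q'}$ is the unipotent companion of a tear of $\sigma'$; this is a genuine quasi-Cartan companion since its diagonal entries are $2$ and its off-diagonal entries satisfy $|a_{ij}|=|b_{ij}|$. Pairwise congruence is then free: by Theorem~\ref{thm:I-N-action} all the $U'$ lie in one integral congruence class, and $U'=GU''G^T$ forces $A_{U'}=G(U''+(U'')^T)G^T=GA_{U''}G^T$ with $G\in\GL(n,\ZZ)$.

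The substantive step, which I expect to be the main obstacle, is to verify that each $A_{U'}$ is \emph{admissible} in the sense of Definition~\ref{def:admissible quasiCartan}. I would run this entirely at the level of signs. Tearing $\sigma'$ into a linear order $<$, one has $a_{pq}=-b_{pq}$ for $p<q$, so for an edge of a chordless cycle $C=(v_0-\cdots-v_k=v_0)$ the entry $a_{v_iv_{i+1}}$ is positive exactly when the arrow on that edge points \emph{backward} in the linear order, i.e.\ from the larger to the smaller vertex. Hence $\#\{i\mid a_{v_iv_{i+1}}>0\}$ equals the number $\beta(C)$ of backward-pointing arrows of $C$ (multiplicities are irrelevant, as only signs matter). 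The crux is the parity identity $\beta(C)\equiv\wind(C,\sigma')\pmod 2$: writing $\theta(\sigma',a,b)\equiv\operatorname{pos}(b)-\operatorname{pos}(a)\pmod n$ and telescoping the sum in~\eqref{eq:wind-n-cycle} around the cycle yields $\wind(C,\sigma')=\#\{i\mid v_i>v_{i+1}\text{ in }<\}-\ell$, where $\ell$ counts arrows opposing the traversal; comparing this bookkeeping term-by-term with the definition of $\beta(C)$ gives the claimed congruence modulo $2$.

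Finally, since $(Q',\sigma')$ is totally proper, Corollary~\ref{cor:totally-proper-wind} states that $\wind(C,\sigma')=\pm1$ (odd) when $C$ is oriented and $\wind(C,\sigma')=0$ (even) otherwise. Combined with $\beta(C)\equiv\wind(C,\sigma')\pmod 2$, this says that $\#\{i\mid a_{v_iv_{i+1}}>0\}$ is odd for oriented chordless cycles and even for non-oriented ones, which is precisely the admissibility requirement. The only real difficulty is keeping the sign/parity bookkeeping in the parity identity clean and correctly tracking the interaction between arrow orientation and linear-order position; once that identity is established, the totally proper hypothesis supplies the winding numbers and the proof closes.
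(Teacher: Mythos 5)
Your proposal is necessarily a different route from the paper, because the paper offers no proof of this proposition at all: it is imported from Seven's work as \cite[Lemma~3.3]{SevenAMatrices}, and the surrounding section only proves the neighboring statement, Proposition~\ref{pr:admissible-qC-vs-U}. Two of your three steps are correct and in fact reproduce material already in the paper. The congruence step is exactly Theorem~\ref{thm:I-N-action} together with the observation that $U'=GU''G^T$ forces $A_{U'}=G A_{U''}G^T$. Your parity argument for admissibility is precisely the paper's proof of Proposition~\ref{pr:admissible-qC-vs-U}: positivity of $a_{v_i v_{i+1}}$ detects arrows pointing down the linear order, the telescoped winding number is the \emph{difference} of the two counts appearing in \eqref{eq:wind formula 2} while the positivity count is their \emph{sum}, so the two agree $\bmod~2$, and Corollary~\ref{cor:totally-proper-wind} supplies the required parity for chordless cycles in a totally proper COQ. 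Also, your intermediate claim that every quiver in $[Q]$ is reached by proper mutations from $(Q,\sigma)$ is a correct use of total properness.

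The gap is your first step. Within this paper, total properness of the standard ordering of a general acyclic quiver is only a \emph{Conjecture}: what is actually proved is the abundant acyclic case (Corollary~\ref{cor:abundant-acyclic-proper}) and finite type (Theorem~\ref{thm:finite type}). You invoke \cite{SN-acyclic-TP}, which is cited as ``in preparation,'' so the argument is not self-contained relative to the paper's established results. Worse, there is a concrete circularity risk: Seven's admissible-companion machinery is the standard tool for analyzing acyclic mutation classes; the paper's own partial progress toward the Conjecture (Corollary~\ref{cor:abundant-acyclic-proper}) already leans on \cite{SevenAMatrices} for vortex-freeness of acyclic mutation classes, and nothing available to you rules out that the proof in \cite{SN-acyclic-TP} uses Lemma~3.3 itself --- the very statement you are deriving. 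As a conditional implication (``the Conjecture implies Seven's lemma, via Theorem~\ref{thm:I-N-action} and Proposition~\ref{pr:admissible-qC-vs-U}'') your write-up is clean and correct; as a proof of the proposition as stated, it does not close.
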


\begin{proposition}
\label{pr:admissible-qC-vs-U}
If $U$ is a unipotent companion of a totally proper COQ $Q$, then $A_U$ is an admissible quasi-Cartan companion of $Q$.
\end{proposition}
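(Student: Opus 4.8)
The plan is to reduce the admissibility of $A_U$ to the winding-number constraints that hold on chordless cycles of any totally proper COQ (Corollary~\ref{cor:totally-proper-wind}), by means of a parity computation. First I would dispose of the routine part, namely that $A_U = U + U^T$ is a quasi-Cartan companion at all: since $U$ is unipotent upper-triangular with $u_{ij} = -b_{ij}$ for $i<j$ (Definition~\ref{def:unipotent companions}), the matrix $A_U$ is symmetric with $a_{ii} = 2$ and $a_{ij} = -b_{ij}$ for $i<j$, so $|a_{ij}| = |b_{ij}|$. The crucial input I would extract is the \emph{sign rule}: fixing a linear ordering obtained by tearing $\sigma$, and writing $p(v)$ for the position of a vertex $v$ in this ordering, one checks directly that $a_{uv} > 0$ exactly when the arrow joining $u$ and $v$ points from the higher position to the lower one, and $a_{uv}<0$ in the opposite case (multiplicities do not affect this sign).

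Next I would analyze a single full subquiver $C = (v_0 - \cdots - v_k = v_0)$ whose underlying graph is a chordless cycle, and compare the admissibility count $P = \#\{i : a_{v_i v_{i+1}} > 0\}$ with $\wind(C)$. I would classify each edge $i$ by two binary attributes: whether its arrow agrees with the traversal direction (``forward'', $v_i \to v_{i+1}$) or not (``backward''), and whether $p(v_i) < p(v_{i+1})$ (``ascent'') or not (``descent''). Writing $F_d$ and $B_a$ for the numbers of forward-descent and backward-ascent edges, the sign rule gives $P = F_d + B_a$. On the other hand, since $\theta(\sigma, v_i, v_{i+1}) = p(v_{i+1}) - p(v_i)$ on ascent edges and $\theta(\sigma, v_i, v_{i+1}) = p(v_{i+1}) - p(v_i) + n$ on descent edges, while $\sum_i\bigl(p(v_{i+1}) - p(v_i)\bigr)$ telescopes to $0$ around the cycle, the defining formula~\eqref{eq:wind-n-cycle} collapses to $\wind(C) = D - \ell$, where $D$ is the number of descent edges and $\ell$ the number of backward edges. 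Expanding $D$ and $\ell$ in the four-way classification yields $\wind(C) = F_d - B_a$. Hence $P - \wind(C) = 2B_a$ is even, so $P \equiv \wind(C) \pmod 2$.

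Finally I would invoke Corollary~\ref{cor:totally-proper-wind}: in a totally proper COQ every chordless cycle $C$ satisfies $\wind(C) = \pm 1$ when $C$ is oriented and $\wind(C) = 0$ otherwise. By the parity identity just established, $P$ is odd precisely when $C$ is oriented and even otherwise, which is exactly the requirement of Definition~\ref{def:admissible quasiCartan}. Since $A_U$ is symmetric this condition is independent of the traversal direction, so $A_U$ is admissible. The main obstacle is really the bookkeeping in the middle step: pinning down the sign of $a_{v_i v_{i+1}}$ correctly and collapsing the winding-number sum so that the relation $P \equiv \wind(C)\pmod 2$ emerges cleanly; once that identity is in hand, the conclusion follows immediately from the totally proper hypothesis via Corollary~\ref{cor:totally-proper-wind}.
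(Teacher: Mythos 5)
Your proposal is correct and follows essentially the same route as the paper: the same sign rule for the entries of $A_U$ in terms of the linear order, the same identity $\#\{i : a_{v_iv_{i+1}}>0\} \equiv \wind(C) \bmod 2$ on chordless cycles, and the same final appeal to Corollary~\ref{cor:totally-proper-wind}. The only difference is cosmetic: you derive the collapsed winding-number formula $\wind(C)=F_d-B_a$ by telescoping, whereas the paper simply asserts it as equation~\eqref{eq:wind formula 2}.
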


\begin{proof}
Let $C$ be a chordless oriented (resp., non-oriented) cycle in~$Q$
of the form shown in~\eqref{eq:cordles--cycle-k}. 
The winding number of~$C$ is given by 
\begin{equation}
\label{eq:wind formula 2}
\wind(C) = \# \{v_i \rightarrow v_{i+1} | v_i > v_{i+1} \} - \# \{ v_i \leftarrow v_{i+1} | v_i < v_{i+1} \}. 
\end{equation}

Let $<$ be the linear order associated to $U$. 
For $p\ne q$, the entry $u_{pq}$ of~$U$ is positive if and only if $p\leftarrow q$ and $p< q$. 
(Here $p\leftarrow q$ means that $Q$ contains an arrow $p\leftarrow q$.) 
This gives a criterion for positivity of an entry $a_{pq}$ of~$A_U$: 
\begin{equation*}
a_{pq}>0 \Leftrightarrow (u_{pq}>0\text{ or }u_{qp}>0) \Leftrightarrow 
\text{(($p\leftarrow q$ and $p< q$) or ($p\rightarrow q$ and $p>q$)).}
\end{equation*}
It follows that
\begin{equation*}
\#\{ i | a_{v_i v_{i+1}} > 0 \} 
= \#\{v_i \rightarrow v_{i+1} | v_i > v_{i+1} \} + \#\{ v_i \leftarrow v_{i+1} | v_i < v_{i+1} \}, 
\end{equation*}
which together with \eqref{eq:wind formula 2} implies that 
\begin{equation*}
\#\{ i | a_{v_i v_{i+1}} > 0 \} \equiv \wind(C)  \bmod 2. 
\end{equation*}
Since $Q$ is totally proper, Corollary~\ref{cor:totally-proper-wind}
implies that $\wind(C)=\pm1$ or $\wind(C)=0$ depending on whether $C$ is oriented or not. 
It follows that $\#\{ i | a_{v_i v_{i+1}} > 0 \}$ is odd (resp., even) for oriented (resp., non-oriented) chordless cycles.
In other words, $A_U$ is admissible.
\end{proof}

\begin{remark}
The na\"ive converse of Proposition~\ref{pr:admissible-qC-vs-U} is false:
there exist admissible quasi-Cartan companions that do not come from a unipotent companion. 
On the other hand, by \cite[Theorem~2.11]{SevenOtherPaper}, 
all admissible quasi-Cartan companions of a given quiver are related to each other
by simultaneously changing the signs in a subset of rows and columns.
\end{remark}

The above discussion of admissible quasi-Cartan companions, taken together with
Corollary~\ref{cor:totally-proper-wind}, suggests the following notion. 

\begin{definition}
\label{def:admissible-quiver}
Let $Q$ be a quiver. Recall that $\GQ$ denotes the unoriented simple graph associated to $Q$,
cf.\ Definition~\ref{def:Graph-Q}.
A~homo\-morphism 
\begin{equation*}
\varphi:H_1(\GQ)\to \ZZ/2\ZZ
\end{equation*}
(cf.\ Remark~\ref{rem:homotopy gen invariant}) 
is  \emph{admissible} 
if for every chordless cycle~$C$ in~$\GQ$, we~have
\begin{equation*}
\varphi(C)=\begin{cases}
1 & \text{if $C$ lifts to an oriented cycle in $Q$;} \\
0 & \text{otherwise.}
\end{cases}
\end{equation*}
\end{definition}

Corollary~\ref{cor:totally-proper-wind} directly implies the following necessary condition
that every quiver that possesses a totally proper cyclic ordering must satisfy. 

\begin{corollary}
Let $Q$ be a totally proper quiver.
Then every quiver mutation equivalent to $Q$  
allows an admissible homo\-morphism $\varphi:H_1(\GQ)\to \ZZ/2\ZZ$. 
\end{corollary}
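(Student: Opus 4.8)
The plan is to combine the key structural fact about totally proper COQs—namely Corollary~\ref{cor:totally-proper-wind}, which pins down the winding numbers of all chordless cycles—with the observation that winding numbers reduce to a well-defined homomorphism mod~$2$. First I would fix a totally proper COQ $(Q,\sigma)$ and consider the underlying unoriented graph $\Gamma_Q$ with its first homology group $H_1(\Gamma_Q)$. The map sending a cycle $C$ to $\wind(C,\sigma) \bmod 2$ is the candidate homomorphism $\varphi$; the task is to check that it is well defined on homology classes and that it satisfies the admissibility condition of Definition~\ref{def:admissible-quiver}.

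The main steps are as follows. The winding number $\wind(C,\sigma)$, as given by~\eqref{eq:wind-n-cycle}, is additive under concatenation of cycles (the $\theta$-sum and the count $\ell$ of backward arrows both add), and it changes sign under reversal of orientation of traversal. Reduction mod~$2$ kills the sign ambiguity, so $C \mapsto \wind(C,\sigma)\bmod 2$ descends to a genuine group homomorphism $\varphi: H_1(\Gamma_Q) \to \ZZ/2\ZZ$. Next, by Corollary~\ref{cor:totally-proper-wind}, every full subquiver $C$ of $(Q,\sigma)$ whose underlying graph is a chordless cycle has $\wind(C,\sigma)=\pm1$ if $C$ lifts to an oriented cycle and $\wind(C,\sigma)=0$ otherwise. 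Passing to parities, this gives exactly
\begin{equation*}
\varphi(C)=\begin{cases} 1 & \text{if $C$ lifts to an oriented cycle in $Q$;} \\ 0 & \text{otherwise,}\end{cases}
\end{equation*}
which is the admissibility requirement. Hence $\varphi$ is an admissible homomorphism for~$Q$.

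It remains to extend the conclusion to every quiver mutation-equivalent to~$Q$. This is where total properness does the essential work: by definition, every quiver in the mutation class of~$Q$ carries a proper cyclic ordering obtained by proper mutations from $(Q,\sigma)$, and each such COQ is again totally proper. Therefore the argument above applies verbatim to any $Q'$ in the mutation class, producing an admissible homomorphism $\varphi':H_1(\Gamma_{Q'})\to\ZZ/2\ZZ$. This yields the stated corollary for the whole mutation class.

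I do not anticipate a serious obstacle here, since the corollary is essentially a parity repackaging of Corollary~\ref{cor:totally-proper-wind}; the one point requiring mild care is verifying well-definedness of $\varphi$ on $H_1(\Gamma_Q)$—that is, that the mod-$2$ winding number genuinely factors through homology and does not depend on the chosen representative cycle or direction of traversal. This follows from the additivity and sign-reversal behavior of $\wind$ noted above, together with Remark~\ref{rem:homotopy gen invariant}, which already guarantees that the collection of winding numbers is determined by values on a homology basis; so the verification is routine rather than delicate.
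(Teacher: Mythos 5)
Your proposal is correct and takes essentially the same route as the paper, which derives this corollary directly from Corollary~\ref{cor:totally-proper-wind} by reading the winding number modulo $2$ as the admissible homomorphism $\varphi:H_1(\Gamma_Q)\to\ZZ/2\ZZ$ and using the fact that total properness propagates through the entire mutation class. Your additional verifications---that $\wind(\cdot,\sigma)$ is additive and sign-reversing, hence descends mod $2$ to a well-defined homomorphism on homology, and that every quiver mutation-equivalent to $Q$ inherits a totally proper ordering via proper mutations---merely spell out steps the paper leaves implicit.
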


The above corollary, in turn, implies the following necessary condition for the existence of a totally proper cyclic ordering. 

\begin{corollary}
\label{cor:even-paving}
Let $Q$ be a totally proper quiver.
Suppose that a collection of chordless cycles in $\GQ$ 
covers every edge of~$\GQ$ an even number of times. 
Then this collection must contain an even number of cycles whose lifts are oriented in~$Q$. 
\end{corollary}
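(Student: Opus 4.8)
The plan is to deduce this directly from the preceding (unlabeled) corollary by a mod-$2$ homology computation, so that the only input from total properness is the existence of an admissible homomorphism and everything else is formal. First I would invoke that corollary to obtain an admissible homomorphism $\varphi\colon H_1(\Gamma_Q)\to\ZZ/2\ZZ$ in the sense of Definition~\ref{def:admissible-quiver}. Because the target $\ZZ/2\ZZ$ is $2$-torsion, $\varphi$ kills $2\cdot H_1(\Gamma_Q)$ and hence factors through the mod-$2$ reduction $H_1(\Gamma_Q)\to H_1(\Gamma_Q;\ZZ/2\ZZ)$, giving a homomorphism $\bar\varphi\colon H_1(\Gamma_Q;\ZZ/2\ZZ)\to\ZZ/2\ZZ$. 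For a graph there are no $2$-cells, so $H_1(\Gamma_Q;\ZZ/2\ZZ)$ coincides with the cycle space $Z_1(\Gamma_Q;\ZZ/2\ZZ)$ of even subgraphs, and each chordless cycle $C$ determines an unambiguous class $[C]\in H_1(\Gamma_Q;\ZZ/2\ZZ)$ (the sign ambiguity coming from the direction of traversal disappears mod $2$).

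Next I would translate the covering hypothesis into a homological identity. Writing $C_1,\dots,C_m$ for the given chordless cycles, the assumption that every edge of $\Gamma_Q$ is covered an even number of times says precisely that the mod-$2$ chain $\sum_{i=1}^m [C_i]$ has coefficient $0$ on every edge, i.e.\ $\sum_{i=1}^m [C_i]=0$ in $Z_1(\Gamma_Q;\ZZ/2\ZZ)=H_1(\Gamma_Q;\ZZ/2\ZZ)$. Applying $\bar\varphi$ and using additivity then yields $0=\bar\varphi\bigl(\sum_i[C_i]\bigr)=\sum_{i=1}^m\varphi(C_i)$ in $\ZZ/2\ZZ$.

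Finally I would read off the conclusion from admissibility. By Definition~\ref{def:admissible-quiver}, $\varphi(C_i)=1$ when $C_i$ lifts to an oriented cycle in $Q$ and $\varphi(C_i)=0$ otherwise, so $\sum_i\varphi(C_i)$ counts, modulo $2$, the number of cycles among $C_1,\dots,C_m$ whose lifts are oriented. Since this sum vanishes in $\ZZ/2\ZZ$, that number is even, as claimed. The only point requiring any care---hence the ``hard part,'' though it is light---is the bookkeeping in the second step: confirming that ``covers every edge an even number of times'' is literally the vanishing of the mod-$2$ chain sum, and that $\varphi$ genuinely descends to mod-$2$ homology so that additivity over the $C_i$ is legitimate. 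Everything else is purely formal.
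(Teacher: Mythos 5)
Your proof is correct and is essentially the paper's own (implicit) argument: the paper states Corollary~\ref{cor:even-paving} as an immediate consequence of the preceding corollary on admissible homomorphisms, and your mod-$2$ computation---factoring $\varphi$ through $H_1(\Gamma_Q;\ZZ/2\ZZ)$, observing that the even-covering hypothesis makes $\sum_i [C_i]$ vanish as a mod-$2$ chain, and then applying admissibility from Definition~\ref{def:admissible-quiver}---is exactly the content of that implication. Nothing is missing, and the care you take with the sign ambiguity of traversal directions and with identifying $H_1(\Gamma_Q;\ZZ/2\ZZ)$ with the cycle space is precisely the right bookkeeping.
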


In the rest of this section, we use Corollary~\ref{cor:even-paving} 
to provide an example of a quiver of finite mutation type that is not totally proper. 

We briefly recall the classification of quivers of finite mutation type that was given by A.~Felikson, M.~Shapiro, and P.~Tumarkin~\cite{FeShTu},
following earlier work in~\cite{cats1, Derksen-Owen}. 
They showed that, apart from 11 exceptional mutation classes (cf.\ Proposition~\ref{pr:exceptional-11} below), 
all quivers of finite mutation type come from triangulated surfaces with boundary. 
We refer the reader to \cite{cats1} for the description of the latter construction. 

Combinatorics of quivers of finite mutation types is well understood~\cite{FeShTu, cats1}.   
In particular, algorithms exist that determine which of these quivers are mutation-equivalent~\cite{WeiwenGu}. 
So in this context, mutation invariants have less practical utility.
On the other hand, understanding which mutation-finite COQs are totally proper 
may provide useful insights into the study of total properness for general quivers.

\begin{proposition}
\label{pr:punctured-annulus}
Any quiver arising from a triangulation of a once-punctured annulus is not totally proper. 
\end{proposition}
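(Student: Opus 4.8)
The plan is to derive a contradiction from Corollary~\ref{cor:even-paving}, which records the following necessary condition: if a quiver $Q$ admits a totally proper cyclic ordering, then every collection of chordless cycles of $\Gamma_Q$ that covers each edge an even number of times must contain an \emph{even} number of cycles whose lifts are oriented in~$Q$. Accordingly, it suffices to exhibit, for one quiver $Q$ coming from a triangulation of the once-punctured annulus, a collection of chordless cycles that covers every edge of $\Gamma_Q$ an even number of times while containing an \emph{odd} number of oriented ones. Two reductions make this enough: all triangulations of a fixed surface are related by flips, hence give mutation-equivalent quivers; and, as noted earlier, the existence of a totally proper cyclic ordering is a mutation-invariant property. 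So I would fix one convenient triangulation $T$ and work only with its quiver $Q=Q_T$.

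First I would read off $Q$ from a small triangulation, say the quiver displayed in Figure~\ref{fig:Q-punctured-annulus}. The relevant local structure is a \emph{hub-and-cycle} configuration: there is a hub vertex $h$ (coming from an arc incident to the puncture) that is a source or a sink of~$Q$, together with an oriented cycle $C=(v_1\to v_2\to\cdots\to v_k\to v_1)$ formed by the arcs running around the annulus, with $h$ joined to each $v_i$ and with no chords among $v_1,\dots,v_k$. This gives a fan of chordless triangles $T_i=(h,v_i,v_{i+1})$ for $i=1,\dots,k$ (indices mod~$k$), and $C$ itself is chordless.

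The heart of the argument is the covering count together with the orientation parity. Working in the edge space over $\ZZ/2\ZZ$, every spoke $h v_i$ lies in exactly the two consecutive triangles $T_{i-1}$ and $T_i$, while each cycle edge $v_iv_{i+1}$ lies in the single triangle $T_i$; hence $\sum_i T_i \equiv C \bmod 2$, so the collection $\{C,T_1,\dots,T_k\}$ covers every edge of $\Gamma_Q$ an even number of times. For the orientation count, $C$ lifts to the oriented cycle $v_1\to\cdots\to v_k\to v_1$, whereas each triangle $T_i$ contains $h$ as a source (or sink), so $h\to v_i$, $h\to v_{i+1}$, $v_i\to v_{i+1}$ do not form a directed cycle and $T_i$ is non-oriented. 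Thus exactly one member of the collection, namely $C$, is oriented, an odd number, contradicting Corollary~\ref{cor:even-paving}. Therefore $Q$, and hence every quiver arising from a triangulation of the once-punctured annulus, has no totally proper cyclic ordering.

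The main obstacle is purely the bookkeeping of the second step: one must read the arrow orientations of $Q$ off the chosen triangulation correctly, confirming that $C$ lifts to a directed cycle while the hub makes each fan triangle non-oriented, and one must verify that $C$ and the $T_i$ are genuinely chordless in $\Gamma_Q$ so that Corollary~\ref{cor:even-paving} applies to them. Once the quiver of a single explicit triangulation is written down, each of these is a finite check; the only real subtlety is selecting the triangulation so that the hub-and-cycle picture appears with the cycles chordless and the oriented count odd.
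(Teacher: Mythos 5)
Your high-level strategy---reduce via mutation invariance and heredity to a single quiver, then contradict Corollary~\ref{cor:even-paving} by exhibiting an even edge-covering by chordless cycles containing an odd number of oriented ones---is exactly the paper's. The gap is in the construction itself: the ``hub-and-cycle'' configuration you posit does not occur in the quiver of Figure~\ref{fig:Q-punctured-annulus}, nor in any quiver mutation-equivalent to it. In that quiver (arrows $a\to c$, $a\to b$, $b\to c$, $b\to e$, $c\to d$, $d\to b$, $e\to a$, $e\to d$), the underlying graph is indeed a wheel with hub~$b$, but $b$ is neither a source nor a sink (it has in-arrows from $a,d$ and out-arrows to $c,e$), and the rim $a-c-d-e$ is not an oriented cycle ($e$ is a source and $d$ a sink on it). More fundamentally, your configuration can never arise from a triangulated surface: an arc lies in at most two triangles, and within each triangle it acquires at most one outgoing and at most one incoming arrow, so every vertex of an adjacency quiver has at most two out-neighbors and at most two in-neighbors; a source or sink therefore has degree at most~$2$, whereas your hub must have degree $k\ge 3$. (For $k=3$ your configuration is precisely a vortex, cf.\ Definition~\ref{def:vortex}; the absence of vortices in this mutation class is exactly why the paper cannot simply invoke Corollary~\ref{cor:vortex improper} and needs the parity argument instead.) Since the mutation class of a surface quiver consists of adjacency quivers of (tagged) triangulations of the same surface, the mutation-invariance reduction cannot manufacture your configuration either.

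The repair is to run your parity count on a collection of chordless cycles that actually exists in this quiver. The paper uses five: the three oriented $3$-cycles $(a\to b\to e\to a)$, $(b\to e\to d\to b)$, $(b\to c\to d\to b)$, the unoriented $3$-cycle on $\{a,b,c\}$, and the unoriented $4$-cycle on $\{a,c,d,e\}$. These cover each of the eight edges exactly twice and contain three (an odd number of) oriented cycles, so Corollary~\ref{cor:even-paving} gives the contradiction. Your bookkeeping (rim edges covered by one triangle plus $C$, spokes by two consecutive triangles) is internally consistent, but it is carried out on a quiver that does not exist in this mutation class, so the proof as written fails.
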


\begin{proof}
It suffices to treat the case where each of the two boundary components
of the annulus contains a single marked point. The general case will follow by restriction to a full subquiver. 

It will be sufficient to establish the claim for a single quiver
in the given mutation class. We will use the quiver shown in 
Figure~\ref{fig:Q-punctured-annulus}. 
%
The collection of 5 chordless cycles 
$(a \rightarrow b \rightarrow d\rightarrow a)$ (oriented),
$(b \rightarrow d \rightarrow e\rightarrow b)$ (oriented),
$(b \rightarrow c \rightarrow e\rightarrow b)$ (oriented),
$(a \rightarrow b \rightarrow c \leftarrow a)$ (unoriented), 
$(a \rightarrow c \rightarrow e \leftarrow d \rightarrow a)$ (unoriented)
covers every arrow in~$Q$ exactly twice. 
The claim then follows by Corollary~\ref{cor:even-paving}. 
\end{proof}

\begin{figure}[ht]
\begin{center}
\vspace{5pt}
\begin{tabular}{ccc}
\setlength{\unitlength}{1.4pt} 
\begin{picture}(60,50)(0,7) 

\linethickness{1.2pt} 

\qbezier(27.5,0)(60,0)(60,30)
\qbezier(27.5,0)(-5,0)(-5,30)
\qbezier(27.5,60)(-5,60)(-5,30)
\qbezier(27.5,60)(60,60)(60,30)

\qbezier(10,30)(10,24)(20,24)
\qbezier(30,30)(30,24)(20,24)
\qbezier(10,30)(10,36)(20,36)
\qbezier(30,30)(30,36)(20,36)

\thinlines

\put(45,30){\line(1,0){15}}

\qbezier(10,30)(10,20)(27.5,20)
\qbezier(45,30)(45,20)(27.5,20)
\qbezier(10,30)(10,40)(27.5,40)
\qbezier(45,30)(45,40)(27.5,40)

\qbezier(10,30)(10,10)(35,10)
\qbezier(60,30)(60,10)(35,10)
\qbezier(10,30)(10,50)(35,50)
\qbezier(60,30)(60,50)(35,50)

\put(60,30){\circle*{2}}
\put(10,30){\circle*{2}}
\put(45,30){\circle*{2}}

\put(25,7){\makebox(0,0){$c$}}
\put(25,53){\makebox(0,0){$a$}}
\put(53,34){\makebox(0,0){$b$}}
\put(37,17.5){\makebox(0,0){$e$}}
\put(37,43.5){\makebox(0,0){$d$}}

\end{picture}
&
\hspace{.5in}
&
\begin{tabular}{c}
\ \\[-1in]
\begin{tikzcd}[arrows={-stealth}, sep=small, cramped, sep=20]
  a\ar[rr] \ar[rd] &    & c\ar[dd] \\[-3pt]
  & b \ar[ur] \ar[dl] & \\[-3pt]
  d  \ar[uu] \ar[rr] & & e \ar[ul]
\end{tikzcd}
\end{tabular}
\end{tabular}
\end{center}
\vspace{10pt}
\caption{A triangulation of a once-punctured annulus and the corresponding quiver.}
\label{fig:Q-punctured-annulus}
\vspace{-12pt}
\end{figure}

\begin{remark}
Proposition~\ref{pr:punctured-annulus} can be easily extended to a large class of bordered surfaces with 
sufficiently many ``features'' (holes, punctures, and/or handles) by making additional cuts and
invoking the fact that the existence of a totally proper cyclic ordering is a hereditary property. 
In this way, we can for example show that the quivers coming from 
a disk with $\ge 3$ punctures, a torus with $\ge 2$ punctures,
or a sphere with $\ge 5$ punctures are not totally proper. 

On the other hand, many surfaces with a small number of ``features'' give rise to 
totally proper COQs. 
A~couple of such examples are shown in Figure~\ref{fig:4 punctured sphere}. 
The corresponding mutation classes contain four (resp.,~one) 
non-isomorphic quivers, so verification of total properness is straightforward. 
\end{remark}

\vspace{-5pt}

\begin{figure}[ht]
{
\newcommand{\radius}{1.4cm} 
\newcommand{\vertsize}{1.4pt}
\newcommand{\smoltxt}[1]{{\scriptstyle #1}}
\begin{tikzpicture}
\filldraw[black] (0,0)++(135:\radius) circle (\vertsize) node[above left=-1pt] {$a$} coordinate (a);
\filldraw[black] (0,0)++(135-60:\radius) circle (\vertsize) node[above right=-1pt] {$b$} coordinate (b);
\filldraw[black] (0,0)++(135-120:\radius) circle (\vertsize) node[ right] {$c$} coordinate (c);
\filldraw[black] (0,0)++(135-180:\radius) circle (\vertsize) node[below right=-1pt] {$d$} coordinate (d);
\filldraw[black] (0,0)++(135-240:\radius) circle (\vertsize) node[below left=-1pt] {$e$} coordinate (e);
\filldraw[black] (0,0)++(135-300:\radius) circle (\vertsize) node[ left] {$f$} coordinate (f);

\draw[black, dashed, decoration={markings, mark=at position .95 with {\arrow{<}}}, postaction={decorate}] (0,0) circle (\radius);

\draw[black, -{stealth}, shorten >=3pt, shorten <= 3pt] (a) -- (b);
\draw[black, -{stealth}, shorten >=3pt, shorten <= 3pt] (b) -- (d);

\draw[black, -{stealth}, shorten >=3pt, shorten <= 3pt] (a) -- (c);
\draw[black, -{stealth}, shorten >=3pt, shorten <= 3pt] (c) -- (d);

\draw[black, -{stealth}, shorten >=3pt, shorten <= 3pt] (d) -- (e);
\draw[black, -{stealth}, shorten >=3pt, shorten <= 3pt] (e) -- (a);

\draw[black, -{stealth}, shorten >=3pt, shorten <= 3pt] (d) -- (f);
\draw[black, -{stealth}, shorten >=3pt, shorten <= 3pt] (f) -- (a);
\end{tikzpicture}
} 
\qquad
{
\newcommand{\radius}{1.4cm} 
\newcommand{\vertsize}{1.4pt}
\newcommand{\smoltxt}[1]{{\scriptstyle #1}}
\begin{tikzpicture}
\filldraw[black] (0,0)++(90:\radius) circle (\vertsize) node[above] {$a$} coordinate (a);
\filldraw[black] (0,0)++(0:\radius) circle (\vertsize) node[right] {$b$} coordinate (b);
\filldraw[black] (0,0)++(-90:\radius) circle (\vertsize) node[below] {$c$} coordinate (c);
\filldraw[black] (0,0)++(-180:\radius) circle (\vertsize) node[left] {$d$} coordinate (d);

\draw[black, dashed, decoration={markings, mark=at position .1 with {\arrow{<}}}, postaction={decorate}] (0,0) circle (\radius);

\draw[black, -{stealth}, shorten >=3pt, shorten <= 3pt] (a) -- (b);
\draw[black, -{stealth}, double, shorten >=3pt, shorten <= 3pt] (b) -- (c);
\draw[black, -{stealth}, shorten >=3pt, shorten <= 3pt] (c) -- (d);
\draw[black, -{stealth}, shorten >=3pt, shorten <= 3pt] (d) -- (a);

\draw[black, -{stealth}, shorten >=3pt, shorten <= 3pt] (c) -- (a);
\draw[black, -{stealth}, shorten >=3pt, shorten <= 3pt] (d) -- (b);
\end{tikzpicture}
} 
\caption{
Totally proper COQs whose quivers arise from triangulations of a $4$-punctured sphere (on the left)
and a one-holed torus (on the right). 
}
\vspace{-10pt}
\label{fig:4 punctured sphere}
\end{figure}

We conclude this section by looking at exceptional quivers of finite mutation type. 

\begin{proposition}
\label{pr:exceptional-11}
Quivers of finite mutation types $E_k$, $E_k^{(1)}$, $E_k^{(1,1)}$, for $k=6,7,8$, are totally proper. 
Quivers of type $X_6$ or $X_7$ are not totally proper. 
\end{proposition}

\begin{proof}
The first statement can be verified on a computer. To prove the second statement, consider the quiver ~$Q$ 
with 5 vertices $a, b, c, d, e$ and 8 arrows $a\to c \to b\stackrel{2}{\longrightarrow} a$, $e\to c\to d\stackrel{2}{\longrightarrow} e$. 
This quiver comes from a triangulation of a once-punctured annulus; in particular, mutating at $c$ and then at $a$ produces the quiver from Figure~\ref{fig:Q-punctured-annulus}. 
It~then follows from Proposition~\ref{pr:punctured-annulus} that $Q$ is not totally proper.
It remains to note that $Q$ is a full subquiver of a quiver of type $X_6$ (hence of a quiver of type~$X_7$). 
\end{proof}

\newpage 

\section{Signed braid group action on upper triangular matrices} 
\label{sec:bondal}

In this section, we discuss a connection between our theory of cyclically ordered quivers 
and the well-known action of the braid group (and of a larger ``signed braid group'') 
on unipotent upper-triangular integer matrices. 
This action goes back to the work of A.~N.~Rudakov~\cite{Rudakov},
B.~Dubrovin~\cite[App.~F]{Dub},
S.~Cecotti--C.~Vafa~\cite[p.~605]{CV},
and A.~I.~Bondal--A.~E.~Polishchuk~\cite{BP}. 

\medskip

We begin by reviewing the basic construction, following A.~Bondal~\cite[Section~2]{Bondal}. 
For more recent work, see, e.g., \cite{FW} and references therein. 

\begin{definition}
\label{def:Bn-action}
Let $\mathbf{B}_n$ denote the \emph{braid group} on $n$ strands, with Artin generators
$\sigma_1, \ldots, \sigma_{n-1}$. 
Let $\mathcal{U}(n,\ZZ)$ denote the set of all $n\times n$ unipotent upper-triangular matrices 
with integer entries. 
The braid group $\mathbf{B}_n$ acts on the set $\mathcal{U}(n,\ZZ)$ in the following way. \linebreak[3]
For $U=(u_{ij})\in\mathcal{U}(n,\ZZ)$ and $k\in\{1,\dots,n-1\}$, 
define the symmetric matrix $G\in\GL(n,\ZZ)$ (which depends on~$k$ and, importantly, on~$U$) by
\vspace{-1pt}
\begin{equation}
\label{eq:G-sigma}
G=s_k (I - u_{k,k+1} E_{k+1,k}) 
\vspace{-1pt}
\end{equation}
where 
\vspace{-1pt}
\begin{itemize}[leftmargin=.2in]
\item 
$E_{k+1,k}$ is the matrix whose only nonzero entry is a $1$ in row $(k+1)$ and column~$k$; 
\item
$s_k$ is the permutation matrix for the adjacent transposition $(k,k+1)$.
\end{itemize}
We then define the action of the Artin generator $\sigma_k$ on $U$ by 
\vspace{-1pt}
\begin{equation}
\label{eq:sigma_i(U)}
\sigma_k(U) = G  U G^T. 
\end{equation}
It is straightforward to check that $\sigma_k(U)$ is again a unipotent upper triangular matrix
and that the above construction gives an action of the braid group~$\mathbf{B}_n$ on $\mathcal{U}(n,\ZZ)$. 
Put differently, the transformations $\sigma_k$ satisfy the braid relations 
\begin{equation*}
\sigma_i \sigma_{i+1} \sigma_i = \sigma_{i+1} \sigma_i \sigma_{i+1}\,. 
\end{equation*}
\end{definition}

\begin{example}
\label{eg:U4x4}
Let $n=4$. For $k=2$ and 
\begin{equation}
\label{eq:U4u}
U=\begin{bmatrix}
1 & u_{12} & u_{13} & u_{14} \\
0 & 1 & u_{23} & u_{24} \\
0 & 0 & 1 & u_{34} \\
0 & 0 & 0 & 1
\end{bmatrix}\!\!,
\end{equation}
we get
\begin{equation*}
G=\begin{bmatrix}
1 & 0 & 0 & 0 \\
0 & \!\!-u_{2 3}\!\! & 1 & 0 \\
0 & 1 & 0 & 0 \\
0 & 0 & 0 & 1
\end{bmatrix}\!\!,
\end{equation*}
and
\begin{equation}
\label{eq:U'4u}
\sigma_2(U)=GUG^T= 
\begin{bmatrix}
1 & \!\!-u_{12}u_{23}+u_{13}\! & u_{12} & u_{14} \\
0 & 1 & \!-u_{23}\! & \!-u_{23}u_{24}+u_{34} \\
0 & 0 & 1 & u_{24} \\
0 & 0 & 0 & 1
\end{bmatrix}\!\!. 
\end{equation}
\end{example}

\begin{remark}
\label{rem:sigma-inverse}
It is easy to see that the inverse of the Artin generator $\sigma_k$ acts by
\begin{equation}
\label{eq:sigmainverse_i(U)}
\sigma^{-1}_k(U) = H  U H^T, 
\end{equation}
where the symmetric matrix $H\in\GL(n,\ZZ)$ is defined by 
\begin{equation}
\label{eq:H-sigma}
H=s_k (I - u_{k,k+1} E_{k,k+1}) ,
\vspace{-1pt}
\end{equation}
cf.\ \eqref{eq:G-sigma}--\eqref{eq:sigma_i(U)}. 
To illustrate, in Example~\ref{eg:U4x4} we would get
\begin{align}
\nonumber
H&=\begin{bmatrix}
1 & 0 & 0 & 0 \\
0 & 0 & 1 & 0 \\
0 & 1 & \!\!-u_{2 3}\!\! & 0 \\
0 & 0 & 0 & 1
\end{bmatrix}\!\!,
\\[.05in]
\label{eq:sigmaiverse2U4u}
\sigma_2^{-1}(U)=HUH^T&= 
\begin{bmatrix}
1 & u_{13} & \!\!-u_{13}u_{23}+u_{12}\! & u_{14} \\
0 & 1 & \!-u_{23}\! & u_{34} \\
0 & 0 & 1 & \!-u_{23}u_{34}+u_{24} \\
0 & 0 & 0 & 1
\end{bmatrix}\!\!. 
\end{align}
\end{remark}

\hide{
Applying $\sigma_3$, we then obtain:
\begin{equation}
\label{eq:sigma3sigma2}
\sigma_3(\sigma_2(U))=
\begin{bmatrix}
1 & \!\!-u_{12}u_{23}+u_{13}\! & \!\!-u_{12}u_{24}+u_{14}\! & u_{12} \\
0 & 1 & u_{34} & \!-u_{23} \\
0 & 0 & 1 & \!-u_{24} \\
0 & 0 & 0 & 1
\end{bmatrix}\!\!. 
\end{equation}
} 

The action of the braid group~$\mathbf{B}_n$ described above extends to the action of the 
signed braid group:  


\begin{definition}
\label{def:signed-braid-group}
The \emph{signed braid group} $\mathbf{B}_n^\pm$ is the semidirect product
\begin{equation*}
\mathbf{B}_n^\pm = \mathbf{B}_n \rtimes \{\pm1\}^n
\end{equation*}
of the braid group $\mathbf{B}_n$ and the $2^n$-element group $\{\pm1\}^n$, 
the direct product of $n$ copies of the multiplicative two-element group~$\{\pm1\}$. 
For $i\in\{1,\dots,n\}$, we denote by $\rho_i$ the generator $(1,\dots,1,-1,1,\dots,1)\in\{\pm1\}^n$, 
where $-1$ is in position~$i$. 
The generators $\sigma_k$ of $\mathbf{B}_n$ and 
the generators $\rho_i$ of $\{\pm1\}^n$
satisfy the commutation relations
\begin{align}
\label{eq:rho-sigma1}
\rho_k \sigma_j &= \sigma_j \rho_k \ \  \text{if $k\notin \{j,j+1\}$;}\\
\label{eq:rho-sigma2}
\rho_j \sigma_j &= \sigma_j \rho_{j+1};\\
\label{eq:rho-sigma3}
\rho_{j+1} \sigma_j &= \sigma_j \rho_j\,. 
\end{align}

To extend the action of the braid group~$\mathbf{B}_n$ on $\mathcal{U}(n,\ZZ)$ 
described in Definition~\ref{def:Bn-action} to the action of~$\mathbf{B}_n^\pm$, 
we let each 
element $\varepsilon=(\varepsilon_1,\dots,\varepsilon_n)\in\{\pm1\}^n$ 
act by 
$\varepsilon(U)=JUJ^T$, 
where the diagonal matrix $J\in\GL(n,\ZZ)$ is given by 
\begin{equation*}
J=
\begin{bmatrix}
\varepsilon_1 & 0 & \cdots & 0 \\
0 & \varepsilon_2 & \cdots & 0 \\
\vdots & \vdots & \ddots & \vdots \\
0 & 0 & \cdots & \varepsilon_n
\end{bmatrix}. 
\end{equation*}
In particular, each generator $\rho_i$ acts by changing the signs of all non-diagonal elements located 
in row~$i$ or in column~$i$. 
The relations \eqref{eq:rho-sigma1}--\eqref{eq:rho-sigma3} are easily checked. 
\end{definition}

It is immediate from the above definitions that the congruence class of an upper-tri\-angular integer matrix~$U$---hence 
the conjugacy class of its cosquare, the corresponding characteristic/Alexander polynomial, etc.---are 
preserved by the $\mathbf{B}_n^\pm$-action.
Put differently, the intersection of each congruence class in $\GL(n,\ZZ)$ 
with the set $\mathcal{U}(n,\ZZ)$ of unipotent upper-triangular integer matrices
is a disjoint union of \hbox{$\mathbf{B}_n^\pm$-orbits}. 




\pagebreak[3]

\section{Signed braid group action on ordered quivers}

Since unipotent upper-triangular integer $n\times n$ matrices with integer entries encode 
quivers on a linearly ordered $n$-element set of vertices,
one could interpret the action of the braid group $\mathbf{B}_n$ on $\mathcal{U}(n,\ZZ)$ as an action on $n$-vertex quivers. 
It~turns out that this na\"ive translation from the language of matrices to the language of quivers is not the ``right'' one.
Instead, we propose the following setup. 

\pagebreak[3]

\begin{definition}
Fix an $n$-element set $V=\{v_1,\dots,v_n\}$.
Let $\LOQ(n)$ denote the~set of all pairs $(Q,\tau)$ where 
$Q$ is a quiver with the vertex set~$V$ and $\tau$ is a linear ordering of~$V$. 
Thus $\LOQ(n)$ is the set of all \emph{linearly ordered quivers} on the vertex set~$V$, cf.\ Definition~\ref{def:cyclic ordering}. 

We note that $(Q,\tau)$ contains the same information as $(U,\tau)$, 
where $U$ denotes the unipotent companion of $Q$ with respect to~$\tau$. 
To rephrase, linearly ordered quivers are crypto\-morphic to unipotent upper-triangular integer matrices
whose rows and columns are labeled by a permutation of the $n$-element set~$V$. 
(The same permutation is used for the rows and for the columns.)
\end{definition}

We next upgrade the action of the braid group $\mathbf{B}_n$ on 
upper-triangular matrices to its action on the set $\LOQ(n)$ of linearly ordered quivers.

\begin{definition}
\label{def:Bn-acts-on-COQ}
Let ${(Q,\tau)\in\LOQ(n)}$, and let $U$ be the unipotent companion of~$Q$ with respect to the linear ordering~$\tau$.
We define the action of a generator $\sigma_k\in\mathbf{B}_n$ on $(Q,\tau)$ by 
$\sigma_k(Q,\tau)=(Q',\tau')$, 
where 
\begin{itemize}[leftmargin=.2in]
\item 
the linear ordering $\tau'$ is obtained from $\tau$ by switching the $k$th and $(k+1)$st elements; 
\item
$Q'$ is the quiver whose unipotent companion with respect to~$\tau'$ is $\sigma_k(U)$ (cf.~\eqref{eq:sigma_i(U)}). 
\end{itemize}
Equivalently, $\sigma_k$ sends the pair $(U,\tau)$ to $(\sigma_k(U), \sigma_k(\tau))$
where $\sigma_k$ acts on $U$ as in Definition~\ref{def:Bn-action}
and acts on~$\tau$ by the adjacent transposition~$s_k$. 
Since the map $\sigma_k\mapsto s_k$ extends to a group homomorphism,
this rule gives a group action. 
\end{definition}

\begin{example}
\label{eg:sigma3sigma2-LOQ}
Let $n=4$. Let $\tau=(v_1,v_2,v_3,v_4)$ be the standard linear ordering~on~$V$. 
For simplicity, assume that $u_{ij}<0$ for all $1\le i<j\le 4$. 
The matrix $U$ given by~\eqref{eq:U4u} is the unipotent companion of the linearly ordered quiver
$(Q,\tau)$ shown below: 
\vspace{2pt}
\begin{equation}
\label{eq:Qtau-4vert}
(Q,\tau)\ =\  \begin{tikzcd}[arrows={-stealth}, sep=small, cramped, sep=60]
v_1 \ar[r, "-u_{12}"] \ar[rr, swap, bend right=15, "-u_{13}"] \ar[rrr,  bend right=30, "-u_{14}"] &    
v_2 \ar[r, "-u_{23}"]  \ar[rr, swap, bend right=15, "-u_{24}"]  &    
v_3 \ar[r, "-u_{34}"] &    
v_4
\end{tikzcd}\,.
\vspace{-5pt}
\end{equation}
Applying the Artin generator~$\sigma_2$ to $(U,\tau)$, we get the matrix $U'=\sigma_2(U)$ given by~\eqref{eq:U'4u}
and the linear ordering $\tau'=\sigma_2(\tau)=s_2(\tau)=(v_1,v_3,v_2,v_4)$. 
Translating back into the quiver language, we obtain the following linearly ordered quiver~$(Q',\tau')=\sigma_2(Q,\tau)$: \vspace{2pt}
\begin{equation*}
(Q',\tau')\ =\ \begin{tikzcd}[arrows={-stealth}, sep=small, cramped, sep=60]
v_1 \ar[r, "-u_{13}+u_{12}u_{23}"] \ar[rr, swap, bend right=15, "-u_{12}"] \ar[rrr,  bend right=30, "-u_{14}"] &    
v_3  \ar[rr, swap, bend right=15, "-u_{34}+u_{23}u_{24}"]  &    
v_2 \ar[r, "-u_{24}"] \ar[l, swap, "-u_{23}"] &    
v_4
\end{tikzcd}\,;
\vspace{-5pt}
\end{equation*}
note that we reordered the vertices in accordance with~$\tau'$. 

\pagebreak[3]

If we instead apply the inverse Artin generator $\sigma_2^{-1}$ to~$(Q,\tau)$,
we get the linearly ordered quiver associated with the matrix $\sigma_2^{-1}$
given by~\eqref{eq:sigmaiverse2U4u}:
\vspace{2pt}
\begin{equation}
\label{eq:sigma2inverse(Q,tau)}
\sigma_2^{-1}(Q,\tau)\ =\ \begin{tikzcd}[arrows={-stealth}, sep=small, cramped, sep=60]
v_1 \ar[r, "-u_{13}"] \ar[rr, swap, bend right=15, "-u_{12}+u_{13}u_{23}"] \ar[rrr,  bend right=30, "-u_{14}"] &    
v_3  \ar[rr, swap, bend right=15, "-u_{34}"]  &    
v_2 \ar[r, "-u_{24}+u_{23}u_{34}"] \ar[l, swap, "-u_{23}"] &    
v_4
\end{tikzcd}\,.
\vspace{-5pt}
\end{equation}

\hide{
Applying $\sigma_3$ to~$U'$ yields the matrix $U''=\sigma_3(\sigma_2(U))$ given by~\eqref{eq:sigma3sigma2}; 
accordingly, applying $\sigma_3$ to~$(Q',\tau')$ produces the linearly ordered quiver
$(Q'',\tau'')=\sigma_3(\sigma_2(Q,\tau))$ given by 
\vspace{2pt}
\begin{equation}
\label{eq:sigma3sigma2-LOQ}
(Q'',\tau'')\ =\ \begin{tikzcd}[arrows={-stealth}, sep=small, cramped, sep=60]
v_1 \ar[r, "-u_{13}+u_{12}u_{23}"] \ar[rr, swap, bend right=15, "-u_{14}+u_{12}u_{24}"] \ar[rrr,  bend right=30, "-u_{12}"] &    
v_3  \ar[r, "-u_{34}"] &    
v_4 &    
v_2  \ar[ll, bend left=15, "-u_{23}"] \ar[l, swap, "-u_{24}"] 
\end{tikzcd}\, . 
\vspace{-5pt}
\end{equation}
} 
\end{example}

\begin{remark}
In Example~\ref{eg:sigma3sigma2-LOQ}, as in other computations involving the $\mathbf{B}_n$-action, the assumption $u_{ij}<0$ can be lifted: 
the formulas continue to hold if we allow negative arrow~weights, 
with the convention that notation $a\stackrel{q}{\longrightarrow}b$, with $q<0$,
should be interpreted as saying that the quiver contains $-q$ arrows directed from $b$ to~$a$ (and no arrows $a\to b$).
\end{remark} 

The $\mathbf{B}_n$-action from Definition~\ref{def:Bn-acts-on-COQ}
extends in a natural way to a $\mathbf{B}_n^\pm$-action on $\LOQ(n)$
that corresponds to the action of $\mathbf{B}_n^\pm$
on $\mathcal{U}(n,\ZZ)$ described in Definition~\ref{def:signed-braid-group}: 

\begin{definition}
\label{def:quiver-reversal}
Let $(Q,\tau)\in\LOQ(n)$ be a linearly ordered quiver. 
A generator $\rho_i\in\{\pm1\}^n\subset \mathbf{B}_n^\pm$ 
acts on~$(Q,\tau)$ by $\rho_i(Q,\tau)=(Q',\tau)$, where the quiver~$Q'$ is obtained from $Q$ by
reversing all arrows incident to the $i$th vertex in the linear ordering~$\tau$. 
The~ordering $\tau$ does not change.

Together with the action of $\mathbf{B}_n$ described in Definition~\ref{def:Bn-acts-on-COQ},
this gives an action of the signed braid group $\mathbf{B}_n^\pm$ on $\LOQ(n)$. 

We call the transformation $\rho_i:(Q,\tau)\mapsto(Q',\tau)$ the \emph{reversal} at~$i$. 
Note that the construction of the quiver $Q'$ depends on both~$Q$ and~$\tau$. 

More generally, for a subset $S = \{ v_{s_1}, v_{s_2}, \dots \}\subset\{1,\dots,n\}$, 
the product of commuting reversals $\rho_S=\rho_{s_1}  \rho_{s_2}  \cdots\in\{\pm1\}^n$
acts by reversing all arrows between the vertices in positions~$s\in S$ (with respect to~$\tau$) 
and the vertices in positions $\bar s\in\bar S=\{1,\dots,n\}-S$. 
\end{definition}

\begin{remark} 
\label{rem:full-reversal}
It is immediate from Definition~\ref{def:quiver-reversal} that $\rho_S$ and $\rho_{\bar S}$ act in the same way.
Put differently, the product of all reversals $\rho_{1} \cdots\rho_{n}$ acts trivially on $\LOQ(n)$. 
\end{remark}

\begin{example}
Applying the reversals $\rho_1$ and $\rho_4$ to the linearly ordered quiver $(Q,\tau)\in\LOQ(4)$ 
in~\eqref{eq:Qtau-4vert}, we obtain:
\vspace{2pt}
\begin{equation*}
\rho_{1,4}(Q,\tau) = \rho_{2,3}(Q,\tau)
\ =\  \begin{tikzcd}[arrows={-stealth}, sep=small, cramped, sep=60]
v_1  \ar[rrr,  bend right=30, "-u_{14}"] &    
v_2 \ar[l, swap, "-u_{12}"] \ar[r, "-u_{23}"]    &    
v_3  \ar[ll, bend left=15, "-u_{13}"] &    
v_4 \ar[ll, bend left=15, "-u_{24}"] \ar[l, swap, "-u_{34}"]
\end{tikzcd}\,.
\vspace{-6pt}
\end{equation*}
\end{example}

The following result will allow us to push the $\mathbf{B}_n^\pm$-action down to the level of~COQs: 

\begin{proposition}
\label{pr:B-orbit-of-COQ}
Let $(Q,\tau)\in\LOQ(n)$ be a linearly ordered quiver. 
Suppose that a linear ordering $\tau'$ is obtained from $\tau$ by a cyclic shift. 
Then 
$(Q,\tau)$ and $(Q,\tau')$ lie in the same $\mathbf{B}_n$-orbit, and consequently in the same $\mathbf{B}_n^\pm$-orbit.
\end{proposition}

\begin{proof}
It suffices to treat the case where $\tau=(v_1<\dots<v_n)$ and $\tau'=(v_2<\dots<v_n<v_1)$. 
Let $U$ and $U'$ be the corresponding matrices in~$\mathcal{U}(n,\ZZ)$. 
It is not hard to see (cf.\ Proposition~\ref{prop:UQ cyclic shift congruent}) 
that in this case, 
\begin{equation}
\label{eq:bondal-coxeter}
U'=\sigma_{n-1} \cdots  \sigma_1(U). 
\end{equation}
The claim follows. 
\end{proof}

\begin{definition}
In light of Proposition~\ref{pr:B-orbit-of-COQ}, 
we can define the \emph{$\mathbf{B}_n^\pm$-orbit of a COQ} $(Q,\sigma)$ on an $n$-element vertex set
as the $\mathbf{B}_n^\pm$-orbit of any linearly ordered quiver $(Q,\tau)\in\LOQ(n)$ 
where $\tau$ is compatible with~$\sigma$, cf.\ Definition~\ref{def:cyclic ordering}. 

\end{definition}

\begin{remark}[{cf.\ \cite[Remark~2.3]{Bondal}}]
\label{rem:double-twist}
Equation~\eqref{eq:bondal-coxeter} implies that the central element $(\sigma_{n-1} \cdots  \sigma_1)^n\in\mathbf{B}_n$
(the full twist) acts trivially on $\mathcal{U}(n,\ZZ)$. 
Cf.\ Remark~\ref{rem:full-reversal}. 
\end{remark}

\section{From signed braid group action to proper mutations of COQs}
\label{sec:bondal-to-proper-mut}

We will now outline the connection between the above constructions
and the machinery of proper mutations of cyclically ordered quivers. 

\begin{theorem}
\label{th:braid-vs-COQ-mut}
If two COQs on $n$ vertices are related to each other 
by a sequence of wiggles and proper mutations, 
then they lie in the same $\mathbf{B}_n^\pm$-orbit. 
\end{theorem}

\begin{proof}[Proof (sketch)]
Let $(Q,\tau),(Q',\tau')\in\LOQ(n)$, with $\tau=(v_1<\cdots<v_n)$. 
In view of Proposition~\ref{pr:B-orbit-of-COQ}, it suffices to verify the following statements: 
\begin{itemize}[leftmargin=.2in]
\item 
Suppose that $(Q',\tau')$ is obtained from $(Q,\tau)$ by a wiggle $(v_k v_{k+1})$; 
in other words, $Q'=Q$, $v_k$~is not adjacent to~$v_{k+1}$ in~$Q$,
and $\tau'$ is obtained from~$\tau$ by switching the order of $v_k$ and~$v_{k+1}$. 
Then $(Q',\tau')= \sigma_k(Q,\tau)$. 
\item
Suppose that $(Q',\tau')$ is obtained from $(Q,\tau)$ by a mutation at a sink/source~$v_i$
(with $\tau'=\tau$). 
Then $(Q',\tau') = \rho_i(Q,\tau)$.
\item
Suppose that the vertex $v_k$ is neither a source nor a sink, and that 
$(Q',\tau')$ is obtained from $(Q,\tau)$ by a proper mutation at~$v_k$, in the following sense: 
\begin{itemize}[leftmargin=.2in]
\item[$\scriptstyle\blacktriangleright$]
$Q'=\mu_{v_k}(Q)$; 
\item[$\scriptstyle\blacktriangleright$]
$v_1\in\In(v_k)\subseteq\{v_1,\dots,v_{k-1}\}$ and $\Out(v_k)\subseteq\{v_{k+1},\dots,v_n\}$, 
so that $v_k$ is proper, cf.\ Remark~\ref{rem:In-Out-proper};
\item[$\scriptstyle\blacktriangleright$]
$\tau'=(v_k, v_1, v_2, \dots, v_{k-1}, v_{k+1},  \dots, v_n)$, as in the proof of Theorem~\ref{thm:I-N-action}. 
\end{itemize}
Then
\begin{equation}
\label{eq:U'-proper-mut}
(Q',\tau') = \mu_{v_k}(Q,\tau)=\rho_1 \sigma_1^{-1} \sigma_2^{-1} \cdots \sigma_{k-1}^{-1}( (Q, \tau) ). 
\end{equation}
\end{itemize}
We omit the details of this straightforward calculation. 
\end{proof}

\begin{example}
\label{eg:rho4sigma3sigma2-Qtau}
We continue with Example~\ref{eg:sigma3sigma2-LOQ}. 
From~\eqref{eq:sigma2inverse(Q,tau)}, we obtain:
\vspace{2pt}
\begin{equation*}
\sigma_1^{-1}(\sigma_2^{-1}(Q,\tau))\ =\ \begin{tikzcd}[arrows={-stealth}, sep=small, cramped, sep=60]
v_3 \ar[rrr,  bend right=30, "-u_{34}"] &    
v_1 \ar[l, swap, "-u_{13}"] \ar[r, "-u_{12}"] \ar[rr, swap, bend right=15, "-u_{14}+u_{13}u_{34}"]  &    
v_2 \ar[r, "-u_{24}+u_{23}u_{34}"] \ar[ll, bend left=15, "-u_{23}"]  &    
v_4
\end{tikzcd}\,.
\vspace{-5pt}
\end{equation*}
Applying the reversal $\rho_1$, we obtain:
\vspace{2pt}
\begin{equation*}
\rho_1(\sigma_1^{-1}(\sigma_2^{-1}(Q,\tau)))\ =\ \begin{tikzcd}[arrows={-stealth}, sep=small, cramped, sep=60]
v_3  \ar[r, "-u_{13}"] \ar[rr, swap, bend right=15, "-u_{23}"]  &    
v_1 \ar[r, "-u_{12}"] \ar[rr, swap, bend right=15, "-u_{14}+u_{13}u_{34}"]  &    
v_2 \ar[r, "-u_{24}+u_{23}u_{34}"] &    
v_4 \ar[lll, swap, bend left=30, "-u_{34}"]
\end{tikzcd}\,. 
\vspace{-5pt}
\end{equation*}
Directly computing $\mu_{v_3}(Q,\tau)$, we arrive at the same result:
\begin{equation*}
\mu_{v_3}(Q,\tau) = \rho_1(\sigma_1^{-1}(\sigma_2^{-1}(Q,\tau))). 
\end{equation*}
This agrees with~\eqref{eq:U'-proper-mut}, for $k=3$. 
\end{example}

\hide{
Applying the reversal $\rho_4$ to the linearly ordered quiver $\sigma_3(\sigma_2(Q,\tau))$
given by~\eqref{eq:sigma3sigma2-LOQ},
we obtain the linearly ordered quiver 
\begin{equation}
\label{eq:rho4sigma3sigma2-LOQ}
\rho_4(\sigma_3(\sigma_2(Q,\tau)))\ =\ 
\begin{tikzcd}[arrows={-stealth}, sep=small, cramped, sep=60]
v_1 \ar[r, "-u_{13}+u_{12}u_{23}"] \ar[rr, swap, bend right=15, "-u_{14}+u_{12}u_{24}"] &    
v_3  \ar[r, "-u_{34}"]  \ar[rr, swap, bend right=15, "-u_{23}"] &    
v_4 \ar[r, "-u_{24}"] & 
v_2 \ar[lll, swap, bend left=30, "-u_{12}"] 
\end{tikzcd}\ . 
\vspace{-5pt}
\end{equation}
We note that the quiver in~\eqref{eq:rho4sigma3sigma2-LOQ} 
is nothing but $\mu_2(Q)$ (if we ignore the linear ordering). 
} 

Theorem~\ref{th:braid-vs-COQ-mut} asserts that 
the proper mutation class of a COQ is contained in its $\mathbf{B}_n^\pm$-orbit.
The opposite inclusion is usually false.  
On the other hand, the following statement holds: 

\begin{corollary}
\label{cor:braid-orbit-via-mut}
The $\mathbf{B}_n^\pm$-orbit of a COQ~$Q$
consists of all COQs that can be obtained from $Q$ by a sequence of proper mutations,  
reversals, and wiggles. 
\end{corollary}

The proof of Corollary~\ref{cor:braid-orbit-via-mut} will rely on some preliminary work. 

\begin{definition}
\label{def:proper-mu-LOQ}
Let $(Q,\tau)$ and $(Q',\tau')$ be linearly ordered quivers. 
We say that these quivers are obtained from each other by a \emph{proper mutation} 
if the corresponding cyclically ordered quivers $(Q,\sigma)$ and $(Q',\sigma')$ are related to each other by 
a proper mutation of COQs. 
\end{definition}

\begin{proposition}
\label{pr:bondal is mutation reversal}
Let $(Q,\tau)\in\LOQ(n)$ and $(Q',\tau')=\sigma^{\pm1}_k(Q,\tau)$. 
Then $(Q',\tau')$ can be obtained from~$(Q,\tau)$ by applying proper mutations, reversals, cyclic shifts, and/or wiggles. 
\end{proposition}

\begin{proof}
We first note that if $(Q',\tau')$ can be obtained from~$(Q,\tau)$ via 
proper mutations, reversals, cyclic shifts, and/or wiggles,
then the same is true with $(Q,\tau)$ and $(Q',\tau')$ interchanged. 
It therefore suffices to treat the case $(Q',\tau')=\sigma^{-1}_k(Q,\tau)$. 

Let $\tau=(v_1<\dots<v_n)$ and $V=\{v_1,\dots,v_n\}$. 
If the vertices $v_k$ and $v_{k+1}$ are not adjacent in~$Q$ 
(i.e., there are no arrows $v_k\to v_{k+1}$ or ${v_k\leftarrow v_{k+1})}$, 
then $(Q',\tau')$ is obtained from~$(Q,\tau)$ by the wiggle $(v_k, v_{k+1})$, and we are done. 

Let $v_k$ and $v_{k+1}$ be adjacent in~$Q$. 
Applying cyclic shifts if necessary and relabeling the vertices accordingly, 
we may assume without loss of generality that $k=1$.
Pick $S\subset\{1,\dots,n\}$ so that $v_2$ is a proper vertex in $\rho_S(Q,\tau)$. 
(Thus $\In_{\rho_S(Q)}(v_2) = \{v_{1}\}$.) 
We can now invoke~\eqref{eq:U'-proper-mut}, obtaining  
\begin{equation}
\label{eq:mu-vk-rhoS}
\mu_{v_2} (\rho_S(Q,\tau)) = \rho_{1}(\sigma^{-1}_1(\rho_S(Q,\tau))).
\end{equation}
Denote $T=s_1(S)$, where $s_1$ is the adjacent transposition~$(1,2)$.
We then get 
\begin{equation*}
(Q',\tau')
=\sigma^{-1}_1(Q,\tau) 
\stackrel{\eqref{eq:rho-sigma1}-\eqref{eq:rho-sigma3}}{=\joinrel=\joinrel=\joinrel=\joinrel=\joinrel=\joinrel=} \rho_{T}(\sigma^{-1}_1(\rho_S(Q,\tau))) 
\stackrel{\ref{eq:mu-vk-rhoS}}{=\joinrel=\joinrel=} \rho_{T} \rho_{1} ( \mu_{v_2} (\rho_S(Q,\tau))),
\end{equation*}
so $(Q',\tau')$ can be obtained from $(Q,\tau)$ via reversals and a proper mutation. 
\end{proof}

\begin{proof}[Proof of Corollary~\ref{cor:braid-orbit-via-mut}]
Combine Propositions~\ref{pr:B-orbit-of-COQ} and~\ref{pr:bondal is mutation reversal}.
\end{proof}



As mentioned above, a $\mathbf{B}_n^\pm$-orbit typically consists of several proper mutation equivalence classes. 
It is natural to ask whether two \emph{proper} COQs lying in the same \hbox{$\mathbf{B}_n^\pm$-orbit}
are necessarily mutation equivalent. The answer turns out to be positive for 3-vertex quivers
but generally negative for larger quivers, see Theorem~\ref{th:signed-braid-n=3}
and Example~\ref{eg:proper-COQs-congr-but-mut-inequiv}, respectively. 
One possible explanation of this phenomenon may be related to the fact 
that every proper $3$-vertex COQ is \emph{totally proper}, see Proposition~\ref{prop:3-vert proper}.
(Indeed, there is a unique proper cyclic ordering for any complete $3$-vertex quiver.) 
The following problem (cf.\ Problem~\ref{rem:conj-class-for-tp-determines-mut-class}) remains open. 

\begin{problem}
\label{problem:braid-orbit-totally-proper}
Can a $\mathbf{B}_n^\pm$-orbit contain more than one (proper) mutation equivalence class
of totally proper quivers?
To rephrase, if two totally proper COQs lie in the same $\mathbf{B}_n^\pm$-orbit,
does it follow that they are in fact mutation equivalent? 
\end{problem}

\pagebreak[3]

\begin{example}
\label{eg:proper-COQs-congr-but-mut-inequiv}
Let $(Q,\tau),(Q',\tau')\in\LOQ(4)$ be given by
\vspace{2pt}
\begin{equation}
(Q,\tau)\ =\  \begin{tikzcd}[arrows={-stealth}, sep=small, cramped, sep=60]
v_1 \ar[r] \ar[rr, swap, bend right=15, "2"]  &    
v_2    &    
v_3  \ar[r, "2"]&    
v_4 \ar[lll, swap, bend left=30, "2"] \ar[ll, bend left=15, "2"] 
\end{tikzcd}\,, 
\vspace{-15pt}
\end{equation}
\begin{equation}
(Q',\tau')\ =\  \begin{tikzcd}[arrows={-stealth}, sep=small, cramped, sep=60]
v_2 \ar[r]  &    
v_1  \ar[rr, swap, bend right=15, "2"]   &    
v_4 \ar[ll, bend left=15, "2"]  \ar[r, "2"]&    
v_3  
\end{tikzcd}\,.
\end{equation}
Inspection shows that the COQs associated with $(Q,\tau)$ and $Q',\tau')$ are both proper. 

It is straightforward to verify that
\begin{equation}
\label{eq:Q'tau'=...}
(Q',\tau')=\mu_{v_2}(\gamma\,(\rho_1(\rho_4(\mu_{v_3}(Q,\tau))))), 
\end{equation}
where $\gamma$ denotes the cyclic shift that changes a linear order on the vertices as follows:
\begin{equation*}
(a<b<c<d) \mapsto (b<c<d<a). 
\end{equation*}
In view of Proposition~\ref{pr:B-orbit-of-COQ} and Theorem~\ref{th:braid-vs-COQ-mut}, 
formula~\eqref{eq:Q'tau'=...} implies that
$(Q,\tau)$ and $(Q',\tau')$ lie in the same $\mathbf{B}_n^\pm$-orbit. 
We can furthermore use \eqref{eq:bondal-coxeter}, \eqref{eq:U'-proper-mut}, and \eqref{eq:rho-sigma1}--\eqref{eq:rho-sigma3} to transform~\eqref{eq:Q'tau'=...} into the formula 
\begin{equation}
\label{eq:rho1rho3...}
(Q',\tau')=\rho_1(\rho_3(\sigma_1^{-1}(\sigma_3(Q,\tau)))), 
\end{equation}
which directly certifies that 
$(Q',\tau')\in\mathbf{B}_n^{\pm}(Q,\tau)$. 
At the same time, one can show that $Q$ and $Q'$ are not mutation equivalent---hence the corresponding COQs 
do not belong to the same proper~mutation equivalence class.
This can be deduced from G.~Muller's results~\cite{Muller} on reddening sequences, as follows. 
The quiver $Q'$ is acyclic, hence it has a reddening (in fact, a maximal green) sequence,
cf.~\cite[Lemma~2.20]{Brustle-Dupont-Perotin}. 
By~\cite[Corollary~19]{Muller}, any quiver mutation equivalent to~$Q'$ must also have a reddening sequence. 
We now observe that~$Q$ contains the \emph{Markov quiver} as a full subquiver on the vertices $\{v_1, v_3, v_4\}$.
Since the Markov quiver does not have a reddening sequence \cite[Remark~3.4]{Ladkani}, 
$Q$ does not have one either, by \cite[Theorem~17]{Muller},
so $Q$ cannot be mutation equivalent to~$Q'$. 

The unipotent companions~$U$ and $U'$ of $(Q,\tau)$ and $(Q',\tau')$ are given by 
\begin{equation*}
U=\begin{bmatrix}
1 & -1 & -2 & 2\\
0 & 1 & 0 & 2 \\
0 & 0 & 1 & -2\\
0 & 0 & 0 & 1
\end{bmatrix},
\qquad 
U'=\begin{bmatrix}
1 & -1 & 2 & 0\\
0 & 1 & 0 & -2\\
0 & 0 & 1 & -2\\
0 & 0 & 0 & 1
\end{bmatrix}. 
\end{equation*}
Formula \eqref{eq:rho1rho3...} implies that $U$ and $U'$ are congruent over the integers---despite 
the corresponding proper COQs being mutation-inequivalent.
To be concrete, we have $U' = G U G^T$, where
\begin{equation*}
G=\begin{bmatrix}
0 & -1 & 0 & 0\\
1 & 1 & 0 & 0\\
0 & 0 & -2 & -1\\
0 & 0 & 1 & 0
\end{bmatrix}. 
\end{equation*}

We emphasize that neither of the two COQs associated with $(Q,\tau)$ and $(Q',\tau')$ 
is totally proper: $\mu_{v_3}(Q,\tau)$ is not proper at~$v_4$, while
$\mu_{v_1}(Q',\tau')$ is not proper at~$v_2$. 
\end{example}

\begin{theorem}
\label{th:signed-braid-n=3}
Let $Q, Q'$ be proper (hence totally proper) 3-vertex COQs.
The following are equivalent:
\begin{itemize}[leftmargin=.3in]
\item[\rm(i)]
$Q$ and $Q'$ are (proper) mutation equivalent; 
\item[\rm(ii)]
$Q$ and $Q'$ lie in the same signed braid group orbit. 
\end{itemize}
\end{theorem}

The latter orbit is well defined (i.e., it does not depend on the choice of linear ordering compatible with the given cyclic ordering) by Theorem~\ref{pr:B-orbit-of-COQ}. 

The (i)$\Rightarrow$(ii) direction has already been established in Theorem~\ref{th:braid-vs-COQ-mut}.
To prove the implication (ii)$\Rightarrow$(i), we will need to show that if $(Q,\tau),(Q',\tau')\in\LOQ(3)$
lie in the same $\mathbf{B}_3^{\pm}$-orbit, then they are related to each other via proper mutations
(in the sense of Definition~\ref{def:proper-mu-LOQ}) and cyclic shifts. 

\begin{definition}
%
For a linearly ordered quiver $(Q, \tau)\in\LOQ(n)$, 
we will call the set 
\begin{equation*}
\rho(Q,\tau) =\{\rho_S(Q, \tau)\}
\end{equation*}
the \emph{reversal orbit} of~$(Q, \tau)$.
It typically contains $2^{n-1}$ elements, cf.\ Remark~\ref{rem:full-reversal}. 
%
\end{definition}

\begin{lemma}
\label{lem:sigma-rhoQ}
Applying $\sigma_k$ or $\sigma_k^{-1}$ to a reversal orbit of a linearly ordered quiver yields a reversal orbit.
\end{lemma}

\begin{proof}
This is a direct consequence of the commutation relations \eqref{eq:rho-sigma1}--\eqref{eq:rho-sigma3}.
\end{proof}

\begin{lemma}
\label{lem:rho-Q3}
Let $\mathbf{R}$ be a reversal orbit of 3-vertex linearly ordered quivers.
Then exactly one of the following three cases takes place:  
\begin{itemize}[leftmargin=.25in]
\item[\rm(a)] $\mathbf{R}$ is the set of all orientations of a weighted cyclically ordered forest, 
all of them proper (cf.\ Example~\ref{eg:trees-are-proper}) and sink/source mutation equivalent to each other; 
\item[\rm(b)] $\mathbf{R}$ contains one proper non-acyclic quiver and three non-proper acyclic quivers;
\item[\rm(c)] $\mathbf{R}$ contains 
one non-proper non-acyclic quiver and 
three proper complete acyclic quivers that are sink/source mutation equivalent to each other.
\end{itemize}
(Here we use the term ``proper'' in the sense of Definition~\ref{def:proper-mu-LOQ},
i.e., the corresponding COQ must be proper, up to wiggles.) 

Thus $\mathbf{R}$ intersects with exactly one proper mutation equivalence class. 
\end{lemma}

\begin{proof}
This can be verified by straightforward examination of all possible cases. 
\end{proof}

\begin{definition}
For a reversal orbit $\mathbf{R}\subset\LOQ(3)$,
we denote by $\mathbf{Q}(\mathbf{R})$ the unique proper mutation equivalence class that intersects~$\mathbf{R}$,
cf.\ Lemma~\ref{lem:rho-Q3}.
\end{definition}


\begin{lemma}
\label{lem:sigma-reversal-orbit}
Let $\mathbf{R}$ be a reversal orbit of 3-vertex linearly ordered quivers.
For any $k\in\{1,2\}$, we have $\mathbf{Q}(\mathbf{R})=\mathbf{Q}(\sigma_k(\mathbf{R}))$. 
In other words, the (totally) proper quivers in the reversal orbits $\mathbf{R}$ and
$\sigma_k(\mathbf{R})$ are mutation equivalent. 
\end{lemma}

\begin{proof}
We need to find proper representatives in~$\mathbf{R}$
and $\sigma_k(\mathbf{R})$
that are (proper) mutation equivalent to each other. 
Since mutation equivalence is invariant under~cyclic shifts, we may assume that $k=1$. 
We may also assume that our quivers are connected. 

Let us pick $(\bar Q, \bar\tau)\in\sigma_1(\mathbf{R})$  
so that $\bar\tau=(v_1<v_2<v_3)$ and $\bar Q$ contains the arrows $v_1\to v_2\to v_3$,
cf.\ Lemma~\ref{lem:rho-Q3}. 
(This may potentially require a wiggle in the case (a) of Lemma~\ref{lem:rho-Q3}.)
Then we can apply \eqref{eq:U'-proper-mut} with $(Q,\tau)=(\bar Q,\bar\tau)$ to obtain 
\begin{equation*}
\mu_{v_2}(\bar Q,\bar \tau) = \rho_1 \sigma_1^{-1}(\bar Q,\bar\tau) \in \mathbf{R},
\end{equation*}
as desired. 
\end{proof}

\pagebreak[3]

\begin{proof}[Proof of Theorem~\ref{th:signed-braid-n=3}]
Assume that $(Q,\tau),(Q',\tau')\in\LOQ(3)$ are (totally) proper.
Let $\mathbf{R}=\rho(Q,\tau)$ and $\mathbf{R}'=\rho(Q',\tau')$ 
be the corresponding reversal orbits. 
If $(Q,\tau)$ and $(Q',\tau')$
lie in the same $\mathbf{B}_3^{\pm}$-orbit, then we can get from $\mathbf{R}$ to $\mathbf{R}'$
by repeated applications of $\sigma_1^{\pm1}$ and/or $\sigma_2^{\pm1}$. 
Lemma~\ref{lem:sigma-reversal-orbit} implies that $\mathbf{Q}(\mathbf{R})=\mathbf{Q}(\mathbf{R}')$. 
Since $(Q,\tau)\in\mathbf{Q}(\mathbf{R})$ and $(Q',\tau')\in\mathbf{Q}(\mathbf{R'})$, 
we conclude that $(Q,\tau)$ and $(Q',\tau')$ are (proper) mutation equivalent. 
\end{proof}

\hide{
The following result is a special case of~\eqref{eq:U'-proper-mut}. 

\begin{lemma}
\label{lem:not-sink/souce-3}
Let $(Q,\tau)\in\LOQ(3)$, with $\tau=(v_1<v_2<v_3)$. 
Assume that $v_k$ is not a sink or source, and moreover is proper in $(Q,\tau)$, 
in the sense of Theorem~\ref{th:braid-vs-COQ-mut}. 
Then $\rho_{k+1} \sigma_k(Q,\tau) = \mu_k(Q,\tau)$.
(Here $\mu_k(Q,\tau)$ is defined as in Theorem~\ref{th:braid-vs-COQ-mut}.) 
\end{lemma}

Replacing $(Q,\tau)$ by~$\mu_k(Q,\tau)$ in Lemma~\ref{lem:not-sink/souce-3},
we see that $\mu_k(Q,\tau) = \sigma^{-1}_k \rho_{k+1}(Q,\tau)$. 
We can thus define $\sigma^{\pm 1}_k$ of a reversal orbit.

\begin{lemma}
\label{lem:ref-class-proper-mut}
With notational conventions from Definition~\ref{def:Q_R}, 
$Q_\mathbf{R}$ and $Q_{\sigma ^{\pm 1}_k(\mathbf{R})}$ are proper mutation equivalent.
\end{lemma}

\begin{proof} 
Pick the representative in~$\mathbf{R}$ described in Lemma~\ref{lem:rho-Q3}(b), 
or the representative from (c) with (cases) elbow $k$ if $\sigma_k$; 
elbow $k+1$ if $\sigma^{-1}_k$. 
Apply proper mutation and use Lemma~\ref{lem:not-sink/souce-3} to get the reversal orbit representative. 
(Note that $\sigma_k$ may be a wiggle in case (a) of Lemma~\ref{lem:rho-Q3}.)
\end{proof}

\begin{proof}[Proof of Theorem~\ref{th:signed-braid-n=3}]
We show that every $U'$ in the signed braid group orbit of $U_Q$ is in the reversal orbit of $U_{Q'}$ for some proper $Q'~\sim Q$.
By Lemma~\ref{lem:sigma-rhoQ}, it suffices to consider the braid group orbit on the reversal orbit of $U_Q$.
By Lemma~\ref{lem:ref-class-proper-mut}, the proper quivers associated to the reversal orbits of $U'$ are all proper mutation equivalent.
\end{proof}
} 

\begin{corollary}
The signed braid group orbit of a proper 3-vertex COQ $(Q,\sigma)$ is the union
of reversal orbits of all COQs in the proper mutation class of $(Q,\sigma)$. 
\end{corollary}

\begin{proof}
This follows from Theorem~\ref{th:signed-braid-n=3} and the last statement in Lemma~\ref{lem:rho-Q3}. 
\end{proof}


We conclude this section by explaining how another well-known invariant of quiver mutations
fits into the framework of the $\mathbf{B}_n^\pm$ action on $\mathcal{U}(n,\ZZ)$. 

\begin{observation}[{\cite{Seven3x3}
}]
\label{obs:gcd-seven}
The greatest common divisor of the entries in a given row (or column) of the exchange matrix
is a mutation invariant of a labeled quiver. 
Allowing for relabelings, the multiset of these gcd's is a mutation invariant. 
\end{observation}

We will show that this multiset is in fact constant on $\mathbf{B}_n^\pm$-orbits. 

\begin{definition}
\label{def:d(U)}
For a matrix $U=(u_{ij})\in\mathcal{U}(n,\ZZ)$,
let $d_r(U)$ denote the greatest common divisor of all non-diagonal entries in the $r$th row and column: 
\begin{equation*}
d_r(U) = \operatorname{gcd}(u_{1r}, u_{2r}, \ldots, u_{r-1,r}, u_{r,r+1}, \ldots, u_{rn})). 
\end{equation*}
We then denote by $\mathbf{d}(U)$ the multiset $\mathbf{d}(U) = \{ d_1(U), \ldots, d_n(U)\}$.
\end{definition}

\begin{proposition}
\label{pr:gcd invariant of unipotent}
If two matrices $U,U'\in\mathcal{U}(n,\ZZ)$ lie in the same $\mathbf{B}_n^\pm$-orbit, 
then $\mathbf{d}(U) = \mathbf{d}(U')$. 
\end{proposition}

\begin{proof}
It suffices to check the claim in the following two cases: 

\noindent
\textbf{Case~1:} 
$U' = \rho_k(U)$.
Then $u'_{ij} = \pm u_{ij}$ for all $i$ and~$j$, so $\mathbf{d}(U') = \mathbf{d}(U)$.

\noindent
\textbf{Case~2:} 
$U' = \sigma_k(U)$.
We will rely on the following elementary fact. 

\begin{lemma}
\label{lem:gcd}
Let $\mathbf{M}=\{m_1,\dots,m_r\}$ and $\mathbf{M'}=\{m'_1,\dots,m'_r\}$
be two collections of integers. 
Suppose that for some $j\in\{1,\dots,r\}$, we have $m_j=\pm m_j'$
and, for any $i\ne j$, we have $m_j \mid (m_i'-m_i)$.
(In other words, $m_i'$ is obtained from $m_i$ by adding a number divisible by~$m_j$.)
Then $\operatorname{gcd}(\mathbf{M})=\operatorname{gcd}(\mathbf{M'})$. 
\end{lemma}

\enlargethispage{5pt}

To complete the proof of Proposition~\ref{pr:gcd invariant of unipotent},
it suffices to verify, using Lemma~\ref{lem:gcd}, that 
\begin{equation*}
d_i(U') = 
\begin{cases}
d_i(U) & \text{if $i\notin\{k,k+1\}$;} \\
d_{k+1}(U) & \text{if $i=k$;} \\
d_k(U) & \text{if $i=k+1$.} 
\end{cases}
\end{equation*}
We omit the details. 
\end{proof}

\begin{example}
In the case of Example~\ref{eg:U4x4} (i.e., $n=4$, $k=2$, $U'=\sigma_2(U)$), 
we get:
\begin{equation*}
\begin{array}{ll}
d_1(U)\!=\!\operatorname{gcd}(u_{12}, u_{13}, u_{14});  
	& d_1(U')\!=\!\operatorname{gcd}(-u_{12}u_{23}+u_{13}, u_{12} , u_{14} ) \!=\! d_1(U); \\[3pt]
d_2(U)\!=\!\operatorname{gcd}(u_{12}, u_{23}, u_{24}); 
	& d_2(U') \!=\!\operatorname{gcd}(-u_{12}u_{23}+u_{13}, u_{23}, -u_{23}u_{24}+u_{34})\!=\!d_3(U);\\[3pt]
d_3(U)\!=\!\operatorname{gcd}(u_{13}, u_{23}, u_{34}); 
	& d_3(U')\!=\!\operatorname{gcd}(u_{12}, -u_{23}, u_{24}) \!=\! d_2(U). 
\end{array}
\end{equation*}
\end{example}


\newpage 


\begin{thebibliography}{99}


\bibitem{acampo}
N.~A'Campo, 
Le groupe de monodromie du d\'eploiement des singularit\'es
isol\'ees de courbes planes.~I, \textsl{Math.\ Ann.}\ \textbf{213}
(1975), 1--32. 

\bibitem{acampo-ihes}
N.~A'Campo, 
Generic immersions of curves, knots, monodromy and Gordian number, 
\textsl{Publ.\ Math.\ IHES} \textbf{88} (1998), 171--180.

\bibitem{AC1} 
N.~A'Campo,
Real deformations and complex topology of
  plane curve singularities, 
\textsl{Annales de la Facult\'e de Science de Toulouse, 6-e Ser.,}
       \textbf{8} (1999), 5--23.



\bibitem{BGZ} 
M.~Barot, C.~Geiss, and A.~Zelevinsky, 
Cluster algebras of finite type and positive symmetrizable matrices,  
\textsl{J.~London Math. Soc.}\ \textbf{73} (2006), 545--564. 

\bibitem{BBH}
A.~Beineke, T.~Br\"ustle, and L.~Hille, 
Cluster-cyclic quivers with three vertices and the Markov equation. 
With an appendix by O.~Kerner, 
\textsl{Algebr.\ Represent.\ Theory} \textbf{14} (2011), 97--112. 

\bibitem{ca3}
A.~Berenstein, S.~Fomin, and A.~Zelevinsky, 
Cluster algebras~III: Upper bounds and double Bruhat cells,
\textsl{Duke Math.~J.}\ \textbf{126} (2005), 1--52. 

\bibitem{BHJ}
W.~Bley, T.~Hofmann, and H.~Johnston, 
Computation of lattice isomorphisms and the integral matrix similarity problem, 
\textsl{Forum Math.\ Sigma} \textbf{10} (2022), Paper No. e87, 36 pp.

\bibitem{Bondal}
A.~I.~Bondal, 
A symplectic groupoid of triangular bilinear forms and the braid group, 
\textsl{Izv.\ Math.}\ \textbf{68} (2004), no.~4, 659--708.

\bibitem{BP}
A.~I.~Bondal and A.~E.~Polishchuk, 
Homological properties of associative algebras: the method of helices, 
\textsl{Russian Acad.\ Sci.\ Izv.\ Math.}\ \textbf{42} (1994), no.~2, 219--260.


\bibitem{brouwer} 
A.~E.~Brouwer, 
The enumeration of locally transitive tournaments, 
Afdeling Zuivere Wiskunde [Department of Pure Mathematics], Report 138/80, 
Mathematisch Centrum, Amsterdam, 1980.

\bibitem{Brustle-Dupont-Perotin}
T.~Br\"ustle, G.~Dupont, and M.~P\'erotin, 
On maximal green sequences, 
\textsl{Int.\ Math.\ Res.\ Not.\ IMRN} \textbf{2014}, no. 16, 4547--4586.


\bibitem{CK}
P.~Caldero and B.~Keller, 
From triangulated categories to cluster algebras.~II,
\textsl{Ann.\ Sci.\ Ecole Norm.\ Sup.}\ \textbf{39} (2006), 983--1009.


\bibitem{cameron}
P.~J.~Cameron, 
Orbits of permutation groups on unordered sets.~II,  
\textsl{J.~London Math.\ Soc.\ (2)} \textbf{23} (1981), 249--264. 


\bibitem{casals-binary}
R.~Casals,
A binary invariant of matrix mutation, 
\texttt{arXiv:2311.03601}, 
to appear in \textsl{J.~Comb.\ Algebra}. 

\bibitem{CV}
S.~Cecotti and C.~Vafa, 
On classification of $N=2$ supersymmetric theories, 
\textsl{Comm.\ Math.\ Phys.}\ \textbf{158} (1993), 569--644.

\bibitem{ChelnokovNoInvs}
G.~Chelnokov,
On the invariants of $4 \times 4$ skew-symmetric matrix under cluster mutations,
\texttt{arxiv:/2311.01784}.

\bibitem{Derksen-Owen}
H.~Derksen and T.~Owen, 
New graphs of finite mutation type, 
\textsl{Electron.\ J.\ Combin.}\ \textbf{15} (2008), no.~1, Research Paper~139, 15 pp.

\bibitem{Dub}
B.~Dubrovin, 
Geometry of 2D topological field theories, 
\textsl{Lecture Notes in Math.}\ \textbf{1620} (1996), 
Springer-Verlag, 120--348.

\bibitem{EHO}
B.~Eick, T.~Hofmann, and E.~A.~O'Brien, 
The conjugacy problem in $\GL(n,\mathbb{Z})$, 
\textsl{J.~Lond.\ Math.\ Soc.}\ \textbf{100} (2019), 731--756.


\bibitem{Ervin}
T.~Ervin, 
New hereditary and mutation-invariant properties arising from forks,
\textsl{Electron. J. Combin.} \textbf{31}, Paper No. 1.16, 30 (2024). 

\bibitem{FW}
Y.-W.~Fan and J.~P.~Whang,
Stokes matrices and exceptional isomorphisms,
\textsl{Math.\ Ann.}\ \textbf{390} (2024), 4041--4086.


\bibitem{FeShTu}
A.~Felikson, M.~Shapiro, and P.~Tumarkin,
Skew-symmetric cluster algebras of finite mutation type,
\textsl{J.~Eur.\ Math.\ Soc.}\ \textbf{14} (2012), 1135--1180.


\bibitem{LMC}
S.~Fomin and S.~Neville,
Long mutation cycles,
\texttt{arXiv:2304.11505},
to appear in \textsl{Selecta Math}. 

\bibitem{cats1}
S.~Fomin, M.~Shapiro, and D.~Thurston, 
Cluster algebras and triangulated surfaces. Part~I: Cluster complexes,
\textsl{Acta Math.}\ \textbf{201} (2008), 83--146. 

\bibitem{FPST}
S.~Fomin, P.~Pylyavskyy, E.~Shustin, and D.~Thurston,
Morsifications and mutations, 
\textsl{J.~Lond.\ Math.\ Soc.}\ \textbf{105} (2022), 2478--2554. 

\bibitem{fwz1-3}
S.~Fomin, L.~Williams, and A. Zelevinsky, 
Introduction to cluster algebras. Chapters~1--3,
\texttt{arXiv:1608.05735}. 



\bibitem{ca1}
S.~Fomin and A. Zelevinsky, 
Cluster algebras I: Foundations, 
\textsl{J.~Amer.\ Math.\ Soc.}\ \textbf{15} (2002), 497--529. 

\bibitem{ca2}
S.~Fomin and A. Zelevinsky, 
Cluster algebras~II: Finite type classification, 
\textsl{Invent.\ Math.}\ \textbf{154} (2003), 63--121. 

\bibitem{ca4}
S.~Fomin and A. Zelevinsky, 
Cluster algebras IV: Coefficients, 
\textsl{Compos.\ Math.}\ \textbf{143} (2007), \hbox{112--164}. 

\bibitem{MR2805200}
A.~P.~Fordy and R.~J.~Marsh, 
Cluster mutation-periodic quivers and associated Laurent sequences, \linebreak[3]
\textsl{J.~Algebraic Combin.}\ \textbf{34} (2011), no.1, 19--66.


\bibitem{Grunewald}
F.~J.~Grunewald, 
Solution of the conjugacy problem in certain arithmetic groups, 
\emph{Word problems~II}, pp.~101--139
Stud. Logic Found.\ Math.\ \textbf{95}, 
North-Holland, 1980. 

\bibitem{Grunewald-Segal}
F.~Grunewald and D.~Segal, 
Some general algorithms. I. Arithmetic groups.
\textsl{Ann.\ of Math.}\ \textbf{112} (1980), 531--583.

\bibitem{WeiwenGu}
W.~Gu, 
A decomposition algorithm for the oriented adjacency graph of the triangulations of a bordered surface with marked points, 
\textsl{Electron.\ J.\ Combin.}\ \textbf{18} (2011), no.1, Paper~91, 45~pp.

\bibitem{gusein-zade-1}
S.~M.~Gusein-Zade, 
Intersection matrices for certain singularities of
functions of two variables, 
\textsl{Funkcional.\ Anal.\ i Prilozhen.}\ \textbf{8} (1974), no.~1,
11--15. 

\bibitem{horn-sergeichuk}
R.~A.~Horn and V.~V.~Sergeichuk, 
Canonical forms for complex matrix congruence and *congruence, 
\textsl{Linear Algebra Appl.}\ \textbf{416} (2006), 1010--1032. 


\bibitem{Kaplansky69}
I.~Kaplansky,
\emph{Linear algebra and geometry. A second course}, 
Allyn and Bacon, Boston, 1969.


\bibitem{Knuth}
D.~E.~Knuth, 
\emph{Axioms and hulls},
\textsl{Lecture Notes in Computer Science} 
\textbf{606}, Springer-Verlag, 1992. 


\bibitem{Ladkani}
S.~Ladkani,
On cluster algebras from once punctured closed surfaces,
\texttt{arXiv:1310.4454}. 




\bibitem{moon}
J.~W.~Moon, 
Tournaments whose subtournaments are irreducible or transitive, 
\textsl{Canad. Math. Bull.}\ \textbf{22} (1979), 75--79. 

\bibitem{Muller-loc-acyclic}
G.~Muller, 
Locally acyclic cluster algebras, 
\textsl{Adv.\ Math.}\ \textbf{233} (2013), 207--247.

\bibitem{Muller}
G.~Muller, 
The existence of a maximal green sequence is not invariant under quiver mutation, 
\textsl{Electron.\ J.\ Combin.}\ \textbf{23} (2016), no.~2, Paper~2.47, 23 pp.

\bibitem{SN-acyclic-TP}
S.~Neville,
Mutation-acyclic quivers are totally proper, 
\texttt{arXiv:2409.17832}.




\bibitem{Rudakov}
A.~N.~Rudakov, 
Integer-valued bilinear forms and vector bundles, 
\textsl{Math.\ USSR-Sb.}\ \textbf{66} (1990), no.~1, 189--197.

\bibitem{Sarkisjan}
R.~A.~Sarkisjan, 
The conjugacy problem for collections of integral matrices, 
\textsl{Mat.\ Zametki} \textbf{25} (1979), 811--824, 956.


\bibitem{SchwartzTree}
A.~Schwartz,
The HOMFLY polynomial of a forest quiver,
\texttt{arXiv:2410.00399}.

\bibitem{SevenOtherPaper}
A.~I.~Seven,
Cluster algebras and semipositive symmetrizable matrices,
\textsl{Trans. Amer. Math. Soc.}\ \textbf{363} (2011), 2733--2762.

\bibitem{Seven3x3}
A.~I.~Seven, 
Mutation classes of skew-symmetrizable $3 \times 3$ matrices,
\textsl{Proc.\ Amer.\ Math.\ Soc.}\ \textbf{141} (2013), 1493--1504.

\bibitem{SevenAMatrices}
A.~Seven, 
Cluster algebras and symmetric matrices,
 {\em Proc. Amer. Math. Soc.}. 
 \textbf{143}, 469-478 (2015). 

\bibitem{Seven-congruence}
A.~Seven and \.I.~\"Unal, 
Congruence invariants of matrix mutation, 
\textsl{J.~Pure Appl.\ Algebra} \textbf{229} (2025), Paper No.~107920, 14 pp.

\bibitem{Stebe}
P.~F.~Stebe, 
Conjugacy separability of groups of integer matrices, 
\textsl{Proc.\ Amer.\ Math.\ Soc.}\ \textbf{32} (1972), 1--7.



  
\bibitem{Warkentin}
M.~Warkentin,  
\emph{Exchange graphs via quiver mutation}, 
Ph.D. thesis, 
Technische Universit\"at Chemnitz, 
2014, 
\url{https://nbn-resolving.org/urn:nbn:de:bsz:ch1-qucosa-153172}.

\bibitem{Yoshinaga}
E.~Yoshinaga and M.~Suzuki, 
Topological types of quasihomogeneous singularities in $\mathbb{C}^2$, 
\textsl{Topology} \textbf{18} (1979), 113--116.

\bibitem{Zassenhaus}
H.~Zassenhaus, 
Neuer Beweis der Endlichkeit der Klassenzahl bei unimodularer \"Aquivalenz end\-licher ganzzahliger Substitutionsgruppen,
\textsl{Abh.\ Math.\ Sem.\ Univ.\ Hamburg}\ \textbf{12} (1937), 276--288.

\end{thebibliography}
\end{document}